\newtheorem{theorem}{Theorem}[section]
\newtheorem{lemma}[theorem]{Lemma}
\newtheorem{proposition}[theorem]{Proposition}
\newtheorem{corollary}[theorem]{Corollary}
\theoremstyle{definition}
\theoremstyle{remark}
\newtheorem{remark}[theorem]{Remark}
\newcommand{\cB}{\mathcal B}
\newcommand{\cF}{\mathcal F}
\newcommand{\R}{\mathbb R}
\newcommand{\dist}{\text{\rm dist}}
\newcommand{\ol}{\overline}
\def\be{\begin{equation}}
\def\ee{\end{equation}}
\def\p{\partial}
\def\tilde{\widetilde}
\def\a{\alpha}
\def\b{\beta}
\def\g{\gamma}
\def\e{\epsilon}
\numberwithin{equation}{section} 
\numberwithin{figure}{section}
\def\O{\Omega}
\DeclareMathOperator{\diam}{{\rm diam}}
\DeclareMathOperator*{\esssup}{ess\,sup}
\newcommand{\gain}{\text{\rm gain}}
\newcommand{\loss}{\text{\rm loss}}
\newcommand{\opnorm}{\@ifstar\@opnorms\@opnorm}
\newcommand{\@opnorms}[1]{%
  \left|\mkern-1.5mu\left|\mkern-1.5mu\left|
   #1
  \right|\mkern-1.5mu\right|\mkern-1.5mu\right|
}
\newcommand{\@opnorm}[2][]{%
  \mathopen{#1|\mkern-1.5mu#1|\mkern-1.5mu#1|}
  #2
  \mathclose{#1|\mkern-1.5mu#1|\mkern-1.5mu#1|}
}
\begin{document}
\bibliographystyle{siam}

\title[$H^s_x$ regularity of SBE with Incoming BC]
{$H^s_x$ regularity of solutions to the stationary Boltzmann equation with the incoming boundary condition}

\author[D.~Kawagoe]{DAISUKE KAWAGOE}

\date{\today}

\begin{abstract}
We consider the stationary Boltzmann equation with the angular cutoff cross section in a bounded convex domain under the incoming boundary condition. In this article, we discuss the fractional Sobolev regularity of the solution without assuming the positivity of the Gaussian curvature on the boundary. For a boundary data sufficiently smooth and close to the standard Maxwellian, the solution has $H^{1-}_x$ regularity for hard potentials and soft potentials ($-2 \leq \gamma \leq 1$), while $H^{((4 + \gamma)/2)-}_x$ regularity is obtained for very soft potentials ($-3 < \gamma < -2$). We first show the well-posedness of the linearized problem on a weighted $L^2$ space and develop the $L^2-L^\infty$ estimate without the stochastic cycle. We next investigate $H^s_x$ regularity of the solution to the linearized problem. The idea of the celebrated velocity averaging lemma plays a key role in our analysis. We finally derive a bilinear estimate to extend the result on the linearized problem to the weakly nonlinear problem.

\smallskip
\noindent \textbf{Keywords.} stationary Boltzmann equation, boundary value problems, fractional Sobolev spaces, velocity averaging lemma
\end{abstract}

\maketitle

\setcounter{tocdepth}{1}
\tableofcontents

\section{Introduction} \label{sec:intro}

In this article, we consider the following boundary value problem:
\begin{equation} \label{BVP}
\begin{cases}
v \cdot \nabla_x F = Q(F, F) &\mbox{ in } \O \times \R^3,\\
F = F_0 &\mbox{ on } \Gamma^-,
\end{cases}
\end{equation}
where $\O$ is a bounded convex domain in $\R^3$ with $C^1$ boundary $\p \O$. The collision operator $Q$ reads
\[
Q(F, G) := \int_{\R^3} \int_0^{2\pi} \int_0^{\frac{\pi}{2}} [F(v') G(v_*') - F(v) G(v_*)] B(|v - v_*|, \theta)\,d\theta d\phi dv_*,
\]
where
\begin{equation} \label{collision_coordinate}
\begin{split}
v' :=& v + ((v_* - v) \cdot \omega) \omega, \quad v_*' := v_* - ((v_* - v) \cdot \omega) \omega,\\
\omega :=& \cos \theta \frac{v_* - v}{|v_* - v|} + (\sin \theta \cos \phi) e_2 + (\sin \theta \sin \phi) e_3.
\end{split}
\end{equation}
Here, $0 \leq \phi < 2\pi$, $0 < \theta < \pi/2$, and $e_2$ and $e_3$ are unit vectors such that the pair $\{ (v_* - v)/|v_* - v|, e_2, e_3 \}$ forms an orthonomal basis in $\R^3$. In this article, we consider the cross section $B$ with Grad's angular cutoff \cite{Grad}:
\begin{equation} \label{assumption_B1}
B(|v - v_*|,\theta) = |v - v_*|^{\gamma} b(\cos \theta) \sin \theta, \quad 0 < b(\cos \theta) \leq C \cos \theta
\end{equation}
for some positive constant $C$ and $-3 < \gamma \leq 1$. The incoming boundary $\Gamma^-$ is defined by
\[
\Gamma^- := \{ (z, v) \in \partial \O \times \R^3 \mid n(z) \cdot v < 0 \},
\]
where $n(z)$ denotes the outward unit normal vector at $z \in \p \O$. $F_0$ is a given function on $\Gamma^-$.

Instead of studying the boundary value problem \eqref{BVP} directly, we consider the perturbation of the solution around the standard Maxwellian $M(v) := \pi^{-3/2} e^{-|v|^2}$. Let $F = M + M^{1/2} f$ and $F_0 = M + M^{1/2} f_0$. Then, the function $f$ solves the following boundary value problem:
\begin{equation} \label{BVP_red}
\begin{cases}
v \cdot \nabla_x f = Lf + \Gamma(f, f) &\mbox{ in } \O \times \R^3,\\
f = f_0 &\mbox{ on } \Gamma^-, 
\end{cases}
\end{equation}
where
\begin{align*}
Lh :=& M^{-\frac{1}{2}} \left( Q(M, M^{\frac{1}{2}}h) + Q(M^{\frac{1}{2}}h, M) \right),\\
\Gamma(h_1, h_2) :=& M^{-\frac{1}{2}} Q(M^{\frac{1}{2}}h_1, M^{\frac{1}{2}}h_2).
\end{align*}
In what follows, we shall discuss regularity of a solution to the boundary value problem \eqref{BVP_red} and its regularity.

Regularity issues on boundary value problems of the stationary Boltzmann equation have attracted attention in recent years. For the case with the diffuse reflection boundary condition, I.~Chen, Hsia and the author \cite{CHK} showed differentiability of the solution to the linearized problem when $0 \leq \gamma \leq 1$ assuming the uniform convexity, or the positivity of the Gaussian curvature on $\p \O$. H.~Chen and Kim \cite{ChenKim} considered the nonlinear problem with $\gamma = 1$, and gave weighed $L^\infty$ estimates for the first derivatives of the solution. Based on this result, they discussed the $W^{1, p}_x$ estimate of the solution for $1 \leq p < 3$ under the same setting \cite{ChenKimGra}. For the case with the incoming boundary condition, I.~Chen \cite{RegularChen} showed local $1^-/2$ H\"older regularity of the solution to the linearized problem when $0 \leq \gamma < 1$ without the positivity of the Gaussian curvature. Recently, Wu and Wang \cite{WW} established H\"older regularity of the solution to the nonlinear problem when $-3 < \gamma < 0$ with the positive Gaussian curvature condition. Also, H.~Chen \cite{HChen} introduced a weighted $C^1$ estimate for the solution to the nonlinear problem with $\gamma = 1$ in the half space, which implies the $W^{1, p}$ regularity for $1 \leq p < 2$. I.~Chen, Hsia, Su and the author \cite{CHKS2} proved the existence of a $W^{1, p}$ solution to the nonlinear problem for $1 \leq p < 3$ and $0 \leq \gamma \leq 1$ with the positive Gaussian curvature when the domain was sufficiently small. We remark that the smallness assumption on the domain was removed in \cite{CHKFred} to obtain the same regularity result.

In this article, we focus on the fractional Sobolev, or $H^s$, regularity of solutions to the boundary value problem \eqref{BVP_red} for $-3 < \gamma \leq 1$ without the positivity of the Gaussian curvature. As obtained in \cite{CHKS1}, in the case $0 \leq \gamma \leq 1$, the solution to the linearized problem has $W^{1, p}$ regularity with $1 \leq p < 2$ for a bounded convex small domain without the curvature condition, and the upper bound $p = 2$ is not achieved in general. Our purpose is to give another characterization for the upper bound in terms of fractional Sobolev regularity in the weakly nonlinear case. We remark that $H^s_x$ regularity of the solution to the linearized problem for $0 \leq s < 1$ was discussed in \cite{CCHS} for the case $0 \leq \gamma \leq 1$ assuming the positive Gaussian curvature condition and the existence of the solution in $L^2(\O \times \R^3)$.
 
To state our main theorem, we introduce some function spaces. For $\a \geq 0$ and $\b \in \R$, we say $f \in L^2_{\a, \b}(\O \times \R^3)$ if
\[
\| f \|_{L^2_{\a, \b}(\O \times \R^3)} := \left( \int_\O \int_{\R^3} |f(x, v)|^2  e^{2\a|v|^2} (1 + |v|)^{2\b}\,dvdx \right)^{\frac{1}{2}} < \infty,
\]
and $f_0 \in L^2_\a(\Gamma^-; d\xi_-)$ if
\[
\| f_0 \|_{L^2_\a(\Gamma^-; d\xi_-)} := \left( \int_{\Gamma^-} |f_0(z, v)|^2 e^{2\a|v|^2}\,d\xi_- \right)^{\frac{1}{2}} < \infty,
\]
where $d\xi_- := |n(z) \cdot v|\,d\sigma_z dv$ and $d\sigma_z$ denotes the surface measure on $\p \O$. Also, we say $f \in L^\infty_{\a, \b}(\O \times \R^3)$ if
\[
\| f \|_{L^\infty_{\a, \b}(\O \times \R^3)} := \esssup_{(x, v) \in \O \times \R^3} |f(x, v)| e^{\a |v|^2} (1 + |v|)^\b < \infty,
\]
and $f_0 \in L^\infty_{\a, \b}(\Gamma^-)$ if
\[
\| f_0 \|_{L^\infty_{\a, \b}(\Gamma^-)} := \esssup_{(z, v) \in \Gamma^-} |f_0(z, v)| e^{\a |v|^2} (1 + |v|)^\b < \infty.
\] 
In addition, for $0 < s < 1$, let
\[
| f_0 |_{\cB^s_{\a, \b}(\Gamma^-)} := \sup_{\substack{(z_1, v), (z_2, v) \in \Gamma^- \\ z_1 \neq z_2}} \frac{|f_0(z_1, v) - f_0(z_2, v)|}{|z_1 - z_2|^s} e^{\a |v|^2} (1 + |v|)^\b
\]
and we say $f_0 \in \cB^s_{\a, \b}(\Gamma^-)$ if
\[
\| f \|_{\cB^s_{\a, \b}(\Gamma^-)} := \| f_0 \|_{L^\infty_{\a, \b}(\Gamma^-)} + | f_0 |_{\cB^s_{\a, \b}(\Gamma^-)} < \infty.
\]
For $0 < s < 1$, the Slobodeckij seminorm is defined by
\[
|u|_{H^s(\O)} := \left( \int_\O \int_\O \frac{|u(x) - u(y)|^2}{|x - y|^{3 + 2s}}\,dxdy \right)^{\frac{1}{2}}, 
\]
and we say $u \in H^s(\O)$ if
\[
\| u \|_{H^s(\O)} := \left( \| u \|_{L^2(\O)}^2 + |u|_{H^s(\O)}^2 \right)^{\frac{1}{2}} < \infty.
\]
Furthermore, we say $f \in L^2_{\a, \b}(\R^3; H^s(\O))$ if
\[
\| f \|_{L^2_{\a, \b}(\R^3; H^s(\O))} := \left( \int_{\R^3} \| f(\cdot, v) \|_{H^s(\O)}^2 e^{2 \a |v|^2} (1 + |v|)^{2\b}\,dv \right)^{\frac{1}{2}} < \infty.
\]
Finally, for $-3 < \gamma \leq 1$, let
\[
X^s_{\a, \b, \gamma} := L^\infty_{\a, \b}(\O \times \R^3) \cap L^2_{\a, \gamma/2}(\R^3; H^s(\O))
\]
equipped with the norm $\| \cdot \|_{X^s_{\a, \b, \g}}$ defined by
\[
\| f \|_{X^s_{\a, \b, \g}} := \| f \|_{L^\infty_{\a, \b}(\O \times \R^3)} + \| f \|_{L^2_{\a, \gamma/2}(\R^3; H^s(\O))}.
\]

With the above notations, the main theorem is stated as follows.
\begin{theorem} \label{main theorem}
Let $\O$ be a bounded convex domain with $C^1$ boundary, $0 \leq \a < 1/2$, $\b > (3 + \gamma)/2$, $-3 < \gamma \leq 1$ and $0 < s_1 \leq 1$. 
There exists a positive constant $\delta_0 > 0$ such that, if $\| f_0 \|_{\cB^{s_1}_{\a, \b}(\Gamma^-)} < \delta_0$, then the boundary value problem \eqref{BVP_red} has a unique solution $f \in X^s_{\a, \b, \gamma}$ with $0 < s < \min \left\{ s_1, s_\gamma \right\}$, where $s_\gamma$ is a constant defined by 
\begin{equation} \label{s_gamma}
s_\gamma := 
\begin{cases}
1, &-2 \leq \gamma \leq 1,\\
(4 + \gamma)/2, &-3 < \gamma < -2.
\end{cases}
\end{equation}
\end{theorem}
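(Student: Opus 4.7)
The plan is to run a Picard iteration around the linearized equation. Set $f^0 := 0$ and, for $n \geq 0$, let $f^{n+1}$ solve
\begin{equation*}
v \cdot \nabla_x f^{n+1} - L f^{n+1} = \Gamma(f^n, f^n) \ \text{in} \ \O \times \R^3, \qquad f^{n+1}\big|_{\Gamma^-} = f_0.
\end{equation*}
The aim is to prove that, for $\delta_0$ small enough, this iteration is contracting in $X^s_{\a, \b, \g}$ for every $0 < s < \min\{s_1, s_\g\}$, producing a unique limit.

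The first pillar is the linear solvability of $v \cdot \nabla_x f - Lf = g$ with incoming data $f_0$. Splitting $L = -\nu(v) + K$ and using coercivity of $L$ modulo its finite-dimensional kernel, I would obtain an energy estimate in $L^2_{\a, \g/2}(\O \times \R^3)$ by multiplying by $f\, e^{2\a|v|^2}(1+|v|)^\g$ and integrating on $\O \times \R^3$; the restriction $\a < 1/2$ is what keeps the weighted coercivity intact. This $L^2$ bound is then upgraded to $L^\infty_{\a, \b}$ via an $L^2$--$L^\infty$ estimate along backward characteristics. Because the incoming condition forces trajectories to exit $\O$ in finite time without returning, the Duhamel iteration closes after finitely many steps and the Guo-style stochastic cycle is unnecessary; the $K$-contributions are absorbed using the already established $L^2$ bound and the weight $\b > (3+\g)/2$.

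The central new content is the $H^s_x$ regularity. I would invoke the velocity averaging lemma in the form: if $u$ solves $v \cdot \nabla_x u = h$ with appropriate integrability, then velocity averages $\int u(x,v) \varphi(v)\,dv$ gain fractional derivatives in $x$. Rewriting the linear equation as $v \cdot \nabla_x f = -\nu(v) f + Kf + g$ and localizing against a smooth cutoff in $v$, one transfers this gain into a pointwise-in-$v$ $H^s_x$ estimate on $f$, weighted by $(1+|v|)^{\g/2}$. The admissible $s$ is limited by the decay of $\nu(v) \sim (1+|v|)^\g$: for $-2 \leq \g \leq 1$, $\nu$ is strong enough in $v$ that the full range $s < 1$ is reachable (subject to the boundary regularity $s_1$); for $-3 < \g < -2$, $\nu$ decays too fast at large $|v|$, and the usable gain drops to $s < (4+\g)/2$, which is exactly the threshold $s_\g$ in \eqref{s_gamma}.

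Finally, to close the scheme I would prove a bilinear estimate of the form $\|\Gamma(f, g)\|_{\mathcal{Y}} \lesssim \|f\|_{X^s_{\a,\b,\g}} \|g\|_{X^s_{\a,\b,\g}}$, where $\mathcal{Y}$ is the source norm appearing on the right-hand side of the linear estimates. The pointwise part reduces to the standard bound $|\Gamma(f,g)(x,v)| \lesssim (1+|v|)^\g \|f(x,\cdot)\|_{L^\infty_{\a,\b}} \|g(x,\cdot)\|_{L^\infty_{\a,\b}}$; the $H^s_x$ part follows from differencing in $x$ in the Slobodeckij seminorm and exploiting that $\Gamma(f,g)(x,\cdot)$ depends on $f,g$ only through the slice at $x$. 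The condition $\b > (3+\g)/2$ is precisely what renders the $v$-integrals absolutely convergent. Combining these ingredients yields $\|f^{n+1}\|_{X^s_{\a,\b,\g}} \leq C \|f_0\|_{\cB^{s_1}_{\a,\b}(\Gamma^-)} + C\|f^n\|_{X^s_{\a,\b,\g}}^2$ together with an analogous contraction for $f^{n+1}-f^n$, giving a fixed point in a small ball of $X^s_{\a,\b,\g}$ once $\delta_0$ is chosen small. The hardest step will be the $H^s_x$ regularity: since velocity averaging produces Sobolev gain only for $v$-integrated quantities, localizing in $v$ and carefully accounting for how $\nu(v)$ trades against $s$ — which is what splits the two regimes of $s_\g$ — requires the bulk of the technical work.
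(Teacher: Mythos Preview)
Your overall architecture --- Picard iteration, linear solvability, $L^2$--$L^\infty$ upgrade, $H^s_x$ gain, bilinear estimate --- matches the paper's. But two of your pillars are not sturdy as described.

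\textbf{The $L^2$ step.} You propose to obtain the weighted $L^2$ a priori bound by multiplying by $f\,e^{2\a|v|^2}(1+|v|)^\g$ and using coercivity of $-L$. The problem is that $-\int (Lf)f \geq c_0\|(I-P)f\|^2_{L^2_\nu}$ controls only the microscopic part; nothing in your energy identity bounds $Pf$. The paper does \emph{not} close an energy estimate directly. Instead it writes the problem as the integral equation $f = S_\O K f + Jf_0 + S_\O\phi$ and shows $I - S_\O K$ has a bounded inverse on $L^2_{\a,\g/2}$ via the Fredholm alternative. Compactness of $S_\O K$ is itself nontrivial: the paper proves it by showing that $V_\g K_\a^* S_\O K_\a$ maps $L^2_{0,\g/2}$ into $H^s(\O\times B_R)$ for some $s>0$ (combining an $H^{s_{2,\g}}_x$ gain from $S_\O K_\a$ with an $H^{s_{3,\g}}_v$ gain from $V_\g K_\a^*$), then invoking Rellich and a tail cutoff. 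Injectivity then uses the kernel structure as you sketch. So the velocity averaging machinery already enters at the $L^2$ existence stage, not only at the regularity stage.

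\textbf{The $H^s_x$ mechanism and the origin of $s_\g$.} Your description --- averaging against $\varphi(v)$ and then localizing in $v$ --- is not the route taken, and your attribution of the threshold $s_\g$ to the decay of $\nu(v)\sim(1+|v|)^\g$ is off. The paper's mechanism is that $K$ is already an integral operator in $v$, so applying $S$ to $K f$ is itself a velocity average. Concretely, in Fourier $\cF(SK_\a f)(\xi,v) = (\nu(|v|)+i\xi\cdot v)^{-1}\cF(K_\a f)(\xi,v)$, and one estimates
\[
\int_{\R^3}\frac{|k_\a(v,v_*)|(1+|v|)^{2\g-1}}{\nu(|v|)^2+(v\cdot\xi)^2}\,dv \lesssim (1+|\xi|)^{-2s_{2,\g}}(1+|v_*|)^\g,
\]
where the decay exponent $s_{2,\g}$ is dictated by the \emph{singularity of the kernel} $k_\a$ at $v=v_*$ (behaving like $w_\g(|v-v_*|)$, cf.\ \eqref{def:wg}), not by the large-$|v|$ behaviour of $\nu$. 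For $-2<\g\leq 1$ this gives $s_{2,\g}=1/2$; for $-3<\g<-2$ the stronger singularity $|v-v_*|^{-|\g|}$ forces $s_{2,\g}=(3+\g)/2$. The final exponent $s_\g = 1/2 + s_{2,\g}$ is then reached by a bootstrap: once $f\in X^s_{\a,\b,\g}$ with $s<1/2$, the zero extension $\tilde f$ lies in $L^2_{v,\a,\g/2}(\R^3;H^s_x(\R^3))$ (this uses the $L^\infty$ bound and $\int_\O d_x^{-2s}\,dx<\infty$ for $s<1/2$), and one feeds $\tilde f$ back through $SK_\a$ to gain another $s_{2,\g}$. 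Your sketch misses both the source of the threshold and this extension-plus-bootstrap step.

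The bilinear estimate and the contraction argument are as you describe; those parts of your proposal are fine.
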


Compared with the previous work \cite{CCHS}, we remove the extra positive Gaussian curvature condition on the boundary, and extend their result to the soft potential case $-3 < \gamma < 0$. We shall not discuss $H^s_v$ regularity of the solution in this article because weighted $L^2$ and $L^\infty$ spaces are natural in our problem and we cannot define a fractional Sobolev space with these weights.

We sketch a proof of Theorem \ref{main theorem}. We consider the iteration scheme:
\begin{equation} \label{iteration_1}
\begin{cases}
v \cdot \nabla_x f_1 = Lf_1 &\mbox{ in } \O \times \R^3,\\
f_1 = f_0 &\mbox{ on } \Gamma^-
\end{cases}
\end{equation}
and
\begin{equation} \label{iteration_j}
\begin{cases}
v \cdot \nabla_x f_{i + 1} = Lf_{i + 1} + \Gamma(f_i, f_i) &\mbox{ in } \O \times \R^3,\\
f_{i+1} = f_0 &\mbox{ on } \Gamma^-.
\end{cases}
\end{equation}
Our goal is to show that the above iteration scheme creates a convergent sequence $\{ f_i \}$ in $X^s_{\a, \b, \g}$. To this end, we begin by considering the following auxiliary boundary value problem:
\begin{equation} \label{BVP_lin}
\begin{cases}
v \cdot \nabla_x f = Lf + \phi &\mbox{ in } \O \times \R^3,\\
f = f_0 &\mbox{ on } \Gamma^-.
\end{cases}
\end{equation}
Under Grad's angular cutoff assumption \eqref{assumption_B1}, the linearized collision operator $L$ can be decomposed into a multiplication operator and an integral operator $K$: 
\[
Lf = -\nu f + Kf.
\]
With the above decomposition, we reduce the boundary value problem \eqref{BVP_lin} into the following integral equation:
\begin{equation} \label{IE}
f = S_\O K f +J f_0 + S_\O \phi,
\end{equation}
where
\begin{align*}
S_\O h(x, v) :=& \int_0^{\tau_-(x, v)} e^{-\nu(|v|)t} h(x - tv, v)\,ds,\\
Jf_0(x, v) :=& e^{-\nu(|v|) \tau_-(x, v)} f_0(q(x, v), v),\\
\tau_-(x, v) :=& \inf \{ t > 0 \mid x - tv \notin \O \},\\
q(x, v) := & x - \tau_-(x, v)v.
\end{align*}
We let $k$ be the integral kernel of $K$, and show some properties and estimates of $\nu$, $k$, $K$, $S_\O$ and $J$ in Section \ref{sec:pre}. 

Our strategy for the proof consists of four steps. The first step is to establish the well-posedness of the boundary value problem \eqref{BVP_lin} in the weighted $L^2$ space.

\begin{lemma} \label{lem:existence_lin}
Let $\O$ be a bounded convex domain in $\R^3$ with $C^1$ boundary, $0 \leq \a < 1/2$ and $-3 < \gamma \leq 1$. Suppose that $J f_0 + S_\O \phi$ belongs to $L^2_{\a, \gamma/2}(\O \times \R^3)$. Then, there exists a unique solution $f$ to the boundary value problem \eqref{BVP_lin} in $L^2_{\a, \gamma/2}(\O \times \R^3)$. Moreover, there exists a positive constant $C_1$ such that
\[
\| f \|_{L^2_{\a, \gamma/2}(\O \times \R^3)} \leq C_1 \| J f_0 + S_\O \phi \|_{L^2_{\a, \gamma/2}(\O \times \R^3)}
\]
for all $f_0$ and $\phi$ with $J f_0 + S_\O \phi \in L^2_{\a, \gamma/2}(\O \times \R^3)$.
\end{lemma}

\begin{remark}
By Lemma \ref{lem:est_J} and Corollary \ref{cor:S_bound}, $J f_0 + S_\O \phi$ belong to $L^2_{\a, \gamma/2}(\O \times \R^3)$ if $f_0 \in L^2_\a(\Gamma^-; d\xi_-)$ and $\phi \in L^2_{\a, -\gamma/2}(\O \times \R^3)$. In this case, there exists a positive constant $C_1$ such that
\[
\| f \|_{L^2_{\a, \gamma/2}(\O \times \R^3)} \leq C_1 \left( \| f_0 \|_{L^2_\a(\Gamma^-; d\xi_-)} + \| \phi \|_{L^2_{\a, -\gamma/2}(\O \times \R^3)} \right)
\]
for all $f_0 \in L^2_\a(\Gamma^-; d\xi_-)$ and $\phi \in L^2_{\a, -\gamma/2}(\O \times \R^3)$, where $f$ is the solution to the boundary value problem \eqref{BVP_lin}.
\end{remark}

For Lemma \ref{lem:existence_lin}, it suffices to show that the operator $I - S_\O K$ has a bounded inverse on $L^2_{\a, \gamma/2}(\O \times \R^3)$. To this end, we shall show the compactness of the operator $S_\O K$. As we shall see in Section \ref{sec:L2}, $S_\O K$ is compact on $L^2_{\a, \gamma/2}(\O \times \R^3)$ if and only if $S_\O K_\a$ is compact on $L^2_{0, \gamma/2}(\O \times \R^3)$, where
\begin{align*}
K_\a h(x, v) :=& \int_{\R^3} k_\a(v, v^*) h(x, v^*)\,dv^*,\\
k_\a(v, v^*) :=& e^{\a |v|^2} k(v, v^*) e^{-\a |v^*|^2}. 
\end{align*}
To show the compactness of the operator $S_\O K_\a$, we investigate a smoothing effect of the operator $V_\gamma K_\a^* S_\O K_\a$, where
\begin{align*}
K^*_\a h (x, v) :=& \int_{\R^3} k_\a^*(v, v^*) h(x, v^*)\,dv^*,\\
k_\a^*(v, v^*) :=& k_\a(v^*, v) = k_{-\a}(v, v^*),\\
V_\gamma h :=& (1 + |v|)^{-\gamma} h.
\end{align*}
In particular, we shall show that operators $S_\O K_\a: L^2_{0, \gamma/2}(\O \times \R^3) = L^2_{0, \gamma/2}(\R^3; L^2(\O)) \to L^2_{0, \gamma/2}(\R^3; H^{s_{2, \gamma}}(\O))$ and $V_\gamma K_\a^*: L^2_{0, \gamma/2}(\O \times \R^3) = L^2(\O; L^2_{0, \gamma/2}(\R^3)) \to L^2(\O; H^{s_{3, \gamma}}(\R^3))$ are bounded for certain positive constants $s_{2, \gamma}$ and $s_{3, \gamma}$, where the fractional Sobolev space $H^s(\R^3)$ is defined in the same way as $H^s(\O)$. Precise definitions of these two constants will be given by \eqref{def:s2g} and \eqref{def:s3g} respectively. 

Here, we adopt the idea of the velocity averaging lemma. It is observed by Agoshkov \cite{Ag84} and Golse, Lions, Perthame and Sentis \cite{GLPS} independently that, for the $L^2$ solution $f$ to the equation $v \cdot \nabla_x f + f = h$ with $h \in L^2(\R^3 \times \R^3)$, its moment with respect to $v$ with a suitable cutoff function has the $H^{1/2}_x$ regularity. Their proofs rely on the Fourier analysis. In the same spirit, I.~Chen, Chuang, Hsia and Su \cite{CCHS} investigated the $H^s_x$ regularity of the function $SK f$ for $f \in L^2_\a(\R^3 \times \R^3)$ and $0 \leq \gamma \leq 1$ by the Fourier analysis, where $Sh$ denotes the solution to the equation $v \cdot \nabla_x f + \nu f = h$. In this article, we extend their argument to the case where $-3 < \gamma < 0$. Thanks to the convexity of the domain, we can see that $S_\O K f = S K \tilde{f}|_{\O \times \R^3}$, where $\tilde{f}$ is the zero extension of $f \in L^2_\a(\O \times \R^3)$. By this identity, we can obtain the regularity of $S_\O K f$.

By a suitable truncation and the Rellich theorem for the fractional Sobolev spaces, we show the compactness of the operator $V_\gamma K_\a^* S_\O K_\a$ on $L^2_{0, \gamma/2}(\O \times \R^3)$, which implies that of $S_\O K$ on $L^2_{\a, \gamma/2}(\O \times \R^3)$. Lemma \ref{lem:existence_lin} follows from the injectivity of the operator $I - S_\O K$ by the Fredholm alternative theorem.

The second step is to derive the $L^2-L^\infty$ estimate for the solution to the boundary value problem \eqref{BVP_lin}.

\begin{lemma} \label{lem:L2-Linfty}
Let $\O$ be a bounded convex domain with $C^1$ boundary, $0 \leq \a < 1/2$, $\b > (3 + \gamma)/2$ and $-3 < \gamma \leq 1$. Suppose that $f_0 \in L^\infty_{\a, \b}(\Gamma^-)$ and $\phi \in L^\infty_{\a, \b - \gamma}(\O \times \R^3)$. Then, the solution $f$ to the boundary value problem \eqref{BVP_lin} belongs to $L^\infty_{\a, \b}(\O \times \R^3)$. Moreover, there exists a positive constant $C_2$ such that
\[
\| f \|_{L^\infty_{\a, \b}(\O \times \R^3)} \leq C_2 \left( \| f_0 \|_{L^\infty_{\a, \b}(\Gamma^-)} + \| \phi \|_{L^\infty_{\a, \b-\gamma}(\O \times \R^3)} \right)
\]
for all $f_0 \in L^\infty_{\a, \b}(\Gamma^-)$ and $\phi \in L^\infty_{\a, \b - \gamma}(\O \times \R^3)$.
\end{lemma}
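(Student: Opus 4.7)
The plan is to combine the $L^2$ well-posedness of Lemma \ref{lem:existence_lin} with a double iteration of the mild formulation, in the style of Guo's $L^2$--$L^\infty$ bootstrap but without the stochastic cycle that is typical for the diffuse reflection case. Setting the weighted unknown $h(x,v) := e^{\alpha|v|^2}(1+|v|)^\beta f(x,v)$ so that $\|f\|_{L^\infty_{\alpha,\beta}} = \|h\|_{L^\infty}$, I would rewrite the integral equation $f = S_\Omega K f + J f_0 + S_\Omega \phi$ from \eqref{IE} as
\begin{equation*}
h(x,v) = G(x,v) + \int_0^{\tau_-(x,v)} e^{-\nu(|v|)t} \int_{\R^3} k_{\alpha,\beta}(v,v_*)\, h(x - tv, v_*)\, dv_*\, dt,
\end{equation*}
where $G$ collects the weighted data $e^{\alpha|v|^2}(1+|v|)^\beta (J f_0 + S_\Omega \phi)$ and the conjugated kernel is $k_{\alpha,\beta}(v,v_*) := e^{\alpha|v|^2}(1+|v|)^\beta k(v,v_*) e^{-\alpha|v_*|^2}(1+|v_*|)^{-\beta}$. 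Using the kernel estimates developed in Section \ref{sec:pre}, together with $\alpha < 1/2$ and $\beta > (3+\gamma)/2$, I expect a Carleman-type bound $\int_{\R^3} k_{\alpha,\beta}(v,v_*)\,dv_* \lesssim (1+|v|)^{-1-\eta}$ for some $\eta > 0$, as well as $\|G\|_{L^\infty} \lesssim \|f_0\|_{L^\infty_{\alpha,\beta}(\Gamma^-)} + \|\phi\|_{L^\infty_{\alpha,\beta-\gamma}}$.

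I would then iterate the equation once, substituting the mild form of $h(x-tv,v_*)$ back into the right-hand side. This produces a double-trajectory integral whose integrand contains $k_{\alpha,\beta}(v,v_*)\,k_{\alpha,\beta}(v_*,v_{**})\, h(x - tv - s v_*, v_{**})$. I would split the $(v_*,v_{**},s)$-integration into three regions: (i) the high-velocity tail $\{|v_*|\geq N\}\cup\{|v_{**}|\geq N\}$, controlled by the Gaussian decay of $k_{\alpha,\beta}$, which contributes $o(1)\|h\|_{L^\infty}$ as $N\to\infty$; (ii) the short trajectory $\{s\leq\delta\}$, bounded by $C\delta\|h\|_{L^\infty}$ via the integrability of $k_{\alpha,\beta}(v_*,\cdot)$; and (iii) the bulk $\{|v_*|,|v_{**}|\leq N,\ s\geq\delta\}$, where for fixed $(x,v,t,v_{**})$ I would change variables $v_*\mapsto y = x - tv - s v_*$ with Jacobian $s^{-3}\leq\delta^{-3}$, converting the $v_*$-integral into an $\Omega$-integral on which Cauchy--Schwarz in $(y,v_{**})$ yields a bound by $C_{N,\delta}\,\|f\|_{L^2_{\alpha,\gamma/2}(\Omega\times\R^3)}$, the remaining weight being absorbed through the cutoff $|v_{**}|\leq N$.

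Choosing $N$ large and then $\delta$ small, contributions (i) and (ii) are absorbed into the left-hand side, leaving
\begin{equation*}
\|h\|_{L^\infty} \leq C\bigl( \|f_0\|_{L^\infty_{\alpha,\beta}(\Gamma^-)} + \|\phi\|_{L^\infty_{\alpha,\beta-\gamma}(\Omega\times\R^3)} + \|f\|_{L^2_{\alpha,\gamma/2}(\Omega\times\R^3)} \bigr).
\end{equation*}
Lemma \ref{lem:existence_lin} dominates the $L^2$ norm of $f$ by $\|f_0\|_{L^2_\alpha(\Gamma^-;d\xi_-)} + \|\phi\|_{L^2_{\alpha,-\gamma/2}}$, and the condition $\beta>(3+\gamma)/2$ makes $\int_{\R^3}(1+|v|)^{-2\beta-\gamma}e^{-2\alpha|v|^2}\,dv$ finite, so these $L^2$ data norms are in turn controlled by the corresponding $L^\infty$ ones (using the boundedness of $\Omega$ and of the boundary measure). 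The main obstacle, and what distinguishes this argument from the diffuse-reflection setting, is that without a stochastic cycle all of the smallness required to close the estimate must be extracted from the double iteration of $k_{\alpha,\beta}$ alone; for the soft-potential range $-3<\gamma<0$ the singular factor $|v-\tilde v|^\gamma$ in $B$ becomes a mild singularity of $k_{\alpha,\beta}$, and the threshold $\beta>(3+\gamma)/2$ is exactly what is needed both to make the single-iteration kernel mass integrable and to keep the bulk change of variables effective uniformly in $v$.
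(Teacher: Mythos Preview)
Your approach is genuinely different from the paper's. The paper does not use a Guo-style double iteration at all; instead, Section~\ref{sec:L2_Linfty} climbs from $L^2_{\a,\g/2}$ to $L^\infty_{\a,\b}$ through a ladder of intermediate mixed-norm spaces $L^{p_2}_x(\O; L^{p_1}_{v,\a,\b'}(\R^3))$. Concretely, it shows that $S_\O K$ raises the $L^p_x$ exponent (Lemmas~\ref{lem:Lpx} and~\ref{lem:Lp_Linfty_x}) and that $K$ raises the $L^p_v$ exponent (Lemmas~\ref{lem:Lp_Linfty_v} and~\ref{lem:Lp1_Lp2}), while Lemma~\ref{lem:polynomial_decay} restores any loss in the polynomial weight. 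Iterating \eqref{IE} finitely many times then reaches $L^\infty_{\a,\b}$. The point of this longer route is that it only ever uses $L^q$ integrability of $k_\a$ with $q < \min\{3, 3/|\g|\}$ (Proposition~\ref{prop:est_ka}), which is precisely the threshold below which the diagonal singularity $w_\g(|v-v_*|)$ of \eqref{def:wg} remains locally $L^q$.

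Your sketch is sound in outline, but there is a real gap for very soft potentials $-3 < \g \leq -3/2$. In the bulk region (iii), after the change of variables $v_* \mapsto y$ you apply Cauchy--Schwarz in $(y, v_{**})$; this forces you to control $\int_{|v_{**}|\leq N} |k_{\a,\b}(v_*, v_{**})|^2\,dv_{**}$ and, undoing the change of variables, $\int_{|v_*|\leq N}|k_{\a,\b}(v, v_*)|^2\,dv_*$. For $-3 < \g < -1$ the kernel behaves like $|v-v_*|^{-|\g|}$ near the diagonal, and once $|\g| \geq 3/2$ this is not locally $L^2$ in $\R^3$, so both integrals diverge. Your three-region splitting (large $|v_*|$, large $|v_{**}|$, small $s$) does not touch this singularity. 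The standard remedy (see e.g.\ \cite{Guo, DHWZ2019}) is to add a fourth cut $\{|v-v_*| \geq 1/m\}\cap\{|v_*-v_{**}| \geq 1/m\}$ to the bulk; the near-diagonal complement contributes $o(1)\|h\|_{L^\infty}$ as $m\to\infty$ because $\int_{|v-v_*|\leq 1/m}|k_{\a,\b}(v,v_*)|\,dv_*\to 0$. With that extra truncation your scheme closes across the whole range; without it the case $\g\leq -3/2$ is not covered. A minor side point: your claim that $\b > (3+\g)/2$ is what makes the kernel mass integrable is off---the mass $\int|k_{\a,\b}(v,\cdot)|\,dv_*$ is finite for every $\b$ by the Gaussian factor; the role of $\b > (3+\g)/2$ is only the final embedding of the $L^\infty$ data into the weighted $L^2$ norms.
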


To prove Lemma \ref{lem:L2-Linfty}, we shall investigate some regularization effects of $S_\O K$ and $K$ in terms of $L^p_x$ and $L^p_v$ regularity. We emphasize that our approach is different from the stochastic cycle, which is commonly used to establish the $L^2-L^\infty$ estimate for the solution with the diffuse reflection boundary condition \cite{DHWZ2019, GuoKim}.

The third step is to investigate regularity of the solution to the boundary value problem \eqref{BVP_lin} in fractional Sobolev spaces. For this purpose, we introduce the following function space: For $\a \geq 0$, $\b \in \R$, $-3 < \gamma \leq 1$ and $0 < s < 1$, let
\[
Y^s_{\a, \b, \gamma} := L^\infty_{\a, \b - \gamma}(\O \times \R^3) \cap L^2_{\a, -\gamma/2}(\R^3; H^s(\O))
\]
equipped with the norm
\[
\| \phi \|_{Y^s_{\a, \b, \g}} := \| \phi \|_{L^\infty_{\a, \b - \gamma}(\O \times \R^3)} + \| \phi \|_{L^2_{\a, -\gamma/2}(\R^3; H^s(\O))}.
\]
We remark that the norm $\| \cdot \|_{Y^s_{\a, \b, \g}}$ is equivalent to $\| \nu^{-1} \cdot \|_{X^s_{\a, \b, \g}}$. Namely, there exist positive constants $C$ and $C'$ such that
\[
C \| \nu^{-1} \phi \|_{X^s_{\a, \b, \g}} \leq \| \phi \|_{Y^s_{\a, \b, \g}} \leq C' \| \nu^{-1} \phi \|_{X^s_{\a, \b, \g}}
\]
for all $\phi \in Y^s_{\a, \b, \g}$.

\begin{lemma} \label{lem:regularity}
Let $\O$ be a bounded convex domain with $C^1$ boundary, $0 \leq \a < 1/2$, $\b > (3 + \gamma)/2$, $-3 < \gamma \leq 1$, $0 < s_1 \leq 1$ and $0 < s_2 < 1$. Suppose that $f_0 \in \cB^{s_1}_{\a, \b}(\Gamma^-)$ and $\phi \in Y^{s_2}_{\a, \b, \gamma}$. Then, the solution $f$ to the boundary value problem \eqref{BVP_lin} belongs to $X^s_{\a, \b, \gamma}$ with $0 < s < \min \left\{ s_1, s_2, s_\gamma \right\}$, where $s_\gamma$ is a constant defined by \eqref{s_gamma}. Moreover, there exists a positive constant $C_3$ such that
\[
\| f \|_{X^s_{\a, \b, \g}} \leq C_3 \left( \| f_0 \|_{\cB^{s_1}_{\a, \b}(\Gamma^-)} + \| \phi \|_{Y^{s_2}_{\a, \b, \g}} \right)
\]
for all $f_0 \in \cB^{s_1}_{\a, \b}(\Gamma^-)$ and $\phi \in Y^{s_2}_{\a, \b, \gamma}$.
\end{lemma}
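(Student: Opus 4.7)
The approach is to work with the integral representation $f = S_\O K f + J f_0 + S_\O \phi$ and to bound each of the three pieces in the $X^s_{\a, \b, \g}$ norm, using Lemmas \ref{lem:existence_lin} and \ref{lem:L2-Linfty} as inputs. The $L^\infty_{\a, \b}$ part of the norm is supplied at once by Lemma \ref{lem:L2-Linfty} together with the trivial bound $\|\phi\|_{L^\infty_{\a, \b - \gamma}} \le \|\phi\|_{Y^{s_2}_{\a, \b, \g}}$, so the substantive task is to control the Slobodeckij piece $\|f\|_{L^2_{v, \a, \gamma/2}(\R^3; H^s_x(\O))}$.

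For $J f_0$, I would exploit that for fixed $v$ the backward hitting map $x \mapsto q(x, v)$ is Lipschitz from $\O$ onto a portion of $\p \O$ with Lipschitz constant of order $|v|/|n(q(x, v)) \cdot v|$, and that $x \mapsto e^{-\nu(|v|) \tau_-(x, v)}$ is Lipschitz in $x$ with a comparable constant. Combined with $f_0 \in \cB^{s_1}_{\a, \b}(\Gamma^-)$, a direct Slobodeckij computation then yields a pointwise-in-$v$ bound $|J f_0(\cdot, v)|_{H^s_x} \lesssim C(v) |f_0|_{\cB^{s_1}_{\a, \b}(\Gamma^-)}$ for any $s < s_1$, and the assumption $\b > (3 + \gamma)/2$ absorbs the grazing singularity $n \cdot v \approx 0$ once we integrate in $v$ against $e^{2\a|v|^2}(1 + |v|)^\gamma$. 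For $S_\O \phi$, I would split
\[
S_\O \phi(x, v) - S_\O \phi(y, v) = \int_0^{\min(\tau_-(x, v), \tau_-(y, v))} e^{-\nu(|v|) t}\bigl(\phi(x - tv, v) - \phi(y - tv, v)\bigr)\,dt + R(x, y, v),
\]
where a Minkowski plus change-of-variables argument transports the $H^{s_2}_x$ regularity of $\phi$ through the first integral, while the remainder $R$, which collects the contribution of the differing upper limits, is controlled through the Lipschitz dependence of $\tau_-$ together with $\|\phi\|_{L^\infty_{\a, \b - \gamma}}$. This produces $\|S_\O \phi\|_{L^2_{v, \a, \gamma/2}(H^s_x)} \lesssim \|\phi\|_{Y^{s_2}_{\a, \b, \g}}$ for any $s < s_2$.

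The heart of the proof is the estimate on $g := S_\O K f$, which solves $v \cdot \nabla_x g + \nu g = K f$ with $g|_{\Gamma^-} = 0$. Here the velocity averaging lemma provides the essential regularity gain: since $K f$ is already a $v_*$-average of $f$ against the kernel $k$, by extending $g$ and $K f$ to all of $\R^3$ (permissible by the convexity of $\O$ and the vanishing incoming trace) and viewing the $v_*$-integration as an averaging weight, one obtains $H^s_x$ smoothing of $g$ with an exponent dictated by the $L^2_v$ behaviour of $k_\a(v, \cdot)$. For $\gamma \ge -2$ the kernel is tame enough that any $s < 1$ is achievable, whereas for $\gamma < -2$ the critical integrability of $k$ against the $v$-weight costs $|\gamma + 2|/2$ and yields the exponent $s_\gamma = (4 + \gamma)/2$. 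Together with the $L^2_{\a, \gamma/2}$ bound on $f$ furnished by Lemma \ref{lem:existence_lin}, this gives $\|S_\O K f\|_{L^2_{v, \a, \gamma/2}(H^s_x)} \lesssim \|f\|_{L^2_{\a, \gamma/2}}$ for $s < s_\gamma$.

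Summing the three bounds and reabsorbing $\|f\|_{L^2_{\a, \gamma/2}}$ through Lemma \ref{lem:existence_lin} yields the desired $X^s_{\a, \b, \g}$ estimate for $s < \min\{s_1, s_2, s_\gamma\}$. The main obstacle is the third step: transferring the classical velocity averaging lemma to a bounded convex domain with the incoming boundary condition, and tracking the weights $e^{\a|v|^2}(1 + |v|)^{\gamma/2}$ against the $\gamma$-dependent singularity of $k$ precisely enough to pin down the critical exponent $s_\gamma$. The borderline at $\gamma = -2$ and the sharp value $(4 + \gamma)/2$ below it reflect the integrability of $k(v, \cdot)$ against the velocity weights and will require a quantitative version of the averaging lemma rather than its abstract form, deployed in conjunction with the kernel estimates developed in Section \ref{sec:pre}.
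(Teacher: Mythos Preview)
Your treatment of $Jf_0$ and $S_\O\phi$ is essentially the paper's argument (Lemmas~\ref{lem:Hs_Jf0} and~\ref{lem:Hs_S}), and the overall structure is right. The gap is in the third step.

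A single application of velocity averaging to $S_\O K f$ does \emph{not} give ``any $s<1$'' for $\gamma\ge -2$. What the paper actually proves (Lemma~\ref{lem:smoothing_x}, Corollary~\ref{cor:smoothing_x}) is that $S_\O K_\a$ maps $L^2_{v,0,\gamma/2}(L^2_x)$ into $L^2_{v,0,\gamma/2}(H^{s_{2,\gamma}}_x)$ with
\[
s_{2,\gamma}=\tfrac12\ (-2<\gamma\le 1),\qquad s_{2,\gamma}=\tfrac12-\epsilon\ (\gamma=-2),\qquad s_{2,\gamma}=\tfrac{3+\gamma}{2}\ (-3<\gamma<-2),
\]
so at most half a derivative. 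The $\tfrac12$ ceiling is the standard velocity averaging limit; the fact that $Kf$ is already a $v_*$-average does not push it higher. Starting from $f\in L^2_{\a,\gamma/2}$ alone, the integral equation therefore only yields $f\in X^s_{\a,\b,\gamma}$ for $s<\min\{s_1,s_2,s_{2,\gamma}\}$, which falls short of $s_\gamma$.

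The missing idea is a bootstrap, and the key role of the $L^\infty_{\a,\b}$ bound is not just to supply half of the $X^s$ norm but to enable that bootstrap. The paper's Lemma~\ref{lem:smoothing_x} in fact gives the stronger statement $SK_\a:L^2_v(\tilde H^s_x(\R^3))\to L^2_v(\tilde H^{s+s_{2,\gamma}}_x(\R^3))$ for all $s\ge 0$. To apply this with $s>0$ one must feed in the zero extension $\tilde f$ of $f$, and the zero extension of an $H^s(\O)$ function lies in $H^s(\R^3)$ only for $s<\tfrac12$; the paper controls the boundary term $\int_\O d_x^{-2s}\,dx$ precisely via $\|f\|_{L^\infty_{\a,\b}}$ (Lemma~\ref{lem:smoothing_x2}). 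Iterating once from $s<\tfrac12$ then gives $S_\O Kf\in L^2_v(H^{s+s_{2,\gamma}}_x)$, hence $f\in X^s$ for $s<\min\{s_1,s_2,\tfrac12+s_{2,\gamma}\}$, and this is exactly $s_\gamma=\tfrac12+s_{2,\gamma}$. The cap $\tfrac12$ on the zero extension, not the kernel integrability alone, is what fixes $s_\gamma$.
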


For Lemma \ref{lem:regularity}, we investigate regularity of solutions to the integral equation \eqref{IE}. Under the assumption of Lemma \ref{lem:regularity}, it is shown that $J f_0 \in L^2_{\a, \gamma/2}(\R^3; H^s(\O))$ for $0 < s < s_1$ and $S_\O \phi \in L^2_{\a, \gamma/2}(\R^3; H^{s_2}_x(\O))$. The boundedness of the operator $S_\O K: L^2_{\a, \gamma/2}(\R^3; L^2(\O)) \to L^2_{\a, \gamma/2}(\R^3; H^{s_{2, \gamma}}(\O))$ implies that $f \in L^2_{\a, \gamma/2}(\R^3; H^s(\O))$ for $0 < s < \min \{ s_1, s_2, s_{2, \gamma} \}$. Lemma \ref{lem:L2-Linfty} allows the zero extension $\tilde{f} \in L^\infty_{\a, \b}(\R^3 \times \R^3) \cap L^2_{\a, \gamma/2}(\R^3; H^s(\R^3))$ for $f \in X^s_{\a, \b, \gamma}$ with $0 \leq s < 1/2$ (see Lemma \ref{lem:smoothing_x2}). The bootstrap argument in \eqref{IE} implies the conclusion of Lemma \ref{lem:regularity}.

The final step is to obtain the bilinear estimate.

\begin{lemma} \label{lem:bilinear}
Let $\O$ be a bounded convex domain with $C^1$ boundary, $0 \leq \a < 1/2$, $\b > (3 + \gamma)/2$, $-3 < \gamma \leq 1$ and $0 < s < 1$. For $h_1, h_2 \in X^s_{\a, \b, \g}$, we have $\Gamma(h_1, h_2) \in Y^s_{\a, \b, \g}$. Moreover, there exists a positive constant $C_4$ such that
\[
\| \Gamma(h_1, h_2) \|_{Y^s_{\a, \b, \g}} \leq C_4 \| h_1 \|_{X^s_{\a, \b, \g}} \| h_2 \|_{X^s_{\a, \b, \g}}
\]
for all $h_1, h_2 \in X^s_{\a, \b, \g}$.
\end{lemma}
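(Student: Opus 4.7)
\textbf{Proof plan for Lemma \ref{lem:bilinear}.} My plan is to split $\Gamma = \Gamma_{\gain} - \Gamma_{\loss}$ via the standard pre/post-collisional symmetry, and then separately prove the $L^\infty_{\a, \b-\g}$ and the $L^2_{v,\a,-\g/2}(H^s_x)$ parts of the norm. Using conservation of energy $|v|^2 + |\tilde v|^2 = |v'|^2 + |\tilde v'|^2$ on the collision manifold, one absorbs the $M^{-1/2}(v)$ factor to obtain the symmetrised representations
\[
\Gamma_{\gain}(g_1,g_2)(v) = \int_{\R^3}\!\!\int_0^{2\pi}\!\!\int_0^{\pi/2} M^{1/2}(\tilde v)\,g_1(v')\,g_2(\tilde v')\,B(|v-\tilde v|,\theta)\,d\theta\,d\phi\,d\tilde v,
\]
\[
\Gamma_{\loss}(g_1,g_2)(v) = g_1(v)\int_{\R^3}\!\!\int_0^{2\pi}\!\!\int_0^{\pi/2} M^{1/2}(\tilde v)\,g_2(\tilde v)\,B(|v-\tilde v|,\theta)\,d\theta\,d\phi\,d\tilde v.
\]
Since $v'$ and $\tilde v'$ depend only on $v,\tilde v,\theta,\phi$, the operator $\Gamma$ acts locally in the spatial variable: $\Gamma(h_1,h_2)(x,v) = \Gamma(h_1(x,\cdot),h_2(x,\cdot))(v)$. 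Thus the problem reduces to velocity-only bilinear estimates followed by spatial bookkeeping.

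\emph{Step 1: the $L^\infty_{\a,\b-\g}$ part.} I would establish the classical pointwise bound
\[
|\Gamma(g_1,g_2)(v)|\,e^{\a|v|^2}(1+|v|)^{\b-\g} \leq C\,\|g_1\|_{L^\infty_{\a,\b}}\|g_2\|_{L^\infty_{\a,\b}},
\]
which follows by pulling the $L^\infty_{\a,\b}$ norms outside the collision integrals and estimating the residual kernel $\int M^{1/2}(\tilde v)(1+|\tilde v|)^{-\b}|v-\tilde v|^\g\,d\tilde v \leq C(1+|v|)^\g$; the hypothesis $\b > (3+\g)/2$ guarantees the necessary integrability in $\tilde v$ for all $-3<\g\leq 1$. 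Applying this pointwise in $x$ and taking the essential supremum yields the $L^\infty_{\a,\b-\g}$ contribution to $\|\Gamma(h_1,h_2)\|_{Y^s_{\a,\b,\g}}$.

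\emph{Step 2: mixed $L^\infty$-$L^2_v$ bilinear estimate.} The key velocity estimate I would prove is
\[
\|\Gamma(g_1,g_2)\|_{L^2_{v,\a,-\g/2}(\R^3)} \leq C\,\|g_1\|_{L^\infty_{\a,\b}}\|g_2\|_{L^2_{v,\a,\g/2}(\R^3)}
\]
together with its symmetric version. For $\Gamma_{\loss}$ this is immediate after bounding $|g_1(v)|$ by its $L^\infty_{\a,\b}$ norm and applying Cauchy-Schwarz in $\tilde v$ against the weight $(1+|\tilde v|)^{\g}e^{2\a|\tilde v|^2}$. For $\Gamma_{\gain}$, bounding $g_1(v')$ by its $L^\infty_{\a,\b}$ norm converts the estimate into a Hilbert-Schmidt-type bound for a linear integral operator in $\tilde v$, controlled via the Maxwellian factor $M^{1/2}(\tilde v)$. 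The compensating loss of the $(1+|v|)^{\g}$ weight between input and output reflects the ``$\nu$-norm'' character of $\Gamma$.

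\emph{Step 3: assembling the $H^s_x$ part.} Using $x$-locality and bilinearity,
\begin{align*}
&\Gamma(h_1,h_2)(x,v) - \Gamma(h_1,h_2)(y,v)\\
&\quad = \Gamma\bigl(h_1(x,\cdot)-h_1(y,\cdot),\,h_2(x,\cdot)\bigr)(v) + \Gamma\bigl(h_1(y,\cdot),\,h_2(x,\cdot)-h_2(y,\cdot)\bigr)(v).
\end{align*}
Applying the mixed estimate of Step 2 to each summand, then integrating against $|x-y|^{-3-2s}\,dxdy$ and using Fubini,
\[
\|\Gamma(h_1,h_2)\|^2_{L^2_{v,\a,-\g/2}(\R^3;H^s_x(\O))} \lesssim \|h_2\|^2_{L^\infty_{\a,\b}}\,\|h_1\|^2_{L^2_{v,\a,\g/2}(\R^3;H^s_x(\O))} + (1 \leftrightarrow 2),
\]
and the $L^2_v$-$L^2_x$ piece of $H^s_x$ is handled analogously. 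Both right-hand sides are bounded by $\|h_1\|^2_{X^s_{\a,\b,\g}}\|h_2\|^2_{X^s_{\a,\b,\g}}$; combining with Step 1 gives the desired estimate.

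\emph{Main obstacle.} The technical heart lies in Step 2 for soft potentials $\g<0$, where $|v-\tilde v|^\g$ is singular at $\tilde v=v$: a naive Cauchy-Schwarz loses integrability when $\g<-3/2$. I would circumvent this by splitting the collision integral over $\{|v-\tilde v|\leq 1\}$ and $\{|v-\tilde v|>1\}$; on the near-diagonal piece the Maxwellian factor $M^{1/2}(\tilde v)$ provides essentially bounded weight, allowing one to absorb the singularity into the $L^\infty_{\a,\b}$ norm of the other factor, while on the far-field piece the estimate reduces to the hard-potential situation. The restriction $\b>(3+\g)/2$ is exactly what makes this domain splitting close.
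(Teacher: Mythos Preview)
Your overall plan matches the paper's almost exactly: the paper also splits $\Gamma=\Gamma_{\gain}-\Gamma_{\loss}$, proves the $L^\infty_{\a,\b-\g}$ bound (Lemma~\ref{lem:Linfty_non}) by the same convolution estimate you describe, and handles the $H^s_x$ part (Lemma~\ref{lem:Hs_non}) by writing $\Gamma(h_1,h_2)(x,\cdot)-\Gamma(h_1,h_2)(y,\cdot)$ as a sum of two pieces, each carrying one difference $h_i(x,\cdot)-h_i(y,\cdot)$, exactly as in your Step~3.

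There is, however, a genuine gap in your treatment of very soft potentials. Your proposed fix for $\g\le -3/2$ --- split into $\{|v-\tilde v|\le 1\}$ and $\{|v-\tilde v|>1\}$ and on the near-diagonal ``absorb the singularity into the $L^\infty_{\a,\b}$ norm of the other factor'' --- does not close: in Step~3 the argument that must sit in $L^2_v$ is the \emph{difference} $h_i(x,\cdot)-h_i(y,\cdot)$, and passing to its $L^\infty_{\a,\b}$ norm on any region kills all $|x-y|$-dependence, so the Slobodeckij integral $\iint_\O |x-y|^{-3-2s}\,dxdy$ diverges. The paper's remedy is different and cleaner: rather than splitting the domain, it distributes the singular weight symmetrically in Cauchy--Schwarz, pairing $|v-\tilde v|^{\g/2}$ with each factor. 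After squaring, each side carries only $|v-\tilde v|^{\g}$, which is locally integrable in $\R^3$ for every $\g>-3$; the factor containing $|v-\tilde v|^{\g}\,|h_2(x,\tilde v)-h_2(y,\tilde v)|^2$ is then integrated in $v$ via Fubini, and the hypothesis $\b>(3+\g)/2$ is precisely what makes $\int_{\R^3}|v-\tilde v|^{\g}(1+|v|)^{-2\b}\,dv$ uniformly bounded in $\tilde v$. (Your domain split can be salvaged, but only by using this same symmetric Cauchy--Schwarz on the near-diagonal region, at which point the split is redundant.) One further point: for the gain term, recovering a genuine $L^2_{v,\a,\g/2}$ norm of the difference from an integral in the variables $v',\tilde v'$ requires the pre/post-collisional change of variables $(v,\tilde v)\leftrightarrow(v',\tilde v')$ with unit Jacobian; the paper invokes this explicitly, and it is not captured by a generic ``Hilbert--Schmidt-type bound'' remark.
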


By Lemma \ref{lem:regularity} with $s_2 = \min \{s_1, s_\gamma \}$ and Lemma \ref{lem:bilinear}, the iteration scheme \eqref{iteration_1}-\eqref{iteration_j} generates the sequence $\{ f_i \}$ in $X^s_{\a, \b, \gamma}$. Also, we have $\| f_1 \|_{X^s_{\a, \b, \g}} \leq C \| f_0 \|_{\cB^s_{\a, \b}(\Gamma^-)}$ and
\[
\| f_{i + 1} \|_{X^s_{\a, \b, \gamma}} \leq C \| f_i \|_{X^s_{\a, \b, \gamma}}^2 + C \| f_0 \|_{\cB^s_{\a, \b}(\Gamma^-)}
\]
for some positive constant $C$ and $ i \geq 1$. By these estimates, we show that $\| f_i \|_{X^s_{\a, \b, \g}} \leq C \delta_0$ for all $i \geq 1$ if $\| f_0 \|_{\cB^s_{\a, \b}(\Gamma^-)} \leq \delta_0$ and $\delta_0$ is sufficiently small. Furthermore, taking the difference for $i \geq 2$ gives us 
\[
f_{i + 1} - f_i = S_\O K(f_{i + 1} - f_i) + S_\O (\Gamma(f_i - f_{i-1}, f_i) + \Gamma(f_{i-1}, f_i - f_{i-1})),
\]
which implies that
\begin{align*}
&\| f_{i + 1} - f_i \|_{X^s_{\a, \b, \g}}\\ 
\leq& C \left( \| f_i - f_{i-1} \|_{X^s_{\a, \b, \g}} \| f_i \|_{X^s_{\a, \b, \g}} + \| f_{i-1} \|_{X^s_{\a, \b, \g}} \| f_i - f_{i-1} \|_{X^s_{\a, \b, \g}} \right)\\
\leq& C \delta_0 \| f_i - f_{i-1} \|_{X^s_{\a, \b, \g}}
\end{align*}
for some positive constant $C$. Thus, by taking $\delta_0$ smaller if necessary, we obtain
\[
\| f_{i + 1} - f_i \|_{X^s_{\a, \b, \g}} \leq \frac{1}{2} \| f_i - f_{i-1} \|_{X^s_{\a, \b, \g}}
\]
for all $i \geq 2$, which means that $\{ f_i \}$ is a Cauchy sequence in $X^s_{\a, \b, \g}$. Hence, the iteration scheme converge to a solution $f$ to the boundary value problem \eqref{BVP_red}. Therefore, Theorem \ref{main theorem} is proved. We mention that uniqueness of the solution also follows from the above estimate.

The organization of the rest part of this article is the following. In Section \ref{sec:pre}, we introduce some estimates which we shall employ in our analysis. Section \ref{sec:VAL} is devoted to developing  the velocity averaging lemma, which is a key tool in our analysis. We remark that it is an extension of one obtained in \cite{CCHS}. By the velocity averaging lemma, we shall give a proof of Lemma \ref{lem:existence_lin} in Section \ref{sec:L2}. In section \ref{sec:L2_Linfty}, we establish the $L^2-L^\infty$ estimate without the stochastic cycle. Thanks to the $L^\infty$ estimate, we can ensure that the zero extension of the solution is an $H^s_x$ extension for $0 \leq s < 1/2$. Employing the velocity averaging lemma again, we reach at the $H^s_x$ regularity of the solution for $s > 1/2$. A detailed argument will be given in Section \ref{sec:Hsx}. In Section \ref{sec:non}, we shall give the bilinear estimate for the nonlinear term $\Gamma$ to conclude that Theorem \ref{main theorem} holds. We provide a proof of estimates for the integral kernel $k$ in the appendix.

\section{Preliminaries} \label{sec:pre}

In this section, we introduce some estimates which we shall employ in our analysis.

\subsection{Estimates for $\nu$ and $k$} \label{subsec:v_and_k}

As was mentioned in the introduction, under the assumption \eqref{assumption_B1}, the operator $L$ can be decomposed into the multiplication operator $\nu$ and the integral operator $K$:
\[
Lh = -\nu h + Kh
\]
with
\[
Kh(x, v) := \int_{\R^3} k(v, v^*) h(x, v^*)\,dv^*.
\]
Their explicit formulae read 
\begin{align*}
\nu(|v|) =& B_0 \int_{\R^3} |v - v_*|^\gamma e^{- |v_*|^2}\,dv_*,\\
k(v, v^*) =& k_1(v, v^*) - k_2(v, v^*),\\
k_1(v, v^*) =& B_0 |v - v^*|^\gamma e^{-\frac{1}{2}(|v|^2 + |v^*|^2)},\\
k_2(v, v^*) =& \frac{1}{\pi^{\frac{3}{2}}} \frac{1}{|v - v^*|} e^{-\frac{1}{4} |v - v^*|^2 -|V_1(v, v^*)|^2} \int_{W_{v - v^*}} e^{- |w + V_2(v, v^*)|^2}\\
&\times \frac{\left( |v - v^*|^2 + |w|^2 \right)^{\frac{\gamma}{2}}}{|w| |v - v^*|} \left( b ( \cos \theta ) |w| + b ( \sin \theta ) |v - v^*| \right) \,dw,
\end{align*}
where 
\begin{align*}
B_0 :=& 2\pi \int_0^{\frac{\pi}{2}} b(\cos \theta) \sin \theta\,d\theta < \infty,\\
\cos \theta :=& \frac{|v - v^*|}{(|v - v^*|^2 + |w|^2)^{\frac{1}{2}}}, \quad \sin \theta := \frac{|w|}{(|v - v^*|^2 + |w|^2)^{\frac{1}{2}}},\\
V_1(v, v^*) :=& \frac{1}{2} \frac{|v|^2 - |v^*|^2}{|v - v^*|^2} (v - v^*),\\
V_2(v, v^*) :=& \frac{\{ (v - v^*) \cdot v \} v^* - \{ (v - v^*) \cdot v^* \} v}{|v - v^*|^2} = \frac{(v - v^*) \times (v^* \times v)}{|v - v^*|^2},\\
W_{v - v^*} :=& \{w \in \R^3 \mid w \perp v - v^*\}.
\end{align*}
We note that 
\[
|V_1(v, v^*)|^2 + |V_2(v, v^*)|^2 = \frac{1}{4} |v + v^*|^2, \quad V_2(v, v^*) \cdot (v - v^*) = 0.
\]
Furthermore, the first identity indicates that
\begin{equation} \label{identity_V2}
\frac{1}{4} |v - v^*|^2 + |V_1(v, v^*)|^2 + |V_2(v, v^*)|^2 = \frac{1}{2} \left( |v|^2 + |v^*|^2 \right).
\end{equation}
For their derivations, see \cite{Glassey, Grad}. 

We first introduce an estimate for the collision frequency $\nu$.

\begin{proposition} \label{prop:est_nu}
Let $-3 < \gamma \leq 1$. There exist positive constants $\nu_0$ and $\nu_1$ such that
\[
\nu_{0}(1+|v|)^{\gamma} \leq \nu(|v|) \leq \nu_{1} (1+|v|)^{\gamma}
\]
for all $v \in \R^3$.
\end{proposition}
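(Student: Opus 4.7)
The plan is to work directly from the formula
\[
\nu(|v|) = B_0 \pi \int_{\R^3} |v - \tilde v|^{\gamma}\, e^{-|\tilde v|^2}\, d\tilde v
\]
and split both the upper and lower bounds according to whether $|v|$ is small or large, with the Gaussian factor $e^{-|\tilde v|^2}$ providing decay in the regions where $|\tilde v|$ is comparable to or larger than $|v|$. The only delicate feature of the integrand is the possible singularity of $|v-\tilde v|^{\gamma}$ at $\tilde v=v$ when $\gamma<0$; this is what uses the assumption $\gamma > -3$, which makes $|v-\tilde v|^\gamma$ locally integrable in $\R^3$.

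For the upper bound, I first note that $\nu(|v|)$ is continuous on $\R^3$, so on the compact set $\{|v|\le 2\}$ it is bounded, which covers the region where $(1+|v|)^{\gamma}$ is bounded away from $0$. For $|v|\ge 2$ I split the integral into $|\tilde v|\le |v|/2$ and $|\tilde v|>|v|/2$. On the first region $|v-\tilde v|$ is comparable to $|v|$, so $|v-\tilde v|^{\gamma}\le C|v|^{\gamma}$ (using $\gamma\le 1$ on the upper side and $\gamma\ge\gamma$ obviously), and integrating the Gaussian gives a contribution $\le C|v|^{\gamma}$. On the second region I further subdivide into $\{|v-\tilde v|<1\}$, where $|v-\tilde v|^{\gamma}$ is integrable (thanks to $\gamma>-3$) and $e^{-|\tilde v|^2}\le e^{-(|v|-1)^2}$ is super-exponentially small, and $\{|v-\tilde v|\ge 1\}$, where $|v-\tilde v|^{\gamma}\le 1$ when $\gamma\le 0$ and $|v-\tilde v|^{\gamma}\le (|v|+|\tilde v|)^{\gamma}$ is absorbed by the Gaussian when $\gamma>0$; in both cases the contribution decays faster than any power of $|v|$, hence is bounded by $C|v|^{\gamma}$ for $|v|$ large.

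For the lower bound, continuity plus positivity of $\nu$ (note $\nu(0)>0$ precisely because $\gamma>-3$ makes $\int_{\R^3}|\tilde v|^{\gamma}e^{-|\tilde v|^2}d\tilde v$ finite) gives a uniform positive lower bound on $\{|v|\le 2\}$. For $|v|\ge 2$ I restrict the integral to $\{|\tilde v|\le 1\}$, where $|v-\tilde v|\in [|v|-1,|v|+1]$; here $|v-\tilde v|^{\gamma}\ge c|v|^{\gamma}$ with $c=2^{-|\gamma|}$ (taking the min of $(|v|-1)^{\gamma}$ and $(|v|+1)^{\gamma}$ according to the sign of $\gamma$), and $\int_{|\tilde v|\le 1}e^{-|\tilde v|^2}d\tilde v$ is a fixed positive constant. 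Combining the two ranges of $|v|$ and using that $|v|^{\gamma}$ and $(1+|v|)^{\gamma}$ are comparable for $|v|\ge 2$ yields the desired lower bound.

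I do not expect a genuine obstacle here: the argument is a standard dyadic-splitting computation, and the only point requiring care is the bookkeeping for $\gamma<0$, where one must keep the singular set $\{\tilde v\approx v\}$ inside the exponentially small region $\{|\tilde v|>|v|/2\}$ in order to absorb it. Using $\gamma>-3$ exactly once (local integrability of $|v-\tilde v|^{\gamma}$) handles that nuisance.
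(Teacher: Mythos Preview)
Your argument is correct; the splitting into $|v|\le 2$ versus $|v|\ge 2$ and, in the latter case, into $|\tilde v|\le |v|/2$ versus $|\tilde v|>|v|/2$ (with a further local treatment of the singular ball $\{|v-\tilde v|<1\}$ using $\gamma>-3$) is exactly the standard computation that yields the two-sided bound $\nu(|v|)\sim (1+|v|)^{\gamma}$. The paper itself does not prove this proposition but simply refers to \cite{DHWY2017}, so your write-up in fact supplies a self-contained proof where the paper gives none.
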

For a proof of Proposition \ref{prop:est_nu}, see \cite{DHWY2017}.

We next introduce some properties and estimates for the integral kernel $k$. Here and in what follows, $C$ denotes the general constant and its suffixes imply that it depends on them.

\begin{proposition} \label{prop:est_k}
Let $- 3 < \gamma \leq 1$ and $0 < \delta < 1$. We have
\begin{equation} \label{est:k}
|k(v, v^*)| \leq \frac{C_{\gamma, \delta} w_\gamma(|v - v^*|)}{\left(1 + |v| + |v^*| \right)^{1 - \gamma}} E_\delta(v, v^*)
\end{equation}
and
\begin{equation} \label{est:dk}
|\nabla_v k(v, v^*)| \leq \frac{C_{\gamma, \delta} w_\gamma(|v - v^*|) (1+|v|) }{|v - v^*| \left(1 + |v| + |v^*| \right)^{1 - \gamma}} E_\delta(v, v^*)
\end{equation}
for all $v, v^* \in \R^3$, where
\[
E_\delta(v, v^*) := e^{- \frac{1 - \delta}{4} \left( |v - v^*|^2 + 4 |V_1(v, v^*)|^2 \right)} = e^{- \frac{1 - \delta}{4} \left( |v - v^*|^2 + \left( \frac{|v|^2 - |v^*|^2}{|v -v^*|} \right)^2 \right)}.
\]
and
\begin{equation} \label{def:wg}
w_\gamma(|v - v^*|) :=
\begin{cases}
\dfrac{1}{|v - v^*|}, &-1 < \gamma \leq 1,\\
\dfrac{|\log |v - v^*|| + 1}{|v - v^*|}, &\gamma = -1,\\
\dfrac{1}{|v - v^*|^{|\gamma|}}, &-3 < \gamma < -1,
\end{cases}
\end{equation}
\end{proposition}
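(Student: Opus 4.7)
The plan is to split $k = k_1 - k_2$ and bound each piece separately, using the algebraic identity \eqref{identity_V2} as the central tool for extracting the $E_\delta$ factor and the polynomial weight $(1+|v|+|v_*|)^{-(1-\gamma)}$. In both cases, the Gaussian factor in $(v,v_*)$ will be written as $E_\delta$ times a residual that is small enough to absorb any polynomial weight in $(1+|v|+|v_*|)$.

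For $k_1 = B_0 \pi |v-v_*|^\gamma e^{-\frac{1}{2}(|v|^2+|v_*|^2)}$, I would use \eqref{identity_V2} to rewrite
\[
e^{-\frac{1}{2}(|v|^2+|v_*|^2)} = E_\delta(v,v_*)\, e^{-\frac{\delta}{2}(|v|^2+|v_*|^2) - (1-\delta)|V_2|^2},
\]
and bound the residual by $e^{-\frac{\delta}{4}|v-v_*|^2}\, e^{-\frac{\delta}{4}(|v|^2+|v_*|^2)}$. The second factor dominates any polynomial, producing the weight $C_{\gamma,\delta}(1+|v|+|v_*|)^{-(1-\gamma)}$ (since $1-\gamma\geq 0$). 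The prefactor $|v-v_*|^\gamma$ is compared to $w_\gamma$ case by case: for $|v-v_*|\leq 1$ the bound is immediate from \eqref{def:wg}, while for $|v-v_*|\geq 1$ the surviving factor $e^{-\frac{\delta}{4}|v-v_*|^2}$ absorbs $|v-v_*|^\gamma$ and leaves something much smaller than $1/|v-v_*|$.

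For $k_2$, the same identity applied to $e^{-\frac{1}{4}|v-v_*|^2-|V_1|^2}$ yields $E_\delta$ times a residual that again provides the polynomial weight $(1+|v|+|v_*|)^{-(1-\gamma)}$. The key remaining step is to bound
\[
I(v,v_*) := \int_{W_{v-v_*}} e^{-|w+V_2|^2}\bigl(|v-v_*|^2+|w|^2\bigr)^{\frac{\gamma-1}{2}}\,dw
\]
on the two-dimensional plane $W_{v-v_*}$. I would split at $|w|\sim|v-v_*|$: on $\{|w|\leq|v-v_*|\}$ use $(|v-v_*|^2+|w|^2)^{(\gamma-1)/2}\leq |v-v_*|^{\gamma-1}$ and integrate the Gaussian over a disk of area $\pi|v-v_*|^2$, while on $\{|w|\geq|v-v_*|\}$ use $(|v-v_*|^2+|w|^2)^{(\gamma-1)/2}\leq |w|^{\gamma-1}$ and integrate the 2D Gaussian against a power of $|w|$. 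Summing produces exactly the three cases of \eqref{def:wg}: for $-1<\gamma\leq 1$ both pieces are uniformly bounded, giving $w_\gamma = |v-v_*|^{-1}$ via the prefactor $|v-v_*|^{-1}$ of $k_2$; at $\gamma=-1$ the outer integral $\int_{|w|\geq|v-v_*|}|w|^{-2}e^{-c|w|^2}dw$ produces the logarithmic correction; and for $-3<\gamma<-1$ the inner disk contributes the dominant $|v-v_*|^{\gamma+1}$, which combined with $|v-v_*|^{-1}$ in $k_2$ yields $|v-v_*|^{-|\gamma|}$.

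For the gradient, $\nabla_v k_1$ is a direct computation giving an extra factor $\gamma|v-v_*|^{-1}+|v|$, which fits into $(1+|v|)/|v-v_*|$. For $\nabla_v k_2$ the $v$-dependence of the integration domain $W_{v-v_*}$ must be handled first: I would parametrize $W_{v-v_*}$ by applying a rotation $R(v,v_*)$, smooth in $(v-v_*)/|v-v_*|$, to a fixed coordinate plane, after which differentiation under the integral is straightforward. The derivatives of $R$, $V_1$, $V_2$, and the bracket $(|v-v_*|^2+|w|^2)^{(\gamma-1)/2}$ each contribute at most an additional factor $C(1+|v|)/|v-v_*|$, matching the claimed bound. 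The main obstacle will be the careful bookkeeping in the $w$-integral across the three regimes of $\gamma$, together with tracking the differentiation of $k_2$ so that the combined factors $V_1,V_2$, and $R$ produce exactly $(1+|v|)$ rather than a larger power of $|v|$; the logarithmic case $\gamma=-1$ in particular requires a sharper treatment of the outer integral rather than the crude bound $|w|^{\gamma-1}\lesssim |w|^{-2}$.
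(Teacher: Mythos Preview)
Your treatment of $k_1$ is correct. The gap is in $k_2$. You claim that writing $e^{-\frac{1}{4}|v-v_*|^2-|V_1|^2}=E_\delta\cdot(\text{residual})$ already supplies the weight $(1+|v|+|v_*|)^{-(1-\gamma)}$, but that residual is
\[
e^{-\delta(\frac{1}{4}|v-v_*|^2+|V_1|^2)}=e^{-\frac{\delta}{2}(|v|^2+|v_*|^2)+\delta|V_2|^2}
\]
by \eqref{identity_V2}. Unlike the $k_1$ case there is no additional $e^{-|V_2|^2}$ in the prefactor to absorb the bad factor $e^{+\delta|V_2|^2}$, so this residual carries no polynomial decay whatsoever: take $|v-v_*|$ and $|V_1|$ bounded while $|V_2|\sim|v|+|v_*|\to\infty$. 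The weight for $k_2$ must therefore come from the $w$-integral, through its dependence on $V_2$. Your split at $|w|\sim|v-v_*|$, followed by the bounds ``area of the disk'' on the inner piece and ``Gaussian against $|w|^{\gamma-1}$'' on the outer piece, discards that dependence entirely and gives only $|k_2|\lesssim w_\gamma(|v-v_*|)E_\delta(v,v_*)$ with no $(1+|v|+|v_*|)^{\gamma-1}$ factor.

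The paper extracts the missing $|V_2|$-decay by first shifting $w\mapsto w-V_2$, so that the integrand becomes $e^{-|w|^2}(|v-v_*|^2+|w-V_2|^2)^{(\gamma-1)/2}$, and then splitting at $|w|\sim|V_2|$ rather than at $|w|\sim|v-v_*|$. On $\{|w|^2\geq\tfrac12|V_2|^2\}$ one borrows $e^{-\frac{\delta}{2}|V_2|^2}$ from $e^{-|w|^2}$; combined with $E_0$ and \eqref{identity_V2} this completes to $e^{-\frac{\delta}{2}(|v|^2+|v_*|^2)}$, which dominates any polynomial. On $\{|w|^2<\tfrac12|V_2|^2\}$ one has $|w-V_2|\gtrsim|V_2|$, so the bracket itself is $\lesssim(|v-v_*|^2+|V_2|^2)^{(\gamma-1)/2}$, and this polynomial decay in $|V_2|$, together with the $|V_1|$-decay in $E_0/E_\delta$, produces the weight via \eqref{identity_V2}. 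Your split at $|w|\sim|v-v_*|$ is still the right tool \emph{afterwards} for reading off $w_\gamma$ from the residual radial integral, but it cannot replace the $|V_2|$-split. The same omission propagates to your gradient argument.
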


\begin{remark}
Our estimate \eqref{est:k} is sharper than that in \cite{DHWY2017}.
\end{remark}

We leave a proof of Proposition \ref{prop:est_k} in the appendix for readers' convenience.

\subsection{Estimates for $K_\a$, $K_\a^*$ and $V_\gamma$} \label{subsec:KandV}

It is known in \cite{CHKS1} that
\begin{align*}
E_\delta(v, v^*) =& e^{a |v|^2} e^{-\a_{1, a, \delta} |v - v^*|^2} e^{-(1-\delta) \left( \frac{(v - v^*) \cdot v}{|v - v^*|} - \alpha_{2, a, \delta} |v - v^*| \right)^2} e^{-a |v^*|^2}\\
=& e^{-a |v|^2} e^{-\a_{1, a, \delta} |v - v^*|^2} e^{-(1-\delta) \left( \frac{(v - v^*) \cdot v^*}{|v - v^*|} + \alpha_{2, a, \delta} |v - v^*| \right)^2} e^{a |v^*|^2}
\end{align*}
for $-1/2 < a < 1/2$, $\delta > 0$ and $v, v^* \in \R^3$, where
\[
\alpha_{1, a, \delta} := \frac{(1 - \delta + 2a)(1 - \delta - 2a)}{4(1 -\delta)},\quad \alpha_{2, a, \delta} := \frac{1 - \delta - 2a}{2(1-\delta)}.
\]
Plugging these identities into the estimate \eqref{est:k} with $a = \a$ and $\delta = 1/2 - \a$, we have
\begin{align}
|k_\a(v, v^*)| \leq& \frac{C w_\gamma(|v - v^*|)}{(1 + |v| + |v^*|)^{1 - \gamma}} e^{-\a_1|v - v^*|^2} e^{-\a_3 \left( \frac{(v - v^*) \cdot v}{|v - v^*|} - \alpha_2 |v - v^*| \right)^2}, \label{est:ka}\\
|k_\a(v, v^*)| \leq& \frac{C w_\gamma(|v - v^*|)}{(1 + |v| + |v^*|)^{1 - \gamma}} e^{-\a_1|v - v^*|^2} e^{-\a_3 \left( \frac{(v - v^*) \cdot v^*}{|v - v^*|} + \alpha_2 |v - v^*| \right)^2}, \label{est:ka2}\\
|k_\a^*(v, v^*)| \leq& \frac{C w_\gamma(|v - v^*|)}{(1 + |v| + |v^*|)^{1 - \gamma}} e^{-\a_1|v - v^*|^2} e^{-\a_3 \left( \frac{(v - v^*) \cdot v}{|v - v^*|} - \alpha_2 |v - v^*| \right)^2}, \label{est:ka*}\\
|k_\a^*(v, v^*)| \leq& \frac{C w_\gamma(|v - v^*|)}{(1 + |v| + |v^*|)^{1 - \gamma}} e^{-\a_1|v - v^*|^2} e^{-\a_3 \left( \frac{(v - v^*) \cdot v^*}{|v - v^*|} + \alpha_2 |v - v^*| \right)^2} \label{est:ka*2}
\end{align}
for some $\a_1, \a_2, \a_3 > 0$ depending only on $\a$. Instead of \eqref{est:ka}, we also use the following estimate in Section \ref{sec:VAL}:
\begin{equation} \label{est:ka3}
|k_\a(v, v^*)| \leq \frac{C w_\gamma(|v - v^*|)}{(1 + |v| + |v^*|)^{1 - \gamma}} e^{-\a_1|v - v^*|^2}. 
\end{equation}

For the sake of simplicity, we further introduce 
\[
k_{\a, \gamma}^*(v, v^*) := (1 + |v|)^{-\gamma} k_\a^*(v, v^*),
\]
which is the integral kernel of the operator $V_\gamma K_\a^*$. By \eqref{est:ka*} and \eqref{est:ka*2}, we have
\begin{align}
|k_{\a, \gamma}^*(v, v^*)| \leq& \frac{C w_\gamma(|v - v^*|)}{(1 + |v| + |v^*|)^{1 - \gamma} (1 + |v|)^\gamma} e^{-\a_1|v - v^*|^2} e^{-\a_3 \left( \frac{(v - v^*) \cdot v}{|v - v^*|} - \alpha_2 |v - v^*| \right)^2}, \label{est:vka*}\\
|k_{\a, \gamma}^*(v, v^*)| \leq& \frac{C w_\gamma(|v - v^*|)}{(1 + |v| + |v^*|)^{1 - \gamma} (1 + |v|)^\gamma} e^{-\a_1|v - v^*|^2} e^{-\a_3 \left( \frac{(v - v^*) \cdot v^*}{|v - v^*|} + \alpha_2 |v - v^*| \right)^2} \label{est:vka*2}
\end{align}
for some $\a_1, \a_2, \a_3 > 0$ depending only on $\a$, respectively.

We also introduce an estimate for $\nabla_v k_{\a, \gamma}^*$. A straightforward calculation yields
\begin{align*}
\nabla_v k_{\a, \gamma}^*(v, v^*) =& -\gamma (1 + |v|)^{-\gamma - 1} \frac{v}{|v|} k_\a^*(v, v^*) -2\a v (1 + |v|)^{-\gamma} k_\a^*(v, v^*)\\
&+ (1 + |v|)^{-\gamma} e^{-\a |v|^2} \nabla_v k(v, v^*) e^{\a |v^*|^2}.
\end{align*}
For the first two terms on the right hand side, we have
\begin{align*}
&\left| -\gamma (1 + |v|)^{-\gamma - 1} \frac{v}{|v|} k_\a^*(v, v^*) -2\a v (1 + |v|)^{-\gamma} k_\a^*(v, v^*) \right|\\ 
\leq& C (1 + |v|)^{-\gamma + 1} |k_\a^*(v, v^*)|\\
\leq& \frac{C w_\gamma(|v - v^*|) (1 + |v|)^{1 - \gamma}}{(1 + |v| + |v^*|)^{1 - \gamma}} e^{-\a_1|v - v^*|^2} e^{-\a_3 \left( \frac{(v - v^*) \cdot v}{|v - v^*|} - \alpha_2 |v - v^*| \right)^2}\\
\leq& \frac{C w_\gamma(|v - v^*|) (1 + |v|)^{1 - \gamma}}{|v - v^*| (1 + |v| + |v^*|)^{1 - \gamma}} e^{-\a_1'|v - v^*|^2} e^{-\a_3 \left( \frac{(v - v^*) \cdot v}{|v - v^*|} - \alpha_2 |v - v^*| \right)^2}
\end{align*}
for some $0 < \a_1' < \a_1$. Here, we have used the following estimate:
\[
e^{-\delta |v -v^*|^2} \leq \frac{C_\delta}{|v - v^*|}
\]
for some positive constant $\delta$. For the third term on the right hand side, we invoke the estimate \eqref{est:dk} and choose $\delta = 1/2 - \a$ to obtain
\begin{align*}
&\left| (1 + |v|)^{-\gamma} e^{-\a |v|^2} \nabla_v k(v, v^*) e^{\a |v^*|^2} \right|\\ 
\leq& \frac{C w_\gamma(|v - v^*|) (1 + |v|)^{1 - \gamma}}{|v - v^*| (1 + |v| + |v^*|)^{1 - \gamma}} e^{-\a_1|v - v^*|^2} e^{-\a_3 \left( \frac{(v - v^*) \cdot v}{|v - v^*|} - \alpha_2 |v - v^*| \right)^2}\\
\leq& \frac{C w_\gamma(|v - v^*|) (1 + |v|)^{1 - \gamma}}{|v - v^*| (1 + |v| + |v^*|)^{1 - \gamma}} e^{-\a_1'|v - v^*|^2} e^{-\a_3 \left( \frac{(v - v^*) \cdot v}{|v - v^*|} - \alpha_2 |v - v^*| \right)^2}.
\end{align*}
Therefore, we have
\begin{equation} \label{est:dvka*}
|\nabla_v k_{\a, \gamma}^*(v, v^*)| \leq \frac{C w_\gamma(|v - v^*|) (1 + |v|)^{1 - \gamma}}{|v - v^*| (1 + |v| + |v^*|)^{1 - \gamma}} e^{-\a_1'|v - v^*|^2} e^{-\a_3 \left( \frac{(v - v^*) \cdot v}{|v - v^*|} - \alpha_2 |v - v^*| \right)^2}.
\end{equation}

We introduce some decay estimates arising from the operator $V_\gamma K_\a^*$. In what follows, we use the following estimate to include the case $\gamma = -1$ into $-3 < \gamma < -1$: 
\begin{equation} \label{gamma-1}
w_{-1}(|v - v^*|) e^{-\frac{\a_1}{2} |v -v^*|^2} \leq \frac{C_{\e}}{|v - v^*|^{1+\epsilon}}
\end{equation}
for some small positive constant $\epsilon$.

Based on the above calculations, we obtain the following estimates, which are related to boundedness of the operator $V_\gamma K_{\a, \gamma}^*$.

\begin{proposition} \label{prop:est_vka*v*}
Let $0 \leq \a < 1/2$ and $-3 < \gamma \leq 1$. We have
\[
\int_{\R^3} |k_{\a, \gamma}^*(v, v^*)|\,dv^* \leq C_{a, \gamma} (1 + |v|)^{-1}
\]
for all $v \in \R^3$, and
\[
\int_{\R^3} |k_{\a, \gamma}^*(v, v^*)|(1 + |v|)^\gamma\,dv \leq C_{a, \gamma} (1 + |v^*|)^{\gamma-1}
\]
for all $v^* \in \R^3$.
\end{proposition}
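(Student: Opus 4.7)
The plan is to reduce both inequalities to a single, elementary integrability statement about $w_\gamma(|u|)e^{-\a_1|u|^2}$ on $\R^3$, by exploiting the elementary bound
\[
(1+|v|+|v_*|)^{1-\gamma}\ \geq\ \max\bigl\{(1+|v|)^{1-\gamma},\,(1+|v_*|)^{1-\gamma}\bigr\},
\]
which is legitimate because $1-\gamma\geq 0$ when $\gamma\leq 1$. For the first inequality I will start from the pointwise bound \eqref{est:vka*} and choose the $(1+|v|)$ branch above, so that the prefactor simplifies as
\[
\frac{1}{(1+|v|+|v_*|)^{1-\gamma}(1+|v|)^\gamma}\ \leq\ \frac{1}{(1+|v|)^{1-\gamma}(1+|v|)^\gamma}\ =\ \frac{1}{1+|v|}.
\]
Factoring $(1+|v|)^{-1}$ out of the $v_*$-integral and simply discarding the second (bounded) anisotropic exponential, the proof reduces to the translation-invariant bound $\int_{\R^3} w_\gamma(|u|)\,e^{-\a_1|u|^2}\,du < \infty$, with $u=v_*-v$.

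The second inequality goes through symmetrically from \eqref{est:vka*2}: the factor $(1+|v|)^\gamma$ in the integrand cancels the $(1+|v|)^{-\gamma}$ sitting inside $k_{\a,\gamma}^*$, and this time I bound the denominator using the $(1+|v_*|)$ branch to extract a clean factor $(1+|v_*|)^{\gamma-1}=(1+|v_*|)^{-(1-\gamma)}$. After the change of variables $u=v-v_*$, the remaining $v$-integral is exactly the same object as in the first case, so its finiteness closes both estimates simultaneously.

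The only real work is a case analysis of $w_\gamma$. In spherical coordinates the radial integrand is $r^{2}w_\gamma(r)\,e^{-\a_1 r^2}$; for $-1<\gamma\leq 1$ this is $r\,e^{-\a_1 r^2}$, trivially integrable; for $-3<\gamma<-1$ it is $r^{\,2-|\gamma|}e^{-\a_1 r^2}$, integrable at the origin precisely because $|\gamma|<3$ and at infinity because of the Gaussian. The borderline $\gamma=-1$ case is the main (mild) obstacle, and I will dispatch it via the inequality \eqref{gamma-1}: splitting off half the Gaussian gives $w_{-1}(r)e^{-\a_1 r^2}\leq C_\varepsilon r^{-1-\varepsilon}e^{-(\a_1/2)r^2}$, which reduces $\gamma=-1$ to the already handled sub-case $|\gamma|=1+\varepsilon<3$. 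All three cases collapse to the same conclusion, yielding the two desired bounds.
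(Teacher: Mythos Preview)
Your proof is correct and essentially identical to the paper's own argument: both use \eqref{est:vka*} for the first inequality and \eqref{est:vka*2} for the second, simplify the prefactor via $(1+|v|+|v_*|)^{1-\gamma}\geq(1+|v|)^{1-\gamma}$ (respectively $(1+|v_*|)^{1-\gamma}$), drop the anisotropic Gaussian, and reduce to the finiteness of $\int_{\R^3} w_\gamma(|u|)\,e^{-\a_1|u|^2}\,du$. The only difference is that you spell out the case analysis for this last integral (including the use of \eqref{gamma-1} at $\gamma=-1$), which the paper leaves implicit.
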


\begin{proof}
Since
\[
\frac{1}{(1 + |v| + |v^*|)^{1 - \gamma} (1 + |v|)^\gamma} \leq \frac{1}{1 + |v|}
\]
for all $v, v^* \in \R^3$ in the estimate \eqref{est:vka*}, we obtain
\begin{align*}
\int_{\R^3} |k_{\a, \gamma}^*(v, v^*)|\,dv^*\leq& C_{\a, \gamma} (1 + |v|)^{- 1} \int_{\R^3} w_\gamma(|v -v^*|)  e^{-\a_1 |v - v^*|^2}\,dv^*\\
\leq& C_{\a, \gamma} (1 + |v|)^{- 1} \int_{\R^3} w_\gamma(|v -v^*|)  e^{-\a_1 |v - v^*|^2}\,dv^*
\end{align*}
for all $v \in \R^3$. By the estimate \eqref{est:vka*2}, we obtain
\begin{align*}
\int_{\R^3} |k_{\a, \gamma}^*(v, v^*)| (1 + |v|)^{\gamma}\,dv\ \leq& C_{\a, \gamma} (1 + |v^*|)^{\gamma - 1} \int_{\R^3} w_\gamma(|v -v^*|)  e^{-\a_1 |v - v^*|^2}\,dv\\
\leq& C_{\a, \gamma} (1 + |v^*|)^{\gamma-1}
\end{align*}
for all $v^* \in \R^3$. This completes the proof.
\end{proof}

For $-3 < \gamma \leq -2$, we shall introduce fractional Sobolev spaces for the $v$ variable without the weight. To show boundedness of the operator $V_\gamma K_{\a, \gamma}^*$ on such spaces, we introduce the following estimate.

\begin{proposition} \label{prop:est_vka*v2}
Let $0 \leq \a < 1/2$ and $-3 < \gamma \leq -2$. We have
\[
\int_{\R^3} |k_{\a, \gamma}^*(v, v^*)|(1 + |v|)^{-1}\,dv \leq C_{a, \gamma} (1 + |v^*|)^{\gamma}
\]
for all $v^* \in \R^3$.
\end{proposition}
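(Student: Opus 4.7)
The plan is to bound the integrand pointwise using the sharper estimate \eqref{est:vka*2}, then reduce the $v$-integration to spherical coordinates aligned with $v_*$, exploiting both the polynomial denominator and the angular Gaussian to extract the required polynomial decay in $|v_*|$.

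First, since $-\gamma - 1 \geq 1 > 0$ and $(-\gamma - 1) + 2 = 1 - \gamma$, I would use the elementary splitting
\[
(1+|v|+|v_*|)^{1-\gamma} = (1+|v|+|v_*|)^{-\gamma - 1}(1+|v|+|v_*|)^{2} \geq (1+|v|)^{-\gamma - 1}(1+|v_*|)^{2}
\]
to dominate the denominator of \eqref{est:vka*2}. Multiplying by the extra $(1+|v|)^{-1}$ factor, the $(1+|v|)^\gamma$ in \eqref{est:vka*2} exactly cancels, giving
\[
|k_{\a,\gamma}^*(v,v_*)|(1+|v|)^{-1} \leq \frac{C\, w_\gamma(|v - v_*|)}{(1+|v_*|)^2}\, e^{-\a_1 |v - v_*|^2}\, e^{-\a_3\left(\frac{(v - v_*)\cdot v_*}{|v - v_*|} + \alpha_2|v - v_*|\right)^2}.
\]

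Next, I would change variables $u = v - v_*$ and adopt spherical coordinates $(r, \theta, \phi)$ for $u$ with polar axis along $v_*$, so that $\frac{u\cdot v_*}{|u|} = |v_*|\cos\theta$ and $du = r^2 \sin\theta\, dr\,d\theta\,d\phi$. For $|v_*| > 0$, the substitution $s = |v_*|\cos\theta + \alpha_2 r$ yields
\[
\int_0^\pi \sin\theta\, e^{-\a_3(|v_*|\cos\theta + \alpha_2 r)^2}\,d\theta = \frac{1}{|v_*|}\int_{-|v_*|+\alpha_2 r}^{|v_*|+\alpha_2 r} e^{-\a_3 s^2}\,ds \leq \frac{C}{|v_*|},
\]
which, combined with the trivial bound $\leq 2$ for $|v_*|$ small, gives a uniform bound of order $C(1+|v_*|)^{-1}$. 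The residual radial integral $\int_0^\infty r^2 w_\gamma(r)\, e^{-\a_1 r^2}\,dr$ is finite because $\gamma > -3$ (the integrand behaves like $r^{2 - |\gamma|}$ near the origin with $|\gamma| < 3$ in our range).

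Assembling the prefactor $(1+|v_*|)^{-2}$, the angular contribution $C(1+|v_*|)^{-1}$, and the finite radial integral produces
\[
\int_{\R^3}|k_{\a,\gamma}^*(v, v_*)|(1+|v|)^{-1}\,dv \leq \frac{C}{(1+|v_*|)^3} \leq C(1+|v_*|)^\gamma,
\]
where the last inequality uses $\gamma > -3$. The main obstacle is that the polynomial splitting alone produces only $(1+|v_*|)^{-2}$, which matches the target only at the endpoint $\gamma = -2$; for $\gamma < -2$ the missing factor of $|v_*|^{-1}$ must be recovered from the angular Gaussian, whose argument concentrates $\cos\theta$ in a window of size $O(|v_*|^{-1})$. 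It is therefore essential to use the refined estimate \eqref{est:vka*2} that retains the directional Gaussian, rather than the coarser bound \eqref{est:ka3}.
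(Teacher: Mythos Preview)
Your proof is correct and follows essentially the same approach as the paper's: the same splitting of the polynomial denominator in \eqref{est:vka*2} to extract $(1+|v_*|)^{-2}$, followed by the angular Gaussian to gain the additional $(1+|v_*|)^{-1}$, and then the observation $(1+|v_*|)^{-3}\le C(1+|v_*|)^\gamma$ for $\gamma>-3$. The only cosmetic difference is that where you carry out the spherical-coordinate computation for the angular integral explicitly, the paper simply invokes the Caflisch-type estimate \eqref{est:Caf2}.
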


\begin{proof}
We notice that $-1 - \gamma \geq 0$ when $\gamma \leq -2$. Thus, by \eqref{est:vka*2}, we have
\begin{align*}
&\int_{\R^3} |k_{\a, \gamma}^*(v, v^*)| (1 + |v|)^{-1}\,dv\\ 
\leq& C_{\a, \gamma} \int_{\R^3} \frac{w_\gamma(|v -v^*|)}{(1 + |v| + |v^*|)^2 (1 + |v| + |v^*|)^{-1 - \gamma} (1 + |v|)^{1 + \gamma}}\\
&\times e^{-\a_1 |v - v^*|^2} e^{-\a_3 \left( \frac{(v - v^*) \cdot v^*}{|v - v^*|} + \alpha_2 |v - v^*| \right)^2} \,dv\\
\leq& \frac{C_{\a, \gamma}}{(1 + |v^*|)^2} \int_{\R^3} w_\gamma(|v -v^*|) e^{-\a_1 |v - v^*|^2} e^{-\a_3 \left( \frac{(v - v^*) \cdot v^*}{|v - v^*|} + \alpha_2 |v - v^*| \right)^2}\,dv\\
\leq& \frac{C_{\a, \gamma}}{(1 + |v^*|)^3}\\
\leq& C_{\a, \gamma} (1 + |v^*|)^\gamma
\end{align*}
for $-3 < \gamma \leq -2$ and $v^* \in \R^3$. Here, we have used the estimate:
\begin{equation} \label{est:Caf2}
\int_{\R^3} \frac{1}{|v - v^*|^\mu} e^{-\a_1 |v - v^*|^2} e^{-\a_3 (\frac{(v -v^*) \cdot v^*}{|v - v^*|} + \a_2 |v -v^*|)^2}\,dv \leq \frac{C_{\a, \mu}}{1 + |v^*|}
\end{equation}
for $\mu < 3$ and $v^* \in \R^3$ \cite{Caf 1}. This completes the proof.
\end{proof}

Corresponding to Proposition \ref{prop:est_vka*v*}, we have the following estimates for $\nabla_v k_{\a, \gamma}^*$. We remark that $\nabla_v k_{\a, \gamma}^*$ is not integrable with respect to $v^*$ (or $v$) on $\R^3$ for $-3 < \gamma \leq -2$ due to its strong singularity at $v^* = v$.

\begin{proposition} \label{prop:est_dvka*v*}
Let $0 \leq \a < 1/2$ and $-2 < \gamma \leq 1$. We have
\begin{equation} \label{ineq:est_dvka*1}
\int_{\R^3} |\nabla_v k_{\a, \gamma}^*(v, v^*)|\,dv^* \leq C_{a, \gamma} (1 + |v|)^{- 1}
\end{equation}
for all $v \in \R^3$ and
\begin{equation} \label{ineq:est_dvka*2}
\int_{\R^3} |\nabla_v k_{\a, \gamma}^*(v, v^*)|(1 + |v|)^{\gamma-1} \,dv \leq C_{a, \gamma} (1 + |v^*|)^{\gamma - 1}
\end{equation}
for all $v^* \in \R^3$.
\end{proposition}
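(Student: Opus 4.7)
The plan is to start from the pointwise bound \eqref{est:dvka*} and reduce each inequality to a Caflisch-type integral estimate analogous to \eqref{est:Caf2}, exploiting the hypothesis $-2 < \gamma \leq 1$ to keep the singularity at $v_*=v$ integrable.

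For \eqref{ineq:est_dvka*1}, I would first dispose of the prefactor $(1+|v|)^{1-\gamma}/(1+|v|+|v_*|)^{1-\gamma}$ by noting that it is bounded by $1$ (since $1-\gamma \geq 0$ and $1+|v|+|v_*| \geq 1+|v|$), which leaves
\[
|\nabla_v k_{\a,\gamma}^*(v,v_*)| \leq \frac{C\, w_\gamma(|v-v_*|)}{|v-v_*|}\, e^{-\a_1'|v-v_*|^2}\, e^{-\a_3\left(\frac{(v-v_*)\cdot v}{|v-v_*|}-\a_2|v-v_*|\right)^2}.
\]
Now I would apply the $v_*$-integration analog of \eqref{est:Caf2}, which gives $\int_{\R^3} |v-v_*|^{-\mu} e^{-\a_1'|v-v_*|^2}e^{-\a_3(\cdots)^2}\,dv_* \leq C(1+|v|)^{-1}$ whenever $\mu < 3$ (one verifies this by the change of variables $w = v-v_*$ and spherical coordinates about the direction of $v$, with the $\theta$-integral contributing the factor $(1+|v|)^{-1}$). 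The effective singularity is $\mu = 2$ for $-1<\gamma\leq 1$, $\mu = 2+\e$ for $\gamma = -1$ via \eqref{gamma-1}, and $\mu = 1-\gamma < 3$ for $-2 < \gamma < -1$, so the Caflisch threshold is met exactly in the stated range $-2 < \gamma \leq 1$.

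For \eqref{ineq:est_dvka*2}, the extra weight $(1+|v|)^{\gamma-1}$ cancels the $(1+|v|)^{1-\gamma}$ in \eqref{est:dvka*}. I would then use the bound $(1+|v|+|v_*|)^{1-\gamma} \geq (1+|v_*|)^{1-\gamma}$ (again since $1-\gamma \geq 0$) to pull out a factor $(1+|v_*|)^{\gamma-1}$. The remaining integral is
\[
\int_{\R^3}\frac{w_\gamma(|v-v_*|)}{|v-v_*|}\, e^{-\a_1'|v-v_*|^2}\, e^{-\a_3\left(\frac{(v-v_*)\cdot v}{|v-v_*|}-\a_2|v-v_*|\right)^2}\,dv.
\]
The change of variables $w = v-v_*$ rewrites the phase as $\frac{w\cdot v_*}{|w|} + (1-\a_2)|w|$, which has the exact form appearing in \eqref{est:Caf2} (with the positive constant $1-\a_2$ in place of $\a_2$; one checks $1-\a_2 > 0$ for $0 \leq \a < 1/2$ with the chosen $\delta = 1/2-\a$). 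Hence \eqref{est:Caf2} gives a bound by $C(1+|v_*|)^{-1}$, and combining with the pulled-out factor yields $C(1+|v_*|)^{\gamma-2} \leq C(1+|v_*|)^{\gamma-1}$.

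The main obstacle is the singularity $1/|v-v_*|$ produced by differentiating in $v$: combined with $w_\gamma$, it forces the Caflisch threshold $\mu < 3$ to hold only when $\gamma > -2$ (and the endpoint $\gamma = -1$ must be handled by the logarithmic absorption \eqref{gamma-1}). This is precisely the reason the proposition, and the second $v$-derivative estimate it feeds into, are stated for $-2 < \gamma \leq 1$, and it explains why the range $-3 < \gamma \leq -2$ in Proposition \ref{prop:est_vka*v2} must be treated separately without differentiating in $v$.
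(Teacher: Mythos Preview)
Your proof is correct and follows essentially the same line as the paper's. Two minor remarks: the $dv_*$-version of the Caflisch estimate you invoke for \eqref{ineq:est_dvka*1} is already stated in the paper as \eqref{est:Caf}, so no analog needs to be derived; and for \eqref{ineq:est_dvka*2} the paper takes a shorter path by simply bounding the angular Gaussian $e^{-\a_3(\cdots)^2} \leq 1$ and noting that $\int_{\R^3} \frac{w_\gamma(|v-v_*|)}{|v-v_*|} e^{-\a_1'|v-v_*|^2}\,dv$ is a finite constant for $-2 < \gamma \leq 1$, which gives $(1+|v_*|)^{\gamma-1}$ directly without your change of variables or the extra $(1+|v_*|)^{-1}$ decay that you then discard.
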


\begin{proof}
Since
\[
\frac{(1 + |v|)^{1 - \gamma}}{(1 + |v| + |v^*|)^{1- \gamma}} \leq 1
\]
for all $v, v^* \in \R^3$, we have
\begin{align*}
\int_{\R^3} |\nabla_v k_{\a, \gamma}^*(v, v^*)|\,dv^* \leq C \int_{\R^3} \frac{w_\gamma(|v - v^*|)}{|v - v^*|} e^{-\a_1'|v - v^*|^2} e^{-\a_3 \left( \frac{(v - v^*) \cdot v}{|v - v^*|} - \alpha_2 |v - v^*| \right)^2}\,dv^*.
\end{align*}
The estimate \eqref{ineq:est_dvka*1} follows from the estimate:
\begin{equation} \label{est:Caf}
\int_{\R^3} \frac{1}{|v - v^*|^\mu} e^{-\a_1 |v - v^*|^2} e^{-\a_3 (\frac{(v -v^*) \cdot v}{|v - v^*|} - \a_2 |v -v^*|)^2}\,dv^* \leq \frac{C_{\a, \mu}}{1 + |v|}
\end{equation}
for $\mu < 3$ and $v \in \R^3$ \cite{Caf 1}. 

For the estimate \eqref{ineq:est_dvka*2}, we have
\begin{align*}
\int_{\R^3} |\nabla_v k_{\a, \gamma}^*(v, v^*)| (1 + |v|)^{\gamma - 1}\,dv \leq& \frac{C}{(1 + |v^*|)^{1 - \gamma}} \int_{\R^3} \frac{w_\gamma(|v - v^*|)}{|v - v^*|} e^{-\a_1'|v - v^*|^2} \,dv\\
=& C (1 + |v^*|)^{\gamma - 1}
\end{align*}
for all $v^* \in \R^3$. This completes the proof.
\end{proof}

For the case $-3 < \gamma \leq -2$, we give a H\"older estimate for $k_{\a, \gamma}^*$ following \cite{WW}.

\begin{proposition} \label{prop:Holder_vka*v*}
Let $0 \leq \a < 1/2$ and $-3 < \gamma \leq -2$. We have
\begin{align*}
&\int_{\R^3} |k_{\a, \gamma}^*(u, v^*) - k_{\a, \gamma}^*(v, v^*)|\,dv^*\\
\leq& C \max \left\{ \frac{1}{1 + |u|}, \frac{1}{1 + |v|} \right\} \times
\begin{cases} 
|u - v|^{3 + \gamma}, &-3 < \gamma < -2,\\
|u - v| (|\log|u - v|| + 1), &\gamma = -2
\end{cases}
\end{align*}
for all $u, v \in \R^3$ with $|u - v| \leq 1$.
\end{proposition}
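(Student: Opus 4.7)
The plan is to decompose $\R^3$ into a \emph{near region}
\[
N_{u,v} := \{v_* \in \R^3 : |v_* - u| \leq 2|u - v|\} \cup \{v_* \in \R^3 : |v_* - v| \leq 2|u - v|\}
\]
and its complement $F_{u,v} := \R^3 \setminus N_{u,v}$, the \emph{far region}, and to bound the two resulting integrals separately. The motivation is that the gradient bound \eqref{est:dvka*} has a non-integrable singularity of order $|v - v_*|^{-|\g| - 1}$ near $v_* = v$ for $-3 < \g \leq -2$, so a naive global application of the fundamental theorem of calculus is not available; the radius $2|u-v|$ is chosen exactly so that the singularity stays outside $F_{u,v}$ on the line segment joining $u$ and $v$.

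On $N_{u,v}$ I use the triangle inequality $|k_{\a, \g}^*(u, v_*) - k_{\a, \g}^*(v, v_*)| \leq |k_{\a, \g}^*(u, v_*)| + |k_{\a, \g}^*(v, v_*)|$ and apply the pointwise estimate \eqref{est:vka*}. The elementary inequality $(1 + |u| + |v_*|)^{1 - \g}(1 + |u|)^\g \geq 1 + |u|$ yields
\[
|k_{\a, \g}^*(u, v_*)| \leq \frac{C}{(1+|u|)\,|u - v_*|^{|\g|}}\, e^{-\a_1 |u - v_*|^2},
\]
and the analogue with $u, v$ swapped. Integrating in polar coordinates centred at $u$ (respectively $v$) reduces matters to $\int_0^{3|u-v|} r^{2 - |\g|}\,dr$, which gives $C|u-v|^{3+\g}$ when $-3 < \g < -2$ and $C|u-v|$ when $\g = -2$; crossing the two centres using $B_{2|u-v|}(v) \subset B_{3|u-v|}(u)$ takes care of the mismatched integrations. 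Each summand carries a prefactor of $1/(1+|u|)$ or $1/(1+|v|)$, which is $\leq \max\{1/(1+|u|), 1/(1+|v|)\}$, and the $\g = -2$ output is subsumed by $|u-v|(|\log|u-v|| + 1)$.

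On $F_{u,v}$ I apply the mean value theorem along $w_t := v + t(u - v)$, $t \in [0,1]$: for $v_* \in F_{u,v}$ we have $|v_* - w_t| \geq |v_* - u| - |u - w_t| \geq |u-v|$, so the integrand of $\int_0^1 |\nabla_v k_{\a,\g}^*(w_t, v_*)|\,dt$ is controlled by \eqref{est:dvka*}. Using the trivial bound $(1+|w_t|)^{1-\g}/(1+|w_t|+|v_*|)^{1-\g} \leq 1$ and $w_\g(r)/r \leq C/r^{|\g|+1}$, I switch to polar coordinates $v_* = w_t + r\sigma$ with $w_t/|w_t|$ as the north pole; the sphere integral of the angular Gaussian evaluates via $s = |w_t|\cos\theta \pm \a_2 r$ to
\[
\int_{S^2} e^{-\a_3((w_t - v_*)/|w_t - v_*| \cdot w_t - \a_2|w_t - v_*|)^2}\,d\sigma \;\leq\; \min\!\left\{4\pi,\; \frac{C}{|w_t|}\right\} \;\leq\; \frac{C}{1 + |w_t|},
\]
which is the key mechanism extracting the desired decay in $|w_t|$. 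The remaining radial integral $\int_{|u-v|}^\infty r^{1 - |\g|} e^{-\a_1' r^2}\,dr$ contributes $C|u-v|^{2+\g}$ for $-3 < \g < -2$ and $C(|\log|u-v|| + 1)$ for $\g = -2$. The assumption $|u-v| \leq 1$ gives $|w_t| \geq \max\{|u|, |v|\} - 1$, whence a case split (according to whether $\min\{|u|, |v|\} \geq 1$) shows $1/(1+|w_t|) \leq C \max\{1/(1+|u|), 1/(1+|v|)\}$ uniformly in $t$. Multiplying by the prefactor $|u-v|$ from the mean value theorem yields the stated bound on $F_{u,v}$, and adding the near-region contribution completes the proof.

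The main obstacle is the non-integrable singularity of $\nabla_v k_{\a,\g}^*$ at $v_* = v$ in the regime $-3 < \g \leq -2$, which both forces the near/far decomposition and obstructs any scheme that tries to recover the $1/(1+|w_t|)$ decay by brute force from the polynomial prefactor $(1+|w_t|)^{1-\g}/(1+|w_t|+|v_*|)^{1-\g}$; the only place this decay comes from is the sphere integral of the angular Gaussian, which must therefore be kept in the estimate all the way through.
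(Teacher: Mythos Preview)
Your proof is correct and follows essentially the same strategy as the paper: the near/far decomposition at radius comparable to $|u-v|$, the triangle inequality plus direct integration of $|k_{\a,\g}^*|$ on the near region, the fundamental theorem of calculus with \eqref{est:dvka*} on the far region, extraction of the $(1+|w_t|)^{-1}$ decay from the angular Gaussian via the substitution $s = |w_t|\cos\theta$, and the final comparison of $1/(1+|w_t|)$ with $\max\{1/(1+|u|),1/(1+|v|)\}$. The only cosmetic differences are that the paper takes the near region to be the single ball $\{|u-v_*|\le 2|u-v|\}$ (handling the $k_{\a,\g}^*(v,v_*)$ piece via the inclusion $\{|u-v_*|\le 2|u-v|\}\subset\{|v-v_*|\le 3|u-v|\}$) and uses a slightly different case split ($|u|>2$ versus $|u|\le 2$) for the last step.
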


\begin{proof}
Fix $u, v \in \R^3$ with $|u - v| \leq 1$. By the fundamental theorem of calculus, we have
\begin{align*}
|k_{\a, \gamma}^*(u, v^*) - k_{\a, \gamma}^*(v, v^*)| =& \left| \int_0^1 \frac{d}{dt} k_{\a, \gamma}^*(u(t), v^*)\,dt \right|\\
\leq& |u - v| \int_0^1 |\nabla_v k_{\a, \gamma}^*(u(t), v^*)|\,dt
\end{align*}
for all $v^* \in \R^3 \setminus \{ u(t) \mid 0 \leq t \leq 1\}$, where $u(t) := tu + (1 - t)v$, $0 \leq t \leq 1$. Thus, we employ the estimate \eqref{est:dvka*} to obtain
\begin{equation} \label{est:Holder_vka*}
\begin{aligned}
&|k_{\a, \gamma}^*(u, v^*) - k_{\a, \gamma}^*(v, v^*)|\\
\leq& C |u - v| \int_0^1 \frac{w_\gamma(|u(t) - v^*|)}{|u(t) - v^*|} e^{-\a_1' |u(t) - v^*|^2} e^{-\a_3 \left( \frac{(u(t) - v^*) \cdot u(t)}{|u(t) - v^*|} - \alpha_2 |u(t) - v^*| \right)^2}\,dt.
\end{aligned}
\end{equation}

We decompose the integration as follows:
\begin{align*}
&\int_{\R^3} |k_{\a, \gamma}^*(u, v^*) - k_{\a, \gamma}^*(v, v^*)|\,dv^*\\
\leq& \int_{\{|u - v^*| \leq 2 |u - v| \}} |k_{\a, \gamma}^*(u, v^*)|\,dv^* + \int_{\{|u - v^*| \leq 2 |u - v| \}} |k_{\a, \gamma}^*(v, v^*)|\,dv^*\\
&+ \int_{\{|u - v^*| > 2 |u - v| \}} |k_{\a, \gamma}^*(u, v^*) - k_{\a, \gamma}^*(v, v^*)|\,dv^*.
\end{align*}
For the first term on the right hand side, by the estimate \eqref{est:vka*} and by introducing the spherical coordinates $v^* = u + \rho \omega$ with $0 < \rho \leq 2|u - v|$ and $\omega \in S^2$, we have
\begin{align*}
&\int_{\{|u - v^*| \leq 2 |u - v| \}} |k_{\a, \gamma}^*(u, v^*)|\,dv^*\\ 
\leq& \frac{C}{1 + |u|} \int_{\{|u - v^*| \leq 2 |u - v| \}} w_\gamma(|u - v^*|) e^{-\a_1 |u - v^*|^2}\,dv^*\\
\leq& \frac{C}{1 + |u|} \int_0^{2|u - v|} \rho^{\gamma + 2}\,d\rho\\
=& \frac{C}{1 + |u|} |u - v|^{\gamma + 3}.
\end{align*}
For the second term on the right hand side, we notice that
\[
|v - v^*| \leq |u - v^*| + |u - v| \leq 3 |u - v|
\]
if $|u - v^*| \leq 2 |u - v|$. Thus, we follow the same argument as for the first term to obtain
\begin{align*}
\int_{\{|u - v^*| \leq 2 |u - v| \}} |k_{\a, \gamma}^*(v, v^*)|\,dv^* \leq& \int_{\{|v - v^*| \leq 3 |u - v| \}} |k_{\a, \gamma}^*(v, v^*)|\,dv^*\\
\leq& \frac{C}{1 + |v|} |u - v|^{\gamma + 3}.
\end{align*}

For the third term on the right hand side, we also notice that
\[
|u(t) - v^*| \geq |u - v^*| - (1 - t)|u - v| > |u - v|
\]
if $|u - v^*| > 2 |u - v|$. Thus, by the estimate \eqref{est:Holder_vka*}, we have
\begin{align*}
&\int_{\{|u - v^*| > 2 |u - v| \}} |k_{\a, \gamma}^*(u, v^*) - k_{\a, \gamma}^*(v, v^*)|\,dv^*\\
\leq& C |u - v| \int_{\{|u - v^*| > 2|u - v| \}} \int_0^1 \frac{w_\gamma(|u(t) - v^*|)}{|u(t) - v^*|} e^{-\a_1' |u(t) - v^*|^2}\\
&\times e^{-\a_3 \left( \frac{(u(t) - v^*) \cdot u(t)}{|u(t) - v^*|} - \alpha_2 |u(t) - v^*| \right)^2}\,dt \,dv^*\\
\leq& C |u - v| \int_0^1 \int_{\{|u(t) - v^*| > |u - v| \}} \frac{w_\gamma(|u(t) - v^*|)}{|u(t) - v^*|} e^{-\a_1' |u(t) - v^*|^2}\\
&\times e^{-\a_3 \left( \frac{(u(t) - v^*) \cdot u(t)}{|u(t) - v^*|} - \alpha_2 |u(t) - v^*| \right)^2}\,dv^* \,dt.
\end{align*}
For fixed $t$, we introduce the spherical coordinates $v^* = u(t) + \rho \omega$ with $\rho > |u - v|$ and $\omega \in S^2$ to obtain
\begin{align*}
&\int_{\{|u(t) - v^*| > |u - v| \}} \frac{w_\gamma(|u(t) - v^*|)}{|u(t) - v^*|} e^{-\a_1' |u(t) - v^*|^2} e^{-\a_3 \left( \frac{(u(t) - v^*) \cdot u(t)}{|u(t) - v^*|} - \alpha_2 |u(t) - v^*| \right)^2}\,dv^*\\
=& \int_{|u - v|}^\infty \rho^{\gamma+1} e^{- \a_1' \rho^2} \left( \int_{S^2} e^{-\a_3 \left( \omega \cdot u(t) - \alpha_2 \rho \right)^2}\,d\omega \right)\,d\rho.
\end{align*}
We further change the variable of integration $\omega \cdot u(t) = r|u(t)|$ with $-1 \leq r \leq 1$ to obtain
\[
\int_{S^2} e^{-\a_3 \left( \omega \cdot u(t) - \alpha_2 \rho \right)^2}\,d\omega  = \int_{-1}^1 e^{-\a_3 \left( r|u(t)| - \alpha_2 \rho \right)^2}\,dr.
\]
On the one hand, we have
\[
\int_{-1}^1 e^{-\a_3 \left( r|u(t)| - \alpha_2 \rho \right)^2}\,dr \leq \int_{-1}^1\,dr = 2.
\]
On the other hand, we get
\begin{align*}
\int_{-1}^1 e^{-\a_3 \left( r|u(t)| - \alpha_2 \rho \right)^2}\,dr =& \frac{1}{|u(t)|} \int_{-|u(t)|}^{|u(t)|} e^{-\a_3 \left( s - \alpha_2 \rho \right)^2}\,ds\\ 
\leq& \frac{1}{|u(t)|} \int_{-\infty}^\infty e^{-\a_3 s^2}\,ds\\ 
\leq& \frac{C}{|u(t)|}.
\end{align*}
Thus, we obtain
\[
\int_{S^2} e^{-\a_3 \left( \omega \cdot u(t) - \alpha_2 \rho \right)^2}\,d\omega \leq \frac{C}{1 + |u(t)|},
\]
and hence we have
\begin{align*}
&\int_{\{|u(t) - v^*| > |u - v| \}} \frac{w_\gamma(|u(t) - v^*|)}{|u(t) - v^*|} e^{-\a_1' |u(t) - v^*|^2} e^{-\a_3 \left( \frac{(u(t) - v^*) \cdot u(t)}{|u(t) - v^*|} - \alpha_2 |u(t) - v^*| \right)^2}\,dv^*\\
\leq& \frac{C}{1 + |u(t)|} \int_{|u - v|}^\infty \rho^{\gamma+1} e^{- \a_1' \rho^2}\,d\rho\\
\leq& \frac{C}{1 + |u(t)|} (1 + |u - v|^{\gamma + 2})\\
\leq& \frac{C}{1 + |u(t)|} |u - v|^{\gamma + 2}
\end{align*}
for $-3 < \gamma < -2$ while
\begin{align*}
&\int_{\{|u(t) - v^*| > |u - v| \}} \frac{w_\gamma(|u(t) - v^*|)}{|u(t) - v^*|} e^{-\a_1' |u(t) - v^*|^2} e^{-\a_3 \left( \frac{(u(t) - v^*) \cdot u(t)}{|u(t) - v^*|} - \alpha_2 |u(t) - v^*| \right)^2}\,dv^*\\
\leq& \frac{C}{1 + |u(t)|} (|\log|u - v|| + 1)
\end{align*}
for $\gamma = -2$. Therefore, we reach at
\begin{align*}
&\int_{\{|u - v^*| > 2 |u - v| \}} |k_{\a, \gamma}^*(u, v^*) - k_{\a, \gamma}^*(v, v^*)|\,dv^*\\ 
\leq& C \int_0^1 \frac{1}{1 + |u(t)|}\,dt \times
\begin{cases} 
|u - v|^{3 + \gamma}, &-3 < \gamma < -2,\\
|u - v| (|\log|u - v|| + 1), &\gamma = -2.
\end{cases}
\end{align*}

At the end of the proof, we give an estimate for $|u(t)|$. When $|u| > 2$, we have
\[
|u(t)| \geq |u| - (1 - t)|u - v| \geq |u| - 1,
\]
which leads us to $|u| + 1 \leq 2(|u(t)| + 1)$, or
\[
\frac{1}{1 + |u(t)|} \leq \frac{2}{1 + |u|}.
\]
When $|v| > 2$, in the same manner, we see that
\[
|u(t)| \geq |v| - t|u - v| \geq |v| - 1
\]
and
\[
\frac{1}{1 + |u(t)|} \leq \frac{2}{1 + |v|}.
\]
When $|u| \leq 2$ and $|v| \leq 2$, we get
\[
\frac{1}{1 + |u(t)|} \leq 1 \leq 3 \max \left\{ \frac{1}{1 + |u|}, \frac{1}{1 + |v|} \right\}.
\]
Thus, we obtain
\[
\frac{1}{1 + |u(t)|} \leq 3 \max \left\{ \frac{1}{1 + |u|}, \frac{1}{1 + |v|} \right\}
\]
for all $u, v \in \R^3$ with $|u - v| \leq 1$ and $0 \leq t \leq 1$.

Summarizing the above argument, we conclude that the desired estimate holds for all $u, v \in \R^3$ with $|u - v| \leq 1$. This completes the proof.
\end{proof}

We are ready to show some estimates on the operator $V_\gamma K_\a^*$.

\begin{proposition} \label{prop:bounded_VK*}
For $0 \leq \a < 1/2$ and $-3 < \gamma \leq 1$, the operator $V_\gamma K_\a^*: L^2_{0, \gamma/2}(\R^3) \to L^2_{0, \gamma/2}(\R^3)$ is bounded.
\end{proposition}

\begin{proof}
Let $h \in L^2_{0, \gamma/2}(\R^3)$. We employ the Cauchy-Schwarz inequality and Proposition \ref{prop:est_vka*v*} to obtain
\begin{align*}
&\| V_\gamma K_\a^* h \|_{L^2_{0, \gamma/2}(\R^3)}^2\\ 
=& \int_{\R^3} \left( \int_{\R^3} k_{\a, \gamma}^*(v, v^*) h(v^*)\,dv^* \right)^2 (1 + |v|)^{\gamma}\,dv\\
\leq& \int_{\R^3} \left( \int_{\R^3} |k_{\a, \gamma}^*(v, v^*)| \,dv^* \right) \left( \int_{\R^3} |k_{\a, \gamma}^*(v, v^*)| |h(v^*)|^2\,dv^* \right) (1 + |v|)^{\gamma}\,dv\\
\leq& C \int_{\R^3} \left( \int_{\R^3} |k_{\a, \gamma}^*(v, v^*)| (1 + |v|)^{\gamma -1}\,dv \right) |h(v^*)|^2\,dv^*\\
\leq& C \int_{\R^3} |h(v^*)|^2 (1+|v^*|)^{\gamma-1}\,dv^*\\
\leq& C \int_{\R^3} |h(v^*)|^2 (1+|v^*|)^{\gamma}\,dv^*.
\end{align*}
Thus, we obtain
\[
\| V_\gamma K_\a^* h \|_{L^2_{0, \gamma/2}(\R^3)}^2 \leq C \| h \|_{L^2_{0, \gamma/2}(\R^3)}^2.
\]
This completes the proof.
\end{proof}

\begin{lemma} \label{lem:H1_VK*}
For $0 \leq \a < 1/2$ and $-2 < \gamma \leq 1$, the operator $V_\gamma K_\a^*: L^2_{0, \gamma/2}(\R^3) \to H^1_{0, \gamma/2}(\R^3)$ is bounded, where the norm $\| \cdot \|_{H^1_{\a, \b}(\R^3)}$ of the function space $H^1_{\a, \b}(\R^3)$ for $\a \geq 0$ and $\b \in \R$ is given by
\[
\| h \|_{H^1_{\a, \b}(\R^3)} := \left( \| h \|_{L^2_{\a, \b}(\R^3)}^2 + \| \nabla_v h \|_{L^2_{\a, \b}(\R^3)}^2 \right)^{\frac{1}{2}}.
\]
\end{lemma}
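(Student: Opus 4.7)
The plan is to mirror exactly the Schur-type estimate used in the proof of Proposition \ref{prop:bounded_VK*}, but applied to the gradient $\nabla_v (V_\gamma K_\a^* h)$ rather than to $V_\gamma K_\a^* h$ itself. Since Proposition \ref{prop:bounded_VK*} already controls $\| V_\gamma K_\a^* h \|_{L^2_{0, \gamma/2}(\R^3)}$ by $\| h \|_{L^2_{0, \gamma/2}(\R^3)}$, all that remains is to bound the gradient part of the $H^1_{0, \gamma/2}$ norm by the same quantity.

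First I would differentiate under the integral sign to write
\[
\nabla_v (V_\gamma K_\a^* h)(v) = \int_{\R^3} \nabla_v k_{\a, \gamma}^*(v, v_*) h(v_*)\,dv_*,
\]
which is legitimate because the pointwise bound \eqref{est:dvka*} combined with estimate \eqref{ineq:est_dvka*1} of Proposition \ref{prop:est_dvka*v*} shows that $\nabla_v k_{\a, \gamma}^*(v, \cdot)$ is integrable on $\R^3$ uniformly in $v$ (this is precisely where the restriction $-2 < \gamma \leq 1$ enters, since for $\gamma \leq -2$ the singularity of $\nabla_v k_{\a, \gamma}^*$ at $v_* = v$ becomes non-integrable). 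Then I would apply the Cauchy--Schwarz inequality in the form used for Proposition \ref{prop:bounded_VK*}, splitting $|\nabla_v k_{\a, \gamma}^*|^{1/2} \cdot |\nabla_v k_{\a, \gamma}^*|^{1/2} |h(v_*)|$:
\begin{align*}
&\int_{\R^3} \left| \int_{\R^3} \nabla_v k_{\a, \gamma}^*(v, v_*) h(v_*)\,dv_* \right|^2 (1 + |v|)^{\gamma}\,dv\\
\leq& \int_{\R^3} \left( \int_{\R^3} |\nabla_v k_{\a, \gamma}^*(v, v_*)|\,dv_* \right) \left( \int_{\R^3} |\nabla_v k_{\a, \gamma}^*(v, v_*)| |h(v_*)|^2\,dv_* \right) (1 + |v|)^\gamma\,dv.
\end{align*}

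Next I would use estimate \eqref{ineq:est_dvka*1} of Proposition \ref{prop:est_dvka*v*} to bound the first inner integral by $C(1 + |v|)^{-1}$, collapsing the weight $(1 + |v|)^\gamma$ into $(1 + |v|)^{\gamma - 1}$, and then apply Fubini's theorem to swap the order of integration. The $v$-integral that emerges is exactly the quantity estimated in \eqref{ineq:est_dvka*2}, giving the bound $C(1 + |v_*|)^{\gamma - 1}$, which is dominated by $C(1 + |v_*|)^\gamma$. This yields
\[
\| \nabla_v (V_\gamma K_\a^* h) \|_{L^2_{0, \gamma/2}(\R^3)}^2 \leq C \int_{\R^3} |h(v_*)|^2 (1 + |v_*|)^{\gamma}\,dv_* = C \| h \|_{L^2_{0, \gamma/2}(\R^3)}^2.
\]
Combining this with Proposition \ref{prop:bounded_VK*} produces the desired $H^1_{0, \gamma/2}$ estimate.

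There is really no hard step here: the work was done upfront in establishing the two integral estimates in Proposition \ref{prop:est_dvka*v*}, which are perfectly tailored so that the same Schur-type trick that worked for $k_{\a, \gamma}^*$ works again verbatim for $\nabla_v k_{\a, \gamma}^*$. The only conceptual point worth highlighting is that the hypothesis $-2 < \gamma \leq 1$ is both necessary and sufficient for this scheme, since Proposition \ref{prop:est_dvka*v*} fails for $\gamma \leq -2$; this is why the statement excludes the very soft potential range, which will have to be handled by different (Hölder/fractional) techniques later, as anticipated by Proposition \ref{prop:Holder_vka*v*}.
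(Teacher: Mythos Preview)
Your proposal is correct and follows exactly the paper's approach: the paper's proof simply invokes Proposition \ref{prop:bounded_VK*} for the $L^2$ part and then states that the gradient estimate follows ``in the same way as the proof of Proposition \ref{prop:bounded_VK*} with Proposition \ref{prop:est_dvka*v*}''. Your explicit write-up of the Schur-type argument with \eqref{ineq:est_dvka*1} and \eqref{ineq:est_dvka*2} is precisely what the paper leaves implicit.
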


\begin{proof}
We have already seen that the operator $V_\gamma K_\a^*$ is bounded on $L^2_{0, \gamma/2}(\R^3)$. We can show in the same way as the proof of Proposition \ref{prop:bounded_VK*} with Proposition \ref{prop:est_dvka*v*} that 
\[
\| \nabla_v V_\gamma K_\a^* h \|_{L^2_{0, \gamma/2}(\R^3)} \leq C \| h \|_{L^2_{0, \gamma/2}(\R^3)}
\]
for all $h \in L^2_{0, \gamma/2}(\R^3)$. The lemma is proved. 
\end{proof}

\begin{lemma} \label{lem:Hs_VK*}
For $0 \leq \a < 1/2$, $-3 < \gamma \leq -2$ and $0 < s < (3 + \gamma)/2$, the operator $V_\gamma K_{\a, \gamma}^*: L^2_{0, \gamma/2}(\R^3) \to H^s(\R^3)$ is bounded.
\end{lemma}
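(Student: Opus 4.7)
The plan is to use the translation characterization of the Slobodeckij seminorm,
$|u|_{H^s(\R^3)}^2 = \int_{\R^3} \frac{\|u(\cdot + w) - u(\cdot)\|_{L^2(\R^3)}^2}{|w|^{3+2s}}\,dw$,
and reduce matters to bounding $\|g_w\|_{L^2(\R^3)}^2$, where $g_w(v) := V_\gamma K_\a^* h(v+w) - V_\gamma K_\a^* h(v)$, by a quantity whose integral against $|w|^{-3-2s}$ converges in the range $0 < s < (3+\gamma)/2$. Before that, I would verify the $L^2(\R^3)$ bound $\|V_\gamma K_\a^* h\|_{L^2(\R^3)} \leq C \|h\|_{L^2_{0,\gamma/2}(\R^3)}$ by the usual Schur-type calculation: Cauchy-Schwarz in $v_*$ with uniform weight, bounding the first factor via Proposition \ref{prop:est_vka*v*} (so that $\int |k_{\a,\gamma}^*(v,v_*)|\,dv_* \leq C(1+|v|)^{-1}$), then swapping the order of integration and using Proposition \ref{prop:est_vka*v2} to get $\int |k_{\a,\gamma}^*(v,v_*)|(1+|v|)^{-1}\,dv \leq C(1+|v_*|)^\gamma$. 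This $L^2$ bound already handles the tail part $|w| > 1$ of the Slobodeckij integral, since $\|g_w\|_{L^2}^2 \leq 4\|V_\gamma K_\a^* h\|_{L^2}^2$ and $\int_{|w|>1} |w|^{-3-2s}\,dw < \infty$ for any $s > 0$.

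For the singular part $|w| \leq 1$, I apply Cauchy-Schwarz inside $g_w(v)$ with unit weight, producing two factors. The first factor $\int |k_{\a,\gamma}^*(v+w,v_*) - k_{\a,\gamma}^*(v,v_*)|\,dv_*$ is bounded by Proposition \ref{prop:Holder_vka*v*} by $C \max\{(1+|v+w|)^{-1},(1+|v|)^{-1}\} \cdot g(|w|)$, where $g(r) = r^{3+\gamma}$ for $-3 < \gamma < -2$ and $g(r) = r(|\log r|+1)$ for $\gamma = -2$. The elementary comparability $\max\{(1+|v+w|)^{-1},(1+|v|)^{-1}\} \leq C(1+|v|)^{-1}$, which holds uniformly for $|w|\leq 1$ by the triangle inequality, reduces this factor to $C g(|w|)/(1+|v|)$. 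The second factor is left alone; after Fubini swaps the $v$ and $v_*$ integrations, the inner $v$-integral splits into two pieces of the form $\int (1+|v|)^{-1} |k_{\a,\gamma}^*(v+w,v_*)|\,dv$ and $\int (1+|v|)^{-1} |k_{\a,\gamma}^*(v,v_*)|\,dv$; the first reduces to the second by the substitution $v' = v + w$ together with the comparability $(1+|v'-w|)^{-1} \leq C(1+|v'|)^{-1}$ for $|w|\leq 1$, and Proposition \ref{prop:est_vka*v2} then bounds the result by $C(1+|v_*|)^\gamma$. Altogether this yields $\|g_w\|_{L^2(\R^3)}^2 \leq C g(|w|) \|h\|_{L^2_{0,\gamma/2}(\R^3)}^2$ for $|w|\leq 1$.

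Assembling both ranges of $|w|$ gives
$|V_\gamma K_\a^* h|_{H^s(\R^3)}^2 \leq C \|h\|_{L^2_{0,\gamma/2}(\R^3)}^2 \left( \int_{|w|\leq 1} \frac{g(|w|)}{|w|^{3+2s}}\,dw + \int_{|w|>1} \frac{dw}{|w|^{3+2s}} \right)$,
and a polar-coordinate computation shows the first integral is finite precisely when $s < (3+\gamma)/2$ (respectively $s < 1/2$ when $\gamma = -2$), matching the hypothesis exactly. Together with the $L^2$ bound this proves boundedness of $V_\gamma K_\a^*: L^2_{0,\gamma/2}(\R^3) \to H^s(\R^3)$.

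The principal obstacle is to set up the first Cauchy-Schwarz step so that the Hölder estimate of Proposition \ref{prop:Holder_vka*v*} enters correctly: one must extract from it the factor $(1+|v|)^{-1}$, matched to the weight $(1+|v|)^{-1}$ for which Proposition \ref{prop:est_vka*v2} provides the sharp bound $C(1+|v_*|)^\gamma$. The sharp threshold $s < (3+\gamma)/2$ is dictated precisely by the matching between the singularity $g(|w|) \sim |w|^{3+\gamma}$ extracted from the Hölder estimate and the Slobodeckij weight $|w|^{-3-2s}$; this is where the restriction in the statement is saturated and why the proof cannot go further.
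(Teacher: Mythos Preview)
Your proposal is correct and follows essentially the same approach as the paper: the $L^2$ bound via the Schur argument with Propositions \ref{prop:est_vka*v*} and \ref{prop:est_vka*v2}, the split into $|w|>1$ and $|w|\leq 1$, the Cauchy--Schwarz step feeding Proposition \ref{prop:Holder_vka*v*} into the first factor, and the closing application of Proposition \ref{prop:est_vka*v2} after Fubini all match the paper exactly. The only cosmetic difference is that you work with the translation form of the Slobodeckij seminorm (substituting $u=v+w$), whereas the paper writes the double integral in $(u,v)$ directly; the computations are identical under this change of variables.
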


\begin{remark}
The range of $s$ could be extended to $0 < s < 3 + \gamma$ as suggested in \cite{LWW}. However, since Lemma \ref{lem:Hs_VK*} is enough for our purpose, we shall not discuss the improvement in this article.
\end{remark}

\begin{proof}
Since $(1 + |v|)^{-2} \leq (1 + |v|)^{-1}$, Proposition \ref{prop:est_vka*v2} also implies that
\[
\int_{\R^3} |k_{\a, \gamma}^*(v, v^*)|(1 + |v|)^{-2}\,dv \leq C_{a, \gamma} (1 + |v^*|)^{\gamma}
\]
for all $v^* \in \R^3$, $0 \leq \a < 1/2$ and $-3 < \gamma \leq -2$. Thus, by Proposition \ref{prop:est_vka*v*}, it is seen that the operator $V_\gamma K_\a^*: L^2_{0, \gamma/2}(\R^3) \to L^2_{0, 0}(\R^3)$ is bounded. Hence, it suffices to show that
\[
|V_\gamma K_\a^* h|_{H^s(\R^3)} \leq C \| h \|_{L^2_{0, \gamma/2}(\R^3)}.
\]
We give a proof in the case $-3 < \gamma < -2$. The case $\gamma = -2$ can be proved in the same way.

We decompose the seminorm $|V_\gamma K_\a^* h|_{H^s(\R^3)}$ into two parts:
\begin{align*}
|V_\gamma K_\a^* h|_{H^s(\R^3)}^2 =& \int_{\R^3} \int_{\R^3} \frac{|V_\gamma K_\a^*h(u) - V_\gamma K_\a^*h(v)|^2}{|u - v|^{3 + 2s}}\,dudv\\
=& \iint_{\{u, v \mid |u - v| > 1\}} \frac{|V_\gamma K_\a^*h(u) - V_\gamma K_\a^*h(v)|^2}{|u - v|^{3 + 2s}}\,dudv\\ 
&+ \iint_{\{u, v \mid |u - v| \leq 1\}} \frac{|V_\gamma K_\a^*h(u) - V_\gamma K_\a^*h(v)|^2}{|u - v|^{3 + 2s}}\,dudv.
\end{align*}
For the former integral on the right hand side, we have
\begin{align*}
&\iint_{\{u, v \mid |u - v| > 1\}} \frac{|V_\gamma K_\a^*h(u) - V_\gamma K_\a^*h(v)|^2}{|u - v|^{3 + 2s}}\,dudv\\
\leq& C \int_{\R^3} \left( \int_{\{v \mid |u - v| > 1\}} \frac{1}{|u - v|^{3 + 2s}}\,dv \right) |V_\gamma K_\a^* h(u)|^2\,du\\ 
&+ C \int_{\R^3} \left( \int_{\{u \mid |u - v| > 1\}} \frac{1}{|u - v|^{3 + 2s}}\,du \right) |V_\gamma K_\a^* h(v)|^2\,dv\\
\leq& C \| V_\gamma K_\a^* h \|_{L^2_{0, 0}(\R^3)}\\
\leq& C \| h \|_{L^2_{0, \gamma/2}(\R^3)}. 
\end{align*}

For the latter integral on the right hand side, we apply the Cauchy-Schwarz inequality and Proposition \ref{prop:est_vka*v*} to obtain
\begin{align*}
&\iint_{\{u, v \mid |u - v| \leq 1\}} \frac{|V_\gamma K_\a^*h(u) - V_\gamma K_\a^*h(v)|^2}{|u - v|^{3 + 2s}}\,dudv\\
\leq& \int \int_{\{u, v \mid |u - v| \leq 1\}} \frac{1}{|u - v|^{3 + 2s}} \left( \int_{\R^3} |k_{\a, \gamma}^*(u, v^*) - k_{\a, \gamma}^*(v, v^*)|\,dv^* \right)\\ 
&\times \left( \int_{\R^3} |k_{\a, \gamma}^*(u, v^*) - k_{\a, \gamma}^*(v, v^*)| |h(v^*)|^2\,dv^* \right)\,dudv\\
\leq& C \int \int_{\{u, v \mid |u - v| \leq 1\}} \frac{1}{|u - v|^{2s + \gamma}} \max\left\{ \frac{1}{1+|u|}, \frac{1}{1+|v|} \right\}\\ 
&\times \left( \int_{\R^3} |k_{\a, \gamma}^*(u, v^*) - k_{\a, \gamma}^*(v, v^*)| |h(v^*)|^2\,dv^* \right)\,dudv\\
=& C \int_{\R^3} \left( \int \int_{\{u, v \mid |u - v| \leq 1\}} \frac{|k_{\a, \gamma}^*(u, v^*) - k_{\a, \gamma}^*(v, v^*)|}{|u - v|^{2s + \gamma}} \max\left\{ \frac{1}{1+|u|}, \frac{1}{1+|v|} \right\} \,dudv \right)\\ 
& \times  |h(v^*)|^2\,dv^*.
\end{align*}
Since $|u - v| \leq 1$, we have
\[
\frac{1}{1+|u|} \leq \frac{C}{1+|v|} \leq \frac{C}{1+|u|}.
\]
Also, we have $-3 < \gamma < 2s + \gamma$. Thus, Proposition \ref{prop:est_vka*v2} implies that
\begin{align*}
&\int \int_{\{u, v \mid |u - v| \leq 1\}} \frac{|k_{\a, \gamma}^*(u, v^*) - k_{\a, \gamma}^*(v, v^*)|}{|u - v|^{2s + \gamma}} \max\left\{ \frac{1}{1+|u|}, \frac{1}{1+|v|} \right\} \,dudv\\
\leq& C \int_{\R^3} \left( \int_{\{v \mid |u - v| \leq 1\}} \frac{1}{|u - v|^{2s + \gamma}}\,dv \right) |k_{\a, \gamma}^*(u, v^*)| (1 + |u|)^{-1}\,du\\
&+ C \int_{\R^3} \left( \int_{\{u \mid |u - v| \leq 1\}} \frac{1}{|u - v|^{2s + \gamma}}\,du \right) |k_{\a, \gamma}^*(v, v^*)| (1 + |v|)^{-1}\,dv\\
\leq& C(1 + |v^*|)^\gamma.
\end{align*}

Therefore, we have $|V_\gamma K_\a^* h|_{H^s(\R^3)} \leq C \| h \|_{L^2_{0, \gamma/2}(\R^3)}$. This completes the proof.
\end{proof}

We proceed to show some estimates for the integral kernel of the operator $K_\a$.

\begin{proposition} \label{prop:est_ka}
Let $0 \leq \a < 1/2$, $\b \in \R$, $-3 < \gamma \leq 1$ and $1 \leq q < \min \{3, 3/|\gamma| \}$. We have
\begin{equation} \label{ineq:est_ka1}
\int_{\R^3} |k_\a(v, v^*)|^q (1 + |v^*|)^\b \,dv^* \leq C_{\a, \gamma} (1 + |v|)^{\b + q(\gamma - 1) - 1}
\end{equation}
for all $v \in \R^3$, and
\begin{equation} \label{ineq:est_ka2}
\int_{\R^3} |k_\a(v, v^*)|^q (1 + |v|)^\b \,dv \leq C_{\a, \gamma} (1 + |v^*|)^{\b + q(\gamma - 1) - 1}
\end{equation}
for all $v^* \in \R^3$.
\end{proposition}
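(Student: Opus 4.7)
The plan is to derive both \eqref{ineq:est_ka1} and \eqref{ineq:est_ka2} by starting from the pointwise estimates \eqref{est:ka} and \eqref{est:ka2} in Section \ref{subsec:KandV}, raising them to the $q$-th power, transferring the polynomial weight $(1+|\cdot|)^\b$ onto the opposite variable, and finally reducing to the Caflisch-type bounds \eqref{est:Caf} and \eqref{est:Caf2}. Since \eqref{est:ka} has its angular exponential centered on $v$ while \eqref{est:ka2} centers it on $v_*$, the two statements are interchanged by the swap $v \leftrightarrow v_*$; I would therefore write out \eqref{ineq:est_ka1} in detail and obtain \eqref{ineq:est_ka2} by the identical argument using \eqref{est:Caf2} in place of \eqref{est:Caf}.

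For \eqref{ineq:est_ka1}, I would raise \eqref{est:ka} to the power $q$ and use $1-\gamma \geq 0$ to get $(1+|v|+|v_*|)^{-q(1-\gamma)} \leq (1+|v|)^{q(\gamma-1)}$, pulling a factor $(1+|v|)^{q(\gamma-1)}$ outside the integral. The next step is to transfer $(1+|v_*|)^\b$ onto $v$. For $\b \geq 0$, the inequality $1+|v_*| \leq (1+|v|)(1+|v-v_*|)$ gives $(1+|v_*|)^\b \leq (1+|v|)^\b (1+|v-v_*|)^\b$, and the polynomial factor $(1+|v-v_*|)^\b$ is absorbed into a Gaussian $e^{-q'\a_1 |v-v_*|^2}$ for some $0 < q' < q$. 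For $\b < 0$, I would split the domain of integration into $\{|v-v_*| \leq (1+|v|)/2\}$, where $1+|v_*| \geq (1+|v|)/2$ yields $(1+|v_*|)^\b \leq 2^{|\b|}(1+|v|)^\b$, and its complement, where $e^{-q\a_1|v-v_*|^2}$ provides super-polynomial decay in $(1+|v|)$ and trivially beats the target bound.

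After these reductions the claim follows once one establishes
\[
\int_{\R^3} w_\gamma(|v-v_*|)^q\, e^{-q'\a_1 |v-v_*|^2}\, e^{-q\a_3 \left( \frac{(v-v_*)\cdot v}{|v-v_*|} - \a_2 |v-v_*| \right)^2}\,dv_* \leq \frac{C}{1+|v|},
\]
which is exactly \eqref{est:Caf} with a suitable exponent $\mu < 3$ inherited from \eqref{def:wg}: one takes $\mu = q$ when $-1 < \gamma \leq 1$, $\mu = q|\gamma|$ when $-3 < \gamma < -1$, and $\mu = q(1+\e)$ for small $\e > 0$ when $\gamma = -1$ (after invoking \eqref{gamma-1} to absorb the logarithm into a slightly worse power singularity). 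The hypothesis $q < \min\{3, 3/|\gamma|\}$ is precisely what guarantees $\mu < 3$ in every case. Assembling the factors gives $(1+|v|)^{\b + q(\gamma-1)}\cdot (1+|v|)^{-1}$, which is the desired bound.

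The main bookkeeping obstacle is the case $\b < 0$: the naive bound $(1+|v_*|)^\b \leq (1+|v|)^\b$ fails on the region $|v_*| < |v|$, forcing the near/far splitting described above. Everything else is routine, as the $\gamma$-dependence of $w_\gamma$ cleanly matches the condition $q < \min\{3, 3/|\gamma|\}$ once the $\gamma = -1$ logarithmic borderline is treated via \eqref{gamma-1}.
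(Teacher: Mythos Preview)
Your proposal is correct and follows essentially the same route as the paper: raise \eqref{est:ka} (resp.\ \eqref{est:ka2}) to the $q$-th power, transfer the weight $(1+|v_*|)^\b$ onto $v$, and reduce to the Caflisch bound \eqref{est:Caf} (resp.\ \eqref{est:Caf2}) with $\mu<3$ guaranteed by $q<\min\{3,3/|\gamma|\}$. The only cosmetic difference is in the $\b<0$ weight transfer: where you split into near/far regions, the paper uses the single pointwise inequality $(1+|v_*|)^\b = (1+|v|)^\b \frac{(1+|v|)^{|\b|}}{(1+|v_*|)^{|\b|}} \leq (1+|v|)^\b(1+|v-v_*|^{|\b|})$, which avoids the case split and is worth noting for its brevity.
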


\begin{remark}
We set $\min \{3, 3/|\gamma| \} = 3$ when $\gamma = 0$.
\end{remark}

\begin{proof}
When $\b \geq 0$, we notice that $(1 + |v^*|)^\b \leq C_\b \{ (1 + |v|)^\b + |v - v^*|^\b \}$ for all $v, v^* \in \R^3$. By the estimate \eqref{est:ka}, we have
\begin{align*}
&|k_\a(v, v^*)|^q (1 + |v^*|)^\b\\
\leq& C (1 + |v|)^{\b + q(\gamma - 1)} w_\gamma(|v - v^*|)^q e^{-q\a_1|v - v^*|^2} e^{-q\a_3 \left( \frac{(v - v^*) \cdot v}{|v - v^*|} - \alpha_2 |v - v^*| \right)^2}\\
&+ C (1 + |v|)^{q(\gamma - 1)} w_\gamma(|v - v^*|)^q |v - v^*|^\b e^{-q\a_1|v - v^*|^2} e^{-q\a_3 \left( \frac{(v - v^*) \cdot v}{|v - v^*|} - \alpha_2 |v - v^*| \right)^2}.
\end{align*}
We notice that, for any $-3 < \gamma \leq 1$ with $\gamma \neq -1$ and $1 \leq q < \min \{3, 3/|\gamma| \}$, there exists $\mu < 3$ such that $w_\gamma(|v - v^*|)^q \leq |v - v^*|^{-\mu}$. Also, by the estimate \eqref{gamma-1}, the case $\gamma = -1$ can be treated in the same way as for $-3< \gamma < -1$. In any cases, the estimate \eqref{est:Caf} yields
\begin{align*}
\int_{\R^3} |k_\a(v, v^*)|^q (1 + |v^*|)^\b\,dv^* \leq& C (1 + |v|)^{\b + q(\gamma - 1) - 1} + C (1 + |v|)^{q(\gamma - 1) - 1}\\ 
\leq& C (1 + |v|)^{\b + q(\gamma - 1) - 1}.
\end{align*}

When $\b < 0$, we have
\begin{align*}
(1 + |v^*|)^\b =& (1 + |v|)^\b \frac{(1 + |v|)^{|\b|}}{(1 + |v^*|)^{|\b|}}\\
\leq& (1 + |v|)^\b \frac{(1 + |v^*|)^{|\b|} + |v - v^*|^{|\b|}}{(1 + |v^*|)^{|\b|}}\\
\leq& (1 + |v|)^\b \left( 1 + |v - v^*|^{|\b|} \right),
\end{align*}
and we apply the above argument to obtain the same estimate. Thus, the estimate \eqref{ineq:est_ka1} is proved.

The estimate \eqref{ineq:est_ka2} is obtained in the same way with estimates \eqref{est:ka2} and \eqref{est:Caf2}. This completes the proof.
\end{proof}

\begin{proposition} \label{prop:bounded_Ka}
For $0 \leq \a < 1/2$ and $-3 < \gamma \leq 1$, the operator $K_\a: L^2_{0, \gamma/2}(\R^3) \to L^2_{0, (4 - \gamma)/2}(\R^3)$ is bounded.
\end{proposition}

\begin{proof}
By Proposition \ref{prop:est_ka}, we have
\begin{align*}
&\| K_\a h \|_{L^2_{0, (4 - \gamma)/2}(\R^3)}^2\\ 
=& \int_{\R^3} \left( \int_{\R^3} k_\a(v, v^*) h(v^*)\,dv^* \right)^2 (1 + |v|)^{4 - \gamma}\,dv\\
\leq& \int_{\R^3} \left( \int_{\R^3} |k_\a(v, v^*)|\,dv^* \right) \left( \int_{\R^3} |k_\a(v, v^*)| |h(v^*)|^2\,dv^* \right) (1 + |v|)^{4 - \gamma}\,dv\\
\leq& C \int_{\R^3} \left( \int_{\R^3} |k_\a(v, v^*)| (1+|v|)^2\,dv \right) |h(v^*)|^2\,dv^*\\
\leq& C \| h \|_{L^2_{0, \gamma/2}(\R^3)}.
\end{align*}
This completes the proof.
\end{proof}

\subsection{Property of $J$}

We introduce the following formula.

\begin{lemma}[{\cite[Lemma 2.1]{CS}}] \label{change_integration}
For $f \in L^1(\Omega \times \mathbb{R}^3)$, we have
\[
\int_\Omega \int_{\mathbb{R}^3} |f(x, v)|\,dv dx = \int_{\Gamma^-} \int_0^{\tau_+(z, v)} |f(z + tv, v)| |n(z) \cdot v|\,dt dv d\sigma_z,
\]
where $\tau_+(z, v) := \tau_-(z, -v)$.
\end{lemma}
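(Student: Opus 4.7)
The plan is to reduce the identity to a fiberwise change of variables: for each fixed velocity $v$, parametrize points $x \in \O$ by the backward entry point $z \in \p \O$ of the line $s \mapsto x - sv$ together with a travel time $t > 0$, and then integrate in $v$ using Fubini.

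First I would fix $v \in \R^3 \setminus \{0\}$ (which covers a.e. $v$) and set $\Gamma^-(v) := \{z \in \p \O \mid n(z) \cdot v < 0\}$. I would then consider the map
\[
\Phi_v : U_v \to \O, \qquad \Phi_v(z, t) := z + tv,
\]
defined on $U_v := \{(z, t) \mid z \in \Gamma^-(v),\ 0 < t < \tau_+(z, v)\}$, and show it is a bijection up to null sets. Surjectivity follows from convexity and the $C^1$ regularity of $\p \O$: for any $x \in \O$, the backward trajectory $s \mapsto x - sv$ first exits $\O$ at time $\tau_-(x, v) > 0$, giving $z := x - \tau_-(x, v) v \in \p \O$; since $-v$ points outward at $z$, one has $n(z) \cdot v < 0$ (outside a subset of $\partial\O$ of surface measure zero where $n(z) \cdot v = 0$), so $z \in \Gamma^-(v)$ and $x = z + tv$ with $t = \tau_-(x, v) \in (0, \tau_+(z, v))$. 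Injectivity is immediate since $(z, t)$ is determined by $x$ via $t = \tau_-(x, v)$ and $z = x - tv$.

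Next I would compute the Jacobian. In local $C^1$ coordinates $(\xi_1, \xi_2) \mapsto z(\xi_1, \xi_2)$ on $\partial \O$ one has $d\sigma_z = |\partial_{\xi_1} z \times \partial_{\xi_2} z|\,d\xi_1 d\xi_2$, and the Jacobian of $(\xi_1, \xi_2, t) \mapsto z(\xi_1, \xi_2) + tv$ is
\[
\bigl| \det(\partial_{\xi_1} z,\ \partial_{\xi_2} z,\ v) \bigr| = \bigl| (\partial_{\xi_1} z \times \partial_{\xi_2} z) \cdot v \bigr| = |n(z) \cdot v| \cdot |\partial_{\xi_1} z \times \partial_{\xi_2} z|,
\]
so $dx = |n(z) \cdot v|\,d\sigma_z\,dt$. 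Combining this with the bijection gives, for a.e. $v \in \R^3$,
\[
\int_\O |f(x, v)|\,dx = \int_{\Gamma^-(v)} \int_0^{\tau_+(z, v)} |f(z + tv, v)| |n(z) \cdot v|\,dt\,d\sigma_z.
\]
Integrating in $v$ and applying Fubini on $\partial \O \times \R^3$, together with the identification $\Gamma^- = \{(z, v) \in \partial \O \times \R^3 \mid n(z) \cdot v < 0\}$, yields the stated formula. The main technical point is verifying the bijection $\Phi_v$ and the vanishing of the boundary grazing set $\{n(z) \cdot v = 0\}$; convexity of $\O$ ensures that for a.e. $v$ every line in direction $v$ meets $\partial \O$ in at most two points, so the grazing set contributes only a measure-zero correction and the argument goes through for merely $C^1$ boundaries.
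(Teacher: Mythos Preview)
The paper does not give its own proof of this lemma: it is quoted verbatim from \cite{CS} (Lemma 2.1 there) and used as a black box in later estimates such as Lemma~\ref{lem:est_J}. Your argument is the standard one for this change-of-variables identity and is correct; the fiberwise parametrization $x = z + tv$ with Jacobian $|n(z)\cdot v|$, followed by Fubini in $v$, is exactly how it is proved in the transport-theory literature. One small refinement: your claim that the grazing set $\{z \in \partial\Omega : n(z)\cdot v = 0\}$ has surface measure zero need not hold for \emph{every} fixed $v$ in a merely convex (not strictly convex) $C^1$ domain, but it does hold for a.e.\ $v$ by Fubini on $\partial\Omega \times \R^3$, which is all you need.
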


The following lemmas hold for the operator $J$.

\begin{lemma} \label{lem:est_J}
For $\a \geq 0$ and $-3 < \gamma \leq 1$, the operator $J: L^2_\a(\Gamma^-; d\xi_-) \to L^2_{\a, \gamma/2}(\O \times \R^3)$ is bounded.
\end{lemma}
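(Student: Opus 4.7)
The plan is a direct computation by changing variables from $(x,v) \in \O \times \R^3$ to $(z,v,t) \in \Gamma^- \times \R_+$ along inward trajectories, using Lemma \ref{change_integration}. The key observation is that if $(z,v) \in \Gamma^-$ and $x = z + tv$ with $0 < t < \tau_+(z,v)$, then by convexity of $\O$ we have $\tau_-(x,v) = t$ and $q(x,v) = z$. Hence
\begin{align*}
\| J f_0 \|_{L^2_{\a, \gamma/2}(\O \times \R^3)}^2
&= \int_\O \int_{\R^3} e^{-2\nu(|v|) \tau_-(x,v)} |f_0(q(x,v), v)|^2 e^{2\a |v|^2} (1+|v|)^\gamma \, dv\, dx \\
&= \int_{\Gamma^-} \int_0^{\tau_+(z,v)} e^{-2\nu(|v|) t} |f_0(z,v)|^2 e^{2\a |v|^2} (1+|v|)^\gamma |n(z)\cdot v| \, dt\, dv\, d\sigma_z.
\end{align*}

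Next I would carry out the $t$-integral, using the crude bound
\[
\int_0^{\tau_+(z,v)} e^{-2\nu(|v|) t}\, dt \leq \frac{1}{2\nu(|v|)},
\]
and then invoke Proposition \ref{prop:est_nu} to estimate $1/\nu(|v|) \leq \nu_0^{-1}(1+|v|)^{-\gamma}$. This precisely cancels the weight $(1+|v|)^\gamma$ in the integrand, producing
\[
\| J f_0 \|_{L^2_{\a, \gamma/2}(\O \times \R^3)}^2 \leq \frac{1}{2\nu_0} \int_{\Gamma^-} |f_0(z,v)|^2 e^{2\a |v|^2} |n(z)\cdot v| \, dv\, d\sigma_z = \frac{1}{2\nu_0} \| f_0 \|_{L^2_\a(\Gamma^-; d\xi_-)}^2,
\]
which gives the required boundedness with constant $C = (2\nu_0)^{-1/2}$.

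There is no real obstacle here; the proof is essentially a single change of variables together with the collision-frequency lower bound. The only points requiring care are verifying the identities $\tau_-(z+tv,v) = t$ and $q(z+tv,v) = z$ (which follow from convexity and $C^1$ regularity of $\partial\O$), and observing the clean cancellation between the velocity weight $(1+|v|)^\gamma$ and the factor $\nu(|v|)^{-1}$ produced by the time integration. This cancellation is the reason the weight exponent $\gamma/2$ on the left is exactly the one that matches $L^2_\a(\Gamma^-; d\xi_-)$ on the right.
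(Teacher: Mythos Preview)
Your proof is correct and follows essentially the same approach as the paper: apply Lemma \ref{change_integration} to pass to the $(z,v,t)$ coordinates, bound the $t$-integral by $(2\nu(|v|))^{-1}$, and use Proposition \ref{prop:est_nu} to cancel the weight $(1+|v|)^\gamma$. The only cosmetic difference is that the paper leaves the final constant implicit as $C$ rather than writing it as $(2\nu_0)^{-1/2}$.
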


\begin{proof}
For $f_0 \in L^2_\a(\Gamma^-; d\xi_-)$, by Lemma \ref{change_integration}, we have
\begin{align*}
\| J f_0 \|_{L^2_{\a, \gamma/2}(\O \times \R^3)}^2 =& \int_\O \int_{\R^3} \left| e^{-\nu(|v|) \tau_-(x, v)} f_0(q(x, v), v) \right|^2 e^{2 \a |v|^2} (1 + |v|)^\gamma \,dv dx\\
=& \int_{\Gamma^-} \int_0^{\tau_-(x, -v)} e^{-2 \nu(|v|) t} |f_0(z, v)|^2 |n(z) \cdot v|\,dt\\
&\times e^{2 \a |v|^2} (1 + |v|)^\gamma \,d\sigma_z dv\\
\leq& \int_{\Gamma^-} |f_0(z, v)|^2 e^{2 \a |v|^2} \frac{(1 + |v|)^\gamma}{2 \nu(|v|)} |n(z) \cdot v| \,d\sigma_z dv\\
\leq& C \| f_0 \|_{L^2_\a(\Gamma^-; d\xi_-)}^2.
\end{align*}
This completes the proof.
\end{proof}

\begin{lemma} \label{lem:est_J_infty}
For $0 \leq \a < 1/2$, $\b \in \R$, $-3 < \gamma \leq 1$, the operator $J: L^\infty_{\a, \b}(\Gamma^-) \to L^\infty_{\a, \b}(\O \times \R^3)$ is bounded. 
\end{lemma}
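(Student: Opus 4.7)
The plan is essentially trivial and requires no technical machinery beyond the definitions. The operator $J$ simply traces back along characteristics to the incoming boundary and multiplies by an exponentially decaying factor, so the bound should follow with constant $C = 1$.

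Concretely, I would fix $(x,v) \in \O \times \R^3$ with $v \neq 0$. Since $\O$ is bounded and convex, $\tau_-(x,v) < \infty$ and $q(x,v) = x - \tau_-(x,v) v \in \p \O$. By the definition of $\tau_-$, the trajectory $x - tv$ exits $\O$ at $t = \tau_-(x,v)$ in the direction $-v$, which means $n(q(x,v)) \cdot v < 0$ (here I would note that this inclusion is standard for the backward exit time on a convex domain, as used implicitly elsewhere in the paper). Hence $(q(x,v), v) \in \Gamma^-$, so $f_0$ is well-defined at this point and controlled by the $L^\infty_{\a,\b}(\Gamma^-)$ norm.

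The proof then reduces to the pointwise chain
\begin{align*}
|Jf_0(x,v)|\, e^{\a|v|^2}(1+|v|)^\b
&= e^{-\nu(|v|)\tau_-(x,v)}\,|f_0(q(x,v),v)|\, e^{\a|v|^2}(1+|v|)^\b\\
&\leq |f_0(q(x,v),v)|\, e^{\a|v|^2}(1+|v|)^\b\\
&\leq \|f_0\|_{L^\infty_{\a,\b}(\Gamma^-)},
\end{align*}
where the first inequality uses $\nu(|v|) \geq 0$ (or more precisely $\nu(|v|) \geq \nu_0(1+|v|)^\gamma > 0$ by Proposition \ref{prop:est_nu}) together with $\tau_-(x,v) \geq 0$, and the second inequality is just the definition of the essential supremum norm on $\Gamma^-$. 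Taking the essential supremum over $(x,v) \in \O \times \R^3$ yields $\|Jf_0\|_{L^\infty_{\a,\b}(\O \times \R^3)} \leq \|f_0\|_{L^\infty_{\a,\b}(\Gamma^-)}$, which gives the boundedness with operator norm at most $1$. There is no real obstacle here; the only point that might warrant a brief remark is the measurability of $Jf_0$ (which follows from the continuity of $\tau_-$ and $q$ away from the grazing set, standard under the convexity and $C^1$ assumptions on $\p\O$).
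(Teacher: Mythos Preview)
Your proof is correct and follows essentially the same approach as the paper: bound $e^{-\nu(|v|)\tau_-(x,v)} \leq 1$ using the nonnegativity of $\nu$, then apply the definition of the $L^\infty_{\a,\b}(\Gamma^-)$ norm to control $|f_0(q(x,v),v)|$. The paper's version is slightly more terse, omitting the remarks on $(q(x,v),v)\in\Gamma^-$ and measurability, but the argument is the same.
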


\begin{proof}
Due to the nonnegativity of the function $\nu(|v|)$, we have
\[
|Jf_0(x, v)| \leq |f_0(q(x, v), v)| \leq \| f_0 \|_{L^\infty_{\a, \b}(\Gamma^-)} e^{-\a |v|^2} (1 + |v|)^{-\b}
\]
for a.e.~$(x, v) \in \O \times \R^3$. This completes the proof.
\end{proof}

\subsection{Property of $S_\O$}

We begin with the following geometric estimate.

\begin{proposition} \label{prop:S_decay}
Let $\O$ be a bounded domain in $\R^3$ and $-3 < \gamma \leq 1$. We have
\[
\int_0^{\tau_-(x, v)} e^{-\nu(|v|)t}\,dt \leq \frac{C_\gamma}{1 + |v|}
\]
for all $(x, v) \in \O \times \R^3$.
\end{proposition}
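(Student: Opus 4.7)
The plan is to combine two elementary bounds on the integrand: a pointwise one that exploits the boundedness of $\Omega$, and an integrated one that exploits the lower bound for $\nu$ from Proposition \ref{prop:est_nu}. Each bound alone fails in a different regime of $|v|$, and the two together cover everything.

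First, since $0 \leq e^{-\nu(|v|)t} \leq 1$, I would estimate
\[
\int_0^{\tau_-(x,v)} e^{-\nu(|v|)t}\,dt \leq \tau_-(x,v) \leq \frac{\diam(\Omega)}{|v|}
\]
for $v \neq 0$, using that $x - \tau_-(x,v)v \in \partial \Omega$ forces $|v|\,\tau_-(x,v) \leq \diam(\Omega)$. In particular, for $|v| \geq 1$ this already yields
\[
\int_0^{\tau_-(x,v)} e^{-\nu(|v|)t}\,dt \leq \frac{\diam(\Omega)}{|v|} \leq \frac{2\diam(\Omega)}{1+|v|}.
\]

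Second, extending the interval of integration and using the lower bound $\nu(|v|) \geq \nu_0 (1+|v|)^\gamma$ from Proposition \ref{prop:est_nu},
\[
\int_0^{\tau_-(x,v)} e^{-\nu(|v|)t}\,dt \leq \int_0^\infty e^{-\nu(|v|)t}\,dt = \frac{1}{\nu(|v|)} \leq \frac{1}{\nu_0 (1+|v|)^\gamma}.
\]
For $|v| \leq 1$ we have $(1+|v|)^\gamma \geq \min(1, 2^\gamma) =: c_\gamma > 0$ (uniformly in $|v|$), so the right-hand side is bounded by a constant $C_\gamma$. Since $|v| \leq 1$ forces $1/(1+|v|) \geq 1/2$, this constant is controlled by $2 C_\gamma/(1+|v|)$.

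Combining the two regimes gives the claimed inequality with $C_\gamma := \max\{2\diam(\Omega),\,2/(\nu_0 c_\gamma)\}$. There is no serious obstacle here; the only observation needed beyond Proposition \ref{prop:est_nu} is the geometric bound $\tau_-(x,v) \leq \diam(\Omega)/|v|$, which requires only boundedness (not convexity) of $\Omega$.
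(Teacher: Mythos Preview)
Your proof is correct and follows essentially the same approach as the paper: both combine the geometric bound $\tau_-(x,v)\leq \diam(\Omega)/|v|$ with the bound $\int_0^\infty e^{-\nu t}\,dt = 1/\nu(|v|)$, using the former for large $|v|$ and the latter (via Proposition~\ref{prop:est_nu}) for small $|v|$. The paper phrases the combination as a single $\min\{1/\nu(|v|),\,1/|v|\}$ while you split explicitly at $|v|=1$, but the content is identical.
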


\begin{proof}
since the collision frequency $\nu(|v|)$ is nonnegative, we have
\[
\int_0^{\tau_-(x, v)} e^{- \nu(|v|) t}\,dt \leq \int_0^\infty e^{- \nu(|v|) t}\,dt = \frac{1}{\nu(|v|)}.
\]
and
\[
\int_0^{\tau_-(x, v)} e^{- \nu(|v|) t}\,dt \leq \tau_-(x, v) \leq \frac{\diam(\O)}{|v|}.
\]
Thus, we obtain
\[
\int_0^{\tau_-(x, v)} e^{- \nu(|v|) t}\,dt \leq C \min \left\{ \frac{1}{\nu(|v|)}, \frac{1}{|v|} \right\}.
\]
The first argument $\nu(|v|)^{-1}$ is bounded near $v  = 0$ and the second one decays $|v|^{-1}$ as $|v| \to \infty$. Thus, the desired estimate holds.
\end{proof}

For the $L^2-L^\infty$ estimate in Section \ref{sec:L2_Linfty}, we introduce an intermediate function space. For $\a \geq 0$, $\b \in \R$ and $2 < p < \infty$, let
\[
\| f \|_{L^p_{\a, \b}(\O \times \R^3)} := \left( \int_\O \int_{\R^3} |f(x, v)|^p e^{p \a |v|^2} (1 + |v|)^{p \b}\,dvdx \right)^{\frac{1}{p}}.
\]
Then, we have the following proposition. 

\begin{proposition} \label{prop:S_bound}
Let $\O$ be a bounded convex domain with $C^1$ boundary. For $\a \geq 0$, $\b \in \R$, $-3 < \gamma \leq 1$ and $2 \leq p \leq \infty$, the operator $S_\O: L^p_{\a, \b}(\O \times \R^3) \to L^p_{\a, \b + 1}(\O \times \R^3)$ is bounded.
\end{proposition}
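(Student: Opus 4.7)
The plan is to treat $p=\infty$ separately by a direct pointwise estimate, and to handle $2\le p<\infty$ uniformly via a Hölder-in-$t$ trick that converts one copy of the exponential weight into the decay $(1+|v|)^{-1}$ from Proposition \ref{prop:S_decay}, and then a Fubini/change-of-variable argument on the remaining $x$-integral.

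For $p=\infty$, I would simply pull the essential supremum of $h$ (in the $v$ variable) out of the definition of $S_\O$, obtaining
\[
|S_\O h(x,v)| \le \|h\|_{L^\infty_{\a,\b}(\O\times\R^3)}\, e^{-\a|v|^2}(1+|v|)^{-\b}\int_0^{\tau_-(x,v)} e^{-\nu(|v|)t}\,dt.
\]
Invoking Proposition \ref{prop:S_decay} to bound the time integral by $C/(1+|v|)$ gives the claim with one extra power of $(1+|v|)^{-1}$.

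For $2\le p<\infty$, I would first apply Hölder's inequality in $t$ with exponents $p$ and $p'=p/(p-1)$:
\[
|S_\O h(x,v)|^p \le \left(\int_0^{\tau_-(x,v)} e^{-\nu(|v|)t}\,dt\right)^{p-1}\!\int_0^{\tau_-(x,v)} e^{-\nu(|v|)t}|h(x-tv,v)|^p\,dt.
\]
Proposition \ref{prop:S_decay} bounds the first factor by $C(1+|v|)^{-(p-1)}$. For the second factor, I would use the convexity of $\O$: for each fixed $v\in\R^3$ and $t\in[0,\diam(\O)/|v|]$, the map $x\mapsto y=x-tv$ is a measure-preserving bijection from $\{x\in\O:\tau_-(x,v)\ge t\}$ onto $\O\cap(\O-tv)\subset\O$. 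Swapping the order of integration in $x$ and $t$ and applying this change of variable gives
\[
\int_\O\int_0^{\tau_-(x,v)} e^{-\nu(|v|)t}|h(x-tv,v)|^p\,dt\,dx \le \left(\int_0^{\diam(\O)/|v|}\!\! e^{-\nu(|v|)t}\,dt\right)\!\int_\O |h(y,v)|^p\,dy,
\]
and the parenthesized quantity is again controlled by $C/(1+|v|)$ via Proposition \ref{prop:S_decay}. Combining, one obtains
\[
\int_\O |S_\O h(x,v)|^p\,dx \le \frac{C}{(1+|v|)^p}\int_\O |h(y,v)|^p\,dy.
\]
Multiplying by the $v$-weight $e^{p\a|v|^2}(1+|v|)^{p(\b+1)}$ and integrating in $v$ then cancels the factor $(1+|v|)^{-p}$ against $(1+|v|)^{p(\b+1)}/(1+|v|)^{p\b}$, yielding $\|S_\O h\|_{L^p_{\a,\b+1}}\le C\|h\|_{L^p_{\a,\b}}$.

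The only step with any subtlety is the $x$-change of variable, so I would want to state it carefully: by convexity of $\O$, if $x\in\O$ and $t\le\tau_-(x,v)$ then the entire segment from $x-tv$ to $x$ lies in $\bar\O$, so $y=x-tv\in\bar\O$, and the Jacobian is $1$. This is the only place the convexity assumption on $\O$ enters, and it is what makes the naive $y$-integral fit inside $\O$ rather than in some larger set. After that, everything reduces to the one-dimensional estimate of Proposition \ref{prop:S_decay}, so no further difficulty is expected.
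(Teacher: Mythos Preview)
Your proof is correct and follows essentially the same approach as the paper: H\"older in $t$, Proposition~\ref{prop:S_decay} for the $(p-1)$ copies of the time integral, and a change of variable $y=x-tv$ to handle the remaining copy. The only cosmetic difference is that the paper packages the $x$--$t$ swap as a cited identity (from \cite{CCHS}) on the full $(x,v,t)$ integral, whereas you work at fixed $v$ and carry out the $x\mapsto y$ translation directly; your hands-on version is self-contained and arguably cleaner, but the underlying argument is identical.
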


\begin{proof}
We first consider the case where $2 \leq p < \infty$. Let $h \in L^p_{\a, \b}(\O \times \R^3)$. The H\"older inequality and Proposition \ref{prop:S_decay} yield
\begin{align*}
&\| S_\O h \|_{L^p_{\a, \b + 1}(\O \times \R^3)}^p\\ 
=& \int_\O \int_{\R^3} |S_\O h(x, v)|^p e^{p \a |v|^2} (1 + |v|)^{p (\b + 1)}\,dvdx\\
\leq& \int_\O \int_{\R^3} \left( \int_0^{\tau_-(x, v)} e^{-\nu(|v|)t} |h(x - tv, v)|^p\,dt \right)\\
&\times \left( \int_0^{\tau_-(x, v)} e^{-\nu(|v|)t}\,dt \right)^{p - 1} e^{p \a |v|^2} (1 + |v|)^{p (\b + 1)}\,dvdx\\
\leq& C \int_\O \int_{\R^3} \left( \int_0^{\tau_-(y, -u)} e^{-\nu(|u|)s}\,ds \right) |h(y, u)|^p e^{p \a |u|^2} (1 + |u|)^{p \b + 1}\,dudy\\
\leq& C \| h \|_{L^p_{\a, \b}(\O \times \R^3)}^p.
\end{align*}
Here, we have used the identity:
\begin{equation} \label{change of variables}
\int_\O \int_{\R^3} \int_0^{\tau_-(x, v)} h_1(x, v, t)\,dt dv dx = \int_\O \int_{\R^3} \int_0^{\tau_-(y, -u)} h_1(y + su, u, s)\,ds du dy
\end{equation} 
for a nonnegative measurable function $h_1$ on $\O \times \R^3 \times [0, \infty)$, which was proved in \cite{CCHS}, and let $h_1(x, v, t) := e^{-\nu(|v|)t} h(x - tv, v)$. 

For the case where $h \in L^\infty_{\a, \b}(\O \times \R^3)$, by Proposition \ref{prop:S_decay}, we have
\begin{align*}
| S_\O h(x, v) | \leq& \int_0^{\tau_-(x, v)} e^{-\nu(|v|)t} |h(x - tv, v)|\,dt\\ 
\leq& \| h \|_{L^\infty_{\a, \b}(\O \times \R^3)} e^{-\a |v|^2} (1 + |v|)^{-\b} \int_0^{\tau_-(x, v)} e^{-\nu(|v|)t}\,dt\\
\leq& C \| h \|_{L^\infty_{\a, \b}(\O \times \R^3)} e^{-\a |v|^2} (1 + |v|)^{-\b - 1}
\end{align*}
for a.e.~$(x, v) \in \O \times \R^3$. This completes the proof.
\end{proof}

\begin{corollary} \label{cor:S_bound}
Let $\O$ be a bounded convex domain with $C^1$ boundary. For $\a \geq 0$ and $-3 < \gamma \leq 1$, the operator $S_\O: L^2_{\a, -\gamma/2}(\O \times \R^3) \to L^2_{\a, \gamma/2}(\O \times \R^3)$ is bounded.
\end{corollary}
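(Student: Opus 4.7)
The plan is to derive this as an immediate corollary of Proposition \ref{prop:S_bound} combined with a trivial weight comparison. First I would apply Proposition \ref{prop:S_bound} in the special case $p = 2$ and $\b = -\gamma/2$, which gives the boundedness
\[
S_\O : L^2_{\a, -\gamma/2}(\O \times \R^3) \to L^2_{\a, -\gamma/2 + 1}(\O \times \R^3).
\]
Thus for any $\phi \in L^2_{\a, -\gamma/2}(\O \times \R^3)$,
\[
\| S_\O \phi \|_{L^2_{\a, -\gamma/2 + 1}(\O \times \R^3)} \leq C \| \phi \|_{L^2_{\a, -\gamma/2}(\O \times \R^3)}.
\]

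Next I would observe that the hypothesis $\gamma \leq 1$ gives $2 \cdot (\gamma/2) = \gamma \leq -\gamma + 2 = 2 \cdot (-\gamma/2 + 1)$, so pointwise in $v$ we have $(1+|v|)^\gamma \leq (1+|v|)^{-\gamma + 2}$. Multiplying by $|f(x,v)|^2 e^{2\a|v|^2}$ and integrating over $\O \times \R^3$ yields the continuous inclusion
\[
L^2_{\a, -\gamma/2 + 1}(\O \times \R^3) \hookrightarrow L^2_{\a, \gamma/2}(\O \times \R^3)
\]
with norm bounded by $1$. Composing this inclusion with the estimate above gives
\[
\| S_\O \phi \|_{L^2_{\a, \gamma/2}(\O \times \R^3)} \leq \| S_\O \phi \|_{L^2_{\a, -\gamma/2 + 1}(\O \times \R^3)} \leq C \| \phi \|_{L^2_{\a, -\gamma/2}(\O \times \R^3)},
\]
which is the desired boundedness. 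There is no real obstacle here; the only thing to check is the exponent inequality, which is precisely where the assumption $-3 < \gamma \leq 1$ enters.
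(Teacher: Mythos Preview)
Your proof is correct and essentially identical to the paper's: both apply Proposition~\ref{prop:S_bound} with $p=2$ and $\b=-\gamma/2$ to land in $L^2_{\a,(2-\gamma)/2}$, then use the inequality $\gamma \le 2-\gamma$ (equivalently $\gamma \le 1$) to embed into $L^2_{\a,\gamma/2}$. The only remark is that the assumption $-3<\gamma$ plays no role in this step; only $\gamma\le 1$ is used.
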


\begin{proof}
Let $p = 2$ and $\b = -\gamma$ in Proposition \ref{prop:S_bound}. Then, the operator $S_\O: L^2_{\a, -\gamma/2}(\O \times \R^3) \to L^2_{\a, (2 - \gamma)/2}(\O \times \R^3)$ is bounded. The conclusion follows from the inequality $2 -\gamma \geq \gamma$, which is equivalent to $\gamma \leq 1$.
\end{proof}

Combining Proposition \ref{prop:bounded_Ka} with Proposition \ref{prop:S_bound}, we obtain the following boundedness.

\begin{proposition} \label{prop:bounded_SKa}
Let $\O$ be a bounded convex domain with $C^1$ boundary. For $0 \leq \a < 1$ and $-3 < \gamma \leq 1$, the operator $S_\O K_\a: L^2_{0, \gamma/2}(\O \times \R^3) \to L^2_{0, \gamma/2}(\O \times \R^3)$ is bounded.
\end{proposition}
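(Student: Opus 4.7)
The plan is to prove Proposition \ref{prop:bounded_SKa} by simply composing the two previous results, Proposition \ref{prop:bounded_Ka} and Proposition \ref{prop:S_bound}, exploiting the fact that $K_\a$ acts only in the velocity variable while $S_\O$ is the integration along characteristics inside $\O$, so both cooperate nicely with Fubini.

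First, I would upgrade Proposition \ref{prop:bounded_Ka} from a purely velocity-space statement to one on $\O\times\R^3$. For $h\in L^2_{0,\gamma/2}(\O\times\R^3)$, for a.e.\ $x\in\O$ the slice $h(x,\cdot)$ lies in $L^2_{0,\gamma/2}(\R^3)$, so Proposition \ref{prop:bounded_Ka} yields
\[
\|K_\a h(x,\cdot)\|_{L^2_{0,(4-\gamma)/2}(\R^3)}\leq C\|h(x,\cdot)\|_{L^2_{0,\gamma/2}(\R^3)}.
\]
Squaring and integrating in $x\in\O$ (Fubini-Tonelli), I obtain $K_\a h\in L^2_{0,(4-\gamma)/2}(\O\times\R^3)$ with a corresponding operator norm bound.

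Next, I apply Proposition \ref{prop:S_bound} with $p=2$, $\a=0$ and $\b=(4-\gamma)/2$, which produces $S_\O K_\a h\in L^2_{0,(4-\gamma)/2+1}(\O\times\R^3)=L^2_{0,(6-\gamma)/2}(\O\times\R^3)$ with
\[
\|S_\O K_\a h\|_{L^2_{0,(6-\gamma)/2}(\O\times\R^3)}\leq C\|K_\a h\|_{L^2_{0,(4-\gamma)/2}(\O\times\R^3)}.
\]
Finally, since $-3<\gamma\leq 1\leq 3$, we have $(6-\gamma)/2\geq \gamma/2$, and because $(1+|v|)^{2\cdot\gamma/2}\leq (1+|v|)^{2\cdot(6-\gamma)/2}$ for every $v\in\R^3$, the inclusion $L^2_{0,(6-\gamma)/2}(\O\times\R^3)\hookrightarrow L^2_{0,\gamma/2}(\O\times\R^3)$ is continuous. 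Chaining the three inequalities gives the desired bound on $S_\O K_\a$.

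There is no real obstacle here: the statement is a bookkeeping consequence of the already-proved propositions, and the only point requiring any care is checking that the intermediate weight $(4-\gamma)/2$ coming out of $K_\a$ is an admissible value of $\b$ in Proposition \ref{prop:S_bound} (it is, since no sign restriction on $\b$ is imposed there) and that the final weight $(6-\gamma)/2$ dominates the target weight $\gamma/2$ for every admissible $\gamma$.
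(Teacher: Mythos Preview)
Your proposal is correct and follows essentially the same route as the paper: compose Proposition~\ref{prop:bounded_Ka} with Proposition~\ref{prop:S_bound} (at $p=2$, $\b=(4-\gamma)/2$) to land in $L^2_{0,(6-\gamma)/2}(\O\times\R^3)$, then use the inclusion into $L^2_{0,\gamma/2}(\O\times\R^3)$. The paper's proof is just a two-line version of what you wrote, with the Fubini step left implicit.
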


\begin{proof}
By Proposition \ref{prop:bounded_Ka} and Proposition \ref{prop:S_bound}, we see that the operator $S_\O K_\a: L^2_{0, \gamma/2}(\O \times \R^3) \to L^2_{0, (6 - \gamma)/2}(\O \times \R^3)$ is bounded. The conclusion follows from the inclusion $L^2_{0, (6 - \gamma)/2}(\O \times \R^3) \subset L^2_{0, \gamma/2}(\O \times \R^3)$.
\end{proof}

\begin{corollary} \label{cor:S_bound2}
Let $\O$ be a bounded convex domain with $C^1$ boundary. For $\a \geq 0$, $\b \in \R$ and $-3 < \gamma \leq 1$, the operator $S_\O: L^\infty_{\a, \b - \gamma}(\O \times \R^3) \to L^\infty_{\a, \b}(\O \times \R^3)$ is bounded.
\end{corollary}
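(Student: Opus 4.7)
The plan is to deduce this corollary as an immediate consequence of Proposition \ref{prop:S_bound} applied with $p = \infty$, combined with a trivial inclusion between weighted $L^\infty$ spaces that uses the hypothesis $\gamma \leq 1$.

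First, I would invoke Proposition \ref{prop:S_bound} with the parameters $p = \infty$, fixed $\alpha \geq 0$, and input weight $\beta - \gamma$. This yields boundedness of $S_\Omega: L^\infty_{\alpha, \beta - \gamma}(\Omega \times \R^3) \to L^\infty_{\alpha, \beta - \gamma + 1}(\Omega \times \R^3)$, with an explicit constant coming from Proposition \ref{prop:S_decay}. Next, since $\gamma \leq 1$ we have $\beta - \gamma + 1 \geq \beta$, and for any $f$ the inequality $(1 + |v|)^{\beta} \leq (1 + |v|)^{\beta - \gamma + 1}$ (pointwise in $v$) gives the continuous embedding
\[
L^\infty_{\alpha, \beta - \gamma + 1}(\Omega \times \R^3) \hookrightarrow L^\infty_{\alpha, \beta}(\Omega \times \R^3),
\]
with $\| \cdot \|_{L^\infty_{\alpha, \beta}} \leq \| \cdot \|_{L^\infty_{\alpha, \beta - \gamma + 1}}$. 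Composing these two bounded maps yields the desired boundedness.

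Alternatively, a direct proof is equally short: for $h \in L^\infty_{\alpha, \beta - \gamma}(\Omega \times \R^3)$, pull the supremum out of the integral defining $S_\Omega h(x, v)$, apply Proposition \ref{prop:S_decay} to bound $\int_0^{\tau_-(x, v)} e^{-\nu(|v|) t}\,dt \leq C_\gamma (1 + |v|)^{-1}$, and then use $\gamma \leq 1$ to absorb the extra factor $(1 + |v|)^{\gamma - 1} \leq 1$. There is essentially no obstacle; the argument is a one-line reduction, and the only mild subtlety is the check that $\gamma \leq 1$ is exactly what is needed to pass from the shift $+1$ of Proposition \ref{prop:S_bound} to the shift $+\gamma$ required in the statement.
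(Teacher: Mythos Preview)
Your proposal is correct and matches the paper's proof essentially verbatim: apply Proposition \ref{prop:S_bound} with $p=\infty$ and input weight $\beta-\gamma$ to land in $L^\infty_{\alpha,\beta-\gamma+1}$, then use $\gamma\le 1$ (equivalently $-\gamma+1\ge 0$) to embed into $L^\infty_{\alpha,\beta}$.
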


\begin{proof}
Replacing $\b$ with $\b - \gamma$ in Proposition \ref{prop:S_bound} yields that the operator $S_\O: L^\infty_{\a, \b - \gamma}(\O \times \R^3) \to L^\infty_{\a, \b - \gamma + 1}(\O \times \R^3)$ is bounded. The conclusion follows from the inequality $- \gamma + 1 \geq 0$ for $\gamma \leq 1$. 
\end{proof}

\section{Velocity averaging lemma} \label{sec:VAL}

In this section, we develop the velocity averaging lemma to describe the smoothing effect of the operator $S_\O K_\a$ in terms of the $H^s_x$ regularity, which will be used in Section \ref{sec:L2}. 

For $s \geq 0$, we say that $u \in \tilde{H}^s(\R^3)$ if
\[
\| u \|_{\tilde{H}^s(\R^3)} := \left( \int_{\R^3} (1 + |\xi|^2)^s |\cF u(\xi)|^2\,d\xi \right)^{\frac{1}{2}}, 
\]
where $\cF u$ denotes the Fourier transform of the function $u$. Also, for $\a \geq 0$, $\b \in \R$ and $s \geq 0$, we say that $f \in L^2_{\a, \b}(\R^3; \tilde{H}^s(\R^3))$ if 
\[
\| f \|_{L^2_{\a, \b}(\R^3; \tilde{H}^s(\R^3))} := \left( \int_{\R^3} \| f(\cdot, v) \|_{\tilde{H}^s(\R^3)}^2 e^{2\a |v|^2} (1 + |v|)^{2\b}\,dv \right)^{\frac{1}{2}} < \infty.
\] 
We remark that the norm $\| \cdot \|_{\tilde{H}^s(\R^3)}$ is equivalent to the Slobodeckij norm $\| \cdot \|_{H^s(\R^3)}$ for $0 < s < 1$ \cite{DPV}.

We introduce an operator $S$ defined by
\[
Sh(x, v) := \int_0^\infty e^{-\nu(|v|) t} h(x - tv, v)\,dt.
\]
We see that the function $Sh$ is the solution to the equation
\[
v \cdot \nabla_x f(x, v) + \nu(|v|) f = h
\]
with the far-field condition: $Sh(x, v) \to 0$ as $|x| \to \infty$ with $x \cdot v < 0$. Thus, we have
\[
\cF Sh(\xi, v) = \frac{1}{\nu(|v|) + i \xi \cdot v} \cF h(\xi, v)
\]
for a.e. $(\xi, v) \in \R^3 \times \R^3$.

\begin{lemma} \label{lem:smoothing_x}
Let $0 \leq \a < 1/2$ and $-3 < \gamma \leq 1$. The operator $S K_\a: L^2_{0, \gamma/2}(\R^3; \tilde{H}^s(\R^3)) \to L^2_{0, \gamma/2}(\R^3; \tilde{H}^{s + s_{2,\gamma}}(\R^3))$ is bounded for any $s \geq 0$, where
\begin{equation} \label{def:s2g}
s_{2, \gamma} := 
\begin{cases}
1/2, &-2 < \gamma \leq 1,\\
1/2 - \epsilon, &\gamma = -2,\\
(3 + \gamma)/2, &-3 < \gamma < -2
\end{cases}
\end{equation}
with $0 < \epsilon < 1/2$.
\end{lemma}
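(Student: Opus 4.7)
The plan is to work on the Fourier side in $x$, where the operator $S$ becomes the explicit multiplier $(\nu(|v|) + i\xi\cdot v)^{-1}$. Since the gain of order $s_{2,\gamma}$ is independent of the input regularity $s$, so that the factor $(1+|\xi|^2)^s$ passes through the entire argument, I would reduce the claim to estimating
\begin{align*}
\|SK_\a h\|_{L^2_{v,0,\gamma/2}(\R^3;\tilde H^{s+s_{2,\gamma}}_x(\R^3))}^2 = \int_{\R^3}\!\!\int_{\R^3} \frac{(1+|\xi|^2)^{s+s_{2,\gamma}}\,|\cF(K_\a h)(\xi, v)|^2}{\nu(|v|)^2 + (\xi\cdot v)^2}\,(1+|v|)^\gamma\,d\xi\,dv
\end{align*}
by $C\|h\|_{L^2_{v,0,\gamma/2}(\R^3;\tilde H^s_x(\R^3))}^2$.

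Next I would expand $\cF(K_\a h)(\xi, v) = \int k_\a(v, v_*)\,\cF h(\xi, v_*)\,dv_*$ and apply Cauchy--Schwarz with the weight split $1 = (1+|v_*|)^{-\gamma/2}(1+|v_*|)^{\gamma/2}$. Proposition~\ref{prop:est_ka} (with $q = 1$, $\b = -\gamma$) gives $\int|k_\a(v, v_*)|(1+|v_*|)^{-\gamma}\,dv_* \leq C(1+|v|)^{-2}$, so after swapping the $v$- and $v_*$-integrations the claim reduces to the pointwise velocity-averaging bound
\[
(1+|\xi|^2)^{s_{2,\gamma}} \int_{\R^3} \frac{|k_\a(v, v_*)|\,(1+|v|)^{\gamma - 2}}{\nu(|v|)^2 + (\xi\cdot v)^2}\,dv \leq M \quad \text{uniformly in } \xi, v_* \in \R^3.
\]

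Establishing this pointwise bound is the main obstacle. The regime $|\xi| \leq 1$ is immediate from $\nu(|v|) \geq \nu_0(1+|v|)^\gamma$ combined with Proposition~\ref{prop:est_ka}. For $|\xi| > 1$ I would introduce coordinates $v = t\hat\xi + v_\perp$ with $\hat\xi := \xi/|\xi|$, so that the denominator becomes $\nu(|v|)^2 + |\xi|^2 t^2$. The formal $|\xi|^{-1}$ gain comes from the explicit integral $\int_\R (\lambda^2 + |\xi|^2 t^2)^{-1}\,dt = \pi/(\lambda|\xi|)$ applied with $\lambda = \nu(|v|)$, and the Gaussian factor in \eqref{est:ka3} together with \eqref{est:Caf} control the residual $v_\perp$-integral. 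When $\gamma > -2$, the factor $\nu^{-1}$ costs at most $(1+|v|)^{|\gamma|}$, which is absorbed against $(1+|v|)^{\gamma - 2}$ and the Gaussian localization, so the full gain $s_{2,\gamma} = 1/2$ is recovered. When $\gamma \leq -2$ the degeneration $\nu(|v|) \to 0$ as $|v| \to \infty$ defeats the na\"ive estimate, and I would instead use the elementary interpolation $\frac{1}{a+b} \leq \frac{1}{a^{1-s}b^s}$ with $a = \nu(|v|)^2$, $b = |\xi|^2 t^2$ and $s = s_{2,\gamma}$, then split the $v$-integration at a $|\xi|$-dependent threshold to balance the two competing losses; optimizing the threshold should recover precisely the reduced gain $s_{2,\gamma} = (3+\gamma)/2$ predicted by \eqref{def:s2g}, with a logarithmic loss at the critical value $\gamma = -2$ absorbed by the small $\epsilon$.
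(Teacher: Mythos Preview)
Your reduction via Fourier transform and Cauchy--Schwarz is correct and matches the paper; the weight split $(1+|v_*|)^{\pm\gamma/2}$ is a harmless variant of the paper's unweighted Cauchy--Schwarz. The gaps lie in the pointwise $v$-integral.

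For $-2<\gamma\le 1$ there is an order-of-operations slip. You cannot evaluate $\int_\R(\lambda^2+|\xi|^2t^2)^{-1}\,dt=\pi/(\lambda|\xi|)$ with $\lambda=\nu(|v|)$, since $\nu$ depends on $t$ through $|v|^2=t^2+|v_\perp|^2$; and the kernel $w_\gamma(|v-v_*|)e^{-\a_1|v-v_*|^2}$ also depends on $t$, so the $t$-integral cannot come first. The paper instead uses the polynomial weight to neutralize $\nu$ \emph{before} integrating: applying \eqref{est:ka3} once more and using $(1+|v|+|v_*|)^{\gamma-1}\le(1+|v|)^{\gamma-1}$ produces total weight $(1+|v|)^{2\gamma-3}$, and for $\gamma<0$
\[
\frac{(1+|v|)^{2\gamma-3}}{\nu_0^2(1+|v|)^{2\gamma}+(v\cdot\xi)^2}=\frac{(1+|v|)^{-3}}{\nu_0^2+(1+|v|)^{2|\gamma|}(v\cdot\xi)^2}\le\frac{1}{\nu_0^2+(v\cdot\xi)^2}
\]
reduces the denominator to constant $\nu_0$ (the case $\gamma\ge 0$ is direct from $\nu\ge\nu_0$). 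One then bounds $|v-v_*|^{-\mu}\le|v_\perp-v_{*,\perp}|^{-\mu}$, integrates $v_\perp$ first (convergent since $\mu<2$), and only afterward performs the one-dimensional $v_\parallel$-integral giving $C/|\xi|$. Your reference to \eqref{est:Caf} is also off: that is a three-dimensional estimate using the angular exponential, not the two-dimensional Gaussian integral needed here.

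For $-3<\gamma\le -2$ the gap is more serious. Once you apply $\tfrac{1}{a+b}\le a^{-(1-s)}b^{-s}$ with $s=s_{2,\gamma}$, the factor $|\xi|^{-2s_{2,\gamma}}$ is already extracted and nothing remains for a $|\xi|$-dependent threshold to balance. Moreover the residual integral then carries the new singularity $|t|^{-(3+\gamma)}$ together with the kernel's $|t-t_*|^{\gamma+2}$ arising from the $v_\perp$-integration; when $t_*=v_*\cdot\hat\xi=0$ these coalesce with combined exponent $-1$ and the integral diverges. If instead you forgo the interpolation and split at $|t|=R$, optimizing $R$ yields only $|\xi|^{-2(\gamma+3)/(\gamma+5)}$, strictly weaker than the claimed $|\xi|^{-(3+\gamma)}$. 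The paper's route is different: after the reduction to constant $\nu_0$ one integrates $v_\perp$ first to obtain the explicit one-dimensional integral $\int_\R\frac{|v_\parallel-v_{*,\parallel}|^{\gamma+2}}{\nu_0^2+v_\parallel^2|\xi|^2}\,dv_\parallel$, and then splits at $|v_\parallel-v_{*,\parallel}|=|\xi|^{-1}$, aligned with the kernel singularity rather than the transport degeneracy at $t=0$. Both pieces then contribute exactly $C|\xi|^{-(3+\gamma)}$, with the expected logarithmic correction at $\gamma=-2$.
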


\begin{proof}
By the estimate \eqref{est:ka3}, Proposition \ref{prop:est_ka} and the Cauchy-Schwarz inequality, we have
\begin{align*}
& \int_{\R^3} (1 + |\xi|^2)^{s + s_{2, \gamma}} \| \cF S K_\a f (\xi, \cdot) \|_{L^2_{0, \gamma/2}(\R^3)}^2 d\xi\\
=& C \int_{\R^3} (1 + |\xi|^2)^{s + s_{2, \gamma}} \int_{\R^3} \frac{1}{\nu(|v|)^2 + (v \cdot \xi)^2} | K_\a \cF f (\xi, v) |^2 (1 + |v|)^{\gamma}\,dv d\xi\\
\leq& C \int_{\R^3} (1 + |\xi|^2)^{s + s_{2, \gamma}} \int_{\R^3} \frac{1}{\nu(|v|)^2 + (v \cdot \xi)^2} \left( \int_{\R^3} |k_\a(v, v^*)|\,dv^* \right)\\
&\times \left( \int_{\R^3} |k_\a(v, v^*)| |\cF f(\xi, v^*)|^2 \,dv^* \right) (1 + |v|)^{\gamma}\,dv d\xi\\
\leq& C \int_{\R^3} (1 + |\xi|^2)^{s + s_{2, \gamma}} \int_{\R^3} \left( \int_{\R^3} \frac{1}{\nu(|v|)^2 + (v \cdot \xi)^2} |k_\a(v, v^*)| (1 + |v|)^{2\gamma - 1}\,dv \right)\\
&\times |\cF f(\xi, v^*)|^2\,dv^* d\xi.
\end{align*}

In what follows, we give an estimate for the inner integral with respect to the $v$ variable. We note that, by Proposition \ref{prop:est_nu} and the estimate \eqref{est:ka3},
\begin{align*}
&\int_{\R^3} \frac{1}{\nu(|v|)^2 + (v \cdot \xi)^2} |k_\a(v, v^*)| (1 + |v|)^{2\gamma - 1}\,dv\\
\leq& C (1 + |v^*|)^{\gamma - 1} \int_{\R^3} w_\gamma(|v - v^*|) e^{-\a_1 |v - v^*|^2}\,dv\\
\leq& C (1 + |v^*|)^\gamma,
\end{align*}
which means that, for a fixed $v^*$, the inner integral is uniformly bounded with respect to $\xi$. Thus, in what follows, we discuss the bound for large $|\xi|$.

When $0 \leq \gamma \leq 1$, by Proposition \ref{prop:est_nu}, we have $\nu(|v|) \geq \nu_0$ for all $v \in \R^3$. By the estimate \eqref{est:ka2} and the inequality $(1 + |v|)^\gamma \leq (1 + |v^*|)^\gamma + |v - v^*|^\gamma$, we have
\begin{align*}
&\int_{\R^3} \frac{1}{\nu(|v|)^2 + (v \cdot \xi)^2} |k_\a(v, v^*)| (1 + |v|)^{2\gamma - 1}\,dv\\
\leq& C \int_{\R^3} \frac{(1 + |v|)^{3\gamma -2}}{(\nu_0^2 + (v \cdot \xi)^2)|v -v^*|} e^{- \a_1|v -v^*|^2}\,dv\\
\leq& C \int_{\R^3} \frac{(1 + |v|)^\gamma}{(\nu_0^2 + (v \cdot \xi)^2)|v -v^*|} e^{- \a_1|v -v^*|^2}\,dv\\
\leq& C (1 + |v^*|)^\gamma \int_{\R^3} \frac{1}{(\nu_0^2 + (v \cdot \xi)^2)|v -v^*|} e^{- \a_1|v -v^*|^2}\,dv\\
&+ C \int_{\R^3} \frac{1}{(\nu_0^2 + (v \cdot \xi)^2)|v -v^*|^{1 - \gamma}} e^{- \a_1|v -v^*|^2}\,dv.
\end{align*}
Let $e_\xi$ be the unit vector with its direction $\xi$ and decompose the $v$ variable as follows:
\begin{equation} \label{decomposition_integral}
v = v_\parallel e_\xi + v_\perp, \quad v^* = v^*_{\parallel} e_\xi + v^*_{\perp},
\end{equation}
where $v_\parallel, v^*_{\parallel} \in \R$ and $v_\perp \cdot e_\xi = v^*_{\perp} \cdot e_\xi = 0$. Identifying the hyperplane $\{ v \in \R^3 \mid v \cdot e_\xi = 0\}$ with $\R^2$, we have
\begin{align*}
&\int_{\R^3} \frac{1}{(\nu_0^2 + (v \cdot \xi)^2)|v -v^*|} e^{- \a_1|v -v^*|^2}\,dv\\
\leq& C \int_\R \frac{1}{1 + (v_\parallel)^2 |\xi|^2} \left( \int_{\R^2} \frac{1}{|v_\perp - v^*_{\perp}|} e^{- \a_1|v_\perp - v^*_{\perp}|^2}\,dv_\perp \right)\,dv_\parallel\\
\leq& \frac{C}{|\xi|} \int_\R \frac{1}{1 + t^2}\,dt\\
=& \frac{C}{|\xi|}
\end{align*}
and
\begin{align*}
&\int_{\R^3} \frac{1}{(\nu_0^2 + (v \cdot \xi)^2)|v -v^*|^{1 - \gamma}} e^{- \a_1|v -v^*|^2}\,dv\\
\leq& C \int_\R \frac{1}{1 + (v_\parallel)^2 |\xi|^2} \left( \int_{\R^2} \frac{1}{|v_\perp - v^*_{\perp}|^{1 - \gamma}} e^{- \a_1|v_\perp - v^*_{\perp}|^2}\,dv_\perp \right)\,dv_\parallel\\
\leq& \frac{C}{|\xi|}.
\end{align*}
Here, we have used the change of variable $t = |\xi| v_\parallel$. Thus, we obtain
\begin{equation} \label{est:smoothing}
\int_{\R^3} \frac{1}{\nu(|v|)^2 + (v \cdot \xi)^2} |k_\a(v, v^*)| (1 + |v|)^{2\gamma - 1}\,dv \leq \frac{C}{(1 + |\xi|^2)^{s_{2, \gamma}}} (1 + |v^*|)^\gamma.
\end{equation}

When $\gamma < 0$, by Proposition \ref{prop:est_nu} and the estimate \eqref{est:ka2} again, we notice that
\begin{align*}
&\int_{\R^3} \frac{1}{\nu(|v|)^2 + (v \cdot \xi)^2} |k_\a(v, v^*)| (1 + |v|)^{2\gamma - 1}\,dv\\
\leq& C (1 + |v^*|)^\gamma \int_{\R^3} \frac{w_\gamma(|v -v^*|) (1 + |v|)^{2\gamma - 2}}{\nu(|v|)^2 + (v \cdot \xi)^2} e^{-\a_1 |v -v^*|^2} \,dv\\
\leq& C (1 + |v^*|)^\gamma \int_{\R^3} \frac{w_\gamma(|v -v^*|)}{\nu_0^2 + (1 + |v|)^{2|\gamma|}(v \cdot \xi)^2} e^{-\a_1 |v -v^*|^2} \,dv\\
\leq& C (1 + |v^*|)^\gamma \int_{\R^3} \frac{w_\gamma(|v -v^*|)}{\nu_0^2 + (v \cdot \xi)^2} e^{-\a_1 |v -v^*|^2} \,dv.
\end{align*}
For $-1< \gamma < 0$, we have
\[
\int_{\R^3} \frac{w_\gamma(|v -v^*|)}{\nu_0^2 + (v \cdot \xi)^2} e^{-\a_1 |v -v^*|^2} \,dv = \int_{\R^3} \frac{1}{(\nu_0^2 + (v \cdot \xi)^2)|v -v^*|} e^{- \a_1|v -v^*|^2}\,dv.
\]
Thus, we may apply the decomposition \eqref{decomposition_integral} to obtain \eqref{est:smoothing} when $-1 < \gamma < 0$.

When $-2 < \gamma < -1$, we have
\[
\int_{\R^3} \frac{w_\gamma(|v -v^*|)}{\nu_0^2 + (v \cdot \xi)^2} e^{-\a_1 |v -v^*|^2} \,dv = \int_{\R^3} \frac{1}{(\nu_0^2 + (v \cdot \xi)^2)|v -v^*|^{|\gamma|}} e^{- \a_1|v -v^*|^2}\,dv.
\]
Since 
\[
\int_{\R^2}\frac{1}{|\ol{v}|^{|\gamma|}} e^{-\a_1|\ol{v}|^2}\,d\ol{v} \leq C_\a
\]
for $-2 < \gamma < -1$, we can still do the integration with the coordinates \eqref{decomposition_integral} to obtain the estimate \eqref{est:smoothing}. This argument also holds for $\gamma = -1$ since
\[
\int_{\R^2} \frac{|\log|\ol{v}||}{|\ol{v}|} e^{-\a_1|\ol{v}|^2}\,d\ol{v} \leq C.
\]

When $-3 < \gamma < -2$, we introduce the coordinates \eqref{decomposition_integral} to obtain
\begin{align*}
&\int_{\R^3} \frac{w_\gamma(|v -v^*|)}{\nu_0^2 + (v \cdot \xi)^2} e^{-\a_1 |v -v^*|^2} \,dv\\
=&\int_{\R^3} \frac{1}{(\nu_0^2 + (v \cdot \xi)^2)|v -v^*|^{|\gamma|}} e^{- \a_1 |v -v^*|^2}\,dv\\
\leq& \int_{\R} \frac{1}{\nu_0^2 + (v_\parallel)^2 |\xi|^2} \int_{\R^2} \frac{1}{(|v_\parallel - v^*_{\parallel}|^2 + |v_\perp - v^*_{\perp}|^2)^{\frac{|\gamma|}{2}}}\,dv_\perp dv_\parallel\\
=& \int_{\R} \frac{|v_\parallel - v^*_{\parallel}|^{\gamma + 2}}{\nu_0^2 + (v_\parallel)^2 |\xi|^2} \int_{\R^2} \frac{1}{(1 + |\ol{v}|^2)^{\frac{|\gamma|}{2}}}\,d\ol{v} dv_\parallel\\
\leq& C_\gamma \int_{\R} \frac{|v_\parallel - v^*_{\parallel}|^{\gamma + 2}}{\nu_0^2 + (v_\parallel)^2 |\xi|^2} \,dv_\parallel.
\end{align*}
For $|v_\parallel - v^*_{\parallel}| > |\xi|^{-1}$, we have
\begin{align*}
\int_{\{ |v_\parallel - v^*_{\parallel}| > |\xi|^{-1} \}} \frac{|v_\parallel - v^*_{\parallel}|^{\gamma + 2}}{\nu_0^2 + (v_\parallel)^2 |\xi|^2} \,dv_\parallel \leq |\xi|^{-(\gamma + 2)} \int_{\R} \frac{1}{\nu_0^2 + (v_\parallel)^2 |\xi|^2}\,dv_\parallel \leq C |\xi|^{-\gamma - 3}.
\end{align*}
On the other hand, for $|v_\parallel - v^*_{\parallel}| \leq |\xi|^{-1}$, we have
\begin{align*}
\int_{\{ |v_\parallel - v^*_{\parallel}| \leq |\xi|^{-1} \}} \frac{|v_\parallel - v^*_{\parallel}|^{\gamma + 2}}{\nu_0^2 + (v_\parallel)^2 |\xi|^2} \,dv_\parallel \leq \frac{2}{\nu_0^2} \int_{|\xi|^{-1}}^\infty r^{\gamma + 2}\,dr \leq C_\gamma |\xi|^{-\gamma - 3}.
\end{align*}
Therefore, we get the estimate \eqref{est:smoothing}.

When $\gamma = -2$, we follow the above computation to obtain
\begin{align*}
&\int_{\R^3} \frac{w_\gamma(|v -v^*|)}{\nu_0^2 + (v \cdot \xi)^2} e^{-\a_1 |v -v^*|^2} \,dv\\
\leq& \int_{\R} \frac{1}{\nu_0^2 + (v_\parallel)^2 |\xi|^2} e^{- \a_1 |v_\parallel -v^*_{\parallel}|^2}\\
&\times \int_{\R^2} \frac{1}{|v_\parallel - v^*_{\parallel}|^2 + |v_\perp - v^*_{\perp}|^2} e^{- \a_1 |v_\perp -v^*_{\perp}|^2} \,dv_\perp dv_\parallel\\
=& \int_{\R} \frac{1}{\nu_0^2 + (v_\parallel)^2 |\xi|^2} e^{- \a_1 |v_\parallel -v^*_{\parallel}|^2} \int_{\R^2} \frac{1}{1 + |\ol{v}|^2} e^{- \a_1 |v_\parallel -v^*_{\parallel}|^2 |\ol{v}|^2} \,d\ol{v} dv_\parallel.
\end{align*}
We first discuss the inner integral with respect to $\ol{v}$. Introducing the polar coordinates, we have
\[
\int_{\R^2} \frac{1}{1 + |\ol{v}|^2} e^{- \a_1|v_\parallel -v^*_{\parallel}|^2 |\ol{v}|^2} \,d\ol{v} = 2\pi \int_0^\infty \frac{r}{1 + r^2} e^{- \a_1|v_\parallel -v^*_{\parallel}|^2 r^2} \,dr.
\]
For $r > |v_\parallel - v^*_{\parallel}|^{-1}$, we have
\begin{align*}
\int_{|v_\parallel - v^*_{\parallel}|^{-1}}^\infty \frac{r}{1 + r^2} e^{- \a_1|v_\parallel -v^*_{\parallel}|^2 r^2} \,d \leq& \int_{|v_\parallel - v^*_{\parallel}|^{-1}}^\infty r^{-1} e^{- \a_1|v_\parallel -v^*_{\parallel}|^2 r^2} \,dr\\
\leq& |v_\parallel - v^*_{\parallel}| \int_{|v_\parallel - v^*_{\parallel}|^{-1}}^\infty e^{- \a_1|v_\parallel -v^*_{\parallel}|^2 r^2} \,dr\\
=& \int_1^\infty e^{- \a_1 s^2} \,ds\\
\leq& C.
\end{align*}
On the other hand, for $r \leq |v_\parallel - v^*_{\parallel}|^{-1}$, we have
\begin{align*}
\int_0^{|v_\parallel - v^*_{\parallel}|^{-1}} \frac{r}{1 + r^2} e^{- \a_1|v_\parallel -v^*_{\parallel}|^2 r^2} \,dr \leq& C \int_0^{|v_\parallel - v^*_{\parallel}|^{-1}} \frac{d}{dr} \log(1 + r^2)\,dr\\
\leq& C \log \left( 1 + |v_\parallel - v^*_{\parallel}|^{-2} \right).
\end{align*}
Thus, we have
\begin{align*}
&\int_{\R^3} \frac{1}{(\nu_0^2 + (v \cdot \xi)^2)|v -v^*|^2} e^{- \a_1|v -v^*|^2}\,dv\\
\leq& C \int_{\R} \frac{1}{\nu_0^2 + (v_\parallel)^2 |\xi|^2} e^{- \a_1|v_\parallel -v^*_{\parallel}|^2} \left\{ 1 + \log \left( 1 + |v_\parallel - v^*_{\parallel}|^{-2} \right) \right\}\,dv_\parallel.
\end{align*}
When $|v_\parallel -v^*_{\parallel}| > |\xi|^{-1}$, we have
\begin{align*}
&\int_{\{|v_\parallel -v^*_{\parallel}| > |\xi|^{-1}\}} \frac{1}{\nu_0^2 + (v_\parallel)^2 |\xi|^2} e^{- \a_1|v_\parallel -v^*_{\parallel}|^2} \left\{ 1 + \log \left( 1 + |v_\parallel - v^*_{\parallel}|^{-2} \right) \right\}\,dv_\parallel\\
\leq& \left\{ 1 + \log \left( 1 + |\xi|^2 \right) \right\} \int_{\R} \frac{1}{\nu_0^2 + (v_\parallel)^2 |\xi|^2} \,dv_\parallel\\
\leq& \frac{C}{|\xi|} \left( \log(1 + |\xi|) + 1 \right).
\end{align*}
On the other hand, when $|v_\parallel -v^*_{\parallel}| \leq |\xi|^{-1}$, we have
\begin{align*}
&\int_{\{|v_\parallel -v^*_{\parallel}| \leq |\xi|^{-1}\}} \frac{1}{\nu_0^2 + (v_\parallel)^2 |\xi|^2} e^{- \a_1|v_\parallel -v^*_{\parallel}|^2} \left\{ 1 + \log \left( 1 + |v_\parallel - v^*_{\parallel}|^{-2} \right) \right\}\,dv_\parallel\\
\leq& C \int_0^{|\xi|^{-1}} e^{- \a_1 s^2} \left\{ 1 + \log \left( 1 + s^{-2} \right) \right\}\,ds.
\end{align*}
Here, for $|\xi| > 1$, we have
\begin{align*}
\int_0^{|\xi|^{-1}} e^{- \a_1 s^2} \left\{ 1 + \log \left( 1 + s^{-2} \right) \right\}\,ds \leq& \int_0^{|\xi|^{-1}} e^{- \a_1 s^2} \left\{ 1 + \log \left( 1 + s^2 \right) - \log s^2 \right\}\,ds\\
\leq& \frac{C}{|\xi|} (\log|\xi| + 1).
\end{align*}
Therefore, we have
\[
\int_0^{|\xi|^{-1}} e^{- \a_1 s^2} \left\{ 1 + \log \left( 1 + s^{-2} \right) \right\}\,ds \leq \frac{C}{|\xi|} (\log(|\xi| + 1) + 1)
\]
and
\[
\int_{\R^3} \frac{1}{\nu(|v|)^2 + (v \cdot \xi)^2} |k_\a(v, v^*)| (1 + |v|)^{2\gamma - 1}\,dv \leq \frac{C (\log(|\xi| + 1) + 1)}{1 + |\xi|} (1 + |v^*|)^\gamma.
\]
Since $\log (1 + |\xi|) + 1 \leq C_\epsilon (1 + |\xi|^2)^{\epsilon}$ for any $\epsilon > 0$, the above estimate implies that
\begin{align*}
&\int_{\R^3} \int_{\R^3} (1 + |\xi|^2)^{s + \frac{1}{2} - \epsilon} |\cF S K_\a f(\xi, v)|^2 (1 + |v|)^\gamma \,dv d\xi\\
\leq& C_\epsilon \int_{\R^3} \int_{\R^3} (1 + |\xi|^2)^s | (\cF f)(\xi, v)|^2 (1 + |v|)^\gamma \,dv d\xi
\end{align*}
for any $0 < \epsilon < 1/2$. Thus, we obtain the estimate \eqref{est:smoothing} when $\gamma = -2$. This completes the proof.
\end{proof}

\begin{corollary} \label{cor:smoothing_x}
Let $\O$ be a bounded convex domain with $C^1$ boundary, $0 \leq \a < 1/2$ and $-3 < \gamma \leq 1$. Then, the operator $S_\O K_\a : L^2_{0, \gamma/2}(\O \times \R^3) = L^2_{0 ,\gamma/2}(\R^3; L^2(\O)) \to L^2_{0 ,\gamma/2}(\R^3; H^{s_{2, \gamma}}(\O))$ is bounded, where $s_{2, \gamma}$ is the constant in \eqref{def:s2g}.
\end{corollary}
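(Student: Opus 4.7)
The plan is to reduce the assertion to Lemma \ref{lem:smoothing_x} by a zero-extension argument. Given $f \in L^2_{0, \gamma/2}(\O \times \R^3)$, I would let $\tilde f$ denote the extension of $f$ by zero to $\R^3 \times \R^3$. Since the integral operator $K_\a$ acts only on the velocity variable, $K_\a \tilde f(y, v) = 0$ for every $y \notin \O$, so $K_\a \tilde f$ is precisely the zero extension of $K_\a f$ from $\O \times \R^3$ to $\R^3 \times \R^3$. In particular, $\|\tilde f\|_{L^2_{0,\gamma/2}(\R^3\times\R^3)} = \|f\|_{L^2_{0,\gamma/2}(\O\times\R^3)}$.

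The key geometric observation is that, because $\O$ is convex and $x \in \O$, the ray $\{ x - tv : t > 0 \}$ exits $\O$ at time $\tau_-(x, v)$ and never re-enters. Hence for $(x, v) \in \O \times \R^3$,
\begin{align*}
S(K_\a \tilde f)(x, v) &= \int_0^\infty e^{-\nu(|v|) t} K_\a \tilde f(x - tv, v)\,dt\\
&= \int_0^{\tau_-(x, v)} e^{-\nu(|v|) t} K_\a f(x - tv, v)\,dt = S_\O K_\a f(x, v).
\end{align*}
In other words, $S_\O K_\a f$ is the restriction of $S(K_\a \tilde f)$ to $\O \times \R^3$.

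Applying Lemma \ref{lem:smoothing_x} with $s = 0$ then gives
\[
\| S(K_\a \tilde f) \|_{L^2_{v, 0, \gamma/2}(\R^3; \tilde H^{s_{2, \gamma}}_x(\R^3))} \leq C \| \tilde f \|_{L^2_{0, \gamma/2}(\R^3 \times \R^3)} = C \| f \|_{L^2_{0, \gamma/2}(\O \times \R^3)}.
\]
Since $0 < s_{2, \gamma} < 1$ in all cases of \eqref{def:s2g}, the norm $\| \cdot \|_{\tilde H^{s_{2, \gamma}}(\R^3)}$ is equivalent to the Slobodeckij norm $\| \cdot \|_{H^{s_{2, \gamma}}(\R^3)}$. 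Moreover, comparing the integrals in the Slobodeckij definition directly yields $\| u|_\O \|_{H^{s_{2, \gamma}}(\O)} \leq \| u \|_{H^{s_{2, \gamma}}(\R^3)}$ for any $u \in H^{s_{2, \gamma}}(\R^3)$, as both the $L^2$ part and the double integral are monotone under shrinking the domain of integration. Chaining these inequalities produces the desired bound.

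Honestly there is no real obstacle in this step: all the analytic work has been done in Lemma \ref{lem:smoothing_x}. The only items requiring a moment of care are the identification $S(K_\a \tilde f)|_{\O \times \R^3} = S_\O K_\a f$ (which uses convexity decisively) and citing the standard equivalence $\tilde H^s \sim H^s$ together with the trivial restriction bound $H^s(\R^3) \to H^s(\O)$. Once these are in place, the corollary follows immediately by integrating in $v$ against the weight $(1 + |v|)^\gamma$.
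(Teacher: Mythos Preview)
Your proposal is correct and follows essentially the same approach as the paper: zero-extend $f$, apply Lemma \ref{lem:smoothing_x} with $s=0$, and then restrict using the identity $S(K_\a \tilde f)|_{\O\times\R^3} = S_\O K_\a f$. In fact you spell out more detail than the paper does, making explicit the role of convexity in this identity, the $\tilde H^s \simeq H^s$ equivalence, and the trivial restriction bound $H^s(\R^3)\to H^s(\O)$.
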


\begin{proof}
For a function $f \in L^2_{0, \gamma/2}(\O \times \R^3)$, let $\tilde{f} \in L^2_{0, \gamma/2}(\R^3 \times \R^3)$ be its zero extension, namely,
\[
\tilde{f}(x, v) :=
\begin{cases}
f(x, v), &x \in \O,\\
0, &x \in \R^3 \setminus \O.
\end{cases}
\]

By Lemma \ref{lem:smoothing_x}, we have $SK_\a \tilde{f} \in L^2_{0 ,\gamma/2}(\R^3; H^{s_{2, \gamma}}(\R^3))$ for $f \in L^2_{0 ,\gamma/2}(\R^3; L^2(\O))$. Noticing the identity
\[
\left. S K_\a \tilde{f} \right|_{\O} = S_\O K_\a f,
\]
we reach at the conclusion.
\end{proof}

\section{$L^2$ well-posedness} \label{sec:L2}

In this section, we prove the following lemma, which implies Lemma \ref{lem:existence_lin}.

\begin{lemma} \label{lem:inverse}
Let $\O$ be a bounded convex domain with $C^1$ boundary. For $0 \leq \a < 1/2$ and $-3 < \gamma \leq 1$, the operator $I - S_\O K: L^2_{\a, \gamma/2}(\O \times \R^3) \to L^2_{\a, \gamma/2}(\O \times \R^3)$ has a bounded inverse. 
\end{lemma}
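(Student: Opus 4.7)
The plan is to invoke the Fredholm alternative on the Hilbert space $L^2_{\a, \gamma/2}(\O \times \R^3)$: it suffices to establish that $S_\O K$ is a compact operator on this space and that $I - S_\O K$ is injective. As a first reduction, the multiplication operator $M_\a f(x,v) := e^{\a |v|^2} f(x,v)$ is a unitary isomorphism from $L^2_{\a, \gamma/2}(\O \times \R^3)$ onto $L^2_{0, \gamma/2}(\O \times \R^3)$, and the very definition of $k_\a$ yields the conjugation identity $M_\a (S_\O K) M_\a^{-1} = S_\O K_\a$. Hence compactness of $S_\O K$ on $L^2_{\a, \gamma/2}$ is equivalent to compactness of $S_\O K_\a$ on $L^2_{0, \gamma/2}$, which is where I would work.

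For the compactness step, the plan is to combine the two smoothing effects already isolated. Corollary \ref{cor:smoothing_x} shows that $S_\O K_\a$ gains $H^{s_{2,\gamma}}$ regularity in the spatial variable, while Lemmas \ref{lem:H1_VK*} and \ref{lem:Hs_VK*} show that $V_\gamma K_\a^*$ gains positive fractional Sobolev regularity in the velocity variable. Consequently, the composite $V_\gamma K_\a^* S_\O K_\a$ lands in a space with positive fractional Sobolev smoothness in $x$ and $v$ jointly. After truncating to $|v| \leq R$, whose tails are uniformly small thanks to the weight-improvement in Propositions \ref{prop:bounded_Ka} and \ref{prop:S_bound}, the Rellich--Kondrachov compact embedding for fractional Sobolev spaces on bounded sets yields compactness of $V_\gamma K_\a^* S_\O K_\a$ on $L^2_{0, \gamma/2}(\O \times \R^3)$. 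To upgrade this to compactness of $T := S_\O K_\a$ itself, one observes that $V_\gamma K_\a^*$ coincides, up to a bounded multiplication factor, with the $L^2_{0, \gamma/2}$-adjoint of $K_\a$, so that $V_\gamma K_\a^* S_\O K_\a$ is comparable to $T^* T$ modulo bounded operators; the identity $\| T(f_n - f_m) \|^2 = \langle T^* T(f_n - f_m), f_n - f_m\rangle$ then converts compactness of $T^* T$ into compactness of $T$ on the Hilbert space $L^2_{0, \gamma/2}(\O \times \R^3)$.

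For injectivity of $I - S_\O K$, suppose $f \in L^2_{\a, \gamma/2}(\O \times \R^3)$ satisfies $f = S_\O K f$. Then $f$ is a weak solution of the homogeneous problem $v \cdot \nabla_x f = Lf$ in $\O \times \R^3$ with $f = 0$ on $\Gamma^-$. Pairing with $f$ and integrating, the transport term contributes a nonnegative boundary integral on $\Gamma^+$ (the trace on $\Gamma^-$ already vanishes), while the collision term yields $\langle Lf, f\rangle \leq 0$, with equality only when $f(x,\cdot)$ lies in the macroscopic kernel $\cN = \Span\{M^{1/2}, v_i M^{1/2}, |v|^2 M^{1/2}\}$ for a.e.~$x \in \O$. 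Both contributions must therefore vanish, so $f(x,\cdot) \in \cN$ a.e.\ and the $\Gamma^+$ trace vanishes; the reduced transport equation $v \cdot \nabla_x f = 0$ with $f|_{\Gamma^-} = 0$ combined with the convexity of $\O$ (every backward characteristic meets $\Gamma^-$) then forces $f \equiv 0$.

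The hardest step will be the compactness argument, uniformly across the entire range $-3 < \gamma \leq 1$. In the soft-potential regime $-3 < \gamma < -2$, both the spatial smoothing exponent $s_{2,\gamma} = (3+\gamma)/2$ and the velocity smoothing obtained from $V_\gamma K_\a^*$ are small, so one must balance the velocity truncation against the limited gain in fractional Sobolev regularity with care, relying on the sharp kernel bounds of Section \ref{sec:pre} and the velocity averaging lemma of Section \ref{sec:VAL}. A secondary technical point will be to make the identification of $V_\gamma K_\a^* S_\O K_\a$ with a bounded perturbation of $T^* T$ fully rigorous in the weighted inner product, since the adjoint involves an additional multiplication by $(1+|v|)^\gamma$ that must be absorbed without destroying the compactness gained from the smoothing.
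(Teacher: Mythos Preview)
Your overall plan—Fredholm alternative, reduction to $S_\O K_\a$ via conjugation by $e^{\a|v|^2}$, compactness of $V_\gamma K_\a^* S_\O K_\a$ via velocity truncation plus Rellich, and injectivity via the energy identity—matches the paper's architecture exactly. The injectivity argument and the compactness of $V_\gamma K_\a^* S_\O K_\a$ are handled essentially as you describe.

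The gap is in your passage from compactness of $V_\gamma K_\a^* S_\O K_\a$ to compactness of $T := S_\O K_\a$. You flag this as a ``secondary technical point,'' but the proposed identification with $T^*T$ is not merely delicate—it is incorrect. The $L^2_{0,\gamma/2}$-adjoint of $T$ is $V_\gamma K_\a^* V_\gamma^{-1} S_\O^{*}$, where $S_\O^{*}$ is the \emph{forward} transport integral $\int_0^{\tau_+(x,v)} e^{-\nu t} h(x+tv,v)\,dt$. Hence $T^*T = V_\gamma K_\a^* V_\gamma^{-1} S_\O^{*} S_\O K_\a$, which contains the factor $S_\O^{*}S_\O$, not $S_\O$ alone. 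No bounded multiplication collapses $S_\O^{*}S_\O$ into $S_\O$, and there is no abstract operator-theoretic route from compactness of $V_\gamma K_\a^* S_\O K_\a$ to compactness of $T$.

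The paper closes this gap by a different mechanism (Lemma~\ref{lem:compactL2}), using the PDE structure rather than operator algebra. If $g_n \rightharpoonup 0$ and $f_n := S_\O K_\a g_n$, then $f_n$ solves $v\cdot\nabla_x f_n + \nu f_n = K_\a g_n$ with $f_n=0$ on $\Gamma^-$. Multiplying by $f_n$, integrating, and applying Green's identity, the transport term contributes a nonnegative $\Gamma^+$ boundary integral, leaving
\[
\int_{\O\times\R^3} \nu |f_n|^2 \;\le\; \int (K_\a g_n)\, f_n \;=\; \int g_n\, (K_\a^* f_n) \;\le\; \|g_n\|_{L^2_{0,\gamma/2}}\, \|V_\gamma K_\a^* S_\O K_\a g_n\|_{L^2_{0,\gamma/2}}.
\]
The right-hand side tends to zero since $\|g_n\|$ is bounded and $V_\gamma K_\a^* S_\O K_\a g_n \to 0$ strongly by the compactness already established; hence $\|f_n\|_{L^2_{0,\gamma/2}} \to 0$. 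This energy estimate is the missing ingredient, and it is precisely where the single factor $S_\O$ (rather than $S_\O^* S_\O$) arises naturally.
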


We remark that Maslova \cite{M} described the same idea for the case $0 \leq \gamma \leq 1$ without an explicit proof. 

We prove Lemma \ref{lem:inverse} by the Fredholm alternative theorem. It suffices to show the compactness of the operator $S_\O K$ and the injectivity of the operator $I - S_\O K$ on $L^2_{\a, \gamma/2}(\O \times \R^3)$. However, it is hard to show the compactness directly. Thus, we transform the operator as follows.

For $f \in L^2_{\a, \gamma/2}(\O \times \R^3)$, let 
\begin{equation} \label{eq:transform_a_to_0}
f_\a := f e^{\a |v|^2}. 
\end{equation}
Then, we have $f_\a \in L^2_{0, \gamma/2}(\O \times \R^3)$. Moreover, we have
\[
(S_\O K f)_\a = S_\O K_\a f_\a \in L^2_{0, \gamma/2}(\O \times \R^3).
\]
We notice that the transformation \eqref{eq:transform_a_to_0} is isometry in the sense that $\| f \|_{L^2_{\a, \b}(\O \times \R^3)} = \| f_\a \|_{L^2_{0, \b}(\O \times \R^3)}$ for all $\b \in \R$. Thus, the compactness of $S_\O K$ on $L^2_{\a, \gamma/2}(\O \times \R^3)$ is equivalent to that of $S_\O K_\a$ on $L^2_{0, \gamma/2}(\O \times \R^3)$. In what follows, we discuss the latter compactness.

\begin{lemma} \label{lem:compactL2}
Let $\O$ be a bounded convex domain with $C^1$ boundary, $0 \leq \a < 1/2$ and $-3 < \gamma \leq 1$. Suppose that the operator $V_\gamma K^*_\a S_\O K_\a$ is compact on $L^2_{0, \gamma/2}(\O \times \R^3)$. Then, the operator $S_\O K_\a$ is also compact on $L^2_{0, \gamma/2}(\O \times \R^3)$. 
\end{lemma}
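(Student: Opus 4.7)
The plan is to deduce compactness of $A := S_\Omega K_\alpha$ from that of $B := V_\gamma K_\alpha^* S_\Omega K_\alpha$ via a direct energy estimate on the transport equation. Given a bounded sequence $\{f_n\} \subset L^2_{0,\gamma/2}(\Omega\times\R^3)$, the compactness of $B$ allows us to pass to a subsequence along which $\{B f_n\}$ is Cauchy in $L^2_{0,\gamma/2}$. Writing $u_n := A f_n = S_\Omega K_\alpha f_n$, the goal is to show that $\{u_n\}$ is also Cauchy, which, combined with boundedness of the original sequence, yields compactness of $A$.

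The crucial observation is that $u_n$ is the unique $S_\Omega$-representation of the solution to the transport problem
\[
v\cdot\nabla_x u_n + \nu(|v|) u_n = K_\alpha f_n \quad \text{in } \Omega\times\R^3, \qquad u_n = 0 \text{ on } \Gamma^-.
\]
Hence $u_n - u_m$ solves the same transport equation with source $K_\alpha(f_n - f_m)$ and zero incoming trace. Multiplying by $u_n - u_m$ and integrating over $\Omega \times \R^3$, the transport term produces
\[
\tfrac12 \int_{\p\Omega\times\R^3} (u_n - u_m)^2 (n(z)\cdot v)\, d\sigma_z dv,
\]
which reduces to an integral over $\Gamma^+$ only (since $u_n - u_m$ vanishes on $\Gamma^-$) and is therefore nonnegative. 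Using Proposition \ref{prop:est_nu} to bound $\nu(|v|)\geq \nu_0(1+|v|)^\gamma$, this yields
\[
\nu_0 \| u_n - u_m\|_{L^2_{0,\gamma/2}(\Omega\times\R^3)}^2 \leq \int_\Omega\!\int_{\R^3} K_\alpha(f_n - f_m)(x,v)\,(u_n - u_m)(x,v)\, dv dx.
\]

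The right-hand side is then recast in terms of $B$. Moving $K_\alpha$ to the other factor via its unweighted adjoint and inserting $(1+|v|)^\gamma(1+|v|)^{-\gamma}$ gives
\[
\int K_\alpha h \cdot w\, dv = \int h\cdot K_\alpha^* w\, dv = \int h \cdot (1+|v|)^\gamma\, V_\gamma K_\alpha^* w\, dv,
\]
so the right-hand side equals $\langle f_n - f_m,\, V_\gamma K_\alpha^*(u_n - u_m)\rangle_{L^2_{0,\gamma/2}(\Omega\times\R^3)}$. Since $V_\gamma K_\alpha^*(u_n - u_m) = V_\gamma K_\alpha^* S_\Omega K_\alpha (f_n - f_m) = B(f_n - f_m)$, Cauchy--Schwarz gives
\[
\nu_0 \| u_n - u_m\|_{L^2_{0,\gamma/2}}^2 \leq \|f_n - f_m\|_{L^2_{0,\gamma/2}}\, \|B(f_n - f_m)\|_{L^2_{0,\gamma/2}}.
\]
The first factor is uniformly bounded and the second vanishes along the chosen subsequence, so $\{u_n\}$ is Cauchy.

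The main obstacle is not analytic but technical: the energy identity requires integration by parts on $v\cdot\nabla_x$ applied to $u_n - u_m$, and one must ensure this is rigorous for elements defined merely through the $S_\Omega$ representation. The standard remedy is to approximate $f_n$ by functions that are smooth and compactly supported in $\Omega\times(\R^3\setminus B_\epsilon(0))$, carry out the identity on the approximations (where $u_n$ is classical and the boundary trace on $\Gamma^+$ is well-defined), and then pass to the limit using boundedness of $S_\Omega$, $K_\alpha$, and $V_\gamma K_\alpha^*$ established in Section \ref{sec:pre}. Once this is in place, the argument above is self-contained and does not require any explicit identification of $V_\gamma K_\alpha^*$ with the Hilbert adjoint of $A$.
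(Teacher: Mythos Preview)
Your proof is correct and follows essentially the same approach as the paper: both derive the key energy inequality $\nu_0\|S_\Omega K_\alpha h\|_{L^2_{0,\gamma/2}}^2 \le \|h\|_{L^2_{0,\gamma/2}}\|V_\gamma K_\alpha^* S_\Omega K_\alpha h\|_{L^2_{0,\gamma/2}}$ by multiplying the transport equation by the solution and integrating. The only cosmetic difference is that the paper phrases compactness via ``weakly null $\Rightarrow$ strongly null'' (applying the inequality with $h=g_n$), whereas you extract a Cauchy subsequence and apply it with $h=f_n-f_m$; your explicit discussion of the adjoint identity and the integration-by-parts justification is a welcome addition that the paper leaves implicit.
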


\begin{proof}
Let $\{ g_n \}$ be a sequence in $L^2_{0, \gamma/2}(\O \times \R^3)$ which converges to $0$ weakly in $L^2_{0, \gamma/2}(\O \times \R^3)$ and let $f_n := S_\O K_\a g_n$. It suffices to show that the sequence $\{ f_n \}$ converges to $0$ strongly in $L^2_{0, \gamma/2}(\O \times \R^3)$ under the assumption that $V_\gamma K^*_\a S_\O K_\a$ is compact on $L^2_{0, \gamma/2}(\O \times \R^3)$. 

Recall that the function $f_n$ solves the following boundary value problem:
\[
\begin{cases}
v \cdot \nabla_x f_n + \nu f_n = K_\a g_n &\mbox{ in } \O \times \R^3,\\
f_n = 0 &\mbox{ on } \Gamma^-.
\end{cases}
\]
Multiplying the equation by $f_n$ and integrating it on $\O \times \R^3$, by Green's identity and the Cauchy-Schwarz inequality, we have
\[
\begin{split}
&\frac{1}{2} \int_{\Gamma^+} |f_n(z, v)|^2 n(z) \cdot v \,d\sigma_z dv + \int_{\O \times \R^3} \nu(|v|) |f_n(x, v)|^2\,dxdv\\ 
=& \int_{\O \times \R^3} (K_\a g_n(x, v)) f_n(x, v)\,dxdv\\
=& \int_{\O \times \R^3} g_n(x, v) (K^*_\a S_\O K_\a g_n(x, v))\,dxdv\\
\leq& \| g_n \|_{L^2_{0, \gamma/2}(\O \times \R^3)} \| V_\gamma K^*_\a S_\O K_\a g_n \|_{L^2_{0, \gamma/2}(\O \times \R^3)}.
\end{split}
\]
By Proposition \ref{prop:est_nu}, we see that
\[
\| f_n \|_{L^2_{0, \gamma/2}(\O \times \R^3)}^2 \leq C \| g_n \|_{L^2_{0, \gamma/2}(\O \times \R^3)} \| V_\gamma K^*_\a S_\O K_\a g_n \|_{L^2_{0, \gamma/2}(\O \times \R^3)}.
\]
Since $\{ g_n \}$ is a weakly convergent sequence, it is uniformly bounded in $L^2_{0, \gamma/2}(\O \times \R^3)$. Also, by the compactness of the operator $V_\gamma K^*_\a S_\O K_\a$ on $L^2_{0, \gamma/2}(\O \times \R^3)$, the sequence $\{ V_\gamma K^*_\a S_\O K_\a g_n \}$ converges to $0$ strongly in $L^2_{0, \gamma/2}(\O \times \R^3)$ as $n \to \infty$. Therefore, the sequence $\{ f_n \}$ converges to $0$ strongly in $L^2_{0, \gamma/2}(\O \times \R^3)$. This completes the proof.
\end{proof}

Motivated by Lemma \ref{lem:compactL2}, we prove the compactness of the operator $V_\gamma K_\a^* S_\O K_\a$.

\begin{lemma} \label{lem:compact_VKSK}
Let $\O$ be a bounded convex domain with $C^1$ boundary. For $0 \leq \a < 1/2$ and $-3 < \gamma \leq 1$, the operator $V_\gamma K^*_\a S_\O K_\a: L^2_{0, \gamma/2}(\O \times \R^3) \to L^2_{0, \gamma/2}(\O \times \R^3)$ is compact.
\end{lemma}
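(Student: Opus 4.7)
The plan is to combine the $x$-smoothing of $S_\O K_\a$ from Corollary \ref{cor:smoothing_x} with the $v$-smoothing of $V_\gamma K_\a^*$ from Lemma \ref{lem:H1_VK*} (for $-2 < \gamma \leq 1$) or Lemma \ref{lem:Hs_VK*} (for $-3 < \gamma \leq -2$), via velocity truncation and the Rellich-Kondrachov theorem on bounded sets. Concretely, for each $N > 0$, fix a smooth cutoff $\chi_N \in C_c^\infty(\R^3)$ with $\chi_N \equiv 1$ on $B_N$ and $\supp \chi_N \subset B_{2N}$, and define $T_N g(x, v) := \chi_N(v)\, V_\gamma K_\a^* S_\O K_\a g(x, v)$. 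Compactness of $V_\gamma K_\a^* S_\O K_\a$ follows once I establish (i) each $T_N$ is compact on $L^2_{0, \gamma/2}(\O \times \R^3)$, and (ii) $\| T_N - V_\gamma K_\a^* S_\O K_\a \|_{\mathrm{op}} \to 0$ as $N \to \infty$, since the compact operators form a closed subspace of the bounded ones.

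For (i), since $V_\gamma K_\a^*$ acts only in $v$, it preserves the $x$-regularity of its argument; thus Corollary \ref{cor:smoothing_x} yields $V_\gamma K_\a^* S_\O K_\a g \in L^2_{v, 0, \gamma/2}(\R^3; H^{s_{2, \gamma}}_x(\O))$, while applying Lemma \ref{lem:H1_VK*} or Lemma \ref{lem:Hs_VK*} pointwise in $x$ gives $V_\gamma K_\a^* S_\O K_\a g \in L^2_x(\O; H^{s_{3, \gamma}}_v(\R^3))$ for a suitable $s_{3, \gamma} > 0$. After multiplication by $\chi_N$, the image of $T_N$ is uniformly bounded in the mixed-regularity space $L^2_v H^{s_{2, \gamma}}_x \cap L^2_x H^{s_{3, \gamma}}_v$ and supported on $\O \times B_{2N}$. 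A separable-regularity Rellich argument, implemented via the Fréchet-Kolmogorov criterion with translations in $x$ and $v$ taken separately, or by tensor-product mollification reducing to a joint $H^{s'}(\O \times B_{2N})$ norm with small $s' > 0$, then yields compactness of $T_N$ in $L^2$.

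For (ii), I would exploit the cumulative weight improvement along the composition: Proposition \ref{prop:bounded_Ka} gives $K_\a : L^2_{0, \gamma/2} \to L^2_{0, (4 - \gamma)/2}$, Proposition \ref{prop:S_bound} then sends this into $L^2_{0, (6 - \gamma)/2}$, and the kernel bounds in Propositions \ref{prop:est_vka*v*} and \ref{prop:est_vka*v2} can be used via the same Cauchy-Schwarz scheme as in Proposition \ref{prop:bounded_VK*} (now with the heavier input weight) to show that $V_\gamma K_\a^*$ sends this further into $L^2_{0, \b/2}$ for some $\b > \gamma$, with norm controlled by $\| g \|_{L^2_{0, \gamma/2}}$. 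Chebyshev's inequality then bounds
\[
\| (1 - \chi_N) V_\gamma K_\a^* S_\O K_\a g \|_{L^2_{0, \gamma/2}}^2 \leq C (1 + N)^{\gamma - \b} \| g \|_{L^2_{0, \gamma/2}}^2,
\]
which vanishes as $N \to \infty$ uniformly in the unit ball of $L^2_{0, \gamma/2}$, giving (ii).

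The main obstacle is (i): the mixed-regularity bound $L^2_v H^{s_{2, \gamma}}_x \cap L^2_x H^{s_{3, \gamma}}_v$ is strictly weaker than joint fractional Sobolev regularity on $\O \times B_{2N}$, so Rellich-Kondrachov cannot be applied directly and a tailored mixed-variable compactness argument is needed. The tail estimate in (ii) is conceptually straightforward once the weighted $L^2$ bookkeeping is carried out.
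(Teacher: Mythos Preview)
Your overall architecture matches the paper's: truncate in $v$, prove compactness of the truncated operator, and show norm convergence of the truncation. The difference lies in how you handle what you call ``the main obstacle'' in step (i).

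You worry that the mixed-regularity bound $L^2_v(B_R; H^{s_{2,\gamma}}_x(\O)) \cap L^2_x(\O; H^{s_{3,\gamma}}_v(B_R))$ is strictly weaker than joint $H^s(\O \times B_R)$ regularity, and you propose workarounds via Fr\'echet--Kolmogorov or mollification. In fact this intersection is \emph{not} weaker: the paper's Proposition \ref{prop:preRellich} shows directly that
\[
L^2_v(B_R; H^s_x(\O)) \cap L^2_x(\O; H^s_v(B_R)) \subset H^s(\O \times B_R)
\]
for any $0 < s < 1$, with the same exponent $s$ on both sides. The proof is an elementary computation: split the six-dimensional Slobodeckij integrand via $|f(x,v) - f(y,u)|^2 \leq 2|f(x,v) - f(y,v)|^2 + 2|f(y,v) - f(y,u)|^2$ and integrate out the extra variable in each piece, which collapses the $6$-dimensional kernel $|(x,v)-(y,u)|^{-(6+2s)}$ to the $3$-dimensional one. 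Once this inclusion is in hand, Rellich--Kondrachov applies immediately on the bounded set $\O \times B_R$, and no separate equicontinuity or mollification argument is needed. So your obstacle dissolves; you simply had not yet seen the one-line inclusion that removes it.

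For step (ii), your weight-tracking plus Chebyshev argument would work, but the paper takes a shorter route: it estimates $\|(I - T_R) V_\gamma K_\a^*\|_{\mathrm{op}}$ directly using only Proposition \ref{prop:est_vka*v*}. The point is that $\int_{\R^3} |k_{\a,\gamma}^*(v,v_*)|\,dv_* \leq C(1+|v|)^{-1}$ already carries an extra factor of $(1+|v|)^{-1}$ beyond what boundedness requires, so restricting to $|v| \geq R$ immediately buys a factor $(1+R)^{-1}$ inside the Cauchy--Schwarz estimate, giving $\|(I - T_R) V_\gamma K_\a^*\| \leq C(1+R)^{-1/2}$ without ever invoking the full chain $K_\a \to S_\O \to V_\gamma K_\a^*$.
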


To prove Lemma \ref{lem:compact_VKSK}, we truncate the operator. Let $R > 0$ and let $B_R$ be the open ball in $\R^3$ with the center the origin and with radius $R$. For a measurable set $A$ in $\R^3$, let $\chi_A$ denote its characteristic function and $T_R$ denote the multiplication operator by $\chi_{B_R}$. 

We introduce the following proposition.

\begin{proposition} \label{prop:preRellich}
Let $\O$ be a bounded convex domain with $C^1$ boundary, $0 < s < 1$ and $R > 0$. We have
\[
L^2(B_R; H^s(\O)) \cap L^2(\O; H^s(B_R)) \subset H^s(\O \times B_R).
\]
\end{proposition}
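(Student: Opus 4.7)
The plan is to prove the inclusion by bounding the Slobodeckij seminorm $|f|_{H^s(\Omega \times B_R)}$ directly in terms of the two partial seminorms. The triangle inequality $|f(x_1,v_1)-f(x_2,v_2)|\le |f(x_1,v_1)-f(x_1,v_2)|+|f(x_1,v_2)-f(x_2,v_2)|$ gives, after using $(a+b)^2\le 2a^2+2b^2$, a decomposition $|f|_{H^s(\Omega\times B_R)}^2 \le 2(I_1+I_2)$, where
\[
I_1 = \int_{\Omega\times B_R}\!\!\int_{\Omega\times B_R} \frac{|f(x_1,v_1)-f(x_1,v_2)|^2}{(|x_1-x_2|^2+|v_1-v_2|^2)^{(6+2s)/2}}\,dx_2 dv_2 dx_1 dv_1
\]
and $I_2$ is the analogous term with a pure $x$-difference in the numerator.

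For $I_1$, the numerator does not depend on $x_2$, so the inner $x_2$-integral can be evaluated first. Extending the integration from $\Omega$ to $\R^3$ and substituting $y=(x_2-x_1)/|v_1-v_2|$ yields
\[
\int_{\Omega}\frac{dx_2}{(|x_1-x_2|^2+|v_1-v_2|^2)^{(6+2s)/2}} \le \frac{C}{|v_1-v_2|^{3+2s}}\int_{\R^3}\frac{dy}{(1+|y|^2)^{(6+2s)/2}} = \frac{C}{|v_1-v_2|^{3+2s}},
\]
where the last integral is finite because $6+2s>3$. Substituting back gives $I_1 \le C\int_\Omega |f(x_1,\cdot)|_{H^s(B_R)}^2\,dx_1 \le C\|f\|_{L^2_x(\Omega;H^s_v(B_R))}^2$. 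A symmetric argument, integrating out $v_1$ first in $I_2$, yields $I_2\le C\|f\|_{L^2_v(B_R;H^s_x(\Omega))}^2$.

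Combined with the obvious bound $\|f\|_{L^2(\Omega\times B_R)}^2 \le \|f\|_{L^2_v(B_R;H^s_x(\Omega))}^2$ (or its $L^2_x(\Omega;H^s_v(B_R))$ counterpart via Fubini), this establishes $f\in H^s(\Omega\times B_R)$ with a norm bound. I do not expect any serious obstacle: the proof reduces to one Fubini swap plus a scaling identity, and neither the convexity of $\Omega$ nor any boundary regularity is actually needed for this half of the equivalence, since we only enlarge the inner integration domain to $\R^3$. The only minor subtlety is verifying integrability of the radial one-dimensional integral $\int_0^\infty t^2(1+t^2)^{-(6+2s)/2}\,dt$, which holds for every $s\ge 0$ because the integrand decays like $t^{-4-2s}$ at infinity.
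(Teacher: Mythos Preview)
Your proof is correct and follows essentially the same approach as the paper: split the difference via the triangle inequality, then integrate out the redundant variable (the one not appearing in the numerator) using a scaling substitution to produce the factor $|v_1-v_2|^{-3-2s}$ (resp.\ $|x_1-x_2|^{-3-2s}$). The paper writes the substitution in spherical coordinates $u=v+|x-y|t\omega$ rather than your Cartesian $y=(x_2-x_1)/|v_1-v_2|$, but the computation and the resulting bound are identical; your remark that convexity and $C^1$ regularity of $\partial\Omega$ are not actually used here is also correct.
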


\begin{proof}
We recall that $|(x, v) - (y, u)| = (|x - y|^2 + |v - u|^2)^{1/2}$ for $(x, v), (y, u) \in \O \times B_R \subset \R^6$. By the definition of the Slobodeckij seminorm for fractional Sobolev spaces in $\R^6$, we see that
\begin{align*}
| f |_{H^s(\O \times B_R)}^2 =& \int_\O \int_{B_R} \int_\O \int_{B_R} \frac{|f(x, v) - f(y, u)|^2}{(|x - y|^2 + |v - u|^2)^{\frac{1}{2}(6 + 2s)}}\,dv dx du dy\\
\leq& 2 \int_\O \int_{B_R} \int_\O \int_{B_R} \frac{|f(x, v) - f(y, v)|^2}{(|x - y|^2 + |v - u|^2)^{3+s}}\,dv dx du dy\\
&+ 2 \int_\O \int_{B_R} \int_\O \int_{B_R} \frac{|f(y, v) - f(y, u)|^2}{(|x - y|^2 + |v - u|^2)^{3+s}}\,dv dx du dy.
\end{align*}

We give an estimate for the first term on the right hand side. By changing the order of integration, we see that
\begin{align*}
&\int_\O \int_{B_R} \int_\O \int_{B_R} \frac{|f(x, v) - f(y, v)|^2}{(|x - y|^2 + |v - u|^2)^{3+s}}\,dv dx du dy\\
=& \int_{B_R} \int_\O \int_\O |f(x, v) - f(y, v)|^2 \left( \int_{B_R} \frac{1}{(|x - y|^2 + |v - u|^2)^{3+s}}\,du \right)\,dx dy dv.
\end{align*}
Introducing the spherical coordinates $u = v + |x - y| t \omega$ with $t > 0$ and $\omega \in S^2$, we have
\begin{align*}
&\int_{B_R} \frac{1}{(|x - y|^2 + |v - u|^2)^{3+s}}\,du\\
=& \frac{1}{|x - y|^{6+2s}}  \int_{S^2} \int_0^{R|x - y|^{-1}} \frac{1}{(1 + t^2)^{3+s}} |x - y|^3 t^2\,dt d\omega\\
\leq& \frac{C}{|x - y|^{3+2s}}.
\end{align*}
Thus, we have
\begin{align*}
&\int_\O \int_{B_R} \int_\O \int_{B_R} \frac{|f(x, v) - f(y, v)|^2}{(|x - y|^2 + |v - u|^2)^{3+s}}\,dv dx du dy\\
\leq& C \int_{B_R} \int_\O \int_\O \frac{|f(x, v) - f(y, v)|^2}{|x - y|^{3+2s}}\,dx dy dv\\
\leq& C \| f \|_{L^2(B_R; H^s(\O))}^2.
\end{align*}

In the same way, we obtain
\[
\int_\O \int_{B_R} \int_\O \int_{B_R} \frac{|f(y, v) - f(y, u)|^2}{(|x - y|^2 + |v - u|^2)^{3+s}}\,dv dx du dy \leq C \| f \|_{L^2(\O; H^s(B_R))}^2.
\]
Thus, we have
\[
| f |_{H^s(\O \times B_R)^2} \leq C \left( \| f \|_{L^2(B_R; H^s(\O))}^2 + \| f \|_{L^2(\O; H^s(B_R))}^2 \right),
\]
which implies the inclusion $L^2(B_R; H^s(\O)) \cap L^2(\O; H^s(B_R)) \subset H^s(\O \times B_R)$. This completes the proof.
\end{proof}

We are ready to prove Lemma \ref{lem:compact_VKSK}.

\begin{proof}[Proof of Lemma \ref{lem:compact_VKSK}]
Our proof consists of two steps. We first show that the operator $T_R V_\gamma K_\a^* S_\O K_\a: L^2_{0, \gamma/2}(\O \times \R^3) \to H^s(\O \times B_R)$ is bounded for some $s > 0$, which implies the compactness of the operator $T_R V_\gamma K_\a^* S_\O K_\a$ on $L^2_{0, \gamma/2}(\O \times \R^3)$. The second step is to show that the operator $T_R V_\gamma K_\a^* S_\O K_\a$ converges to $V_\gamma K_\a^* S_\O K_\a$ in the sense of the operator norm as $R \to \infty$.

For the first step, by Proposition \ref{prop:bounded_VK*} and Corollary \ref{cor:smoothing_x}, we see that $V_\gamma K_\a^* S_\O K_\a f \in L^2(B_R; H^{s_{2, \gamma}}(\O))$ for $f \in L^2_{0, \gamma/2}(\O \times \R^3)$, where $s_{2, \gamma}$ is the constant in \eqref{def:s2g}. Here, we used the equivalence between $L^2_{0, \gamma/2}(B_R)$ and $L^2(B_R)$ since the domain $B_R$ is bounded. Also, by Lemma \ref{lem:H1_VK*}, Lemma \ref{lem:Hs_VK*} and Proposition \ref{prop:bounded_SKa}, we see that $V_\gamma K_\a^* S_\O K_\a f \in L^2(\Omega; H^{s_{3, \gamma}}(B_R))$ for $f \in L^2_{0, \gamma/2}(\O \times \R^3)$, where
\begin{equation} \label{def:s3g}
s_{3, \gamma} :=
\begin{cases}
1, &-2 < \gamma \leq 1,\\
(3 + \gamma)/2 - \epsilon, &-3 < \gamma \leq -2
\end{cases}
\end{equation}
for any small $\epsilon > 0$. Letting $s := \min\{ s_{2, \gamma}, s_{3, \gamma} \}$, we have $V_\gamma K_\a^* S_\O K_\a f \in L^2(B_R; H^s(\O)) \cap L^2(\Omega; H^s(B_R))$, which with Proposition \ref{prop:preRellich} means that $V_\gamma K_\a^* S_\O K_\a f \in H^s(\O \times B_R)$ for $f \in L^2_{0, \gamma/2}(\O \times \R^3)$. Thus, the first step is completed.

For the second step, it suffices to check the convergence of $T_R V_\gamma K_\a^*$ to $V_\gamma K_\a^*$ with respect to the operator norm on $L^2_{0, \gamma/2}(\O \times \R^3)$. By the Cauchy-Schwarz inequality and Proposition \ref{prop:est_vka*v*}, we see that
\begin{align*}
&\| (I - T_R) V_\gamma K_\a^* f \|_{L^2_{0, \gamma/2}(\O \times \R^3)}^2\\ 
=& \int_{\{ |v| \geq R \}} \int_\O \left( \int_{\R^3} k_{\a, \gamma}^*(v, v^*) f(x, v^*)\,dv^* \right)^2 (1+|v|)^\gamma\,dxdv\\
\leq& \int_{\{ |v| \geq R \}} \int_\O \left( \int_{\R^3} |k_{\a, \gamma}^*(v, v^*)| \,dv^* \right)\\
&\times \left( \int_{\R^3} |k_{\a, \gamma}^*(v, v^*)| |f(x, v^*)|^2\,dv^* \right) (1+|v|)^\gamma\,dxdv\\
\leq& \int_{\{ |v| \geq R \}} \int_\O C(1 + |v|)^{\gamma-1} \left( \int_{\R^3} |k_{\a, \gamma}^*(v, v^*)| |f(x, v^*)|^2\,dv^* \right)\,dxdv\\
\leq& \frac{C}{1 + R} \int_\O \int_{\R^3} \left( \int_{\R^3} |k_{\a, \gamma}^*(v, v^*)| (1+|v|)^\gamma\,dv \right) |f(x, v^*)|^2\,dv^*\,dx\\
\leq& \frac{C}{1 + R} \| f \|_{L^2_{0, \gamma/2}(\O \times \R^3)}^2,
\end{align*}
which implies that
\[
\| (I - T_R) V_\gamma K_\a^* \| \leq \frac{C}{(1 + R)^{\frac{1}{2}}}.
\]
Therefore, the operator $T_R V_\gamma K_\a^*$ converges to $V_\gamma K_\a^*$ in the sense of the operator norm as $R \to \infty$. This completes the proof.
\end{proof}

We have shown that the operator $S_\O K$ is compact on $L^2_{\a, \gamma/2}(\O \times \R^3)$. As a consequence, we can say that the operator $I - S_\O K$ is a Fredholm operator on $L^2_{\a, \gamma/2}(\O \times \R^3)$. To prove Lemma \ref{lem:inverse}, it suffices to show its injectivity.

\begin{lemma} \label{lem:injective_L2}
For $-3 < \gamma \leq 1$, the operator $I - S_\O K: L^2_{0, \gamma/2}(\O \times \R^3) \to L^2_{0, \gamma/2}(\O \times \R^3)$ is injective. 
\end{lemma}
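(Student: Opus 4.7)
The plan is the standard energy/H-theorem argument for the homogeneous linearized Boltzmann equation with zero incoming data, adapted to the weighted space $L^2_{0,\gamma/2}(\O\times\R^3)$. Suppose $f\in L^2_{0,\gamma/2}(\O\times\R^3)$ satisfies $f=S_\O Kf$. By the definition of $S_\O$, this is equivalent to saying that $f$ is the (mild) solution of
\[
v\cdot\nabla_x f+\nu f=Kf\text{ in }\O\times\R^3,\qquad f=0\text{ on }\Gamma^-,
\]
or equivalently $v\cdot\nabla_x f=Lf$ with vanishing incoming trace. The game is to run the energy identity, use coercivity of $-L$ to force $f$ into $\ker L$ pointwise in $x$, and then use the transport equation together with the boundary condition to kill $f$.

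First I would multiply the equation by $f$ and integrate over $\O\times\R^3$. Green's identity together with Proposition \ref{prop:est_nu} (so that $\nu f,Kf\in L^2_{0,-\gamma/2}$ and the energy identity is justified) yields
\[
\tfrac{1}{2}\int_{\Gamma^+}|f(z,v)|^2\,(n(z)\cdot v)\,d\sigma_z dv=\int_{\O\times\R^3} f\,(Lf)\,dxdv.
\]
The integrand on the right is nonpositive in $v$ thanks to the classical coercivity of the linearized collision operator: $\int_{\R^3} fLf\,dv\leq 0$ with equality (for a given $x$) if and only if $f(x,\cdot)\in\ker L$. Since the left-hand side is nonnegative, both sides vanish, hence $f|_{\Gamma^+}=0$ and $f(x,\cdot)\in\ker L$ for a.e.\ $x\in\O$. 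The hydrodynamic kernel is five-dimensional and spanned by $M^{1/2},v_1M^{1/2},v_2M^{1/2},v_3M^{1/2},|v|^2M^{1/2}$, so
\[
f(x,v)=\bigl(a(x)+b(x)\cdot v+c(x)|v|^2\bigr)M(v)^{1/2}
\]
for some measurable coefficients $a,b,c$.

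Next, I would substitute this ansatz into $v\cdot\nabla_x f=Lf=0$ and match polynomial coefficients in $v$. The identity $(v\cdot\nabla_x a)+\sum_{i,j}\partial_i b_j\,v_iv_j+(v\cdot\nabla_x c)|v|^2=0$ must hold for all $v\in\R^3$, which forces $\nabla_x a\equiv 0$, $\nabla_x c\equiv 0$, and $\partial_i b_j+\partial_j b_i\equiv 0$. Hence $a$ and $c$ are constants and $b(x)=b_0+Ax$ with $A$ antisymmetric. Finally, the boundary condition $f(z,v)=0$ for all $(z,v)\in\Gamma^-$ means that the polynomial $a+b(z)\cdot v+c|v|^2$ vanishes on the half-space $\{v:n(z)\cdot v<0\}$; since a polynomial in $v$ that vanishes on an open set vanishes identically, we obtain $a=0$, $c=0$ and $b(z)=0$ for every $z\in\partial\O$. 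The affine function $b$ thus vanishes on $\partial\O$, and hence on all of $\O$, giving $f\equiv 0$.

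The main technical point (rather than a genuine obstacle) is the justification of Green's identity and of the $\int fLf$ pairing in the weighted space $L^2_{0,\gamma/2}$ for soft potentials $-3<\gamma<0$: here $\nu$ and $Kf$ decay in $|v|$ and the natural duality requires carrying the weight $(1+|v|)^\gamma$ through the computation. This is handled by noting that $\nu\sim(1+|v|)^\gamma$ (Proposition \ref{prop:est_nu}) and $K$ maps $L^2_{0,\gamma/2}$ into $L^2_{0,-\gamma/2}$ (Proposition \ref{prop:bounded_Ka}), so $\int fLf\,dxdv$ is absolutely convergent and the usual trace argument (for instance by approximation through the transport equation along characteristics with the known trace on $\Gamma^-\cup\Gamma^+$) delivers the identity above. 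Everything else is soft.
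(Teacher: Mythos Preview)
Your argument is correct and coincides with the paper's through the key step: the energy identity plus coercivity of $-L$ force $(I-P)f=0$, hence $Lf=0$, so $f$ solves the free transport equation $v\cdot\nabla_x f=0$ with zero incoming data. The paper then finishes in one line by the method of characteristics: since $\Omega$ is convex, for any $(x,v)$ the backward characteristic reaches $\Gamma^-$ at $q(x,v)$, and $f$ is constant along it, so $f(x,v)=f(q(x,v),v)=0$. Your detour through the hydrodynamic expansion $f=(a+b\cdot v+c|v|^2)M^{1/2}$, polynomial matching in $v$, and the Killing equation for $b$ is correct but unnecessary here; it is the route one takes when the boundary condition does not directly kill the kernel (e.g.\ specular or diffuse reflection), whereas for incoming data the characteristics argument is both shorter and avoids any regularity discussion of the coefficients $a,b,c$.
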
 

\begin{proof}
Let $f$ be a function in $L^2_{0, \gamma/2}(\O \times \R^3)$ such that $(I - S_\O K) f = 0$. Then, it solves the homogeneous boundary value problem:
\[
\begin{cases}
v \cdot \nabla_x f = Lf &\mbox{ in } \O \times \R^3, \\
f = 0&\mbox{ on } \Gamma^-.
\end{cases}
\]
In the same way as in the proof of Lemma \ref{lem:compactL2}, Green's identity yields
\[
\frac{1}{2} \int_{\Gamma^+} |f(z, v)|^2 n(z) \cdot v\,d\sigma_z dv - \int_{\O \times \R^3} (Lf)f\,dxdv = 0. 
\]

Let $P$ be the projection operator on $L^2_{0 ,\gamma/2}(\O \times \R^3)$ to $\mathcal{N}(L)$, where $\mathcal{N}(L)$ is the null space of the operator $L$. It is known that 
\[
- \int_{\O \times \R^3} (Lf)f\,dxdv \geq c_0 \| (I - P) f \|_{L^2_{\nu}(\O \times \R^3)}^2
\]
with some positive constant $c_0$ \cite{2003Guo, Guo}, where
\[
\| f \|_{L^2_\nu(\O \times \R^3)}^2 := \int_\O \int_{\R^3} |f(x, v)|^2 \nu(v)\,dvdx.
\]
We mention that the norm $\| \cdot \|_{L^2_\nu(\O \times \R^3)}$ is equivalent to $\| \cdot \|_{L^2_{0, \gamma/2}(\O \times \R^3)}$ by Proposition \ref{prop:est_nu}. Thus, we have $(I - P) f = 0$, which means that $f$ satisfies $L f = 0$. In terms of the original boundary value problem, this implies that it solves the homogeneous boundary value problem of the free transport equation:
\[
\begin{cases}
v \cdot \nabla_x f = 0 &\mbox{ in } \O \times \R^3, \\
f = 0&\mbox{ on } \Gamma^-.
\end{cases}
\]
The method of characteristic lines shows that $f = 0$. This completes the proof.
\end{proof}

Since $L^2_{\a, \gamma/2}(\O \times \R^3) \subset L^2_{0, \gamma/2}(\O \times \R^3)$ for $0 \leq \a < 1/2$, Lemma \ref{lem:injective_L2} implies that the operator $I - S_\O K: L^2_{\a, \gamma/2}(\O \times \R^3) \to L^2_{\a, \gamma/2}(\O \times \R^3)$ is also injective. Lemma \ref{lem:inverse} follows from the Fredholm alternative theorem. This completes the proof of Lemma \ref{lem:existence_lin}.

\section{$L^2$-$L^\infty$ estimate} \label{sec:L2_Linfty}

In this section, we give a proof of Lemma \ref{lem:L2-Linfty} following an argument in \cite{RegularChen}. To this end, we introduce the following function space: For $\a \geq 0$, $\b \in \R$ and $2 \leq p, q < \infty$, we say $f \in L^p(\O; L^q_{\a, \b}(\R^3))$ if
\[
\| f \|_{L^p(\O; L^q_{\a, \b}(\R^3))} := \left( \int_\O \left( \int_{\R^3} |f(x, v)|^q e^{q \a |v|^2} (1 + |v|)^{q\b}\,dv \right)^{\frac{p}{q}}\,dx \right)^{\frac{1}{p}} < \infty.
\]
We remark that $L^p_{\a, \b}(\O \times \R^3) = L^p(\O; L^p_{\a, \b}(\R^3))$. In the same way, we say $f \in L^p(\O; L^\infty_{\a, \b}(\R^3))$ if
\[
\| f \|_{L^p(\O; L^\infty_{\a, \b}(\R^3))} := \left( \int_\O \left( \esssup_{v \in \R^3} |f(x, v)| e^{\a |v|^2} (1 + |v|)^\b \right)^p\,dx \right)^{\frac{1}{p}} < \infty.
\]
We remark that, for a bounded domain $\O$, the inclusion $L^\infty_{\a, \b}(\O \times \R^3) \subset L^p(\O; L^q_{\a, \b_q}(\R^3))$ holds for all $2 \leq p, q < \infty$ if $\b_q < \b - 3 q^{-1}$. Indeed, we have
\[
\int_{\R^3} |h(x, v)|^q e^{q \a|v|^2} (1 + |v|)^{q \b_q}\,dv \leq \| h \|_{L^\infty_{\a, \b}(\O \times \R^3)}^q \int_{\R^3} (1 + |v|)^{q (\b_q - \b)}\,dv
\]
for all $h \in L^\infty_{\a, \b}(\O \times \R^3)$ and the integral on the right hand side converges if $\b_q < \b - 3 q^{-1}$. Thus, the inclusion holds.

In what follows, for $1 \leq p \leq \infty$, let $p'$ denote the H\"older conjugate exponent of $p$.

We first introduce a decay property of $S_\O K$ with respect to the exponent $\b$.

\begin{lemma} \label{lem:polynomial_decay}
Let $\O$ be a bounded convex domain with $C^1$ boundary, $0 \leq \a < 1/2$, $\b_1 \in \R$, $-3 < \gamma \leq 1$ and $2 \leq p \leq \infty$. Then, the operator $S_\O K: L^p_{\a, \b_1}(\O \times \R^3) \to L^p_{\a, \b_1 + 3 - \gamma}(\O \times \R^3)$ is bounded.
\end{lemma}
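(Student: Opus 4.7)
The plan is to factor the operator $S_\O K$ as the composition $S_\O \circ K$ and treat each factor independently. Proposition \ref{prop:S_bound} already supplies the mapping $S_\O : L^p_{\a, \b}(\O \times \R^3) \to L^p_{\a, \b + 1}(\O \times \R^3)$ for every $2 \leq p \leq \infty$, so the remaining task is to prove
\[
K : L^p_{\a, \b_1}(\O \times \R^3) \to L^p_{\a, \b_1 + 2 - \gamma}(\O \times \R^3)
\]
is bounded; composing the two estimates then produces the exponent shift $\b_1 \to \b_1 + 3 - \gamma$, which explains where the number $3 - \gamma$ comes from (one power from the kernel, one power from $S_\O$, plus a correction of $-\gamma$ coming from the growth of $\nu$).

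To prove boundedness of $K$, I would absorb the Gaussian and polynomial weights into the kernel. Setting $\widetilde{f}(x, v_*) := f(x, v_*)\,e^{\a|v_*|^2}(1 + |v_*|)^{\b_1}$ one has $\|\widetilde{f}\|_{L^p(\O \times \R^3)} = \|f\|_{L^p_{\a, \b_1}(\O \times \R^3)}$, and a direct computation gives
\[
e^{\a|v|^2}(1 + |v|)^{\b_1 + 2 - \gamma}\, Kf(x, v) = \int_{\R^3} \widetilde{k}(v, v_*)\, \widetilde{f}(x, v_*)\,dv_*,
\]
where $\widetilde{k}(v, v_*) := |k_\a(v, v_*)|\,(1 + |v|)^{\b_1 + 2 - \gamma}\,(1 + |v_*|)^{-\b_1}$. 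Since $x$ enters only as a parameter, the question reduces to a uniform $L^p_v(\R^3)$ bound for the kernel operator with kernel $\widetilde{k}$. Applying Proposition \ref{prop:est_ka} with $q = 1$ (which satisfies $1 \leq q < \min\{3, 3/|\gamma|\}$ for every $-3 < \gamma \leq 1$) once with weight exponent $-\b_1$ and once with $\b_1 + 2 - \gamma$ yields
\begin{align*}
\sup_v \int_{\R^3} \widetilde{k}(v, v_*)\,dv_* &\leq C \sup_v (1 + |v|)^{\b_1 + 2 - \gamma}(1 + |v|)^{-\b_1 + \gamma - 2} = C, \\
\sup_{v_*} \int_{\R^3} \widetilde{k}(v, v_*)\,dv &\leq C \sup_{v_*} (1 + |v_*|)^{-\b_1}(1 + |v_*|)^{\b_1} = C,
\end{align*}
so the two Schur hypotheses hold with constants that do not depend on $v$ or $v_*$.

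By the standard Schur test (equivalently, the generalized Young inequality), the integral operator with kernel $\widetilde{k}$ is bounded on $L^p_v(\R^3)$ for every $1 \leq p \leq \infty$, with operator norm at most $C^{1 - 1/p}\, C^{1/p} = C$. Integrating in $x \in \O$ converts this into the estimate $\|Kf\|_{L^p_{\a, \b_1 + 2 - \gamma}} \leq C \|f\|_{L^p_{\a, \b_1}}$ for every $2 \leq p \leq \infty$, and one further application of Proposition \ref{prop:S_bound} concludes the argument. I do not foresee a serious obstacle; the calculation is essentially a bookkeeping exercise, and the only point worth highlighting is that the exponent $2 - \gamma$ appearing in the intermediate target space is precisely the one that makes both Schur integrals homogeneous of degree zero, so both suprema above are finite simultaneously.
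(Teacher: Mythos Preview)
Your argument is correct and uses the same core estimates as the paper (Proposition~\ref{prop:est_ka} with $q=1$ and the $S_\O$ mapping from Proposition~\ref{prop:S_bound}), but the organization differs slightly. The paper treats $S_\O K$ as a single operator: for $2 \leq p < \infty$ it applies H\"older to the $t$-integral inside $S_\O$, performs the change of variables $(x,v,t) \mapsto (y,u,s)$, and only then inserts the kernel bound on $k_\a$, handling $p = \infty$ separately by a direct pointwise estimate. Your route factors $S_\O K = S_\O \circ K$ and disposes of $K$ via the Schur test, which unifies all $p$ at once and avoids the change of variables in the intermediate step. The payoff of your version is cleanliness and the full range $1 \leq p \leq \infty$ for $K$ in one stroke; the paper's direct computation, on the other hand, never needs to name an intermediate target space for $K$ alone.
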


\begin{proof}
For $2 \leq p < \infty$, by the H\"older inequality, Proposition \ref{prop:S_decay}, the identity \eqref{change of variables} and Proposition \ref{prop:est_ka}, we have
\begin{align*}
&\| S_\O K h \|_{L^p_{\a, \b_1 + 3 - \gamma}(\O \times \R^3)}^p\\
=& \int_\O \int_{\R^3} \left| \int_0^{\tau_-(x, v)} e^{-\nu(|v|) t} Kh (x - tv, v)\,dt \right|^p e^{p \a |v|^2} (1 + |v|)^{p(\b_1 + 3 - \gamma)}\,dvdx\\
\leq& \int_\O \int_{\R^3} \left( \int_0^{\tau_-(x, v)} e^{-\nu(|v|) t}\,dt \right)^{\frac{p}{p'}} \left(\int_0^{\tau_-(x, v)} e^{-\nu(|v|) t} |Kh (x - tv, v)|^p \,dt \right)\\
&\times e^{p \a |v|^2} (1 + |v|)^{p(\b_1 + 3 - \gamma)} \,dvdx\\
\leq& C \int_\O \int_{\R^3} \left(\int_0^{\tau_-(y, -u)} e^{-\nu(|u|) t}\,dt \right) |Kh (y, u)|^p e^{p \a |u|^2} (1 + |u|)^{p(\b_1 + 3 - \gamma) -\frac{p}{p'}}\,dudy\\
\leq& C \int_\O \int_{\R^3} \left( \int_{\R^3} |k_\a(u, u^*)|\,du^* \right)^{\frac{p}{p'}} \left( \int_{\R^3} |k_\a(u, u^*)| |h(y, u^*)|^p e^{p \a |u^*|^2}\,du^* \right)\\ 
&\times (1 + |u|)^{p(\b_1 + 2 - \gamma)}\,dudy\\
\leq& C \int_\O \int_{\R^3} \left( \int_{\R^3} |k_\a(u, u^*)| (1 + |u|)^{p(\b_1 + 2 - \gamma) +\frac{p}{p'}(\gamma - 2)}\,du \right) |h(y, u^*)|^p e^{p \a |u^*|^2}\,du^* dy\\
\leq& C \| h \|_{L^p_{\a, \b_1}(\O \times \R^3)}^p
\end{align*}
for all $h \in L^p_{\a, \b_1}(\O \times \R^3)$.

For $p = \infty$, by Proposition \ref{prop:est_ka}, we have
\begin{align*}
|Kh(x, v)| \leq& \int_{\R^3} |k_\a(v, v^*)| |h(x, v^*)| e^{\a|v^*|^2}\,dv^* e^{-\a|v|^2}\\
\leq& \| h(x, \cdot) \|_{L^\infty_{\a, \b_1}(\R^3)} \int_{\R^3} |k_\a(v, v^*)| (1 + |v^*|)^{-\b_1}\,dv^* e^{-\a|v|^2}\\
\leq& C \| h \|_{L^\infty_{\a, \b_1}(\O \times \R^3)} (1 + |v|)^{-\b_1 + \gamma - 2} e^{-\a|v|^2}
\end{align*}
for a.e.~$(x, v) \in \O \times \R^3$ and for all $h \in L^\infty_{\a, \b_1}(\O \times \R^3)$. The desired boundedness follows from the above estimate and Proposition \ref{prop:S_decay}. This completes the proof.
\end{proof}

We next show two regularization effects of $K$ in terms of the $L^p_v$ regularity.

\begin{lemma} \label{lem:Lp_Linfty_v}
Let $\O$ be a bounded convex domain with $C^1$ boundary, $0 \leq \a < 1/2$, $\b_2 \in \R$, $-3 < \gamma \leq 1$ and $2 \leq p_1, p_2 \leq \infty$. Suppose that $p_1' < \min \{3, 3/|\gamma| \}$. Then, the operator $K: L^{p_2}(\O; L^{p_1}_{\a, \b_2}(\R^3)) \to L^{p_2}(\O; L^\infty_{\a, \b_3}(\R^3))$ is bounded for $\b_3 = \b_2 + 2 - \gamma - p_1^{-1}$.
\end{lemma}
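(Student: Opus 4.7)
The plan is to reduce the statement to a pointwise (in $x$) Hölder estimate on the velocity integral defining $Kh(x,v)$, then integrate in $x$. The only nontrivial input needed is the kernel estimate in Proposition \ref{prop:est_ka}, whose applicability is exactly what the hypothesis $p_1' < \min\{3, 3/|\gamma|\}$ guarantees.

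Concretely, first I would pass to the exponentially-weighted kernel by writing
\[
e^{\a|v|^2} Kh(x,v) = \int_{\R^3} k_\a(v,v_*)\, h(x,v_*) e^{\a|v_*|^2}\,dv_*.
\]
Splitting the integrand as
\[
\bigl[k_\a(v,v_*) (1+|v_*|)^{-\b_2}\bigr] \cdot \bigl[h(x,v_*) e^{\a|v_*|^2}(1+|v_*|)^{\b_2}\bigr]
\]
and applying Hölder's inequality with exponents $p_1'$ and $p_1$, the second factor yields (after raising to the $1/p_1$ power) precisely $\|h(x,\cdot)\|_{L^{p_1}_{v,\a,\b_2}(\R^3)}$, and the first factor becomes $\left(\int_{\R^3} |k_\a(v,v_*)|^{p_1'} (1+|v_*|)^{-p_1'\b_2}\,dv_*\right)^{1/p_1'}$.

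Next, I would invoke Proposition \ref{prop:est_ka} with $q=p_1'$ and $\b=-p_1'\b_2$ (legal thanks to the assumption $p_1' < \min\{3, 3/|\gamma|\}$) to obtain
\[
\int_{\R^3} |k_\a(v,v_*)|^{p_1'} (1+|v_*|)^{-p_1'\b_2}\,dv_* \leq C (1+|v|)^{-p_1'\b_2 + p_1'(\gamma-1) - 1}.
\]
Taking the $1/p_1'$-th power gives the exponent $-\b_2 + \gamma - 1 - 1/p_1'$, and using $1/p_1' = 1 - 1/p_1$ this simplifies to $-\b_2 + \gamma - 2 + 1/p_1 = -\b_3$. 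Combining,
\[
|Kh(x,v)|\, e^{\a|v|^2}(1+|v|)^{\b_3} \leq C\,\|h(x,\cdot)\|_{L^{p_1}_{v,\a,\b_2}(\R^3)}
\]
for a.e.\ $v\in\R^3$, uniformly in $x$.

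Finally, since the right-hand side is independent of $v$, taking $\esssup_{v\in\R^3}$ of the left-hand side preserves the estimate, yielding
\[
\|Kh(x,\cdot)\|_{L^\infty_{v,\a,\b_3}(\R^3)} \leq C\,\|h(x,\cdot)\|_{L^{p_1}_{v,\a,\b_2}(\R^3)}
\]
pointwise in $x$. Raising to the $p_2$-th power and integrating over $\O$ yields the announced bound. The step that requires the most care is matching the exponent $\b_3$ to the output of the kernel estimate: I would verify that the arithmetic $-\b_2 + \gamma - 1 - 1/p_1' = -\b_3$ comes out correctly, which is the only real content of the proof since Proposition \ref{prop:est_ka} does all the heavy lifting.
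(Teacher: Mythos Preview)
Your proposal is correct and follows essentially the same approach as the paper's proof: both pass to the weighted kernel $k_\a$, apply H\"older's inequality in $v_*$ with exponents $(p_1',p_1)$, invoke Proposition~\ref{prop:est_ka} with $q=p_1'$ and $\b=-p_1'\b_2$, and then conclude pointwise in $x$. The arithmetic identifying the exponent $-\b_3$ is the same in both.
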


\begin{proof}
The case $p_1 = \infty$ can be handled in the same way as in the proof of Lemma \ref{lem:polynomial_decay}. In what follows, we discuss the case $2 \leq p_1 < \infty$. Let $h \in L^{p_2}(\O; L^{p_1}_{\a, \b_2}(\R^3))$. By the H\"older inequality, we have
\begin{align*}
|Kh(x, v)| \leq& \int_{\R^3} |k_\a(v, v^*)| |h(x, v^*)| e^{\a |v^*|^2}\,dv^* e^{-\a |v|^2}\\
\leq& \left( \int_{\R^3} |k_\a(v, v^*)|^{p_1'} (1 + |v^*|)^{- p_1' \b_2}\,dv^* \right)^{\frac{1}{p_1'}} \| h(x, \cdot) \|_{L^{p_1}_{\a, \b_2}(\R^3)} e^{-\a |v|^2}.
\end{align*}
Since $1 < p_1' < \min \{ 3, 3/|\gamma| \}$ by the assumption, we may apply Proposition \ref{prop:est_ka} to obtain
\[
\int_{\R^3} |k_\a(v, v^*)|^{p_1'} (1 + |v^*|)^{- p_1' \b_2}\,dv^* \leq C (1 + |v|)^{- p_1' \b_2 - 1 + p_1'(\gamma - 1)}.
\]
Thus, we have
\begin{align*}
|Kh(x, v)| \leq& C (1 + |v|)^{- \b_2 + \gamma - 1 - \frac{1}{p_1'}} \| h(x, \cdot) \|_{L^{p_1}_{\a, \b_2}(\R^3)} e^{-\a |v|^2}\\ 
=& C \| h(x, \cdot) \|_{L^{p_1}_{\a, \b_2}(\R^3)} e^{-\a |v|^2} (1 + |v|)^{-\b_3}
\end{align*}
for a.e.~$v \in \R^3$, or 
\[
\| Kh(x, \cdot) \|_{L^\infty_{\a, \b_3}(\R^3)} \leq C \| h(x, \cdot) \|_{L^{p_1}_{\a, \b_2}(\R^3)}.
\]
Taking $L^{p_2}_x$ integration of the above inequality concludes the lemma.
\end{proof}

\begin{lemma} \label{lem:Lp1_Lp2}
Let $\O$ be a bounded convex domain with $C^1$ boundary, $0 \leq \a < 1/2$, $\b_2 \in \R$, $-3 < \gamma \leq 1$ and $2 \leq p_1 < \infty$. Then, the following statements hold:
\begin{enumerate}
\item Suppose that $p_1' < \min\{ 3, 3/|\gamma| \}$. Then, for any $p_1 \leq p_2 < \infty$, the operator $K: L^{p_2}(\O; L^{p_1}_{\a, \b_2}(\R^3)) \to L^{p_2}_{\a, \b_4}(\O \times \R^3)$ is bounded for $\b_4 < \b_2 + 2 - \gamma - {p_1}^{-1} - 3 {p_2}^{-1}$.

\item Suppose that $p_1' \geq \min\{ 3, 3/|\gamma| \}$. Let $1 \leq q < \min\{3, 3/|\gamma|\}$ and define $p_2$ by $1 + p_2^{-1} = q^{-1} + p_1^{-1}$. Then, the operator $K: L^{p_2}(\O; L^{p_1}_{\a, \b_2}(\R^3)) \to L^{p_2}_{\a, \b_4}(\O \times \R^3)$ is bounded for $\b_4 = \b_2 + 2 - \gamma - p_1^{-1} + p_2^{-1}$.
\end{enumerate}
\end{lemma}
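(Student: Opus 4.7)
The plan is to treat the two cases separately; both reduce, pointwise in $x\in\O$, to a weighted estimate in the velocity variable, from which the desired bound on $\O\times\R^3$ follows by raising to the $p_2$-th power and integrating in $x$.

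For Part (1), the hypothesis $p_1'<\min\{3,3/|\gamma|\}$ lets me invoke Lemma~\ref{lem:Lp_Linfty_v}, giving the pointwise-in-$x$ bound $\|Kh(x,\cdot)\|_{L^\infty_{v,\a,\b_3}(\R^3)} \le C\,\|h(x,\cdot)\|_{L^{p_1}_{v,\a,\b_2}(\R^3)}$ with $\b_3 := \b_2+2-\gamma-1/p_1$. I then exploit the polynomial decay $(1+|v|)^{-\b_3}$ encoded in the pointwise form of this estimate: raising to the $p_2$-th power and integrating in $v$, the tail $\int_{\R^3}(1+|v|)^{p_2(\b_4-\b_3)}\,dv$ is finite for $\b_4<\b_3-3/p_2$. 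To reach the full range $\b_4<\b_2+2-\gamma-4/p_2$ stated in the lemma, I pair this with the complementary $L^{p_1}_v\to L^{p_1}_v$ Schur bound $\|Kh(x,\cdot)\|_{L^{p_1}_{v,\a,\b_2+2-\gamma}(\R^3)} \le C\,\|h(x,\cdot)\|_{L^{p_1}_{v,\a,\b_2}(\R^3)}$ obtained from Proposition~\ref{prop:est_ka} at $q=1$ (which is admissible for all $-3<\gamma\le 1$), and apply Riesz--Thorin interpolation between the two endpoints, with input fixed at $L^{p_1}_{v,\a,\b_2}$ and output ranging from $L^{p_1}$ to $L^\infty$. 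Finally, raising the pointwise-in-$x$ bound to the $p_2$-th power and integrating in $x\in\O$ yields the claim.

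For Part (2), since $p_1'\ge\min\{3,3/|\gamma|\}$ rules out Proposition~\ref{prop:est_ka} at $q=p_1'$, the approach of Part (1) breaks down and I instead apply the generalized Young (Schur--Young) inequality in the velocity variable, using the auxiliary exponent $q<\min\{3,3/|\gamma|\}$ and $p_2$ defined by $1+1/p_2=1/q+1/p_1$. Introducing the weighted kernel $\tilde k(v,v_*) := k_\a(v,v_*)(1+|v|)^{\b_4}(1+|v_*|)^{-\b_2}$ and the weighted density $\tilde h(x,v_*) := h(x,v_*)(1+|v_*|)^{\b_2}e^{\a|v_*|^2}$, I rewrite $Kh(x,v)\,e^{\a|v|^2}(1+|v|)^{\b_4} = \int_{\R^3}\tilde k(v,v_*)\,\tilde h(x,v_*)\,dv_*$. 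By Proposition~\ref{prop:est_ka} at exponent $q$ in the $v_*$-direction, $\sup_v\|\tilde k(v,\cdot)\|_{L^q_{v_*}}\le C(1+|v|)^{\b_4-\b_2+\gamma-1-1/q}$; the symmetric bound holds in the $v$-direction. The choice $\b_4=\b_2+1-\gamma+1/q$ makes both $\sup_v\|\tilde k(v,\cdot)\|_{L^q_{v_*}}$ and $\sup_{v_*}\|\tilde k(\cdot,v_*)\|_{L^q_v}$ uniformly bounded, and the generalized Young inequality then gives $\|Kh(x,\cdot)\|_{L^{p_2}_{v,\a,\b_4}(\R^3)} \le C\,\|h(x,\cdot)\|_{L^{p_1}_{v,\a,\b_2}(\R^3)}$; integrating in $x\in\O$ closes the argument.

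The main obstacle is the generalized Young inequality in Part (2), which must be invoked for a kernel that is not translation invariant. The standard route is to Riesz--Thorin interpolate between the two endpoint mappings $T:L^1_{v_*}\to L^q_v$ (via Minkowski's integral inequality applied to $\tilde k$) and $T:L^{q'}_{v_*}\to L^\infty_v$ (via direct H\"older on $\tilde k(v,\cdot)$), both of which hold under uniform $L^q$-bounds on $\tilde k$ in the appropriate free variable; the Young relation $1+1/p_2=1/q+1/p_1$ is precisely the condition placing $(L^{p_1},L^{p_2})$ on the interpolation line between these endpoints, and the exponent $\b_4=\b_2+1-\gamma+1/q$ in the target weight is the one dictated by the scaling of Proposition~\ref{prop:est_ka}. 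A useful consistency check is the degenerate limit $q\to p_1'$, where $p_2\to\infty$ and the argument formally collapses to the proof of Lemma~\ref{lem:Lp_Linfty_v}, confirming that Parts (1) and (2) form a continuous family of Young-type estimates.
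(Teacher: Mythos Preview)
For Part~(2) your approach is correct and essentially identical to the paper's: you invoke the generalized Young (Schur--Young) inequality for non-translation-invariant kernels, while the paper writes out its standard three-term H\"older proof explicitly, with exponents $r_1,r_2,p_2$ defined by $1/r_1=1/p_1-1/p_2$, $1/r_2=1/q-1/p_2$, so that $1/r_1+1/r_2+1/p_2=1$. The kernel bounds you quote from Proposition~\ref{prop:est_ka} are precisely what drives that computation.

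For Part~(1) the paper's argument is only your direct step: apply Lemma~\ref{lem:Lp_Linfty_v} to obtain $\|Kh(x,\cdot)\|_{L^\infty_{v,\a,\b_3}}\le C\|h(x,\cdot)\|_{L^{p_1}_{v,\a,\b_2}}$ with $\b_3=\b_2+2-\gamma-1/p_1$, then integrate the pointwise bound to land in $L^{p_2}_{v,\a,\b_4}$ for $\b_4<\b_3-3/p_2$. The paper does not interpolate. Your additional Riesz--Thorin step is extra, and it does \emph{not} reach the threshold $\b_4<\b_2+2-\gamma-4/p_2$ you are aiming for: interpolating (Stein--Weiss, since the output weights differ) between the $L^{p_1}$ endpoint with weight exponent $\b_2+2-\gamma$ and the $L^\infty$ endpoint with weight exponent $\b_2+2-\gamma-1/p_1$, at $1/p_2=(1-\theta)/p_1$, yields weight exponent
\[
\b_2+2-\gamma-\theta/p_1=\b_2+2-\gamma-1/p_1+1/p_2,
\]
which is strictly smaller than $\b_2+2-\gamma-4/p_2$ whenever $p_2>5p_1$ (e.g.\ $p_1=2$, $p_2=20$). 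In fact the paper's own proof carries a typo---it writes $\b_3=\b_2+2-\gamma-1/p_2$ where Lemma~\ref{lem:Lp_Linfty_v} gives $1/p_1$---and the stated threshold coincides with the direct method only at $p_1=p_2$; the bound actually established by the paper's argument is $\b_4<\b_2+2-\gamma-1/p_1-3/p_2$. Your interpolation genuinely improves this to $\b_4\le\b_2+2-\gamma-1/p_1+1/p_2$, but neither reaches the stated range for all $p_2\ge p_1$, so you should not claim the interpolation closes that gap.
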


\begin{proof}
We first prove the statement (1). Let $h \in L^{p_2}(\O; L^{p_1}_{\a, \b_2}(\R^3))$. By Lemma \ref{lem:Lp_Linfty_v}, we see that $Kh \in L^{p_2}(\O; L^\infty_{\a, \b_3}(\R^3))$ with $\b_3 = \b_2 + 2 - \gamma - p_1^{-1}$. Thus, for $\b_4 < \b_3 - 3 p_2^{-1} = \b_2 + 2 - \gamma - p_1^{-1} - 3 p_2^{-1}$, we have
\begin{align*}
&\int_{\R^3} |Kh(x, v)|^{p_2} e^{p_2 \a |v|^2} (1 + |v|)^{p_2 \b_4}\,dv\\
\leq& \| Kh(x, \cdot) \|_{L^\infty_{\a, \b_3}(\R^3)}^{p_2} \int_{\R^3} (1 + |v|)^{p_2(\b_4 - \b_2 - 2 + \gamma) + 1}\,dv\\
\leq& C \| h(x, \cdot) \|_{L^{p_1}_{\a, \b_2}(\R^3)}^{p_2},
\end{align*}
or
\[
\| Kh(x, \cdot) \|_{L^{p_2}_{v, \a, \b_4}(\R^3)} \leq C \| h(x, \cdot) \|_{L^{p_1}_{\a, \b_2}(\R^3)}.
\]
This proves the statement (1).

For the statement (2), let $r_1^{-1} := p_1^{-1} - p_2^{-1}$ and $r_2^{-1} := q^{-1} - p_2^{-1}$. We notice that $r_1, r_2 \geq 1$ and
\[
\frac{1}{r_1} + \frac{1}{r_2} + \frac{1}{p_2} = \frac{1}{p_1} + \frac{1}{q} - \frac{1}{p_2} = 1.
\]
We decompose
\begin{align*}
|k(v, v^*) h(x, v^*)| =& \left( |h(x, v^*)|^{p_1} e^{p_1 \a |v^*|^2} (1 + |v^*|)^{p_1 \b_2} \right)^{\frac{1}{r_1}} \left( |k_\a(v, v^*)|^q \right)^{\frac{1}{r_2}}\\
&\times \left( |k_\a(v, v^*)|^q |h(x, v^*)|^{p_1} e^{p_1 \a |v^*|^2} (1 + |v^*|)^{-\frac{p_1 p_2}{r_1} \b_2} \right)^{\frac{1}{p_2}} e^{-\a |v|^2}.  
\end{align*}
The generalized H\"older inequality and Proposition \ref{prop:est_ka} yield
\begin{align*}
|Kh(x, v)| e^{\a |v|^2} \leq& \| h(x, \cdot) \|_{L^{p_1}_{\a, \b_2}(\R^3)}^{\frac{p_1}{r_1}} \left( \int_{\R^3} |k_\a(v, v^*)|^q\,dv^* \right)^{\frac{1}{r_2}}\\
&\times \left( \int_{\R^3} |k_\a(v, v^*)|^q |h(x, v^*)|^{p_1} e^{p_1 \a |v^*|^2} (1 + |v^*|)^{-\frac{p_1 p_2}{r_1} \b_2}\,dv^* \right)^{\frac{1}{p_2}}\\
\leq& C \| h(x, \cdot) \|_{L^{p_1}_{\a, \b_2}(\R^3)}^{\frac{p_1}{r_1}} (1 + |v|)^{\frac{q(\gamma - 1) - 1}{r_2}}\\
&\times \left( \int_{\R^3} |k_\a(v, v^*)|^q |h(x, v^*)|^{p_1} e^{p_1 \a |v^*|^2} (1 + |v^*|)^{-\frac{p_1 p_2}{r_1} \b_2}\,dv^* \right)^{\frac{1}{p_2}}.
\end{align*}
Thus, we have
\begin{align*}
&\int_{\R^3} |Kh(x, v)|^{p_2} e^{p_2 \a |v|^2} (1 + |v|)^{p_2 \b_4}\,dv\\
\leq& C \| h(x, \cdot) \|_{L^{p_1}_{\a, \b_2}(\R^3)}^{\frac{p_1 p_2}{r_1}} \int_{\R^3} \left(\int_{\R^3} |k_\a(v, v^*)|^q (1 + |v|)^{\frac{p_2 \{q(\gamma - 1) - 1\}}{r_2} + p_2 \b_4}\,dv \right)\\
&\times |h(x, v^*)|^{p_1} e^{p_1 \a |v^*|^2} (1 + |v^*|)^{-\frac{p_1 p_2}{r_1} \b_2}\,dv^*\\
\leq& C \| h(x, \cdot) \|_{L^{p_1}_{\a, \b_2}(\R^3)}^{\frac{p_1 p_2}{r_1}} \int_{\R^3} |h(x, v^*)|^{p_1} e^{p_1 \a |v^*|^2}\\
&\times (1 + |v^*|)^{\frac{p_2 \{q(\gamma - 1) - 1\}}{r_2} + q(\gamma - 1) - 1 + p_2 \b_4 - \frac{p_1 p_2}{r_1} \b_2}\,dv^*.
\end{align*}
Since
\[
\frac{p_2}{r_2} = \frac{p_2}{q} - 1, \quad \frac{p_2}{r_1} = \frac{p_2}{p_1} - 1,
\]
we see that
\[
\frac{p_2 \{q(\gamma - 1) - 1\}}{r_2} + q(\gamma - 1) - 1 + p_2 \b_4 - \frac{p_1 p_2}{r_1} \b_2 = p_1 \b_2.
\]
We obtain
\[
\| Kh(x, \cdot) \|_{L^{p_2}_{v, \a, \b_4}(\R^3)} \leq C \| h(x, \cdot) \|_{L^{p_1}_{\a, \b_2}(\R^3)}^{\frac{p_1}{r_1} + \frac{p_1}{p_2}} = C \| h(x, \cdot) \|_{L^{p_1}_{\a, \b_2}(\R^3)},
\]
which implies the statement (2) of Lemma \ref{lem:Lp1_Lp2}. This completes the proof.
\end{proof}

We also introduce two regularization effects of $S_\O K$ in terms of the $L^p_x$ regularity. 

\begin{lemma} \label{lem:Lpx}
Let $\O$ be a bounded convex domain with $C^1$ boundary, $0 \leq \a < 1/2$, $\b_2 \in \R$, $-3 < \gamma \leq 1$ and $2 \leq p_1 < \infty$. Then, the following statements hold:
\begin{enumerate}
\item Suppose that $p_1' < \min \{3, 3/|\gamma| \}$. Let $1 \leq q < 3/2$ and define $p_2 := q p_1$. Then, the operator $S_\O K: L^{p_1}_{\a, \b_2}(\O \times \R^3) \to L^{p_2}(\O; L^{p_1}_{\a, \b_4}(\R^3))$ is bounded for $\b_4 < \b_2 + 3 - \gamma - 4p_1^{-1}$.

\item Let $-3 < \gamma < -1$ and suppose that $p_1' \geq 3/|\gamma|$. Let 
\[
1 \leq q < \min \left\{ \frac{3}{2}, \frac{4}{1 + |\gamma|} \right\} 
\]
and define $p_2 = q p_1$. Then, the operator $S_\O K: L^{p_1}_{\a, \b_2}(\O \times \R^3) \to L^{p_2}(\O; L^{p_1}_{\a, \b_4}(\R^3))$ is bounded for $\b_4 < \b_2 + 3 - \gamma - 4 p_1^{-1} + 4 p_2^{-1}$.
\end{enumerate}
\end{lemma}

\begin{remark}
In the statement (2) of Lemma \ref{lem:Lpx}, since
\[
\min \left\{ \frac{3}{2}, \frac{4}{1 + |\gamma|} \right\} > 1
\]
for $-3 < \gamma < -1$, we may take $q$ strictly greater than $1$. 
\end{remark}

\begin{proof}
We first prove the statement (1). Let $h \in L^{p_1}_{\a, \b_2}(\O \times \R^3)$. By the H\"older inequality, Proposition \ref{prop:S_decay} and Lemma \ref{lem:Lp_Linfty_v}, we have
\begin{align*}
&\| S_\O K h (x, \cdot) \|_{L^{p_1}_{\a, \b_4}(\R^3)}^{p_1}\\
\leq& \int_{\R^3} \left( \int_0^{\tau_-(x, v)} e^{-{p_1}' \nu(|v|) t}\,dt \right)^{\frac{p_1}{p_1'}} \left( \int_0^{\tau_-(x, v)} |Kh(x - tv, v)|^{p_1}\,dt \right)\\
&\times e^{p_1\a|v|^2} (1 + |v|)^{p_1 \b_4}\,dv\\
\leq& C \int_{\R^3} \left( \int_0^{\tau_-(x, v)} \|Kh(x - tv, \cdot) \|_{L^\infty_{\a, \b_3}}^{p_1}\,dt \right) (1 + |v|)^{p_1 \b_4 - \frac{p_1}{p_1'} - p_1 \left( \b_2 + 2 - \gamma - \frac{1}{p_1} \right)}\,dv\\
\leq& C \int_{\R^3}  \left( \int_0^{\tau_-(x, v)} \|h(x - tv, \cdot) \|_{L^{p_1}_{\a, \b_2}}^{p_1}\,dt \right) (1 + |v|)^{p_1 \b_4 - p_1 \b_2 - p_1 (3 - \gamma) + 2}\,dv,
\end{align*}
where $\b_3 = \b_2 + 2 - \gamma - p_1^{-1}$. We change the variable of integration $r = |v| t$ and introduce the spherical coordinates $v = \rho \omega$ to obtain
\begin{align*}
&\int_{\R^3}  \left( \int_0^{\tau_-(x, v)} \|h(x - tv, \cdot) \|_{L^{p_1}_{\a, \b_2}}^{p_1}\,dt \right) (1 + |v|)^{p_1 \b_4 - p_1 \b_2 - p_1 (3 - \gamma) + 2}\,dv\\
=& \int_0^\infty \int_{S^2} \int_0^{|x - q(x, \omega)|} \| h(x - r\omega, \cdot) \|_{L^{p_1}_{\a, \b_2}(\R^3)}^{p_1} \rho (1 + \rho)^{p_1 \b_4 - p_1 \b_2 - p_1 (3 - \gamma) + 2}\,dr\,d\sigma_\omega d\rho\\
=& \left( \int_0^\infty \rho (1 + \rho)^{p_1 \b_4 - p_1 \b_2 - p_1 (3 - \gamma) + 2}\,d\rho \right)\\ 
&\times \left( \int_{S^2} \int_0^{|x - q(x, \omega)|} r^{-2} \| h(x - r\omega, \cdot) \|_{L^{p_1}_{\a, \b_2}(\R^3)}^{p_1} r^2\,dr\,d\sigma_\omega \right).
\end{align*}
The first integral with respect to $\rho$ is finite if 
\[
p_1 \b_4 - p_1 \b_2 - p_1 (3 - \gamma) + 2 + 1 < -1,
\]
which is equivalent to
\[
\b_4 < \b_2 + 3 - \gamma - \frac{4}{p_1}.
\]
Thus, we have
\[
\int_0^\infty \rho (1 + \rho)^{p_1 \b_4 - p_1 \b_2 - p_1 (3 - \gamma) + 2}\,d\rho \leq C.
\]
For the second integral, we change the variable of integration $y = x - r\omega$ to obtain
\begin{align*}
&\int_{S^2} \int_0^{|x - q(x, \omega)|} r^{-2} \| h(x - r\omega, \cdot) \|_{L^{p_1}_{\a, \b_2}(\R^3)}^{p_1} r^2\,dr\,d\sigma_\omega\\ 
=& \int_\O \frac{1}{|x - y|^2} \| h(y, \cdot) \|_{L^{p_1}_{\a, \b_2}(\R^3)}^{p_1}\,dy.
\end{align*}
Hence, we obtain
\begin{equation} \label{est:Young}
\| S_\O K h (x, \cdot) \|_{L^{p_1}_{\a, \b_4}(\R^3)} \leq C \left( \int_\O \frac{1}{|x - y|^2} \| h(y, \cdot) \|_{L^{p_1}_{\a, \b_2}(\R^3)}^{p_1}\,dy \right)^{\frac{1}{p_1}}.
\end{equation}

Let $1 \leq q < 3/2$. The H\"older inequality implies that
\begin{align*}
&\int_\O \frac{1}{|x - y|^2} \| h(y, \cdot) \|_{L^{p_1}_{\a, \b_2}(\R^3)}^{p_1}\,dy\\ 
\leq& \left( \int_\O \frac{1}{|x - y|^{2q}} \| h(y, \cdot) \|_{L^{p_1}_{\a, \b_2}(\R^3)}^{p_1}\,dy \right)^{\frac{1}{q}} \| h \|_{L^{p_1}_{\a, \b_2}(\O \times \R^3)}^{\frac{p_1}{q'}}.
\end{align*}
Thus, for $p_2 = q p_1$, we have
\begin{align*}
&\int_\O \| S_\O K h (x, \cdot) \|_{L^{p_1}_{\a, \b_4}(\R^3)}^{p_2}\,dx\\
\leq& C \int_\O \left( \int_\O \frac{1}{|x - y|^2} \| h(y, \cdot) \|_{L^{p_1}_{\a, \b_2}(\R^3)}^{p_1}\,dy \right)^q\,dx\\
\leq& C \| h \|_{L^{p_1}_{\a, \b_2}(\O \times \R^3)}^{(q - 1)p_1} \int_\O \left( \int_\O \frac{1}{|x - y|^{2q}}\,dx \right) \| h(y, \cdot) \|_{L^{p_1}_{\a, \b_2}(\R^3)}^{p_1}\,dy\\
\leq& C \| h \|_{L^{p_1}_{\a, \b_2}(\O \times \R^3)}^{p_2}.
\end{align*}
The statement (1) is proved.

Next, we proceed the proof of the statement (2). Let $h \in L^{p_1}_{\a, \b_2}(\O \times \R^3)$. By the same argument as the proof of the statement (1), we have
\begin{align*}
&\| S_\O K h (x, \cdot) \|_{L^{p_1}_{\a, \b_4}(\R^3)}^{p_1}\\
\leq& \int_{\R^3} \left( \int_0^{\tau_-(x, v)} e^{- \nu(|v|) t}\,dt \right)^{\frac{p_1}{p_1'}} \left( \int_0^{\tau_-(x, v)} e^{- \nu(|v|) t} |Kh(x - tv, v)|^{p_1}\,dt \right)\\
&\times e^{p_1\a|v|^2} (1 + |v|)^{p_1 \b_4}\,dv\\
\leq& C \int_{\R^3} \left( \int_0^{\tau_-(x, v)} e^{- \nu(|v|) t} |K h(x - tv, v) |^{p_1}\,dt \right) e^{p_1\a|v|^2} (1 + |v|)^{p_1 \b_4 - \frac{p_1}{p_1'}}\,dv\\
=& C \int_0^\infty \int_{S^2} \int_0^{|x - q(x, \omega)|} e^{-\frac{\nu(\rho)}{\rho}r} | K h(x - r\omega, \rho \omega) |^{p_1} e^{p_1 \a \rho^2}\\
&\times \rho (1 + \rho)^{p_1 \b_4 - (p_1 - 1)} \,dr\,d\sigma_\omega d\rho\\
=& C \int_0^\infty \int_\O e^{-\frac{\nu(\rho)}{\rho} |x - y|} \left| K h \left(y, \rho \frac{x - y}{|x - y|} \right) e^{\a \rho^2} \right|^{p_1}\,\frac{dy}{|x - y|^2} \rho (1 + \rho)^{p_1 \b_4 - (p_1 - 1)}\,d\rho.
\end{align*}
We focus on an estimate of $Kh$. We decompose the integrand as follows:
\begin{align*}
&\left|k \left( \rho \frac{x - y}{|x - y|}, v^* \right) h(y, v^*) e^{\a \rho^2} \right|\\
=& \left|k_\a \left( \rho \frac{x - y}{|x - y|}, v^* \right) \right|^{\frac{1}{p_1'}} (1 + |v^*|)^{-\b_2}\\
&\times \left|k_\a \left( \rho \frac{x - y}{|x - y|}, v^* \right) \right|^{1 - \frac{1}{p_1'}} |h(y, v^*)| e^{\a |v^*|^2} (1 + |v^*|)^{\b_2}
\end{align*}
By the H\"older inequality and Proposition \ref{prop:est_ka}, we have
\begin{align*}
&\left| K h \left(y, \rho \frac{x - y}{|x - y|} \right) e^{\a \rho^2} \right|^{p_1}\\
\leq& \left( \int_{\R^3} \left|k_\a \left( \rho \frac{x - y}{|x - y|}, v^* \right) \right| (1 + |v^*|)^{-p_1' \b_2}\,dv^*\right)^{\frac{p_1}{p_1'}}\\
&\times \left( \int_{\R^3} \left|k_\a \left( \rho \frac{x - y}{|x - y|}, v^* \right) \right| |h(y, v^*)|^{p_1} e^{p_1 \a |v^*|^2} (1 + |v^*|)^{p_1 \b_2}\,dv^* \right)\\
\leq& C (1 + \rho)^{-p_1 \b_2 + (p_1 - 1) (\gamma - 2)}\\
&\times \int_{\R^3} \left|k_\a \left( \rho \frac{x - y}{|x - y|}, v^* \right) \right| |h(y, v^*)|^{p_1} e^{p_1 \a |v^*|^2} (1 + |v^*|)^{p_1 \b_2}\,dv^*.
\end{align*}
Thus, we have
\begin{align*}
&\| S_\O K h (x, \cdot) \|_{L^{p_1}_{\a, \b_4}(\R^3)}^{p_1}\\
\leq& C \int_0^\infty \int_\O \int_{\R^3} \left|k_\a \left( \rho \frac{x - y}{|x - y|}, v^* \right) \right| |h(y, v^*)|^{p_1} e^{p_1 \a |v^*|^2} (1 + |v^*|)^{p_1 \b_2}\,dv^*\\
&\times e^{-\frac{\nu(\rho)}{\rho} |x - y|} \,\frac{dy}{|x - y|^2} \rho (1 + \rho)^{p_1 \b_4 - p_1 \b_2 + (p_1 - 1)(\gamma - 2)}\,d\rho\\
\leq& C \left( \int_0^\infty \int_\O \int_{\R^3} e^{- q\frac{\nu(\rho)}{\rho} |x - y|} \left|k_\a \left( \rho \frac{x - y}{|x - y|}, v^* \right) \right|^q \frac{1}{|x - y|^{2q}} \right.\\
&\times \left. |h(y, v^*)|^{p_1} e^{p_1 \a |v^*|^2} (1 + |v^*|)^{p_1 \b_2} \rho^q (1 + \rho)^{\b_\e} \,dv^*\,dy\,d\rho \right)^{\frac{1}{q}}\\
&\times \left( \int_0^\infty \int_\O \int_{\R^3} |h(y, v^*)|^{p_1} e^{p_1 \a |v^*|^2} (1 + |v^*|)^{p_1 \b_2}\,dv^* \,dy (1 + \rho)^{-(1 + \e)} \,d\rho\right)^{\frac{1}{q'}}\\
\leq& C_\e \| h \|_{L^{p_1}_{\a, \b_2}(\O \times \R^3)}^{\frac{p_1}{q'}} \left( \int_0^\infty \int_\O \int_{\R^3} e^{- q\frac{\nu(\rho)}{\rho} |x - y|} \left|k_\a \left( \rho \frac{x - y}{|x - y|}, v^* \right) \right|^q \right.\\
&\times \left. \frac{1}{|x - y|^{2q}} |h(y, v^*)|^{p_1} e^{p_1 \a |v^*|^2} (1 + |v^*|)^{p_1 \b_2} \rho^q (1 + \rho)^{\b_\e} \,dv^*\,dy\,d\rho \right)^{\frac{1}{q}},
\end{align*}
where $q \geq 1$ is the constant in the statement, $\e$ is a small positive number and
\[
\b_\e := p_1 q (\b_4 - \b_2 + \gamma - 3) + q(4 - \gamma + \e) - (1 + \e).
\]
For $p_2 = q p_1$, we have
\begin{align*}
&\int_\O \| S_\O K h (x, \cdot) \|_{L^{p_1}_{\a, \b_4}(\R^3)}^{p_2}\,dx\\
\leq& C_\e \| h \|_{L^{p_1}_{\a, \b_2}(\O \times \R^3)}^{p_1 (q - 1)} \int_\O \int_{\R^3} \left( \int_0^\infty \int_\O e^{- q\frac{\nu(\rho)}{\rho} |x - y|} \left|k_\a \left( \rho \frac{x - y}{|x - y|}, v^* \right) \right|^q \right.\\
&\times \left. \frac{1}{|x - y|^{2q}} \rho^q (1 + \rho)^{\b_\e}\,dx d\rho \right) |h(y, v^*)|^{p_1} e^{p_1 \a |v^*|^2} (1 + |v^*|)^{p_1 \b_2} \,dv^*\,dy.
\end{align*}

We handle the inner integral with respect to $x$ and $\rho$. By change of variables $x = y + r\omega$, we have
\begin{align*}
&\int_0^\infty \int_\O e^{- q\frac{\nu(\rho)}{\rho} |x - y|} \left|k_\a \left( \rho \frac{x - y}{|x - y|}, v^* \right) \right|^q \frac{1}{|x - y|^{2q}} \rho^q (1 + \rho)^{\b_\e}\,dx d\rho\\
=& \int_0^\infty \int_{S^2} \left( \int_0^{|y - q(y, \omega)|} e^{- q\frac{\nu(\rho)}{\rho} r} r^{2(1 - q)}\,dr \right) |k_\a( \rho \omega, v^*)|^q\,d\sigma_\omega \rho^q (1 + \rho)^{\b_\e}\,d\rho.
\end{align*}
We claim that
\[
\int_0^{|y - q(y, \omega)|} e^{- q\frac{\nu(\rho)}{\rho} r} r^{2(1 - q)}\,dr \leq C \min \left\{ 1, \frac{\rho^{3 - 2q}}{(1 + \rho)^{\gamma(3 - 2q)}} \right\}
\]
for $1 \leq q < 3/2$. On the one hand, we have
\[
\int_0^{|y - q(y, \omega)|} e^{- q\frac{\nu(\rho)}{\rho} r} r^{2(1 - q)}\,dr \leq \int_0^{|y - q(y, \omega)|} r^{2(1 - q)}\,dr \leq C.
\]
On the other hand, by changing the variable $r = (\rho/\nu(\rho))t$, we have
\[
\int_0^{|y - q(y, \omega)|} e^{- q\frac{\nu(\rho)}{\rho} r} r^{2(1 - q)}\,dr \leq \left( \frac{\rho}{\nu(\rho)} \right)^{3 - 2q} \int_0^\infty e^{-qt} t^{2(1 - q)}\,dt \leq C \frac{\rho^{3 - 2q}}{(1 + \rho)^{\gamma(3 - 2q)}}.
\]
Thus, the claim is proved. Using the claimed estimate and the change of variables $v = \rho \omega$, we have
\begin{align*}
&\int_0^\infty \int_{S^2} \left( \int_0^{|y - q(y, \omega)|} e^{- q\frac{\nu(\rho)}{\rho} r} r^{2(1 - q)}\,dr \right) |k_\a( \rho \omega, v^*)|^q\,d\sigma_\omega \rho^q (1 + \rho)^{\b_\e}\,d\rho\\
\leq& C \int_{\{ |v| > 1 \}} |k_\a(v, v^*)|^q |v|^{q - 2} (1 + |v|)^{\b_\e}\,dv\\
+& C \int_{\{ |v| \leq 1 \}} |k_\a(v, v^*)|^q |v|^{1 - q} (1 + |v|)^{\b_\e - \gamma (3 - 2q)}\,dv.
\end{align*}
For the former part, we get
\begin{align*}
\int_{\{ |v| > 1 \}} |k_\a(v, v^*)|^q |v|^{q - 2} (1 + |v|)^{\b_\e}\,dv \leq& C \int_{\R^3} |k_\a(v, v^*)|^q (1 + |v|)^{\b_\e + q - 2}\,dv\\
\leq& C (1 + |v^*|)^{\b_\e + q - 2 + q(\gamma - 1) - 1}\\
=& C (1 + |v^*|)^{p_1 q (\b_4 - \b_2 + \gamma - 3) + (q - 1)(4 + \e)}.
\end{align*}
For the latter part, since $1 \leq 1 + |v| \leq 2$ for $|v| \leq 1$, we obtain
\[
\int_{\{ |v| \leq 1 \}} |k_\a(v, v^*)|^q |v|^{1 - q} (1 + |v|)^{\b_\e - \gamma (3 - 2q)}\,dv \leq C \int_{\R^3} |v|^{1 - q} |k_\a(v, v^*)|^q\,dv.
\]
For $-3 < \gamma < -1$, the right hand side is finite if
\[
1 - q + \gamma q > -3,
\]
which is equivalent to 
\[
q < \frac{4}{1 + |\gamma|}.
\]
Thus, we have
\begin{align*}
&\int_0^\infty \int_{S^2} \left( \int_0^{|y - q(y, \omega)|} e^{- q\frac{\nu(\rho)}{\rho} r} r^{2(1 - q)}\,dr \right) |k_\a( \rho \omega, v^*)|^q\,d\sigma_\omega \rho^q (1 + \rho)^{\b_\e}\,d\rho\\
\leq& C (1 + |v^*|)^{p_1 q (\b_4 - \b_2 + \gamma - 3) + (q - 1)(4 + \e)}.
\end{align*}
The above estimate is uniformly bounded if
\[
p_1 q (\b_4 - \b_2 + \gamma - 3) + (q - 1)(4 + \e) \leq 0.
\]
Since $\e > 0$ is arbitrarily small, it is sufficient that
\[
\b_4 < \b_2 + 3 - \gamma + \frac{4}{p_2} - \frac{4}{p_1}.
\]

Therefore, under the assumptions of the statement (2), we have
\begin{align*}
&\int_\O \| S_\O K h (x, \cdot) \|_{L^{p_1}_{\a, \b_4}(\R^3)}^{p_2}\,dx\\
\leq& C \| h \|_{L^{p_1}_{\a, \b_2}(\O \times \R^3)}^{p_1 (q - 1)} \int_\O \int_{\R^3} |h(y, v^*)|^{p_1} e^{p_1 \a |v^*|^2} (1 + |v^*|)^{p_1 \b_2} \,dv^*\,dy\\
=& C \| h \|_{L^{p_1}_{\a, \b_2}(\O \times \R^3)}^{p_2}.
\end{align*}
The statement (2) is proved.
\end{proof}

\begin{lemma} \label{lem:Lp_Linfty_x}
Let $\O$ be a bounded convex domain with $C^1$ boundary, $0 \leq \a < 1/2$, $\b_2 \in \R$, $-3 < \gamma \leq 1$ and $2 \leq p < \infty$. Suppose that $p' < \min \{ 3/2, 3/|\gamma| \}$. Take $p_1 \geq p$ and define $p_2$ by
\[
1 + \frac{p}{p_2} = \frac{1}{p'} + \frac{p}{p_1}.
\]
Then, the operator $S_\O K: L^{p_1}(\O; L^p_{\a, \b_2}(\R^3)) \to L^{p_2}(\O; L^p_{\a, \b_4}(\R^3))$ is bounded for $\b_4 < \b_2 + 3 - \gamma - 4 p^{-1}$. 
\end{lemma}

\begin{proof}
We first consider the case where $p_2 < \infty$. In the same way as in the proof of Lemma \ref{lem:Lpx}, we obtain the estimate \eqref{est:Young} for $h \in L^{p_1}_x(\O; L^p_{\a, \b_2}(\R^3))$ with $\b_4 < \b_2 + 3 - \gamma - 4 p^{-1}$. We let 
\[
\frac{1}{r_1} := \frac{p}{p_1} - \frac{p}{p_2}, \quad \frac{1}{r_2} := \frac{1}{p'} - \frac{p}{p_2},
\]
and decompose the integrand as
\begin{align*}
\frac{1}{|x - y|^2} \| h(y, \cdot) \|_{L^p_{\a, \b_2}(\R^3)}^p =& \| h(y, \cdot) \|_{L^p_{\a, \b_2}(\R^3)}^{\frac{p_1}{r_1}} \left( \frac{1}{|x - y|^{2p'}} \right)^{\frac{1}{r_2}}\\
&\times \left( \frac{1}{|x - y|^{2p'}} \| h(y, \cdot) \|_{L^p_{\a, \b_2}(\R^3)}^{p_1} \right)^{\frac{p}{p_2}}. 
\end{align*}
The generalized H\"older inequality yields
\begin{align*}
&\int_\O \frac{1}{|x - y|^2} \| h(y, \cdot) \|_{L^p_{\a, \b_2}}^p\,dy\\ 
\leq& \left( \int_\O \| h(y, \cdot) \|_{L^p_{\a, \b_2}(\R^3)}^{p_1}\,dy \right)^{\frac{1}{r_1}} \left( \int_\O \frac{1}{|x - y|^{2p'}}\,dy \right)^{\frac{1}{r_2}}\\
&\times \left( \int_\O \frac{1}{|x - y|^{2p'}} \| h(y, \cdot) \|_{L^p_{\a, \b_2}(\R^3)}^{p_1}\,dy \right)^{\frac{p}{p_2}}\\
\leq& C \| h \|_{L^{p_1}_x(\O; L^p_{\a, \b_2}(\R^3))}^{\frac{p_1}{r_1}} \left( \int_\O \frac{1}{|x - y|^{2p'}} \| h(y, \cdot) \|_{L^p_{\a, \b_2}(\R^3)}^{p_1}\,dy \right)^{\frac{p}{p_2}}.
\end{align*}
Thus, we obtain
\begin{align*}
&\int_\O \| S_\O K h(x, \cdot) \|_{L^p_{\a, \b_4}(\R^3)}^{p_2}\,dx\\
\leq& C \| h \|_{L^{p_1}_x(\O; L^p_{\a, \b_2}(\R^3))}^{\frac{p_1}{r_1} \frac{p_2}{p}} \int_\O \left( \int_\O \frac{1}{|x - y|^{2p'}}\,dx \right) \| h(y, \cdot) \|_{L^p_{\a, \b_2}(\R^3)}^{p_1}\,dy\\
\leq& C \| h \|_{L^{p_1}_x(\O; L^p_{\a, \b_2}(\R^3))}^{p_2}.
\end{align*}

For the case $p_2 = \infty$, we just use the H\"older inequality to obtain
\begin{align*}
\int_\O \frac{1}{|x - y|^2} \| h(y, \cdot) \|_{L^p_{\a, \b_2}}^p\,dy \leq& \left( \int_\O \frac{1}{|x - y|^{2p'}}\,dy \right)^{\frac{1}{p'}} \left( \int_\O \| h(y, \cdot) \|_{L^p_{\a, \b_2}}^{p \frac{p_1}{p}}\,dy \right)^{\frac{p}{p_1}}\\
\leq& C \| h \|_{L^{p_1}_x(\O; L^p_{\a, \b_2}(\R^3))}^p
\end{align*}
for all $x \in \O$, and hence we have
\[
\| S_\O K h \|_{L^\infty_x(\O; L^p_{\a, \b_4}(\R^3))} \leq C \| h \|_{L^{p_1}_x(\O; L^p_{\a, \b_2}(\R^3))}.
\]
This completes the proof.
\end{proof}

We are ready to prove Lemma \ref{lem:L2-Linfty}. Let $f$ be the solution to the integral equation \eqref{IE} in $L^2_{\a, \gamma/2}(\O \times \R^3)$, whose existence was shown by Lemma \ref{lem:existence_lin}. 

We first consider the case where $-3/2 < \gamma \leq 1$. In this case, we have $2 < \min \{ 3, 3/|\gamma| \}$. Hence, we apply the statement (1) of Lemma \ref{lem:Lpx} to obtain $S_\O K f \in L^{2q}(\O; L^2_{\a, \b_4}(\R^3))$ with $1 \leq q < 3/2$ and $\b_4 < \gamma/2 + 3 - \gamma - 2 = 1 - \gamma/2$. For the sake of simplicity, let $q = 5/4$. Since $J f_0 + S_\O \phi \in L^\infty_{\a, \b}(\O \times \R^3) \subset L^{5/2}(\O; L^2_{\a, \b_2}(\R^3))$ with $\b_2 < \b - 3/2$, the equation \eqref{IE} implies that $f \in  L^{5/2}(\O; L^2_{\a, \b_2}(\R^3))$ with $\b_2 < \min \{1 - \gamma/2, \b - 3/2 \}$. Applying the statement (1) of Lemma \ref{lem:Lp1_Lp2} and Lemma \ref{lem:polynomial_decay} if necessary, we obtain $f \in L^{5/2}_{\a, \b_1}(\O \times \R^3)$ with $\b_1 < \b - 6/5$. The same procedure yields $f \in L^{25/8}_{\a, \b_1}(\O \times \R^3)$ with $\b_1 < \b - 24/25$. We notice that $(25/8)' = 25/17 < \min \{ 3/2, 3/|\gamma| \}$. Thus, we make use of Lemma \ref{lem:Lp_Linfty_x} to obtain $f \in L^\infty(\O; L^{25/8}_{\a, \b_2}(\R^3))$ for some $\b_2 \in \R$. Lemma \ref{lem:Lp_Linfty_v} and Lemma \ref{lem:polynomial_decay} give us that $f \in L^\infty_{\a, \b}(\O \times \R^3)$.

Since $L^\infty_{\a, \b}(\O \times \R^3)$ is contained in all the function spaces appearing in the above argument, we can see that
\[
\| f \|_{L^\infty_{\a, \b}(\O \times \R^3)} \leq C \left( \| f \|_{L^2_{\a, \gamma/2}(\O \times \R^3)} + \| Jf_0 + S_\O \phi \|_{L^\infty_{\a, \b}(\O \times \R^3)} \right).
\]
By Lemma \ref{lem:inverse}, we have
\begin{align*}
\| f \|_{L^2_{\a, \gamma/2}(\O \times \R^3)} \leq& C \| Jf_0 + S_\O \phi \|_{L^2_{\a, \gamma/2}(\O \times \R^3)}\\
\leq& C \| Jf_0 + S_\O \phi \|_{L^\infty_{\a, \b}(\O \times \R^3)}.
\end{align*}
Thus, by Lemma \ref{lem:est_J_infty} and Proposition \ref{prop:S_bound}, we obtain
\begin{align*}
\| f \|_{L^\infty_{\a, \b}(\O \times \R^3)} \leq& C \| Jf_0 + S_\O \phi \|_{L^\infty_{\a, \b}(\O \times \R^3)}\\
\leq& C \left( \| Jf_0 \|_{L^\infty_{\a, \b}(\O \times \R^3)} + \| S_\O \phi \|_{L^\infty_{\a, \b}(\O \times \R^3)} \right)\\
\leq& C \left( \| f_0 \|_{L^\infty_{\a, \b}(\Gamma^-)} + \| \phi \|_{L^\infty_{\a, \b - \gamma}(\O \times \R^3)} \right).
\end{align*}

We next consider the case where $-3 < \gamma \leq -3/2$. In this case, we apply the statement (2) of Lemma \ref{lem:Lpx}, the statement of Lemma \ref{lem:Lp1_Lp2} and Lemma \ref{lem:polynomial_decay} to obtain $f \in L^{p_1}_{\a, \b_1}(\O \times \R^3)$ for $p_1 > 2$ and $\b_1 < \b - 3/p_1$. Through the same procedure inductively, we obtain $f \in L^{p_2}_{\a, \b_2}(\O \times \R^3)$ with $p_2' < \min \{3/2, 3/|\gamma| \}$ and $\b_2 < \b - 3/p_2$. Therefore, we apply Lemma \ref{lem:Lp_Linfty_x}, Lemma \ref{lem:Lp_Linfty_v} and Lemma \ref{lem:polynomial_decay} to reach at $f \in L^\infty_{\a, \b}(\O \times \R^3)$. The desired $L^\infty$ estimate is also obtained in the same way. This completes the proof of Lemma \ref{lem:L2-Linfty}.

\section{$H^s_x$ regularity} \label{sec:Hsx}

In this subsection, we discuss $H^s_x$ regularity of the solution to the integral equation \eqref{IE}. We first discuss it for source terms.

\begin{lemma} \label{lem:Hs_Jf0}
Let $\O$ be a bounded convex domain with $C^1$ boundary, $0 \leq \a < 1/2$, $\b > (3 + \gamma)/2$, $-3 < \gamma \leq 1$ and $0 < s_1 \leq 1$. Then, for any $0 < s < s_1$, the operator $J: \cB^{s_1}_{\a, \b}(\Gamma^-) \to X^s_{\a, \b, \g}$ is bounded.
\end{lemma}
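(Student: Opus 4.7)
The plan is to verify the two constituent norms of $X^s_{\a,\b,\g}$ in turn. The $L^\infty_{\a,\b}(\O \times \R^3)$-bound is immediate from Lemma \ref{lem:est_J_infty} combined with the embedding $\cB^{s_1}_{\a,\b}(\Gamma^-) \hookrightarrow L^\infty_{\a,\b}(\Gamma^-)$. The $L^2$-in-$x$ part of the $L^2_{v,\a,\gamma/2}(\R^3; H^s_x(\O))$-norm follows from the pointwise bound $|Jf_0(x,v)| \leq \|f_0\|_{L^\infty_{\a,\b}(\Gamma^-)}\,e^{-\a|v|^2}(1+|v|)^{-\b}$ together with the hypothesis $\b > (3+\gamma)/2$, which ensures $\int_{\R^3}(1+|v|)^{\gamma-2\b}\,dv < \infty$. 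The heart of the proof therefore lies in bounding the Slobodeckij seminorm $|Jf_0(\cdot,v)|_{H^s_x(\O)}$ uniformly enough in $v$.

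First I would fix $v$ and use the decomposition
\[
Jf_0(x,v) - Jf_0(y,v) = e^{-\nu(|v|)\tau_-(x,v)}\bigl[f_0(q(x,v),v) - f_0(q(y,v),v)\bigr] + \bigl[e^{-\nu(|v|)\tau_-(x,v)} - e^{-\nu(|v|)\tau_-(y,v)}\bigr]f_0(q(y,v),v),
\]
applying the defining H\"older bound of $\cB^{s_1}_{\a,\b}(\Gamma^-)$ to the first summand and the elementary inequality $|e^{-\nu t_1} - e^{-\nu t_2}| \leq \nu(|v|)|t_1 - t_2|$ to the second. The estimate then reduces to controlling $|q(x,v)-q(y,v)|^{s_1}$ and $\nu(|v|)|\tau_-(x,v) - \tau_-(y,v)|$; via the identity $q(x,v)-q(y,v) = (x-y) - [\tau_-(x,v)-\tau_-(y,v)]v$, both quantities reduce to bounding the variation of $\tau_-$.

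The key geometric ingredient is that, on the convex $C^1$ domain $\O$, away from the grazing set one has a pointwise Lipschitz estimate $|\tau_-(x,v)-\tau_-(y,v)| \leq C(v)|x-y|$ with $C(v)$ governed by the reciprocal of the cosine between $v$ and the outward normal at the exit point $q(x,v)$. Combining this with the splitting above yields, on the transversal portion of $\O\times\O$, a pointwise H\"older bound $|Jf_0(x,v) - Jf_0(y,v)| \leq \Psi(v)|x-y|^{s_1}$ with
\[
\Psi(v) \leq C\bigl[|f_0|_{\cB^{s_1}_{\a,\b}(\Gamma^-)} + \nu(|v|)\|f_0\|_{L^\infty_{\a,\b}(\Gamma^-)}\bigr]\,e^{-\a|v|^2}(1+|v|)^{-\b},
\]
whose square is integrable against $e^{2\a|v|^2}(1+|v|)^\gamma\,dv$ thanks to Proposition \ref{prop:est_nu} (giving $\nu(|v|) \leq \nu_1 (1+|v|)^\gamma$) and $\b > (3+\gamma)/2$. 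The elementary embedding $L^\infty(\O) \cap C^{s_1}(\O) \hookrightarrow H^s(\O)$ for $s<s_1$ on the bounded domain $\O$ then converts this pointwise H\"older information into the required seminorm bound.

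The main obstacle is the grazing set, where $x \mapsto q(x,v)$ and $x \mapsto \tau_-(x,v)$ fail to be Lipschitz uniformly in $v$. To handle it, I would partition $\int_\O\!\int_\O |x-y|^{-3-2s}|Jf_0(x,v)-Jf_0(y,v)|^2\,dx\,dy$ into a transversal region, treated as above, and a near-grazing region; on the latter the crude bound $|Jf_0(x,v)-Jf_0(y,v)| \leq 2\|Jf_0(\cdot,v)\|_{L^\infty_x}$ combined with the smallness of the near-grazing measure absorbs the $|x-y|^{-3-2s}$-singularity. It is the strict inequality $s<s_1$ that provides the necessary margin both for the transversal estimate and for the grazing region. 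Integrating in $v$ against the weight $e^{2\a|v|^2}(1+|v|)^\gamma\,dv$ then closes the estimate and yields the desired operator bound.
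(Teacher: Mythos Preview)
Your decomposition and the idea of using the H\"older bound from $\cB^{s_1}_{\a,\b}(\Gamma^-)$ together with the Lipschitz control of $\tau_-$ are on the right track, but the argument has a genuine gap in how you handle the grazing set.

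First, your displayed $\Psi(v)$ cannot be correct as written: Proposition~\ref{prop:EPT} gives $|\tau_-(x,v)-\tau_-(y,v)| \leq 2|x-y|/(N(x,y,v)|v|)$ and $|q(x,v)-q(y,v)| \leq |x-y|/N(x,y,v)$, so the pointwise H\"older constant carries an unavoidable factor of $N(x,y,v)^{-s_1}$ (or $N^{-1}$ if you keep the full Lipschitz bound on the exponential). This factor depends on $x$ and $y$, not only on $v$, and it blows up near the grazing set. Dropping it is precisely where the difficulty hides.

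Second, the near-grazing patch cannot be handled by the crude bound plus smallness of measure: the kernel $|x-y|^{-3-2s}$ is not locally integrable over $\O\times\O$, so on any subset of positive measure containing part of the diagonal the double integral already diverges. The smallness of the near-grazing set in $x$ does nothing against the $y$-integral near $x$.

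What the paper does instead is to carry the $N^{-s_1}$ factor explicitly throughout. After bounding the integrand by $C|x-y|^{2s_1-3-2s}N(x,y,v)^{-2s_1}$ (times the weights in $v$), one integrates in $y$ first, using $s<s_1$ so that $|x-y|^{-3+2(s_1-s)}$ is integrable over $\O$; this leaves $\int_\O N(x,v)^{-2s_1}\,dx$. That quantity is then integrated in $v$ with the weight by passing to boundary coordinates via Lemma~\ref{change_integration}: the Jacobian $|n(z)\cdot v|$ contributes one power of $N$, reducing the singularity to $N^{1-2s_1}$, and the remaining angular integral $\int_0^1 t^{1-2s_1}\,dt$ is finite. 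For the exponential term the paper also uses the interpolated bound $|e^{-\nu t_1}-e^{-\nu t_2}| \leq C(\nu|t_1-t_2|)^{s_1}$ rather than the full Lipschitz bound, precisely so that the resulting power of $N$ stays below the critical value $2$.
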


To prove Lemma \ref{lem:Hs_Jf0}, we introduce some geometric estimates. For $x, y \in \O$ and $v \in \R^3 \setminus \{ 0 \}$, let 
\begin{align*}
N(x, v) :=& \frac{|n(q(x, v)) \cdot v|}{|v|},\\
N(x, y, v) :=& \min \{ N(x, v), N(y, v) \}.
\end{align*}
We have the following H\"older estimates.

\begin{proposition}[{\cite[Proposition 6.2]{CHK}}] \label{prop:EPT}
Let $\Omega$ be a bounded convex domain in $\R^3$ with $C^1$ boundary. For $x, y \in \Omega$ and $v \in \R^3 \setminus \{ 0 \}$, we have
\begin{align*}
&|q(x, v) - q(y, v)| \leq \frac{|x - y|}{N(x, y, v)},\\
&|\tau_-(x, v) - \tau_-(y, v)| \leq \frac{2 |x - y| }{N(x, y, v) |v| }.
\end{align*}
\end{proposition}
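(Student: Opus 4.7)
The plan is to exploit two elementary consequences of $\Omega$ being convex with $C^1$ boundary: (a) the tangent hyperplane at any boundary point supports $\bar\Omega$, so $n(q(x,v)) \cdot (q(y,v) - q(x,v)) \leq 0$ and symmetrically with the roles of $x, y$ exchanged; and (b) since the backward trajectory $q(x,v) + tv$, $t \in [0, \tau_-(x,v)]$, lies in $\bar\Omega$, the vector $v$ is inward-pointing at $q(x,v)$, giving $n(q(x,v)) \cdot v \leq 0$ and hence $n(q(x,v)) \cdot v = -N(x,v)|v|$.

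For the time bound, I would start from the algebraic identity
\[
y - x = [q(y,v) - q(x,v)] + [\tau_-(y,v) - \tau_-(x,v)]\,v
\]
and take the inner product with $n(q(x,v))$. Fact (a) makes the first term on the right nonpositive, and fact (b) turns the second into $-[\tau_-(y,v) - \tau_-(x,v)] N(x,v) |v|$. Rearranging and using Cauchy--Schwarz on the left gives
\[
[\tau_-(y,v) - \tau_-(x,v)] N(x,v) |v| \leq -n(q(x,v)) \cdot (y - x) \leq |x - y|,
\]
which controls one sign of $\tau_-(y,v) - \tau_-(x,v)$. The symmetric argument with $n(q(y,v))$ handles the other sign, yielding $|\tau_-(x,v) - \tau_-(y,v)| \leq |x-y|/(N(x,y,v)|v|)$, which is in fact stronger than the stated bound by a factor of two.

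For the spatial bound, I would write
\[
q(x,v) - q(y,v) = (x - y) + [\tau_-(y,v) - \tau_-(x,v)]\,v,
\]
apply the triangle inequality, invoke the just-proved time bound, and use the trivial estimate $N(x,y,v) \leq 1$ to obtain $|q(x,v) - q(y,v)| \leq 2|x-y|/N(x,y,v)$. Sharpening this constant from $2$ to $1$ is the main obstacle. I would do so by mimicking the half-space model computation, in which $q(x,v) - q(y,v) = u - \frac{n \cdot u}{n \cdot v} v$ (with $u = x-y$) is a linear projection onto the tangent hyperplane along $v$ whose operator norm equals exactly $1/N$; the sharp bound is verified via the weighted Young inequality $2ab \leq \lambda a^2 + \lambda^{-1} b^2$ with $\lambda = |v_\perp|^2 / v_n^2$ applied to the cross-term in $|u - (u_n/v_n) v_\perp|^2$. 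The extension to general convex $\Omega$ proceeds by differentiating $x \mapsto q(x,v)$ and using $\nabla_x q(x,v) = I - (v \otimes n(q(x,v)))/(n(q(x,v)) \cdot v)$, whose operator norm is $1/N(x,v)$ by the above weighted-Young computation. Integrating along the segment from $x$ to $y$ gives $|q(x,v) - q(y,v)| \leq |x - y| \sup_{s \in [0,1]} 1/N(x + s(y-x), v)$.

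The hard part is the last step: bounding $\sup_{s \in [0,1]} 1/N(x+s(y-x), v)$ by $1/N(x,y,v)$, since for a general $C^1$ convex $\Omega$ the quantity $N(\cdot, v)$ need not be concave along chords, so its minimum may lie strictly inside $(0,1)$. I would anticipate that one needs either a local linearization near each of $q(x,v), q(y,v)$ combined with second-order accounting along the boundary curve $s \mapsto q(x+s(y-x), v)$, or a direct packaging of both one-sided inequalities into a single two-variable estimate; since the result is quoted directly from \cite{CHK}, I would defer to that reference for the remaining technical details.
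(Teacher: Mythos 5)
Your derivation of the time bound via the two supporting-hyperplane inequalities and $n(q(x,v)) \cdot v = -N(x,v)|v|$ is correct, and it actually yields the cleaner constant $1$ in place of $2$; the paper's factor $2$ presumably comes from first proving the spatial estimate and then using $|\tau_-(x,v)-\tau_-(y,v)|\,|v| \leq |q(x,v)-q(y,v)| + |x-y|$ together with $N(x,y,v)\leq 1$. The gap is in the spatial bound. Your route — differentiate $x\mapsto q(x,v)$, note that $\nabla_x q = I - (v\otimes n(q))/(n(q)\cdot v)$ has operator norm $1/N(x,v)$, and integrate along the chord — only produces $|x-y|\sup_{s\in[0,1]}N(x+s(y-x),v)^{-1}$, and you are right to be uneasy: nothing forces $N(\cdot,v)$ to attain its minimum along the chord at an endpoint. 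Deferring to the reference at that step leaves the crucial inequality unproven, and the chord-integration strategy is the wrong tool because the argument should never need to look at the interior of $[x,y]$.

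The missing step is that the two inequalities you already wrote down close the argument by themselves. Set $u := x-y$, $n := n(q(x,v))$, $n' := n(q(y,v))$, $\Delta\tau := \tau_-(x,v)-\tau_-(y,v)$, so that $q(x,v)-q(y,v) = u - \Delta\tau\,v$. Your inequalities $n\cdot(u-\Delta\tau\,v)\geq 0$ and $n'\cdot(u-\Delta\tau\,v)\leq 0$ confine $\Delta\tau$ to the interval $[t_1,t_2]$ with $t_1 = (n\cdot u)/(n\cdot v)$ and $t_2 = (n'\cdot u)/(n'\cdot v)$. At these endpoints the relevant normal components vanish exactly, $n\cdot(u-t_1 v)=0$ and $n'\cdot(u-t_2 v)=0$, so $u-t_1v$ and $u-t_2v$ are precisely the half-space oblique projections to which your Cauchy--Schwarz (or weighted Young) computation applies verbatim, giving $|u-t_1v|\leq |u|/N(x,v)$ and $|u-t_2v|\leq |u|/N(y,v)$. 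Since $t\mapsto |u-tv|^2$ is a convex quadratic, its maximum over $[t_1,t_2]$ is attained at an endpoint, and therefore
\[
|q(x,v)-q(y,v)| = |u-\Delta\tau\,v| \leq \max\left\{\frac{|u|}{N(x,v)},\ \frac{|u|}{N(y,v)}\right\} = \frac{|x-y|}{N(x,y,v)},
\]
with no information whatsoever about $N(\cdot,v)$ in the interior of the segment.
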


\begin{proof}[Proof of Lemma \ref{lem:Hs_Jf0}]
Lemma \ref{lem:est_J_infty} implies that $Jf_0 \in L^\infty_{\a, \b}(\O \times \R^3)$. Thus, it suffices to discuss the $H^s$ seminorm of $Jf_0$.

Due to the estimate,
\begin{align*}
|Jf_0(x, v) - Jf_0(y, v)| \leq& |e^{-\nu(|v|) \tau_-(x, v)} - e^{-\nu(|v|)\tau_-(y, v)}| |f_0(q(x, v), v)|\\
&+ e^{-\nu(|v|) \tau_-(y, v)} |f_0(q(x, v), v) - f_0(q(y, v), v)|,
\end{align*}
we have
\begin{align*}
&|J f_0|_{L^2_{\a, \gamma/2}(\R^3; H^s(\O))}^2\\ 
=& \int_{\R^3} \left( \int_\O \int_\O \frac{|Jf_0(x, v) - Jf_0(y, v)|^2}{|x - y|^{3 + 2s}}\,dxdy \right) e^{2\a|v|^2} (1 + |v|)^\gamma \,dv\\
\leq& C \int_{\R^3} \left( \int_\O \int_\O \frac{|e^{-\nu(|v|) \tau_-(x, v)} - e^{-\nu(|v|)\tau_-(y, v)}|^2 |f_0(q(x, v), v)|^2}{|x - y|^{3 + 2s}}\,dxdy \right)\\
&\times e^{2\a|v|^2} (1 + |v|)^\gamma \,dv\\
&+ C \int_{\R^3} \left( \int_\O \int_\O \frac{e^{-2\nu(|v|) \tau_-(y, v)} |f_0(q(x, v), v) - f_0(q(y, v), v)|^2}{|x - y|^{3 + 2s}}\,dxdy \right)\\
&\quad \times e^{2\a|v|^2} (1 + |v|)^\gamma \,dv.
\end{align*}

For the first term on the right hand side, we apply the fundamental theorem of calculus and Proposition \ref{prop:EPT} to obtain
\begin{align*}
&|e^{-\nu(|v|) \tau_-(x, v)} - e^{-\nu(|v|)\tau_-(y, v)}| |f_0(q(x, v), v)|\\
\leq& 2^{1 - s_1} \left| \nu(|v|) \int_{\tau_-(y, v)}^{\tau_-(x, v)} e^{-\nu(|v|)t}\,dt \right|^{s_1} \| f_0 \|_{L^\infty_{\a, \b}(\Gamma^-)} e^{- \a |v|^2} (1 + |v|)^{-\b}\\
\leq& C |\tau_-(x, v) - \tau_-(y, v)|^{s_1} \| f_0 \|_{L^\infty_{\a, \b}(\Gamma^-)} e^{- \a |v|^2} (1 + |v|)^{s_1 \gamma - \b}\\
\leq& C \frac{|x - y|^{s_1}}{N(x, y, z)^{s_1} |v|^{s_1}} \| f_0 \|_{L^\infty_{\a, \b}(\Gamma^-)} e^{- \a |v|^2} (1 + |v|)^{s_1 \gamma - \b}.
\end{align*}
Thus, we have
\begin{align*}
&\int_{\R^3} \left( \int_\O \int_\O \frac{|e^{-\nu(|v|) \tau_-(x, v)} - e^{-\nu(|v|)\tau_-(y, v)}|^2 |f_0(q(x, v), v)|^2}{|x - y|^{3 + 2s}}\,dxdy \right)\\
& \times e^{2\a|v|^2} (1 + |v|)^\gamma \,dv\\
\leq& C \| f_0 \|_{L^\infty_{\a, \b}(\Gamma^-)}^2 \int_{\R^3} \left( \int_\O \int_\O \frac{1}{|x - y|^{3 - 2(s_1 - s)}N(x, y, v)^{2s_1}}\,dxdy \right)\\
&\times \frac{(1 + |v|)^{(2s_1 + 1)\gamma  - 2\b}}{|v|^{2s_1}}\,dv.
\end{align*}
For the inner integral, we recall the definition of $N(x, y, v)$ to obtain
\begin{align*}
&\int_\O \int_\O \frac{1}{|x - y|^{3 - 2(s_1 - s)}N(x, y, v)^{2s_1}}\,dxdy\\
\leq& \int_\O \frac{1}{N(x, v)^{2s_1}} \left( \int_\O \frac{1}{|x - y|^{3 - 2(s_1 - s)}}\,dy \right)\,dx\\
&+ \int_\O \frac{1}{N(y, v)^{2s_1}} \left( \int_\O \frac{1}{|x - y|^{3 - 2(s_1 - s)}}\,dx \right)\,dy\\
\leq& C \int_\O \frac{1}{N(x, v)^{2s_1}}\,dx.
\end{align*}
We employ Lemma \ref{change_integration} to get
\begin{align*}
&\int_{\R^3} \left( \int_\O \int_\O \frac{1}{|x - y|^{3 - 2(s_1 - s)}N(x, y, v)^{2s_1}}\,dxdy \right) \frac{(1 + |v|)^{(2s_1 + 1)\gamma  - 2\b}}{|v|^{2s_1}}\,dv\\
\leq& C \int_{\R^3} \int_\O \frac{1}{N(x, v)^{2s_1}} \frac{(1 + |v|)^{(2s_1 + 1)\gamma  - 2\b}}{|v|^{2s_1}}\,dxdv\\
=& C \int_{\Gamma^-} \int_0^{\tau_+(z, v)} \frac{1}{N(z + tv, v)^{2s_1}} \frac{(1 + |v|)^{(2s_1 + 1)\gamma  - 2\b}}{|v|^{2s_1}}\,N(z, v)|v|\,dtd\sigma_zdv\\
\leq& C \int_{\p \O} \left( \int_{\Gamma^-_z} \frac{1}{N(z, v)^{2s_1-1}} \frac{(1 + |v|)^{(2s_1 + 1)\gamma  - 2\b}}{|v|^{2s_1}}\,dv \right)\,d\sigma_z,
\end{align*}
where
\[
\Gamma^-_z := \{ v \in \R^3 \mid n(z) \cdot v < 0 \}.
\]
Here, we have used the fact that $N(z + tv, v) = N(z, v)$ for all $0 < t < \tau_+(z, v)$. We introduce the spherical coordinates for the $v$ integral so that $\theta = 0$ corresponds to the $n(z)$ direction to obtain
\begin{align*}
&\int_{\Gamma^-_z} \frac{1}{N(z, v)^{2s_1-1}} \frac{(1 + |v|)^{(2s_1 + 1)\gamma  - 2\b}}{|v|^{2s_1}}\,dv\\
=& 2 \pi \int_0^{\frac{\pi}{2}} \int_0^{\infty} \frac{1}{\cos^{2s_1 - 1} \theta} \rho^{2 - 2s_1} (1 + \rho)^{(2s_1 + 1)\gamma - 2\b} e\,\sin \theta\,d\rho d\theta\\
=& 2\pi \left( \int_0^1 \frac{1}{t^{2s_1 - 1}}\,dt \right) \left( \int_0^{\infty} \rho^{2 - 2s_1} (1 + \rho)^{(2s_1 + 1)\gamma - 2\b} \,d\rho \right).
\end{align*}
We notice that $2 - 2s_1 \geq 0$ and
\[
2 - 2s_1 + (2s_1 + 1)\gamma - 2\b = 2 + \gamma - 2\b + 2(\gamma - 1) s_1 < -1
\]
for $\b > (3 + \gamma)/2$, $-3 < \gamma \leq 1$ and $0 < s_1 \leq 1$. Thus the above integral with respect to $\rho$ is convergent. We have
\[
\int_{\Gamma^-_z} \frac{1}{N(z, v)^{2s_1-1}} \frac{(1 + |v|)^{(2s_1 + 1)\gamma  - 2\b}}{|v|^{2s_1}}\,dv \leq C.
\]
for all $z \in \p \O$, and hence we have
\begin{align*}
&\int_{\R^3} \left( \int_\O \int_\O \frac{|e^{-\nu(|v|) \tau_-(x, v)} - e^{-\nu(|v|)\tau_-(y, v)}|^2 |f_0(q(x, v), v)|^2}{|x - y|^{3 + 2s}}\,dxdy \right)\\
& \times e^{2\a|v|^2} (1 + |v|)^\gamma\,dv \leq C \| f_0 \|_{L^\infty_{\a, \b}(\Gamma^-)}^2.
\end{align*}

For the second term on the right hand side, we use the assumption $f_0 \in \cB^{s_1}_{\a, \b}$ and Proposition \ref{prop:EPT} to get
\begin{align*}
&e^{-\nu(|v|) \tau_-(y, v)} |f_0(q(x, v), v) - f_0(q(y, v), v)|\\ 
\leq& \| f_0 \|_{\cB^{s_1}_{\a, \b}(\Gamma^-)} |q(x, v) - q(y, v)|^{s_1} e^{-\a |v|^2} (1 + |v|)^{-\b}\\
\leq& C \| f_0 \|_{\cB^{s_1}_{\a, \b}(\Gamma^-)} \frac{|x - y|^{s_1}}{N(x, y, v)^{s_1}} e^{-\a |v|^2} (1 + |v|)^{-\b}.
\end{align*}
Thus, in the same fashion, we have
\begin{align*}
&\int_{\R^3} \left( \int_\O \int_\O \frac{e^{-2\nu(|v|) \tau_-(y, v)} |f_0(q(x, v), v) - f_0(q(y, v), v)|^2}{|x - y|^{3 + 2s}}\,dxdy \right)\\
&\quad \times e^{2\a|v|^2} (1 + |v|)^\gamma\,dv\\
\leq& C \| f_0 \|_{\cB^{s_1}_{\a, \b}(\Gamma^-)}^2 \int_{\R^3} \left( \int_\O \int_\O \frac{1}{N(x, y, v)^{2s_1}|x - y|^{3 - 2(s_1 - s)}}\,dxdy \right) (1 + |v|)^{\gamma - 2\b} \,dv\\
\leq& C \| f_0 \|_{\cB^{s_1}_{\a, \b}(\Gamma^-)}^2 \int_{\R^3} \int_\O \frac{1}{N(x, v)^{2s_1}} (1 + |v|)^{\gamma - 2\b} \,dxdv\\
=& C \| f_0 \|_{\cB^{s_1}_{\a, \b}(\Gamma^-)}^2 \left( \int_0^1 \frac{1}{t^{2s_1 - 1}}\,dt \right) \left( \int_0^\infty \rho^2 (1 + \rho)^{\gamma - 2\b}\,d\rho \right)\\
\leq& C \| f_0 \|_{\cB^{s_1}_{\a, \b}(\Gamma^-)}^2.
\end{align*}

Therefore, we conclude that $|J f_0|_{L^2_{\a, \gamma/2}(\R^3; H^s(\O))} \leq C \| f_0 \|_{\cB^{s_1}_{\a, \b}(\Gamma^-)}^2$ for $0 < s < s_1$. This completes the proof.
\end{proof}

\begin{lemma} \label{lem:Hs_S}
Let $\O$ be a bounded convex domain with $C^1$ boundary, $0 \leq \a < 1/2$, $\b > (3 + \gamma)/2$, $-3 < \gamma \leq 1$ and $0 < s < 1$. Then, the operator $S_\O: Y^s_{\a, \b, \g} \to X^s_{\a, \b, \g}$ is bounded.
\end{lemma}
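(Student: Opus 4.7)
The proof splits naturally along the two factors of $X^s_{\a,\b,\g} = L^\infty_{\a,\b}(\O \times \R^3) \cap L^2_{v,\a,\g/2}(\R^3; H^s_x(\O))$ and $Y^s_{\a,\b,\g} = L^\infty_{\a,\b-\g}(\O \times \R^3) \cap L^2_{v,\a,-\g/2}(\R^3; H^s_x(\O))$. The $L^\infty$ mapping $S_\O : L^\infty_{\a,\b-\g} \to L^\infty_{\a,\b}$ is exactly Corollary \ref{cor:S_bound2}, so the only remaining claim is that $S_\O : L^2_{v,\a,-\g/2}(\R^3; H^s_x(\O)) \to L^2_{v,\a,\g/2}(\R^3; H^s_x(\O))$ is bounded. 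The plan is to estimate the Slobodeckij seminorm $|S_\O \phi(\cdot, v)|_{H^s(\O)}$ pointwise in $v$, using both the $L^\infty$ and the $H^s_x$ pieces of $\|\phi\|_{Y^s}$, and then integrate against the weight $e^{2\a|v|^2}(1+|v|)^\g$.

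Fix $v \in \R^3 \setminus \{0\}$ and $x, y \in \O$ with, without loss of generality, $\tau_-(x, v) \leq \tau_-(y, v)$. Split
\[
S_\O \phi(x, v) - S_\O \phi(y, v) = A_1 + A_2,
\]
with
\[
A_1 = \int_0^{\tau_-(x, v)} e^{-\nu(|v|)t}[\phi(x - tv, v) - \phi(y - tv, v)]\,dt, \qquad A_2 = -\int_{\tau_-(x,v)}^{\tau_-(y,v)} e^{-\nu(|v|)t}\phi(y-tv, v)\,dt.
\]
By convexity of $\O$, for $t \in [0, \tau_-(x, v)]$ both $x - tv$ and $y - tv$ lie in $\O$, so $A_1$ involves only interior values of $\phi$. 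Cauchy-Schwarz in $t$ yields $|A_1|^2 \leq \nu(|v|)^{-1}\int_0^{\tau_-(x,v)} e^{-\nu t}|\phi(x - tv, v) - \phi(y - tv, v)|^2\,dt$, and Fubini together with the translation $(x, y) \mapsto (x - tv, y - tv)$, which preserves $|x - y|$ and lands back in $\O \times \O$, gives
\[
\int_\O \int_\O \frac{|A_1|^2}{|x - y|^{3+2s}}\,dx\,dy \leq \frac{C}{\nu(|v|)^2}\,|\phi(\cdot, v)|_{H^s(\O)}^2.
\]
Multiplying by $e^{2\a|v|^2}(1+|v|)^\g$ and using $\nu \sim (1+|v|)^\g$ (Proposition \ref{prop:est_nu}) yields, upon $v$-integration, a bound by $C\|\phi\|_{L^2_{v,\a,-\g/2}(H^s_x)}^2$.

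The delicate piece is $A_2$, where only the $L^\infty$ control is available. Combining the two pointwise bounds
\[
|A_2| \leq \|\phi(\cdot, v)\|_{L^\infty}\bigl(\tau_-(y, v) - \tau_-(x, v)\bigr) \leq \frac{C\|\phi(\cdot, v)\|_{L^\infty}|x-y|}{N(x, y, v)|v|}
\]
(Proposition \ref{prop:EPT}) and $|A_2| \leq \|\phi(\cdot, v)\|_{L^\infty}\int_0^\infty e^{-\nu(|v|) t}\,dt = \|\phi(\cdot, v)\|_{L^\infty}/\nu(|v|)$, I take a geometric mean with weight $\theta \in (s, 1)$ to obtain
\[
|A_2|^2 \leq \frac{C \|\phi(\cdot, v)\|_{L^\infty}^2 |x - y|^{2\theta}}{N(x, y, v)^{2\theta}\,|v|^{2\theta}\,\nu(|v|)^{2(1-\theta)}}.
\]
The exponent $\theta > s$ makes $|x-y|^{2\theta - 3 - 2s}$ locally integrable, while $\theta < 1$ keeps the $N$-singularity controllable: by the change of variables along chords parallel to $v$ (as in Lemma \ref{change_integration}),
\[
\int_\O N(x, v)^{-2\theta}\,dx = |v|\int_{\{z \in \p \O : n(z) \cdot v < 0\}} \tau_+(z, v) N(z, v)^{1 - 2\theta}\,d\sigma_z \leq \diam(\O) \int_{\{z \in \p \O : n(z) \cdot v < 0\}} N(z, v)^{1 - 2\theta}\,d\sigma_z,
\]
which is bounded uniformly in $v$ because $1 - 2\theta > -1$. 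Inserting $\|\phi(\cdot, v)\|_{L^\infty} \leq \|\phi\|_{L^\infty_{\a,\b-\g}} e^{-\a|v|^2}(1+|v|)^{-(\b - \g)}$ and integrating in $v$ against $e^{2\a|v|^2}(1+|v|)^\g$ reduces everything to the convergence of $\int (1+|v|)^{\g(1+2\theta) - 2\b}|v|^{-2\theta}\,dv$, which holds for any $\theta \in (s, 1)$ precisely under the hypothesis $\b > (3+\g)/2$, yielding a bound by $C\|\phi\|_{L^\infty_{\a,\b-\g}}^2$.

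The main obstacle is the $A_2$ term: the naive estimate $|A_2|^2 \leq \|\phi\|_{L^\infty}^2(\tau_-(y) - \tau_-(x))^2$ produces a factor $N^{-2}$ that fails to be $dx$-integrable uniformly in $v$ (the $N^{-2}$ singularity near grazing directions is too strong for a general convex $C^1$ domain). The interpolation trick trades part of the $|x-y|^{-3-2s}$ spatial singularity for a weaker $N^{-2\theta}$ boundary singularity, and the existence of a valid $\theta \in (s, 1)$ is exactly the reason the lemma is restricted to $s < 1$.
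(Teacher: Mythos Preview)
Your treatment of the $A_1$ piece is correct and in fact simpler than the paper's: since for $(x,y)$ with $\tau_-(x,v)\le\tau_-(y,v)$ and $0\le t\le\tau_-(x,v)$ both $x-tv$ and $y-tv$ lie in $\O$, the translation stays inside $\O\times\O$ and you avoid the $H^s$-extension of $\phi$ to $\R^3$ that the paper invokes.

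The genuine gap is in the $A_2$ estimate. Your assertion that
\[
\int_{\{z\in\p\O:\,n(z)\cdot v<0\}}N(z,v)^{1-2\theta}\,d\sigma_z
\]
is bounded uniformly in $v$ ``because $1-2\theta>-1$'' tacitly assumes that $N(z,v)$ vanishes \emph{linearly} as $z$ approaches the terminator $\{n(z)\cdot v=0\}$. That is a positive-Gaussian-curvature statement, and the lemma explicitly dispenses with any curvature hypothesis. Concretely, take $\O=\{x\in\R^3:\,x_1^{2k}+x_2^2+x_3^2<1\}$ (bounded, convex, $C^\infty$) and $v=e_1$: near the terminator circle $\{x_1=0,\ x_2^2+x_3^2=1\}$ one has $N(z,e_1)\sim|x_1|^{2k-1}=(1-r^2)^{(2k-1)/(2k)}$, and after accounting for the blow-up of the surface element in the $(x_2,x_3)$-chart one finds $\int_{\Gamma^-_{e_1}}N^{1-2\theta}\,d\sigma_z\sim\int_0^1(1-r^2)^{-(2k-1)\theta/k}\,r\,dr$, which diverges once $\theta\ge k/(2k-1)$. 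Since $k/(2k-1)\to1/2$, for every $s\ge1/2$ there is an admissible domain on which no $\theta\in(s,1)$ works. (For $s<1/2$ your argument is fine: choose $\theta\in(s,1/2]$ so that $N^{1-2\theta}\le1$.)

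The fix is not to seek a uniform-in-$v$ bound on the spatial integral at all. Keep the combined quantity
\[
\int_{\R^3}\!\int_\O N(x,v)^{-2\theta}\,(1+|v|)^{\g(1+2\theta)-2\b}\,|v|^{-2\theta}\,dx\,dv,
\]
apply Lemma~\ref{change_integration} to pass to $\Gamma^-$ (picking up the Jacobian $|n\cdot v|=N|v|$ and hence reducing $N^{-2\theta}$ to $N^{1-2\theta}$), and then for \emph{fixed} $z\in\p\O$ do the $v$-integral in spherical coordinates aligned with $n(z)$. The angular part becomes $2\pi\int_0^1 t^{1-2\theta}\,dt<\infty$ for every $\theta<1$, uniformly in $z$; the remaining radial and $z$-integrals are then routine under $\b>(3+\g)/2$. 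This is precisely the order of integration the paper uses. As a minor point, you should also cite Corollary~\ref{cor:S_bound} for the $L^2_x$ part of the $H^s_x$ norm, which your outline does not address.
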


\begin{proof}
Let $\phi \in Y^s_{\a, \b, \g}$. By Corollary \ref{cor:S_bound2}, we have $\| S_\O \phi \|_{L^\infty_{\a, \b}(\O \times \R^3)} \leq C \| \phi \|_{L^\infty_{\a, \b - \gamma}(\O \times \R^3)}$. Also, by Corollary \ref{cor:S_bound}, we see that $\| S_\O \phi \|_{L^2_{\a, \gamma/2}(\O \times \R^3)} \leq C \| \phi \|_{L^2_{\a, -\gamma/2}(\O \times \R^3)}$. Thus, it suffices to discuss the $H^s$ seminorm of $S_\O \phi$. 

For a fixed $v \in \R^3 \setminus \{ 0 \}$, let
\begin{align*}
T_1(v) :=& \{ (x, y) \in \O \times \O \mid \tau_-(x, v) \leq \tau_-(y, v) \},\\
T_2(v) :=& \{ (x, y) \in \O \times \O \mid \tau_-(y, v) \leq \tau_-(x, v) \}.
\end{align*}
Then, we have
\begin{align*}
&|S_\O \phi(\cdot, v)|_{H^s(\O)}^2\\
=& \int_\O \int_\O \frac{|S_\O \phi(x, v) - S_\O \phi(y, v)|^2}{|x - y|^{3 + 2s}}\,dxdy\\
\leq& C \int_{T_1(v)} \frac{1}{|x - y|^{3 + 2s}} \left( \int_0^{\tau_-(x, v)} e^{-\nu(|v|)t} (\phi(x - tv, v) - \phi(y - tv, v))\,dt \right)^2\,dxdy\\
&+C \int_{T_1(v)} \frac{1}{|x - y|^{3 + 2s}} \left( \int_{\tau_-(x, v)}^{\tau_-(y, v)} e^{-\nu(|v|)t} \phi(y - tv, v)\,dt \right)^2\,dxdy\\
&+ C \int_{T_2(v)} \frac{1}{|x - y|^{3 + 2s}} \left( \int_0^{\tau_-(y, v)} e^{-\nu(|v|)t} (\phi(y - tv, v) - \phi(x - tv, v))\,dt \right)^2\,dxdy\\
&+C \int_{T_2(v)} \frac{1}{|x - y|^{3 + 2s}} \left( \int_{\tau_-(y, v)}^{\tau_-(x, v)} e^{-\nu(|v|)t} \phi(x - tv, v)\,dt \right)^2\,dxdy.
\end{align*}
By switching dummy variables $x$ and $y$ in $T_2(v)$, we notice that
\begin{align*}
&|S_\O \phi(\cdot, v)|_{H^s(\O)}^2\\
\leq& C \int_{T_1(v)} \frac{1}{|x - y|^{3 + 2s}} \left( \int_0^{\tau_-(x, v)} e^{-\nu(|v|)t} (\phi(x - tv, v) - \phi(y - tv, v))\,dt \right)^2\,dxdy\\
&+C \int_{T_1(v)} \frac{1}{|x - y|^{3 + 2s}} \left( \int_{\tau_-(x, v)}^{\tau_-(y, v)} e^{-\nu(|v|)t} \phi(y - tv, v)\,dt \right)^2\,dxdy.
\end{align*}
Thus, we obtain
\begin{align*}
&\| |S_\O \phi |_{H^s(\O)} \|_{L^2_{\a, \gamma/2}(\R^3)}^2\\
\leq& C \int_{\R^3} \int_{T_1(v)} \frac{1}{|x - y|^{3 + 2s}}\\
&\quad \times \left( \int_0^{\tau_-(x, v)} e^{-\nu(|v|)t} (\phi(x - tv, v) - \phi(y - tv, v))\,dt \right)^2\,dxdy\,e^{2\a|v|^2} (1 + |v|)^\gamma\,dv\\
&+C \int_{\R^3} \int_{T_1(v)} \frac{1}{|x - y|^{3 + 2s}} \left( \int_{\tau_-(x, v)}^{\tau_-(y, v)} e^{-\nu(|v|)t} \phi(y - tv, v)\,dt \right)^2\,dxdy\\
&\quad \times e^{2\a|v|^2} (1 + |v|)^\gamma\,dv.
\end{align*}

For the first term on the right hand side, since the boundary $\partial \O$ is $C^1$, there exists an $H^s$ extension $\overline{\phi}$ of $\phi$ such that $\| \overline{\phi}(\cdot, v) \|_{H^s(\R^3)} \leq C \| \phi (\cdot, v) \|_{H^s(\O)}$ for some positive constant $C$ \cite{DPV}. We remark that $C$ is independent of $\phi$ and $v$. Thus, by the Cauchy-Schwarz inequality and Proposition \ref{prop:S_decay}, we have
\begin{align*}
&\int_{T_1(v)} \frac{1}{|x - y|^{3 + 2s}} \left( \int_0^{\tau_-(x, v)} e^{-\nu(|v|)t} (\phi(x - tv, v) - \phi(y - tv, v))\,dt \right)^2\,dxdy\\
\leq& C (1 + |v|)^{-1} \int_{T_1(v)} \frac{1}{|x - y|^{3 + 2s}}\\
&\times \int_0^{\tau_-(x, v)} e^{-\nu(|v|)t} |\phi(x - tv, v) - \phi(y - tv, v))|^2\,dt\,dxdy\\
\leq& C (1 + |v|)^{-1} \int_{\R^3} \int_{\R^3} \frac{1}{|x - y|^{3 + 2s}}\\
&\times \int_0^\infty e^{-\nu(|v|)t} |\overline{\phi}(x - tv, v) - \overline{\phi}(y - tv, v))|^2\,dt\,dxdy.
\end{align*}
By changing variables $x' = x - tv$ and $y' = y - tv$, and by the Fubini theorem, we get
\begin{align*}
&\int_{\R^3} \int_{\R^3} \frac{1}{|x - y|^{3 + 2s}} \int_0^\infty e^{-\nu(|v|)t} |\overline{\phi}(x - tv, v) - \overline{\phi}(y - tv, v))|^2\,dt\,dxdy\\
=& \int_0^\infty e^{-\nu(|v|)t}\,dt \int_{\R^3} \int_{\R^3} \frac{|\overline{\phi}(x', v) - \overline{\phi}(y', v))|^2}{|x' - y'|^{3 + 2s}}\,dx'dy'\\
\leq& C (1 + |v|)^{-\gamma} \| \ol{\phi}(\cdot, v) \|_{H^s(\R^3)}^2\\
\leq& C (1 + |v|)^{-\gamma} \| \phi(\cdot, v) \|_{H^s(\O)}^2.
\end{align*}
Hence, we have
\begin{align*}
&\int_{\R^3} \int_{T_1(v)} \frac{1}{|x - y|^{3 + 2s}}\\
&\quad \times \left( \int_0^{\tau_-(x, v)} e^{-\nu(|v|)t} (\phi(x - tv, v) - \phi(y - tv, v))\,dt \right)^2\,dxdy\,e^{2\a|v|^2} (1 + |v|)^\gamma\,dv\\
\leq& C \int_{\R^3} \| \phi(\cdot, v) \|_{H^s(\O)}^2 e^{2\a|v|^2} (1 + |v|)^{-1}\,dv\\
\leq& C \| \phi \|_{L^2_{\a, -\gamma/2}(\R^3; H^s(\O))}^2.
\end{align*}

For the second term on the right hand side, we have
\begin{align*}
&\left( \int_{\tau_-(x, v)}^{\tau_-(y, v)} e^{-\nu(|v|)t} \phi(y - tv, v)\,dt \right)^2\\
\leq& \| \phi \|_{L^\infty_{\a, \b - \gamma}(\O \times \R^3)}^2 e^{-2\a|v|^2} (1 + |v|)^{2(\gamma - \b)} \left( \int_{\tau_-(x, v)}^{\tau_-(y, v)} e^{-\nu(|v|)t} \,dt \right)^2.
\end{align*}
Also, by Proposition \ref{prop:S_decay} and Proposition \ref{prop:EPT}, we get
\begin{align*}
\left( \int_{\tau_-(x, v)}^{\tau_-(y, v)} e^{-\nu(|v|)t} \,dt \right)^2 \leq& C (1 + |v|)^{-(1 - s)} |\tau_-(x, v) - \tau_-(x, v)|^{1 + s}\\
\leq& C (1 + |v|)^{-(1 - s)} \frac{|x - y|^{1 + s}}{N(x, y, v)^{1 + s} |v|^{1 + s}}.
\end{align*}
Thus, we obtain
\begin{align*}
&\int_{T_1(v)} \frac{1}{|x - y|^{3 + 2s}} \left( \int_{\tau_-(x, v)}^{\tau_-(y, v)} e^{-\nu(|v|)t} \phi(y - tv, v)\,dt \right)^2\,dxdy\\
\leq& C\| \phi \|_{L^\infty_{\a, \b - \gamma}(\O \times \R^3)}^2 e^{-2\a|v|^2} \frac{ (1 + |v|)^{2(\gamma - \b) - 1 + s}}{|v|^{1 + s}} \int_{T_1(v)} \frac{1}{|x - y|^{2 + s} N(x, y, v)^{1 + s}}\,dxdy.
\end{align*}
Furthermore, we observe that
\begin{align*}
&\int_{T_1(v)} \frac{1}{|x - y|^{2 + s} N(x, y, v)^{1 + s}}\,dxdy\\
\leq& \int_\O \frac{1}{N(x, v)^{1 + s}} \left( \int_\O \frac{1}{|x - y|^{2 + s}}\,dy \right)\,dx + \int_\O \frac{1}{N(y, v)^{1 + s}} \left( \int_\O \frac{1}{|x - y|^{2 + s}}\,dx \right)\,dy\\
\leq& C \int_\O \frac{1}{N(x, v)^{1 + s}}\,dx,
\end{align*}
and hence, we have
\begin{align*}
&\int_{\R^3} \int_{T_1(v)} \frac{1}{|x - y|^{3 + 2s}} \left( \int_{\tau_-(x, v)}^{\tau_-(y, v)} e^{-\nu(|v|)t} \phi(y - tv, v)\,dt \right)^2\,dxdy\\
&\quad \times e^{2\a|v|^2} (1 + |v|)^\gamma\,dv\\
\leq& C\| \phi \|_{L^\infty_{\a, \b - \gamma}(\O \times \R^3)}^2 \int_{\R^3} \frac{ (1 + |v|)^{2(\gamma - \b) - 1 + s + \gamma}}{|v|^{1  + s}} \int_\O \frac{1}{N(x, v)^{1 + s}}\,dx\,dv.
\end{align*}
A similar calculation to that in the proof of Lemma \ref{lem:Hs_Jf0} yields
\begin{align*}
&\int_{\R^3} \frac{ (1 + |v|)^{2(\gamma - \b) - 1 + s + \gamma}}{|v|^{1 + s}} \int_\O \frac{1}{N(x, v)^{1 + s}}\,dx\,dv\\
\leq& C \int_{\Gamma^-} \frac{ (1 + |v|)^{2(\gamma - \b) - 1 + s + \gamma}}{|v|^{1 + s}} \frac{1}{N(z, v)^s}\,dv d\sigma_z\\
=& C \left( \int_0^1 \frac{1}{t^s}\,dt \right) \left( \int_0^\infty (1 + \rho)^{3\gamma - 2\b} \left( \frac{\rho}{1 + \rho} \right)^{1 - s}\,d\rho \right).
\end{align*}
The first integral with respect to $t$ converges since $s < 1$. Also, since $3\gamma - 2\b < 2 \gamma -3 \leq -1$ for $\b > (3 + \gamma)/2$ and $\rho/(1 + \rho) \leq 1$, the second integral with respect to $\rho$ converges. Therefore, we have
\begin{align*}
&\int_{\R^3} \int_{T_1(v)} \frac{1}{|x - y|^{3 + 2s}} \left( \int_{\tau_-(x, v)}^{\tau_-(y, v)} e^{-\nu(|v|)t} \phi(y - tv, v)\,dt \right)^2\,dxdy\\
\leq& C\| \phi \|_{L^\infty_{\a, \b - \gamma}(\O \times \R^3)}^2.
\end{align*}

Combining the above two estimates, we conclude that
\begin{align*}
\| |S_\O \phi |_{H^s(\O)} \|_{L^2_{\a, \gamma/2}(\R^3)}^2 \leq& C \left( \| \phi \|_{L^2_{\a, -\gamma/2}(\R^3; H^s(\O))}^2 + \| \phi \|_{L^\infty_{\a, \b - \gamma}(\O \times \R^3)}^2 \right)\\
\leq& C \| \phi \|_{Y^s_{\a, \b, \g}}^2.
\end{align*}
This completes the proof.
\end{proof}

Let $f_0 \in \cB^{s_1}_{\a, \b}(\Gamma^-)$ and $\phi \in Y^{s_2}_{\a, \b, \g}$. So far, by Lemma \ref{lem:smoothing_x}, Lemma \ref{lem:Hs_Jf0} and Lemma \ref{lem:Hs_S}, we obtain $f \in X^s_{\a, \b, \g}$ with $0 < s < \min \{ s_1, s_2, s_{2, \gamma} \}$, where $s_{2, \g}$ is the constant in \eqref{def:s2g}. To improve the regularity, we introduce the following lemma.

\begin{lemma} \label{lem:smoothing_x2}
Let $\O$ be a bounded convex domain with $C^1$ boundary, $0 \leq \a < 1/2$, $\b > (3 + \gamma)/2$, $-3 < \gamma \leq 1$ and $0 \leq s < 1/2$. Then, we have
\[
\| S_\O K f \|_{L^2_{\a ,\gamma/2}(\R^3; H^{s + s_{2, \gamma}}(\O))} \leq C \| f \|_{X^s_{\a, \b, \gamma}}
\] 
for all $f \in X^s_{\a, \b, \gamma}$, where $s_{2, \gamma}$ is the constant in \eqref{def:s2g}.
\end{lemma}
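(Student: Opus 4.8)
The plan is to reduce to the weight-free velocity averaging estimate of Lemma~\ref{lem:smoothing_x} and then exploit that, for $0\le s<1/2$, extension by zero in the space variable is bounded on fractional Sobolev spaces. First I would pass to the weight-free setting: writing $g:=f e^{\a|v|^2}=f_\a$, the identity $(Kf)(x,v)=e^{-\a|v|^2}(K_\a g)(x,v)$ gives $S_\O K f=e^{-\a|v|^2}S_\O K_\a g$, hence
\[
\| S_\O K f \|_{L^2_{v,\a,\gamma/2}(\R^3; H^t_x(\O))} = \| S_\O K_\a g \|_{L^2_{v,0,\gamma/2}(\R^3; H^t_x(\O))}
\]
for every $t$, while $\| g \|_{L^2_{v,0,\gamma/2}(\R^3; H^s_x(\O))}=\| f \|_{L^2_{v,\a,\gamma/2}(\R^3; H^s_x(\O))}\le\|f\|_{X^s_{\a,\b,\g}}$. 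So it is enough to show $\| S_\O K_\a g \|_{L^2_{v,0,\gamma/2}(\R^3; H^{s+s_{2,\gamma}}_x(\O))}\le C\| g \|_{L^2_{v,0,\gamma/2}(\R^3; H^s_x(\O))}$.

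Next I would introduce $\tilde g$, the zero extension of $g$ in the $x$ variable from $\O$ to $\R^3$. Since $0\le s<1/2$ and $\p\O$ is $C^1$, the zero-extension operator is bounded $H^s(\O)\to H^s(\R^3)$ with a constant $C=C(\O,s)$ independent of the function; this is where $s<1/2$ is essential, the borderline term $\int_\O\int_{\R^3\setminus\O}|u(x)|^2|x-y|^{-3-2s}\,dy\,dx$ being controlled by the fractional Hardy inequality $\int_\O|u(x)|^2\dist(x,\p\O)^{-2s}\,dx\le C\|u\|_{H^s(\O)}^2$ (cf.~\cite{DPV}). Applying this for a.e.~$v$, integrating against $(1+|v|)^\gamma\,dv$, and using the equivalence $\|\cdot\|_{H^s(\R^3)}\sim\|\cdot\|_{\tilde H^s(\R^3)}$ for $0<s<1$ from \cite{DPV}, I obtain $\tilde g\in L^2_{v,0,\gamma/2}(\R^3;\tilde H^s_x(\R^3))$ with
\[
\|\tilde g\|_{L^2_{v,0,\gamma/2}(\R^3;\tilde H^s_x(\R^3))}\le C\|g\|_{L^2_{v,0,\gamma/2}(\R^3;H^s_x(\O))}.
\]

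Then I would invoke Lemma~\ref{lem:smoothing_x} with the operators $S$ and $K_\a$ to get $\| S K_\a \tilde g \|_{L^2_{v,0,\gamma/2}(\R^3;\tilde H^{s+s_{2,\gamma}}_x(\R^3))}\le C\|\tilde g\|_{L^2_{v,0,\gamma/2}(\R^3;\tilde H^s_x(\R^3))}$. Since $s<1/2$ and $s_{2,\gamma}\le1/2$, the exponent $s+s_{2,\gamma}$ lies in $(0,1)$, so the right-hand target space is again equivalent to the Slobodeckij one. Finally, convexity of $\O$ forces any ray leaving $\O$ never to return, so $(K_\a\tilde g)(x-tv,v)=0$ whenever $x\in\O$ and $t>\tau_-(x,v)$, which yields the pointwise identity $(SK_\a\tilde g)\big|_\O=S_\O K_\a g$. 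As restriction from $\R^3$ to $\O$ only decreases both the $L^2$ and the Slobodeckij seminorms, chaining the inequalities gives $\| S_\O K_\a g \|_{L^2_{v,0,\gamma/2}(\R^3; H^{s+s_{2,\gamma}}_x(\O))}\le C\| g \|_{L^2_{v,0,\gamma/2}(\R^3; H^s_x(\O))}$, and undoing the weight reduction proves the lemma.

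The main obstacle is the uniform-in-$v$ boundedness of the zero-extension operator on $H^s(\O)$ for $s<1/2$: one has to be careful that its operator norm depends only on $\O$ and $s$ (so that it can be pulled out of the $v$-integral) and to keep track of where the $C^1$ regularity of $\p\O$ and the threshold $s<1/2$ enter. Everything else — the isometry $f\mapsto f_\a$, the already-established Lemma~\ref{lem:smoothing_x}, and the harmless restriction $\R^3\to\O$ — is routine bookkeeping.
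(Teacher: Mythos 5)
Your proof is correct, but it takes a genuinely different route from the paper. The paper avoids the fractional Hardy inequality entirely: after splitting the Gagliardo double integral of the zero extension $\tilde f$ into the $\O\times\O$ part (the $H^s_x(\O)$ seminorm of $f$) and two cross terms, it bounds $\int_{\R^3\setminus\O}|x-y|^{-3-2s}\,dy\le C\,d_x^{-2s}$ as you do, but then uses the $L^\infty_{\a,\b}$ bound of $f$ to pull $|f(x,v)|^2(1+|v|)^\gamma\le\|f\|_{L^\infty_{0,\b}}^2(1+|v|)^{\gamma-2\b}$ out of the inner integral, reducing the cross term to $\|f\|_{L^\infty_{0,\b}}^2\, D_s$ with $D_s:=\int_\O d_x^{-2s}\,dx$. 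The only thing left to check is then $D_s<\infty$, which the paper verifies directly by a partition of unity near the $C^1$ boundary and the integrability of $y_3^{-2s}$ on $(0,1)$ for $s<1/2$. Your approach instead invokes the fractional Hardy inequality $\int_\O|u|^2 d(x)^{-2s}\,dx\le C\|u\|_{H^s(\O)}^2$ and hence the boundedness of the zero-extension operator $H^s(\O)\to H^s(\R^3)$ for $s<1/2$, which is valid on a bounded Lipschitz (a fortiori $C^1$) domain with constant depending only on $\O$ and $s$. What your version buys is a stronger intermediate estimate, namely that $S_\O K$ already maps $L^2_{v,\a,\gamma/2}(\R^3;H^s_x(\O))$ boundedly into $L^2_{v,\a,\gamma/2}(\R^3;H^{s+s_{2,\gamma}}_x(\O))$ with no appeal to the $L^\infty$ norm of $f$, at the cost of importing a deeper functional-analytic fact; the paper's route is more elementary but genuinely uses the $L^\infty$ component of $X^s_{\a,\b,\g}$. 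One small correction: the Hitchhiker's guide \cite{DPV} is not the right citation for the distance-to-boundary Hardy inequality or for the $s<1/2$ zero-extension theorem (DPV proves the Fourier--Gagliardo equivalence, which you also need, but not these); the standard references are Grisvard or Lions--Magenes for the extension statement and Dyda for the fractional Hardy inequality. The rest of your outline — the isometry $f\mapsto f_\a$, the identity $(SK_\a\tilde g)|_\O=S_\O K_\a g$, the norm equivalence on $(0,1)$, and the harmless restriction — matches the paper's wrapper around Lemma~\ref{lem:smoothing_x}.
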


\begin{proof}
Via the transformation \eqref{eq:transform_a_to_0}, it suffices to prove
\[
\| S_\O K_\a f \|_{L^2_{0 ,\gamma/2}(\R^3; H^{s + s_{2, \gamma}}(\O))} \leq C \| f \|_{X^s_{0, \b, \g}}
\] 
for all $f \in X^s_{0, \b, \g}$.

As in the proof of Corollary \ref{cor:smoothing_x}, for a function $f \in X^s_{0, \b, \g}$, let $\tilde{f}$ be its zero extension. We claim that it belongs to $L^2_{0 ,\gamma/2}(\R^3; H^s(\R^3)) \cap L^\infty_{0, \b}(\R^3 \times \R^3)$. Since $\| \tilde{f} \|_{L^\infty_{0, \b}(\R^3 \times \R^3)} = \| f \|_{L^\infty_{0, \b}(\O \times \R^3)}$, we only need to check that $\tilde{f} \in L^2_{0 ,\gamma/2}(\R^3; H^s(\R^3))$.

We have $\| \tilde{f} \|_{L^2_{0 ,\gamma/2}(\R^3; L^2(\R^3))} = \| f \|_{L^2_{0 ,\gamma/2}(\R^3; L^2(\O))}$ and
\begin{align*}
&\int_{\R^3} \left( \int_{\R^3} \int_{\R^3} \frac{|\tilde{f}(x, v) - \tilde{f}(y, v)|^2}{|x - y|^{3 + 2s}}\,dxdy \right)(1 + |v|)^\gamma\,dv\\
=& \int_{\R^3} \left( \int_{\O} \int_{\O} \frac{|f(x, v) - f(y, v)|^2}{|x - y|^{3 + 2s}}\,dxdy \right)(1 + |v|)^\gamma\,dv\\
&+ \int_{\R^3} \left( \int_{\O} \left(\int_{\R^3 \setminus \O} \frac{1}{|x - y|^{3 + 2s}}\,dy \right) |f(x, v)|^2\,dx \right)(1 + |v|)^\gamma\,dv\\ 
&+ \int_{\R^3} \left( \int_{\O} \left( \int_{\R^3 \setminus \O} \frac{1}{|x - y|^{3 + 2s}}\,dx \right) |f(y, v)|^2\,dy \right)(1 + |v|)^\gamma\,dv.
\end{align*}
Since 
\[
\int_{\R^3 \setminus \O} \frac{1}{|x - y|^{3 + 2s}}\,dy \leq \int_{\{|x - y| \geq d_x \}} \frac{1}{|x - y|^{3 + 2s}}\,dy \leq C d_x^{-2s},
\]
where $d_x = \dist(x, \p \O)$, we have
\begin{align*}
&\int_{\R^3} \left( \int_{\R^3} \int_{\R^3} \frac{|\tilde{f}(x, v) - \tilde{f}(y, v)|^2}{|x - y|^{3 + 2s}}\,dxdy \right)(1 + |v|)^\gamma\,dv\\
\leq& C \| f \|_{L^2_{0, \gamma/2}(\R^3; H^s(\O))}^2 + C \int_{\R^3} \left( \int_{\O} d_{x}^{-2s} |f(x, v)|^2\,dx \right)(1 + |v|)^\gamma\,dv\\
\leq& C \| f \|_{L^2_{0, \gamma/2}(\R^3; H^s(\O))}^2 + C \| f \|_{L^\infty_{0, \b}(\O \times \R^3)}^2 D_s
\end{align*}
with
\[
D_s := \int_{\O} d_{x}^{-2s} \,dx.
\]

In what follows, we prove that the integral $D_s$ converges if $0 \leq s < 1/2$. Let $\{ U_n \}_{n = 1}^N$ be an open covering of $\p \O$, and by taking them sufficiently small, we may assume the existence of the $C^1$ diffeomorphism $\psi_n: U_n \to B_1$ for each $n$ such that $\psi_n(U_n \cap \p \O) = B_1 \cap \R^3_0$ and $\psi_n(U_n \cap \O) = B_1 \cap \R^3_+$, where
\[
\R^3_0 := \{ (x_1, x_2, x_3) \in \R^3 \mid x_3 = 0 \}, \quad \R^3_+ := \{ (x_1, x_2, x_3) \in \R^3 \mid x_3 > 0 \},
\]
and $B_1$ denotes the unit ball in $\R^3$. Also, let $U_0 := \O \setminus \cup_{n = 1}^N U_n$. Since $\ol{U_0} \subset \O$, we have
\[
\int_{U_0} d_x^{-2s}\,dx \leq C.
\]
On each $U_n$, we have
\[
\int_{U_n \cap \O} d_x^{-2s}\,dx = \int_{B_1 \cap \R^3_+} d_{\psi_n^{-1}(y)}^{-2s} |\det \nabla_y \psi^{-1}_n(y)|\,dy,
\]
whose convergence is equivalent to that of  
\[
\int_0^1 {y_3}^{-2s}\,dy_3.
\]
Since the above integral converges for $0 \leq s < 1/2$, we obtain
\begin{align*}
\int_{\R^3} \left( \int_{\R^3} \int_{\R^3} \frac{|\tilde{f}(x, v) - \tilde{f}(y, v)|^2}{|x - y|^{3 + 2s}}\,dxdy \right)(1 + |v|)^\gamma\,dv \leq C \| f \|_{X^s_{0, \b, \gamma}}^2.
\end{align*}
Thus, the claim is proved.

The conclusion follows from Lemma \ref{lem:smoothing_x}. This completes the proof.
\end{proof}

We apply Lemma \ref{lem:smoothing_x2} to the integral equation \eqref{IE}. The bootstrap argument yields $f \in X^s_{\a, \b, \g}$ with $0 < s < \min \{s_1, s_2, 1/2 + s_{2, \gamma} \}$. Lemma \ref{lem:regularity} is proved since $s_\gamma = 1/2 + s_{2, \gamma}$.

\section{Bilinear estimate} \label{sec:non}

In this section, we give a proof of Lemma \ref{lem:bilinear}. Recall that
\[
\Gamma(h_1,h_2) = \pi^{-\frac{3}{4}} \left( \Gamma_{\gain}(h_1, h_2) - \Gamma_{\loss}(h_1, h_2) \right),
\]
where
\begin{align*}
\Gamma_{\gain}(h_1, h_2) :=& \int_{\mathbb{R}^{3}}\int_{0}^{2\pi}\int_{0}^{\frac{\pi}{2}}e^{-\frac{|v_*|^2}{2}} h_1(v') h_2(v_*') B(|v-v_*|, \theta)\,d\theta \, d\phi\,dv_*,\\
\Gamma_{\loss}(h_1, h_2) :=& \int_{\mathbb{R}^{3}}\int_{0}^{2\pi}\int_{0}^{\frac{\pi}{2}}e^{-\frac{|v_*|^2}{2}} h_1(v) h_2(v_*) B(|v-v_*|, \theta)\,d\theta \, d\phi\,dv_*.
\end{align*}
The variables $v'$ and $v_*'$ are defined by \eqref{collision_coordinate}. Here, we used the identity: 
\begin{equation} \label{energy_conservation}
|v|^2 + |v_*|^2 = |v'|^2 + |v_*'|^2.
\end{equation}
We call $\Gamma_\gain$ and $\Gamma_\loss$ the gain term and the loss term respectively.

\begin{lemma} \label{lem:Linfty_non}
Let $\O$ be a bounded convex domain with $C^1$ boundary, $0 \leq \a < 1/2$, $\b \geq 0$ and $-3 < \gamma \leq 1$. For $h_1, h_2 \in L^\infty_{\a, \b}(\O \times \R^3)$, we have $\Gamma(h_1, h_2) \in L^\infty_{\a, \b - \gamma}(\O \times \R^3)$. Moreover, we have
\[
\| \Gamma(h_1, h_2) \|_{L^\infty_{\a, \b - \gamma}(\O \times \R^3)} \leq C \| h_1 \|_{L^\infty_{\a, \b}(\O \times \R^3)} \| h_2 \|_{L^\infty_{\a, \b}(\O \times \R^3)}
\]
for all $h_1, h_2 \in L^\infty_{\a, \b}(\O \times \R^3)$.
\end{lemma}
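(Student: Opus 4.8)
The plan is to estimate the gain term and the loss term separately, using the pointwise bounds on $B$ coming from \eqref{assumption_B1} together with the weighted $L^\infty$ control of $h_1$ and $h_2$. For the loss term, $\Gamma_{\loss}(h_1, h_2)(x,v) = h_1(x,v) \int_{\R^3}\int_0^{2\pi}\int_0^{\pi/2} e^{-|\tilde v|^2/2} h_2(x,\tilde v) B_0 |v - \tilde v|^\gamma \sin\theta\cos\theta \, d\theta d\phi d\tilde v$, so after bounding $|h_1(x,v)| \le \|h_1\|_{L^\infty_{\a,\b}} e^{-\a|v|^2}(1+|v|)^{-\b}$ and $|h_2(x,\tilde v)| \le \|h_2\|_{L^\infty_{\a,\b}} e^{-\a|\tilde v|^2}(1+|\tilde v|)^{-\b}$, the remaining $\tilde v$-integral is $\int_{\R^3} e^{-|\tilde v|^2/2} e^{-\a|\tilde v|^2}(1+|\tilde v|)^{-\b}|v-\tilde v|^\gamma\,d\tilde v$. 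Since $\b \ge 0$ and $-3 < \gamma \le 1$, this is dominated by $C\int_{\R^3} e^{-|\tilde v|^2/2}|v - \tilde v|^\gamma\,d\tilde v \le C(1+|v|)^\gamma$, which is exactly (up to a constant) the bound $\nu(|v|)$ from Proposition \ref{prop:est_nu}; this produces the extra factor $(1+|v|)^\gamma$ and hence the loss of weight from $\b$ to $\b-\gamma$. So $|\Gamma_{\loss}(h_1,h_2)(x,v)| \le C\|h_1\|_{L^\infty_{\a,\b}}\|h_2\|_{L^\infty_{\a,\b}} e^{-2\a|v|^2}(1+|v|)^{\gamma - \b}$, and since $2\a \ge \a$, this is the required bound.

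For the gain term, the key is the energy identity \eqref{energy_conservation}, $|v|^2 + |\tilde v|^2 = |v'|^2 + |\tilde v'|^2$. Writing out the weighted bounds for $h_1(v')$ and $h_2(\tilde v')$ gives a factor $e^{-\a(|v'|^2 + |\tilde v'|^2)} = e^{-\a(|v|^2 + |\tilde v|^2)}$, which recovers the Gaussian decay in $v$ and $\tilde v$ directly. For the polynomial weights I would split $e^{-|\tilde v|^2/2}$ into, say, $e^{-|\tilde v|^2/4} e^{-|\tilde v|^2/4}$; one half absorbs any polynomial factors in $v'$ and $\tilde v'$ (using $(1+|v'|)^{-\b} \le 1$ and $(1+|\tilde v'|)^{-\b} \le 1$ for $\b \ge 0$, or more carefully if one wants to track $(1+|v|)^{-\b}$ — actually, since $\b \ge 0$, one simply discards these nonnegative-exponent weights, i.e.\ bounds $(1+|v'|)^{-\b}(1+|\tilde v'|)^{-\b} \le 1$), and the other half, combined with $|v - \tilde v|^\gamma$, gives $\int_{\R^3}\int_0^{2\pi}\int_0^{\pi/2} e^{-|\tilde v|^2/4} |v - \tilde v|^\gamma \sin\theta\cos\theta\, d\theta d\phi d\tilde v \le C(1+|v|)^\gamma$, again by the same computation underlying Proposition \ref{prop:est_nu}. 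To get the desired factor $(1+|v|)^{-\b}$ one notes $(1+|v|)^\gamma \le (1+|v|)^{\gamma-\b}(1+|v|)^\b$ is the wrong direction, so instead I would keep one factor of the exponential Gaussian $e^{-\e(|v|^2+|\tilde v|^2)}$ for small $\e < \a$ unused and convert it to the polynomial weight $(1+|v|)^{-\b}$ via $e^{-\e|v|^2} \le C_\b (1+|v|)^{-\b}$; this is harmless since $\a < 1/2$ leaves room. The outcome is $|\Gamma_{\gain}(h_1,h_2)(x,v)| \le C\|h_1\|_{L^\infty_{\a,\b}}\|h_2\|_{L^\infty_{\a,\b}} e^{-\a|v|^2}(1+|v|)^{\gamma-\b}$.

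Combining the two estimates and using $\Gamma = \pi^{-3/4}(\Gamma_{\gain} - \Gamma_{\loss})$ gives the claimed bound. The main obstacle — really the only subtle point — is bookkeeping the weights in the gain term: one must use \eqref{energy_conservation} to transfer the Gaussian weight from the post-collisional variables $(v', \tilde v')$ back to $(v, \tilde v)$, and then carefully allocate the fixed Maxwellian $e^{-|\tilde v|^2/2}$ and a sliver of the transferred Gaussian among three competing needs: integrability of $|v-\tilde v|^\gamma$ near $\tilde v = v$ (which requires $\gamma > -3$), the polynomial decay rate $\b$ in $v$, and the exponential decay rate $\a$ in $v$. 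Since $\a < 1/2$ and $\b \ge 0$ with $\gamma > -3$, there is enough slack, but the splitting has to be written down explicitly. I would also double-check that the singular kernel $|v-\tilde v|^\gamma$ for $-3 < \gamma < 0$ is handled by the elementary estimate $\int_{\R^3} e^{-c|\tilde v|^2}|v-\tilde v|^\gamma\,d\tilde v \le C(1+|v|)^\gamma$, valid for $-3 < \gamma \le 1$, which is the same estimate used in the proof of Proposition \ref{prop:est_nu}.
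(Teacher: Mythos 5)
Your treatment of the loss term is correct (modulo a cosmetic typo: the final exponential should be $e^{-\a|v|^2}$, not $e^{-2\a|v|^2}$; the factor $e^{-\a|\tilde v|^2}$ from $h_2$ is integrated away). However, your treatment of the gain term has a genuine gap. You discard the polynomial weights on the post-collisional variables by bounding $(1+|v'|)^{-\b}(1+|\tilde v'|)^{-\b}\le 1$, which leaves you after the $\tilde v$-integration with $e^{-\a|v|^2}(1+|v|)^{\gamma}$, short by a factor $(1+|v|)^{-\b}$. Your proposed fix is to spend a sliver $e^{-\e|v|^2}$ of the transferred Gaussian $e^{-\a(|v|^2+|\tilde v|^2)}$ to manufacture $(1+|v|)^{-\b}$. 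But this leaves only $e^{-(\a-\e)|v|^2}$ in the exponential, and $e^{-(\a-\e)|v|^2}$ is \emph{not} bounded by a constant times $e^{-\a|v|^2}$; the ratio blows up as $|v|\to\infty$. The remark that ``$\a<1/2$ leaves room'' is a misdirection: the target space is $L^\infty_{\a,\b-\gamma}$ with the \emph{same} $\a$ as the hypothesis, and the Gaussian decay of $h_1$, $h_2$ provides exactly $e^{-\a|v'|^2}e^{-\a|\tilde v'|^2}=e^{-\a|v|^2}e^{-\a|\tilde v|^2}$ with no slack in the $v$ variable.

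The correct move, which is what the paper does, is to \emph{not} discard the polynomial weights but to transfer them through the energy identity, in exactly the same way the Gaussian is transferred. Since $\b\ge 0$, one has
\[
(1+|v'|)^{-\b}(1+|\tilde v'|)^{-\b}\le C\bigl(1+|v'|^2+|\tilde v'|^2\bigr)^{-\b/2}=C\bigl(1+|v|^2+|\tilde v|^2\bigr)^{-\b/2}\le C(1+|v|)^{-\b},
\]
where the middle equality is \eqref{energy_conservation}. This produces the $(1+|v|)^{-\b}$ directly, with no expenditure of Gaussian weight. Combined with your remaining $\tilde v$-integral $\int e^{-(\a+1/2)|\tilde v|^2}|v-\tilde v|^\gamma\,d\tilde v\le C(1+|v|)^\gamma$ (which is correct; the paper verifies this convolution estimate by splitting the domain at $|\tilde v|\le|v-\tilde v|$ for $\gamma<0$), this gives $|\Gamma_\gain|\le C\|h_1\|\|h_2\|e^{-\a|v|^2}(1+|v|)^{\gamma-\b}$ as required.
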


\begin{proof}
For the gain term, by \eqref{assumption_B1}, we have
\begin{align*}
|\Gamma_{\gain}(h_1, h_2)| \leq& C \| h_1 \|_{L^\infty_{\a, \b}(\O \times \R^3)} \| h_2 \|_{L^\infty_{\a, \b}(\O \times \R^3)} \int_{\mathbb{R}^{3}}\int_{0}^{2\pi}\int_{0}^{\frac{\pi}{2}} e^{-\frac{|v_*|^2}{2} -\a(|v'|^2 + |v_*'|^2)}\\ 
&\times (1 + |v'|)^{-\b} (1 + |v_*'|)^{-\b} |v - v_*|^\gamma \sin \theta \cos \theta \,d\theta \, d\phi\,dv_*.
\end{align*}
We notice that the estimate $(1 + |v'|^2)^{-\b/2} \leq C (1 + |v'|)^{-\b} \leq C (1 + |v'|^2)^{-\b/2}$ holds for all $v' \in \R^3$. We employ the identity \eqref{energy_conservation} to obtain
\begin{align*}
&e^{-\frac{|v_*|^2}{2} -\a(|v'|^2 + |v_*'|^2)} (1 + |v'|)^{-\b} (1 + |v_*'|)^{-\b}\\
\leq& C e^{-\frac{|v_*|^2}{2} -\a(|v'|^2 + |v_*'|^2)} (1 + |v'|^2 + |v_*'|^2)^{-\frac{\b}{2}}\\
=& C e^{-\frac{|v_*|^2}{2} -\a(|v|^2 + |v_*|^2)} (1 + |v|^2 + |v_*|^2)^{-\frac{\b}{2}}\\
\leq& C e^{-\a |v|^2} e^{-\left( \a + \frac{1}{2} \right)|v_*|^2} (1 + |v|)^{-\b}.
\end{align*}
Thus, we have
\begin{align*}
|\Gamma_{\gain}(h_1, h_2)| \leq& C \| h_1 \|_{L^\infty_{\a, \b}(\O \times \R^3)} \| h_2 \|_{L^\infty_{\a, \b}(\O \times \R^3)} e^{-\a |v|^2} (1 + |v|)^{-\b}\\
&\times \int_{\mathbb{R}^{3}}\int_{0}^{2\pi}\int_{0}^{\frac{\pi}{2}} e^{-\left( \a + \frac{1}{2} \right)|v_*|^2} |v - v_*|^\gamma \sin \theta \cos \theta \,d\theta \, d\phi\,dv_*\\
\leq& C \| h_1 \|_{L^\infty_{\a, \b}(\O \times \R^3)} \| h_2 \|_{L^\infty_{\a, \b}(\O \times \R^3)} e^{-\a |v|^2} (1 + |v|)^{-\b}\\
&\times \int_{\mathbb{R}^{3}} e^{-\left( \a + \frac{1}{2} \right)|v_*|^2} |v - v_*|^\gamma\,dv_*.
\end{align*}

For $0 \leq \gamma \leq 1$, we have $|v - v_*|^\gamma \leq C_\gamma(|v|^\gamma + |v_*|^\gamma)$ and hence we have
\begin{align*}
\int_{\mathbb{R}^{3}} e^{-\left( \a + \frac{1}{2} \right)|v_*|^2} |v - v_*|^\gamma\,dv_* \leq& C |v|^\gamma \int_{\mathbb{R}^{3}} e^{-\left( \a + \frac{1}{2} \right)|v_*|^2} \,dv_*\\
& + C \int_{\mathbb{R}^{3}} e^{-\left( \a + \frac{1}{2} \right)|v_*|^2} |v_*|^\gamma \,dv_*\\
\leq& C (1 + |v|)^\gamma.
\end{align*}

For $-3 < \gamma < 0$, we decompose the domain of integration as follows:
\begin{align*}
\int_{\mathbb{R}^{3}} e^{-\left( \a + \frac{1}{2} \right)|v_*|^2} |v - v_*|^\gamma\,dv_* =& \int_{\{ |v_*| \leq |v - v_*| \}} e^{-\left( \a + \frac{1}{2} \right)|v_*|^2} |v - v_*|^\gamma\,dv_*\\
&+ \int_{\{ |v_*| > |v - v_*| \}} e^{-\left( \a + \frac{1}{2} \right)|v_*|^2} |v - v_*|^\gamma\,dv_*.
\end{align*}
In the first region, we have
\begin{align*}
\int_{\{ |v_*| \leq |v - v_*| \}} e^{-\left( \a + \frac{1}{2} \right)|v_*|^2} |v - v_*|^\gamma\,dv_* \leq& \int_{\{ |v_*| \leq |v - v_*| \}} e^{-\left( \a + \frac{1}{2} \right)|v_*|^2} |v_*|^\gamma\,dv_*\\
\leq& \int_{\R^3} e^{-\left( \a + \frac{1}{2} \right)|v_*|^2} |v_*|^\gamma\,dv_* \leq C
\end{align*}
and, since $|v - v_*| \geq |v|/2$ when $|v_*| \leq |v - v_*|$, we have
\[
\int_{\{ |v_*| \leq |v - v_*| \}} e^{-\left( \a + \frac{1}{2} \right)|v_*|^2} |v - v_*|^\gamma\,dv_* \leq C |v|^\gamma \int_{\R^3} e^{-\left( \a + \frac{1}{2} \right)|v_*|^2} \,dv_* \leq C |v|^\gamma.
\]
Thus, we get
\[
\int_{\{ |v_*| \leq |v - v_*| \}} e^{-\left( \a + \frac{1}{2} \right)|v_*|^2} |v - v_*|^\gamma\,dv_* \leq C (1 + |v|)^\gamma.
\]
In the second region, since $|v_*| > |v|/2$ when $|v_*| > |v - v_*|$, we have
\begin{align*}
&\int_{\{ |v_*| > |v - v_*| \}} e^{-\left( \a + \frac{1}{2} \right)|v_*|^2} |v - v_*|^\gamma\,dv_*\\ 
\leq& e^{-\frac{1}{8} \left( \a + \frac{1}{2} \right) |v|^2} \int_{\{ |v_*| > |v - v_*| \}} e^{- \frac{1}{2} \left( \a + \frac{1}{2} \right)|v - v_*|^2} |v - v_*|^\gamma\,dv_*\\
\leq& C(1 + |v|)^\gamma.
\end{align*}
Thus, we obtain
\begin{equation} \label{est:convolution}
\int_{\mathbb{R}^{3}} e^{-\left( \a + \frac{1}{2} \right)|v_*|^2} |v - v_*|^\gamma\,dv_* \leq C (1 + |v|)^\gamma
\end{equation}
and
\[
|\Gamma_{\gain}(h_1, h_2)| \leq C \| h_1 \|_{L^\infty_{\a, \b}(\O \times \R^3)} \| h_2 \|_{L^\infty_{\a, \b}(\O \times \R^3)} e^{-\a |v|^2} (1 + |v|)^{\gamma - \b}.
\]

We can give the same estimate for the loss term: 
\begin{align*}
|\Gamma_{\loss}(h_1, h_2)| \leq& C \| h_1 \|_{L^\infty_{\a, \b}(\O \times \R^3)} \| h_2 \|_{L^\infty_{\a, \b}(\O \times \R^3)} e^{-\a |v|^2} (1 + |v|)^{-\b}\\
&\times \int_{\mathbb{R}^{3}}\int_{0}^{2\pi}\int_{0}^{\frac{\pi}{2}} e^{-\left( \a + \frac{1}{2} \right)|v_*|^2} (1 + |v_*|)^{-\b} |v - v_*|^\gamma \sin \theta \cos \theta \,d\theta \, d\phi\,dv_*\\
\leq& C \| h_1 \|_{L^\infty_{\a, \b}(\O \times \R^3)} \| h_2 \|_{L^\infty_{\a, \b}(\O \times \R^3)} e^{-\a |v|^2} (1 + |v|)^{-\b}\\
&\times \int_{\mathbb{R}^{3}} e^{-\left( \a + \frac{1}{2} \right)|v_*|^2} |v - v_*|^\gamma\,dv_*\\
\leq& C \| h_1 \|_{L^\infty_{\a, \b}(\O \times \R^3)} \| h_2 \|_{L^\infty_{\a, \b}(\O \times \R^3)} e^{-\a |v|^2} (1 + |v|)^{\gamma-\b}.
\end{align*}
This completes the proof.
\end{proof}

\begin{lemma} \label{lem:Hs_non}
Let $\O$ be a bounded convex domain with $C^1$ boundary, $0 \leq \a < 1/2$, $\b > (\gamma + 3)/2$ and $-3 < \gamma \leq 1$. For $h_1, h_2 \in X^s_{\a, \b, \g}$, we have $\Gamma(h_1, h_2) \in L^2_{\a, -\gamma/2}(\R^3; H^s(\O))$. Moreover, we have
\begin{align*}
&\| \Gamma(h_1, h_2) \|_{L^2_{\a, -\gamma/2}(\R^3; H^s(\O))}\\ 
\leq& C \left( \| h_1 \|_{L^2_{\a, \gamma/2}(\R^3; H^s(\O))} \| h_2 \|_{L^\infty_{\a, \b}(\O \times \R^3)} + \| h_1 \|_{L^\infty_{\a, \b}(\O \times \R^3)} \| h_2 \|_{L^2_{\a, \gamma/2}(\R^3; H^s(\O))} \right)
\end{align*}
for all $h_1, h_2 \in X^s_{\a, \b, \g}$.
\end{lemma}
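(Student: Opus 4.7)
The plan is to first establish a fiberwise bilinear estimate in the $v$-variable alone and then lift it to the $H^s_x$-seminorm through a bilinear increment decomposition. Concretely, I would prove
\[
\|\Gamma(g_1, g_2)\|_{L^2_{\a,\gamma/2}(\R^3)} \le C\,\|g_1\|_{L^2_{\a,\gamma/2}(\R^3)}\|g_2\|_{L^\infty_{\a,\b}(\R^3)}
\]
(with $\gamma/2$ here playing the role of $-\gamma/2$ in the target norm, the $(1+|v|)^{-\gamma}$ weight; I abuse the $L^2_{\a,\cdot}$ notation slightly) together with its symmetric variant obtained by exchanging $g_1$ and $g_2$. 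With these in hand, the identity
\[
\Gamma(h_1, h_2)(x, \cdot) - \Gamma(h_1, h_2)(y, \cdot) = \Gamma(h_1(x,\cdot)-h_1(y,\cdot),\, h_2(x,\cdot)) + \Gamma(h_1(y,\cdot),\, h_2(x,\cdot)-h_2(y,\cdot))
\]
together with Fubini transfers the Slobodeckij seminorm from $\Gamma(h_1, h_2)$ to the seminorms of $h_1$ and $h_2$, weighted respectively by $\|h_2\|_{L^\infty_{\a,\b}}^2$ and $\|h_1\|_{L^\infty_{\a,\b}}^2$, which is exactly the form stated.

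For the fiberwise bound I would split $\Gamma = \pi^{-3/4}(\Gamma_\gain - \Gamma_\loss)$. The loss term is direct: angular integration collapses $\Gamma_\loss(g_1, g_2)(v) = g_1(v)\,\tilde\nu_{g_2}(v)$ with $\tilde\nu_{g_2}(v) = B_0\pi\int e^{-|\tilde v|^2/2}g_2(\tilde v)|v-\tilde v|^\gamma d\tilde v$, and the pointwise bound $|\tilde\nu_{g_2}(v)| \le C\|g_2\|_{L^\infty_{\a,\b}}(1+|v|)^\gamma$ (a consequence of the inequality \eqref{est:convolution} already used in Lemma \ref{lem:Linfty_non}) yields the estimate at once, while the symmetric version is obtained by a further Cauchy--Schwarz in $\tilde v$. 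For the gain term I would apply Cauchy--Schwarz in $(\tilde v, \theta, \phi)$ against the measure $e^{-|\tilde v|^2/2}B\,d\theta d\phi d\tilde v$, obtaining
\[
|\Gamma_\gain(g_1, g_2)(v)|^2 \le C(1+|v|)^\gamma \int e^{-|\tilde v|^2/2}|g_1(v')|^2|g_2(\tilde v')|^2 B\, d\theta d\phi d\tilde v,
\]
where the prefactor again comes from \eqref{est:convolution}. Then, after inserting the $L^\infty_{\a,\b}$ pointwise bound on $g_2$, I would use the energy-conservation identity $|v'|^2+|\tilde v'|^2=|v|^2+|\tilde v|^2$ to rewrite $e^{2\a|v|^2}e^{-2\a|\tilde v'|^2}$ as $e^{-2\a|\tilde v|^2+2\a|v'|^2}$, and apply the classical involution $(v, \tilde v) \mapsto (v', \tilde v')$ at fixed $\omega$ (Jacobian one, preserving $|v-\tilde v|$). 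This reduces the problem to the elementary bound
\[
\int (1+|\tilde v'|)^{-2\b}|v'-\tilde v'|^\gamma\, d\tilde v' \le C(1+|v'|)^\gamma,
\]
which holds for $\b > (3+\gamma)/2$ and $-3 < \gamma \le 1$ by a standard near/far splitting of the $\tilde v'$-domain.

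The main obstacle will be orchestrating the gain-term computation so that the weight $e^{2\a|v'|^2}$ required on the $L^2$ side of the target inequality is \emph{produced} by the conservation identity rather than inflated by it. The order of operations is crucial: the $L^\infty_{\a,\b}$ pointwise bound on $g_2$ must be used \emph{before} invoking conservation and the change of variables, so that the Gaussian structure in $\tilde v$ coming from $e^{-|\tilde v|^2/2}$ is preserved on the unprimed side while $|g_1(v')|^2$ picks up exactly the weighted $L^2$-measure on the primed side. An analogous but cleaner calculation handles the symmetric estimate, and no additional hypothesis beyond $0 \le \a < 1/2$ and $\b > (3+\gamma)/2$ should be required.
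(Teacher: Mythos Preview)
Your proposal is correct and takes essentially the same approach as the paper: the paper decomposes the increment $\Gamma(h_1,h_2)(x,\cdot)-\Gamma(h_1,h_2)(y,\cdot)$ into the four pieces $L_1,L_2,G_1,G_2$ (exactly your bilinear splitting applied separately to $\Gamma_{\loss}$ and $\Gamma_{\gain}$) and estimates each by the same Cauchy--Schwarz, energy-conservation identity, and involutive change of variables you describe. The only difference is organizational: you factor out a clean fiberwise bilinear inequality in $v$ and then lift it to the Slobodeckij seminorm, whereas the paper keeps the $x,y$ dependence explicit throughout; the underlying computations and the ingredients (the bound \eqref{est:convolution}, the estimate $\int (1+|\tilde v'|)^{-2\b}|v'-\tilde v'|^\gamma d\tilde v' \le C(1+|v'|)^\gamma$, and the unit-Jacobian change $(v,\tilde v)\mapsto(v',\tilde v')$) are identical.
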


\begin{proof}
We first treat the loss term. Let $x, y \in \O$ and $v \in \R^3$. We decompose the loss term into two parts:
\[
\Gamma_\loss(h_1, h_2)(x, v) - \Gamma_\loss(h_1, h_2)(y, v) = L_1(x, y, v) + L_2(x, y, v),
\]
where
\begin{align*}
L_1(x, y, v) :=& (h_1(x, v) - h_1(y, v)) \int_{\R^3} \int_0^{2\pi} \int_0^{\frac{\pi}{2}} e^{-\frac{|v_*|^2}{2}} h_2(x, v_*) B(|v - v_*|, \theta)\,d\theta d\phi dv_*,\\
L_2(x, y, v) :=& h_1(y, v) \int_{\R^3} \int_0^{2\pi} \int_0^{\frac{\pi}{2}} e^{-\frac{|v_*|^2}{2}} (h_2(x, v_*) - h_2(y, v_*)) B(|v - v_*|, \theta)\,d\theta d\phi dv_*.
\end{align*}

For the $L_1$ term, since $\b > 0$, the estimate \eqref{est:convolution} implies that
\begin{align*}
&\left| \int_{\R^3} \int_0^{2\pi} \int_0^{\frac{\pi}{2}} e^{-\frac{|v_*|^2}{2}} h_2(x, v_*) B(|v - v_*|, \theta)\,d\theta d\phi dv_* \right|\\
\leq& C \| h_2 \|_{L^\infty_{\a, \b}(\O \times \R^3)} \int_{\R^3} e^{- \left(\frac{1}{2} + \a \right) |v_*|^2} |v - v_*|^\gamma (1 + |v_*|)^{-\b} dv_*\\
\leq& C \| h_2 \|_{L^\infty_{\a, \b}(\O \times \R^3)} (1 + |v|)^\gamma.
\end{align*}
Hence, we have
\begin{align*}
&\int_{\R^3} \left( \int_{\O} \int_{\O} \frac{|L_1(x, y, v)|^2}{|x - y|^{3 + 2s}}\,dxdy \right) e^{2\a|v|^2} (1 + |v|)^{-\gamma}\,dv\\
\leq& C \| h_2 \|_{L^\infty_{\a, \b}(\O \times \R^3)}^2 \int_{\R^3} \left( \int_{\O} \int_{\O} \frac{|h_1(x, v) - h_1(y, v)|^2}{|x - y|^{3 + 2s}}\,dxdy \right) e^{2\a|v|^2} (1 + |v|)^{\gamma}\,dv\\
=& C \| h_1 \|_{L^2_{\a, \gamma/2}(\R^3; H^s(\O))}^2 \| h_2 \|_{L^\infty_{\a, \b}(\O \times \R^3)}^2.
\end{align*}

We discuss an estimate on the $L_2$ term in two cases. First, for $0 \leq \gamma \leq 1$, we apply the Cauchy-Schwarz inequality and the estimate \eqref{est:convolution} to obtain 
\begin{align*}
&\left| \int_{\R^3} \int_0^{2\pi} \int_0^{\frac{\pi}{2}} e^{-\frac{|v_*|^2}{2}} (h_2(x, v_*) - h_2(y, v_*)) B(|v - v_*|, \theta)\,d\theta d\phi dv_* \right|^2\\
\leq& C \left( \int_{\R^3} e^{- (2\a + 1) |v_*|^2} |v - v_*|^{\gamma} \,dv_* \right) \left( \int_{\R^3} |h_2(x, v_*) - h_2(y, v_*)|^2 e^{2\a|v_*|^2} |v - v|^\gamma\,dv_* \right)\\
\leq& C(1 + |v|)^\gamma \left( \int_{\R^3} |h_2(x, v_*) - h_2(y, v_*)|^2 e^{2\a|v_*|^2} |v - v|^\gamma \,dv_* \right).
\end{align*}
Hence we have
\begin{align*}
&\int_{\R^3} \left( \int_{\O} \int_{\O} \frac{|L_2(x, y, v)|^2}{|x - y|^{3 + 2s}}\,dxdy \right) e^{2\a|v|^2} (1 + |v|)^{-\gamma}\,dv\\
\leq& C \| h_1 \|_{L^\infty_{\a, \b}(\O \times \R^3)}^2 \int_{\R^3} \left(\int_{\R^3} |v - v_*|^\gamma (1 + |v|)^{-2\b}\,dv \right) \\
&\times \left(\int_{\O} \int_{\O} \frac{|h_2(x, v_*) - h_2(y, v_*)|^2}{|x - y|^{3 + 2s}}\,dxdy \right) e^{2\a|v_*|^2}\,dv_*.
\end{align*}
Since $-2\b \leq \gamma - 2\b < -3$, we get
\begin{align*}
\int_{\R^3} |v - v_*|^\gamma (1 + |v|)^{-2\b}\,dv \leq& C \int_{\R^3} (1 + |v|)^{\gamma - 2\b}\,dv + C |v_*|^\gamma \int_{\R^3} (1 + |v|)^{-2\b}\,dv\\
\leq& C (1 + |v_*|)^\gamma.
\end{align*}
Thus, we obtain
\begin{equation} \label{est:L2_Hs}
\begin{split}
&\int_{\R^3} \left( \int_{\O} \int_{\O} \frac{|L_2(x, y, v)|^2}{|x - y|^{3 + 2s}}\,dxdy \right) e^{2\a|v|^2} (1 + |v|)^{-\gamma}\,dv\\ 
\leq& C \| h_1 \|_{L^\infty_{\a, \b}(\O \times \R^3)}^2 \| h_2 \|_{L^2_{\a, \gamma/2}(\R^3; H^s(\O))}^2.
\end{split}
\end{equation}

Next, for $-3 < \gamma < 0$, we apply the Cauchy-Schwarz inequality to the $L_2$ term again to obtain
\begin{align*}
&\left| \int_{\R^3} \int_0^{2\pi} \int_0^{\frac{\pi}{2}} e^{-\frac{|v_*|^2}{2}} (h_2(x, v_*) - h_2(y, v_*)) B(|v - v_*|, \theta)\,d\theta d\phi dv_* \right|^2\\
\leq& C \left( \int_{\R^3} e^{- (2\a + 1) |v_*|^2} |v - v_*|^{\gamma} (1 + |v_*|)^{-\gamma} \,dv_* \right)\\
&\times \left( \int_{\R^3} |h_2(x, v_*) - h_2(y, v_*)|^2 e^{2\a|v_*|^2} |v - v|^\gamma (1 + |v_*|)^\gamma\,dv_* \right).
\end{align*}
Since
\[
(1 + |v|)^{|\gamma|} \leq C_\gamma \left\{ (1 + |v_*|)^{|\gamma|} + |v - v_*|^{|\gamma|} \right\},
\]
we have
\begin{align*}
&(1 + |v|)^{|\gamma|} \int_{\R^3} e^{- (2\a + 1) |v_*|^2} |v - v_*|^{\gamma} (1 + |v_*|)^{-\gamma} \,dv_*\\
\leq& C \int_{\R^3} e^{- (2\a + 1) |v_*|^2} (1 + |v|)^{-\gamma} \,dv_* + C \int_{\R^3} e^{- (2\a + 1) |v_*|^2} |v - v_*|^{\gamma} (1 + |v_*|)^{-2\gamma}\,dv_*\\
\leq& C \int_{\R^3} e^{- 2\a |v_*|^2} \,dv_* + C \int_{\R^3} e^{- 2\a |v_*|^2} |v - v_*|^{\gamma} \,dv_*\\
\leq& C,
\end{align*}
or
\[
\int_{\R^3} e^{- (2\a + 1) |v_*|^2} |v - v_*|^{\gamma} (1 + |v_*|)^{-\gamma} \,dv_* \leq C (1 + |v|)^{\gamma}. 
\]
Hence we have
\begin{align*}
&\int_{\R^3} \left( \int_{\O} \int_{\O} \frac{|L_2(x, y, v)|^2}{|x - y|^{3 + 2s}}\,dxdy \right) e^{2\a|v|^2} (1 + |v|)^{-\gamma}\,dv\\
\leq& C \| h_1 \|_{L^\infty_{\a, \b}(\O \times \R^3)}^2 \int_{\R^3} \left(\int_{\R^3} |v - v_*|^\gamma (1 + |v|)^{- 2\b}\,dv \right) \\
&\times \left(\int_{\O} \int_{\O} \frac{|h_2(x, v_*) - h_2(y, v_*)|^2}{|x - y|^{3 + 2s}}\,dxdy \right) e^{2\a|v_*|^2} (1 + |v_*|)^\gamma \,dv_*.
\end{align*}
In the same way as in the proof of Lemma \ref{lem:Linfty_non}, since $-3 < \gamma < 0$ and $\gamma - 2\b < -3$, we have
\begin{align*}
\int_{\{ |v| \leq |v - v_*| \}} |v - v_*|^\gamma (1 + |v|)^{-2\b}\,dv \leq& \int_{\R^3} |v|^\gamma (1 + |v|)^{-2\b}\,dv\\
\leq& C \int_{\{|v| \leq 1\}} |v|^\gamma \,dv + C \int_{\{ |v| > 1 \}} (1 + |v|)^{\gamma - 2\b}\,dv\\
\leq& C
\end{align*}
and
\begin{align*}
\int_{\{ |v| > |v - v_*| \}} |v - v_*|^\gamma (1 + |v|)^{-2\b}\,dv \leq& \int_{\R^3} |v - v_*|^\gamma (1 + |v - v_*|)^{-2\b}\,dv\\
=& \int_{\R^3} |v|^\gamma (1 + |v|)^{-2\b}\,dv\\
\leq& C.
\end{align*}
Therefore, the estimate \eqref{est:L2_Hs} also holds for $-3 < \gamma < 0$.

We proceed to an estimate for the gain term. We decompose the gain term into two parts:
\[
\Gamma_\gain(h_1, h_2)(x, v) - \Gamma_\gain(h_1, h_2)(y, v) = G_1(x, y, v) + G_2(x, y, v),
\]
where
\begin{align*}
G_1(x, y, v) :=& \int_{\R^3} \int_0^{2\pi} \int_0^{\frac{\pi}{2}} e^{-\frac{|v_*|^2}{2}} (h_1(x, v') - h_1(y, v')) h_2(x, v_*') B(|v - v_*|, \theta)\,d\theta d\phi dv_*,\\
G_2(x, y, v) :=& \int_{\R^3} \int_0^{2\pi} \int_0^{\frac{\pi}{2}} e^{-\frac{|v_*|^2}{2}} h_1(y, v') (h_2(x, v_*') - h_2(y, v_*')) B(|v - v_*|, \theta)\,d\theta d\phi dv_*.
\end{align*}
We focus on the case where $0 \leq \gamma \leq 1$. The case where $-3 < \gamma < 0$ can be treated with a slight modification on the polynomial weight as we did for the $L_2$ term. 

For the $G_1$ term, the Cauchy-Schwarz inequality and the estimate \eqref{est:convolution} yield
\begin{align*}
&|G_1(x, y, v)|^2\\ 
\leq& \left( \int_{\R^3} \int_0^{2\pi} \int_0^{\frac{\pi}{2}} |h_1(x, v') - h_1(y, v')|^2 |h_2(x, v_*')|^2 e^{2\a (|v'|^2 + |v_*'|^2)} \right.\\
&\times \left. B(|v - v_*|, \theta) \,d\theta d\phi dv_* \right)\\ 
&\times \left(\int_{\R^3} \int_0^{2\pi} \int_0^{\frac{\pi}{2}} e^{-(2\a + 1)|v_*|^2} B(|v - v_*|, \theta) \,d\theta d\phi dv_* \right) e^{-2\a|v|^2}\\
\leq& C (1 + |v|)^\gamma e^{-2\a|v|^2} \| h_2 \|_{L^\infty_{\a, \b}(\O \times \R^3)}^2\\
&\times \int_{\R^3} \int_0^{2\pi} \int_0^{\frac{\pi}{2}} |h_1(x, v') - h_1(y, v')|^2 e^{2\a |v'|^2} B(|v - v_*|, \theta) (1 + |v_*'|)^{-2\b} \,d\theta d\phi dv_* 
\end{align*}
and hence
\begin{align*}
&\int_{\R^3} \left( \int_\O \int_\O \frac{|G_1(x, y, v)|^2}{|x - y|^{3+2s}}\,dxdy \right) e^{2\a|v|^2} (1 + |v|)^{-\gamma}\,dv\\
\leq& C \| h_2 \|_{L^\infty_{\a, \b}(\O \times \R^3)}^2\\
& \times \int_{\R^3} \int_{\R^3} \int_0^{2\pi} \int_0^{\frac{\pi}{2}} \left( \int_\O \int_\O \frac{|h_1(x, v') - h_1(y, v')|^2}{|x - y|^{3+2s}}\,dxdy \right) e^{2\a |v'|^2}\\ 
&\times B(|v - v_*|, \theta) (1 + |v_*'|)^{-2\b} \,d\theta d\phi dv_*\,dv.
\end{align*}
We recall the definition \eqref{collision_coordinate} of $v'$ and $v_*'$. We notice that $|v' - v_*'| = |v - v_*|$ and
\[
v = v' + ((v_*' - v') \cdot \omega) \omega, \quad v_* = v_*' - ((v_*' - v') \cdot \omega) \omega.
\]
Also, the Jacobian determinant of the above change is unity, namely, $dv' dv_*' = dv dv_*$ \cite{CIP, Glassey}. Thus, the change gives
\begin{align*}
&\int_{\R^3} \left( \int_\O \int_\O \frac{|G_1(x, y, v)|^2}{|x - y|^{3+2s}}\,dxdy \right) e^{2\a|v|^2} (1 + |v|)^{-\gamma}\,dv\\
\leq& C \| h_2 \|_{L^\infty_{\a, \b}(\O \times \R^3)}^2 \int_{\R^3} \left( \int_\O \int_\O \frac{|h_1(x, v') - h_1(y, v')|^2}{|x - y|^{3+2s}}\,dxdy \right) e^{2\a |v'|^2}\\ 
&\times \left( \int_{\R^3} \int_0^{2\pi} \int_0^{\frac{\pi}{2}} |v' - v_*'|^\gamma \sin \theta \cos \theta (1 + |v_*'|)^{-2\b} \,d\theta d\phi dv_*' \right)\,dv'\\
\leq& C \| h_1 \|_{L^2_{\a, \gamma/2}(\R^3; H^s(\O))}^2 \| h_2 \|_{L^\infty_{\a, \b}(\O \times \R^3)}^2.
\end{align*}

In the same fashion, we have
\begin{align*}
&|G_2(x, y, v)|^2\\ 
\leq& C(1 + |v|)^\gamma e^{-2\a |v|^2} \| h_1 \|_{L^\infty_{\a, \b}(\O \times \R^3)}^2\\
&\times \int_{\R^3} \int_0^{2\pi} \int_0^{\frac{\pi}{2}} |h_2(x, v_*') - h_2(y, v_*')|^2 e^{2\a |v_*'|^2} B(|v - v_*|, \theta) (1 + v')^{-2\b} \,d\theta d\phi dv_*.
\end{align*}
and
\begin{align*}
&\int_{\R^3} \left( \int_\O \int_\O \frac{|G_2(x, y, v)|^2}{|x - y|^{3 + 2s}}\,dxdy \right) e^{2\a|v|^2} (1 + |v|)^{-\gamma}\,dv\\
\leq& C \| h_1 \|_{L^\infty_{\a, \b}(\O \times \R^3)}^2 \int_{\R^3} \left( \int_\O \int_\O \frac{|h_2(x, v_*') - h_2(y, v_*')|^2}{|x - y|^{3+2s}}\,dxdy \right)\\
&\times \left( \int_{\R^3} \int_0^{2\pi} \int_0^{\frac{\pi}{2}} |v' - v_*'|^\gamma \sin \theta \cos \theta (1 + |v'|)^{-2\b} \,d\theta d\phi dv' \right) e^{2\a |v_*'|^2}\, dv_*'\\
\leq& C \| h_1 \|_{L^\infty_{\a, \b}(\O \times \R^3)}^2 \| h_2 \|_{L^2_{\a, \gamma/2}(\R^3; H^s(\O))}^2.
\end{align*}

The desired estimate is obtained by summarizing the above estimates. This completes the proof.
\end{proof}

Lemma \ref{lem:bilinear} follows from Lemma \ref{lem:Linfty_non} and Lemma \ref{lem:Hs_non}.

\section*{Acknowledgement}

The author would like to thank Professor Chun-Hsiung Hsia and Professor I-Kun Chen for suggesting this project and for giving him fruitful comments when he prepared this article. He is supported in part by JSPS KAKENHI grant number JP24K00539.

\appendix

\section{Estimates for $k$}

In this appendix, we provide a proof of Proposition \ref{prop:est_k} for readers' convenience. In what follows, we use the following estimate:
\[
\frac{1}{|v - v^*|^{\b_1}} e^{-\delta |v - v^*|^2} \leq \frac{C_{\b_1, \b_2, \delta}}{|v - v^*|^{\b_2}}
\]
for any $0 \leq \b_1 < \b_2$ and $\delta > 0$.

We first give a proof of the estimate \eqref{est:k}. We notice that, by the identity \eqref{identity_V2}, we have
\[
\frac{1}{4} |v - v^*|^2 + |V_1(v, v^*)|^2 \leq \frac{1}{2} (|v|^2 + |v^*|^2).
\]
Thus, for any $0 < \delta < 1$ and $\b \geq 0$, we have
\begin{align*}
&(1 + |v| + |v^*|)^\b |k_1(v, v^*)|\\ 
=& C (1 + |v| + |v^*|)^\b e^{-\frac{\delta}{4}(|v|^2 + |v^*|^2)} |v - v^*|^\gamma e^{-\left( \frac{1}{2} - \frac{\delta}{4} \right)(|v|^2 + |v^*|^2)}\\ 
\leq& C |v - v^*|^\gamma E_{\delta/2}(v, v^*).
\end{align*}
Moreover, we have
\[
|k_1(v, v^*)| \leq \frac{C_{\b, \gamma, \delta}}{(1 + |v| + |v^*|)^\b} E_\delta(v, v^*)
\]
for $0 \leq \gamma \leq 1$, while
\[
|k_1(v, v^*)| \leq \frac{C_{\b, \delta}|v - v^*|^\gamma}{(1 + |v| + |v^*|)^\b} E_\delta(v, v^*)
\]
for $-3 < \gamma < 0$.

We give an estimate for $k_2$ following \cite{Caf 1}. We decompose the integral in the definition of $k_2$ into two parts;
\[
k_2(v, v^*) = \frac{1}{\pi^{\frac{3}{2}}} \frac{1}{|v - v^*|} E_0(v, v^*) \left( k_{2, o}(v, v^*) + k_{2, i}(v, v^*) \right),
\]
where
\begin{align*}
k_{2, o}(v, v^*) :=& \int_{\{w \in W_{v - v^*} \mid |w|^2 \geq \frac{1}{2}|V_2(v, v^*)|^2 \}} e^{- |w|^2} \left( |v - v^*|^2 + |w - V_2(v, v^*)|^2 \right)^{\frac{\gamma}{2}}\\
&\times \frac{b ( \cos \tilde{\theta}) |w - V_2(v, v^*)| + b ( \sin \tilde{\theta} ) |v - v^*|}{|w - V_2(v, v^*)| |v - v^*|}\,dw,\\
k_{2, i}(v, v^*) :=& \int_{\{w \in W_{v - v^*} \mid |w|^2 < \frac{1}{2}|V_2(v, v^*)|^2 \}} e^{- |w|^2} \left( |v - v^*|^2 + |w - V_2(v, v^*)|^2 \right)^{\frac{\gamma}{2}}\\
&\times \frac{b ( \cos \tilde{\theta} ) |w - V_2(v, v^*)| + b ( \sin \tilde{\theta} ) |v - v^*|}{|w - V_2(v, v^*)| |v - v^*|}\,dw,
\end{align*}
\[
\cos \tilde{\theta} := \frac{|v - v^*|}{(|v - v^*|^2 + |w - V_2(v, v^*)|^2)^{\frac{1}{2}}}, \quad \sin \tilde{\theta} := \frac{|w - V_2(v, v^*)|}{(|v - v^*|^2 + |w - V_2(v, v^*)|^2)^{\frac{1}{2}}}.
\]

Thanks to the assumption \eqref{assumption_B1}, we have
\[
\frac{b ( \cos \tilde{\theta} ) |w - V_2(v, v^*)| + b ( \sin \tilde{\theta} ) |v - v^*|}{|w - V_2(v, v^*)| |v - v^*|} \leq C (|v - v^*|^2 + |w - V_2(v, v^*)|^2)^{-\frac{1}{2}}. 
\]
Thus, for the $k_{2, o}$ part, we may apply the estimate $e^{-|w|^2} < e^{- \frac{1}{2} |V_2(v, v^*)|^2}$ to obtain
\begin{align*}
k_{2, o}(v, v^*) \leq& e^{- \frac{\delta}{2} |V_2(v, v^*)|^2} \int_{W_{v - v^*}} e^{- (1 - \delta) |w|^2} \left( |v - v^*|^2 + |w - V_2(v, v^*)|^2 \right)^{\frac{\gamma - 1}{2}}\,dw\\
\leq& C e^{- \frac{\delta}{2} |V_2(v, v^*)|^2} \int_{W_{v - v^*}} e^{- (1 - \delta) |w|^2} \left( |v - v^*|^2 + |w|^2 \right)^{\frac{\gamma - 1}{2}}\,dw
\end{align*}
for all $0 < \delta < 1$. Here, we have used the estimate
\begin{align*}
&e^{- (1 - \delta) |w|^2} \left( |v - v^*|^2 + |w - V_2(v, v^*)|^2 \right)^{\frac{\gamma - 1}{2}}\\ 
\leq& e^{- (1 - \delta) |w|^2} \left( |v - v^*|^2 + |w|^2 \right)^{\frac{\gamma - 1}{2}}\\
&+ e^{- (1 - \delta) |w - V_2(v, v^*)|^2} \left( |v - v^*|^2 + |w - V_2(v, v^*)|^2 \right)^{\frac{\gamma - 1}{2}}
\end{align*}
and change the variable of integration. 

In what follows, we give estimates for the above integral according to the parameter $\gamma$. We identify the hyperplane $W_{v - v^*}$ with $\R^2$. For $-1 < \gamma \leq 1$, noticing that $(\gamma -1)/2 \leq 0$, we have
\[
\int_{W_{v - v^*}} e^{- (1 - \delta) |w|^2} \left( |v - v^*|^2 + |w|^2 \right)^{\frac{\gamma - 1}{2}}\,dw \leq \int_{\R^2} e^{- (1 - \delta) |w|^2} |w|^{\gamma - 1}\,dw \leq C_{\delta, \gamma}
\]
for all $v, v^* \in \R^3$. When $\gamma = -1$, we introduce the polar coordinates $w = r \omega$ with $r > 0$ and $\omega \in S^1$ to get
\begin{align*}
&\int_{W_{v - v^*}} e^{- (1 - \delta) |w|^2} \left( |v - v^*|^2 + |w|^2 \right)^{-1}\,dw\\ 
\leq& 2\pi \int_0^1 e^{- (1 - \delta) r^2} \left( |v - v^*|^2 + r^2 \right)^{-1} r\,dr + 2\pi \int_1^\infty e^{- (1 - \delta) r^2}\,dr\\
\leq& C \left( |\log |v - v^*|| + 1 \right). 
\end{align*}
Similarly, for $-3 < \gamma < -1$, we introduce the polar coordinates $w = |v - v^*| r \omega$ with $r > 0$ and $\omega \in S^1$ to obtain
\begin{align*}
&\int_{W_{v - v^*}} e^{- (1 - \delta) |w|^2} \left( |v - v^*|^2 + |w|^2 \right)^{\frac{\gamma - 1}{2}}\,dw\\
=& 2\pi \int_0^\infty e^{- (1 - \delta) |v - v^*|^2 r^2} |v - v^*|^{\gamma + 1} \left( 1 + r^2 \right)^{\frac{\gamma - 1}{2}} r\,dr\\
\leq& C |v - v^*|^{\gamma + 1}.
\end{align*}
Therefore, we have
\begin{align*}
&\frac{1}{\pi^{\frac{3}{2}}} \frac{1}{|v - v^*|} E_0(v, v^*) k_{2, o}(v, v^*)\\
\leq& C_\delta w_\gamma(|v - v^*|) e^{-\frac{1}{4} |v - v^*|^2} e^{-|V_1(v, v^*)|^2} e^{-\frac{\delta}{2}|V_2(v, v^*)|^2}\\
=& C_\delta w_\gamma(|v - v^*|) e^{- (1 - \frac{\delta}{2}) \left(\frac{1}{4} |v - v^*|^2 + |V_1(v, v^*)|^2 \right)} e^{-\frac{\delta}{2}(|v|^2 + |v^*|^2)}\\
\leq& \frac{C_\delta w_\gamma(|v - v^*|)}{(1 + |v| + |v^*|)^\b} E_\delta(v, v^*)
\end{align*}
for all $\b \geq 0$, where $w_\gamma$ is the function defined by \eqref{def:wg}.

For the $k_{2, i}$ part, we have 
\[
\left( |v - v^*|^2 + |w - V_2(v, v^*)|^2 \right)^{\frac{\gamma - 1}{2}} \leq C \left( |v - v^*|^2 + |V_2(v, v^*)|^2 \right)^{\frac{\gamma - 1}{2}},
\]
and hence we have
\[
k_{2, i}(v, v^*) \leq C \left( |v - v^*|^2 + |V_2(v, v^*)|^2 \right)^{\frac{\gamma - 1}{2}} \int_{\{w \in \R^2 \mid |w|^2 < \frac{1}{2}|V_2(v, v^*)|^2 \}} e^{- |w|^2}\,dw.
\]
On the one hand, we have
\begin{align*}
&\left( |v - v^*|^2 + |V_2(v, v^*)|^2 \right)^{\frac{\gamma - 1}{2}} \int_{\{w \in \R^2 \mid |w|^2 < \frac{1}{2}|V_2(v, v^*)|^2 \}} e^{- |w|^2}\,dw\\
\leq& \left( |v - v^*|^2 + |V_2(v, v^*)|^2 \right)^{\frac{\gamma - 1}{2}} \int_{\R^2} e^{- |w|^2}\,dw\\
\leq& C \left( |v - v^*|^2 + |V_2(v, v^*)|^2 \right)^{\frac{\gamma - 1}{2}}.
\end{align*}
On the other hand, we have
\begin{align*}
&\left( |v - v^*|^2 + |V_2(v, v^*)|^2 \right)^{\frac{\gamma - 1}{2}} \int_{\{w \in \R^2 \mid |w|^2 < \frac{1}{2}|V_2(v, v^*)|^2 \}} e^{- |w|^2}\,dw\\
\leq& \left( |v - v^*|^2 + |V_2(v, v^*)|^2 \right)^{\frac{\gamma - 1}{2}} |V_2(v, v^*)|^2\\
\leq& C \left( |v - v^*|^2 + |V_2(v, v^*)|^2 \right)^{\frac{\gamma + 1}{2}}.
\end{align*}
Thus, we have
\begin{align*}
k_{2, i}(v, v^*) \leq C \min \left\{ \left(|v - v^*|^2 + |V_2(v, v^*)|^2 \right)^{\frac{\gamma - 1}{2}}, \left(|v - v^*|^2 + |V_2(v, v^*)|^2 \right)^{\frac{\gamma + 1}{2}} \right\}.
\end{align*}

When $-1 \leq \gamma \leq 1$, we have
\[
\left(|v - v^*|^2 + |V_2(v, v^*)|^2 \right)^{\frac{\gamma + 1}{2}} \leq C
\] 
for $|v - v^*|^2 + |V_2(v, v^*)|^2 < 1$ and
\[
\left(|v - v^*|^2 + |V_2(v, v^*)|^2 \right)^{\frac{\gamma - 1}{2}} \leq C \left(1 + |v - v^*|^2 + |V_2(v, v^*)|^2 \right)^{\frac{\gamma - 1}{2}}
\]
for $|v - v^*|^2 + |V_2(v, v^*)|^2 \geq 1$. Thus, we have
\begin{align*}
&\frac{1}{\pi^{\frac{3}{2}}} \frac{1}{|v - v^*|} E_0(v, v^*) k_{2, i}(v, v^*)\\
\leq& \frac{C_\delta}{|v - v^*|} E_0(v, v^*) \left(1 + |v - v^*|^2 + |V_2(v, v^*)|^2 \right)^{\frac{\gamma - 1}{2}}\\
\leq& \frac{C_\delta}{|v - v^*| \left(1 + |v - v^*|^2 + |V_1(v, v^*)|^2 + |V_2(v, v^*)|^2 \right)^{\frac{1 - \gamma}{2}}} E_{\frac{\delta}{2}}(v, v^*)\\
\leq& \frac{C_\delta}{|v - v^*| \left(1 + |v| + |v^*| \right)^{1 - \gamma}} E_\delta(v, v^*).
\end{align*}
Here, we have used the following estimate:
\begin{align*}
&\frac{1}{(1 + |v - v^*|^2 + |V_2(v, v^*)|^2)^{\frac{1 - \gamma}{2}}}\\
=&\frac{1}{(1 + |v - v^*|^2 + |V_1(v, v^*)|^2 + |V_2(v, v^*)|^2)^{\frac{1 - \gamma}{2}}}\\
&\times \left( 1 + \frac{|V_1(v, v^*)|^2}{1 + |v - v^*|^2 + |V_2(v, v^*)|^2} \right)^{\frac{1-\gamma}{2}}\\
\leq& \frac{1}{(1 + |v - v^*|^2 + |V_1(v, v^*)|^2 + |V_2(v, v^*)|^2)^{\frac{1 - \gamma}{2}}} ( 1 + |V_1(v, v^*)|^2 )^{\frac{1-\gamma}{2}}\\
\leq& \frac{C_{\delta, \gamma}}{(1 + |v - v^*|^2 + |V_1(v, v^*)|^2 + |V_2(v, v^*)|^2)^{\frac{1 - \gamma}{2}}} e^{\frac{\delta}{2} |V_1(v, v^*)|^2}.
\end{align*}
We remark that the same estimate was obtained in \cite{Caf 1} for $-1 < \gamma < 1$.

When $-3 < \gamma < -1$, since $(\gamma + 1)/2 < 0$, we have
\[
\left(|v - v^*|^2 + |V_2(v, v^*)|^2 \right)^{\frac{\gamma + 1}{2}} \leq |v - v^*|^{\gamma + 1} \leq \frac{C|v -v^*|^{\gamma + 1}}{(1 + |v- v^*|^2 + |V_2(v, v^*)|^2)^\b}
\]
for $|v - v^*|^2 + |V_2(v, v^*)|^2 < 1$ and for all $\b \geq 0$, and
\begin{align*}
\left(|v - v^*|^2 + |V_2(v, v^*)|^2 \right)^{\frac{\gamma - 1}{2}} \leq C \left(1 + |v - v^*|^2 + |V_2(v, v^*)|^2 \right)^{\frac{\gamma - 1}{2}}
\end{align*}
for $|v - v^*|^2 + |V_2(v, v^*)|^2 \geq 1$. Thus, we have
\begin{align*}
&\frac{1}{\pi^{\frac{3}{2}}} \frac{1}{|v - v^*|} e^{-\frac{1}{4} |v - v^*|^2} e^{-|V_1(v, v^*)|^2} k_{2, i}(v, v^*)\\
\leq& \frac{C_\delta}{|v - v^*|} E_0(v, v^*) \left(1 + |v - v^*|^2 + |V_2(v, v^*)|^2 \right)^{\frac{\gamma - 1}{2}}\\
&+ C_\delta |v -v^*|^\gamma E_0(v, v^*) \left(1 + |v - v^*|^2 + |V_2(v, v^*)|^2 \right)^{\frac{\gamma - 1}{2}}\\
\leq& \frac{C_\delta |v - v^*|^\gamma}{\left(1 + |v - v^*|^2 + |V_1(v, v^*)|^2 + |V_2(v, v^*)|^2 \right)^{\frac{1 - \gamma}{2}}} E_{\frac{\delta}{2}}(v, v^*)\\
\leq& \frac{C_\delta |v - v^*|^\gamma}{\left(1 + |v| + |v^*| \right)^{1 - \gamma}} E_\delta(v, v^*).
\end{align*}

Therefore, we have
\begin{align*}
\frac{1}{\pi^{\frac{3}{2}}} \frac{1}{|v - v^*|} E_0(v, v^*) k_{2, i}(v, v^*) \leq \frac{C_\delta w_\gamma(|v - v^*|)}{(1 + |v| + |v^*|)^{1 - \gamma}} E_\delta(v, v^*)
\end{align*}
for all $-3 < \gamma \leq 1$, where $w_\gamma$ is the function defined by \eqref{def:wg}. The estimate \eqref{est:k} is obtained by summing up the above estimates.

We next give a proof of the estimate \eqref{est:dk}. For $k_1$, we have
\begin{align*}
\nabla_v k_1 (v, v^*) =& B_0 \gamma |v - v^*|^{\gamma - 2} (v - v^*) e^{-\frac{1}{2}(|v|^2 + |v^*|^2)}\\ 
&+ B_0 |v - v^*|^\gamma (- v) e^{-\frac{1}{2}(|v|^2 + |v^*|^2)}.
\end{align*}
Thus, we have
\begin{align*}
|\nabla_v k_1 (v, v^*)| \leq& C \gamma |v - v^*|^{\gamma - 1} e^{-\frac{1}{2}(|v|^2 + |v^*|^2)} + C |v - v^*|^\gamma e^{- (\frac{1}{2} - \frac{\delta}{4})(|v|^2 + |v^*|^2)}\\
\leq& \frac{C_{\beta, \delta}|v - v^*|^{\gamma - 1}}{(1 + |v| + |v^*|)^\beta} E_\delta (v, v^*)
\end{align*}
for all $\beta \geq 0$. 

We change the formula of $k_2$. Due to the identity \eqref{identity_V2}, we have
\begin{align*}
k_2(v, v^*) =& \frac{1}{\pi^{\frac{3}{2}}} \frac{1}{|v - v^*|} e^{-\frac{1}{2} (|v|^2 + |v^*|^2)} \int_{W_{v - v^*}} e^{- |w|^2 - 2 w \cdot V_2(v, v^*)}\\
&\times \frac{\left( |v - v^*|^2 + |w|^2 \right)^{\frac{\gamma}{2}}}{|w| |v - v^*|} \left( b ( \cos \theta ) |w| + b ( \sin \theta ) |v - v^*| \right) \,dw
\end{align*}
with
\[
\cos \theta = \frac{|v - v^*|}{(|v - v^*|^2 + |w|^2)^{\frac{1}{2}}}, \quad \sin \theta = \frac{|w|}{(|v - v^*|^2 + |w|^2)^{\frac{1}{2}}}.
\]
We introduce the polar coordinates to the plane $W_{v -v^*}$ to obtain
\begin{align*}
k_2(v, v^*) =& \frac{1}{\pi^{\frac{3}{2}}} \frac{1}{|v - v^*|^2} e^{-\frac{1}{2} (|v|^2 + |v^*|^2)} \int_0^\infty \int_{-\pi}^{\pi} e^{- r^2 - 2 r |V_2(v, v^*)| \cos \varphi}\\ 
&\times \left( |v - v^*|^2 + r^2 \right)^{\frac{\gamma}{2}} \left( b ( \cos \theta ) r + b ( \sin \theta ) |v - v^*| \right) \,d\varphi dr.
\end{align*}
Thus, the formal differentiation yields
\[
\nabla_v k_2(v, v^*) = \sum_{j = 1}^6 \nabla_v k_{2, j}(v, v^*),
\]
where
\begin{align*}
\nabla_v k_{2, 1}(v, v^*) :=& - \frac{2(v - v^*)}{|v - v^*|^2} k_2(v, v^*),\\
\nabla_v k_{2, 2}(v, v^*) :=& -v k_2(v, v^*),\\
\nabla_v k_{2, 3}(v, v^*) :=& - \frac{2 \nabla_v |V_2(v, v^*)|}{\pi^{\frac{3}{2}}} \frac{1}{|v - v^*|^2} e^{-\frac{1}{2} (|v|^2 + |v^*|^2)} \int_0^\infty \int_{-\pi}^{\pi} (r \cos \varphi)\\ 
&\times e^{- r^2 - 2 r |V_2(v, v^*)| \cos \varphi} \left( |v - v^*|^2 + r^2 \right)^{\frac{\gamma}{2}}\\
&\times \left( b ( \cos \theta ) r + b ( \sin \theta ) |v - v^*| \right) \,d\varphi dr,\\
\nabla_v k_{2, 4}(v, v^*) :=& \frac{\gamma}{\pi^{\frac{3}{2}}} \frac{v -v^*}{|v - v^*|^2} e^{-\frac{1}{2} (|v|^2 + |v^*|^2)} \int_0^\infty \int_{-\pi}^{\pi} e^{- r^2 - 2 r |V_2(v, v^*)| \cos \varphi}\\
&\times \left( |v - v^*|^2 + r^2 \right)^{\frac{\gamma - 2}{2}} \left( b ( \cos \theta ) r + b ( \sin \theta ) |v - v^*| \right)\, d\varphi dr,\\
\nabla_v k_{2, 5}(v, v^*) :=& \frac{1}{\pi^{\frac{3}{2}}} \frac{1}{|v - v^*|^2} e^{-\frac{1}{2} (|v|^2 + |v^*|^2)} \int_0^\infty \int_{-\pi}^{\pi} e^{- r^2 - 2 r |V_2(v, v^*)| \cos \varphi}\\
&\times \left( |v - v^*|^2 + r^2 \right)^{\frac{\gamma}{2}} \nabla_v \left( b ( \cos \theta ) r + b ( \sin \theta ) |v - v^*| \right)\, d\varphi dr.
\end{align*}

By the estimate for $k_2$, we have
\[
|\nabla_v k_{2, 1}(v, v^*)| = \frac{2 |k_2(v, v^*)|}{|v - v^*|} \leq \frac{C_{\gamma, \delta}(1 + |v|) w_\gamma(|v - v^*|)}{|v -v^*|(1 + |v| + |v^*|)^{1 - \gamma}} E_\delta (v, v^*)
\]
and
\[
|\nabla_v k_{2, 2}(v, v^*)| = |v| |k_2(v, v^*)| \leq \frac{C_{\gamma, \delta}(1 + |v|) w_\gamma(|v - v^*|)}{|v -v^*|(1 + |v| + |v^*|)^{1 - \gamma}} E_\delta (v, v^*).
\]

For $\nabla_v k_{2, 3}$ and $\nabla_v k_{2, 4}$, we give an estimate for $\nabla_v |V_2(v, v^*)|$. Recalling the definition of $V_2$ and the relation $v \cdot (v^* \times v) = v^* \cdot (v^* \times v) = 0$, we have
\[
|V_2(v, v^*)| = \frac{|v^* \times v|}{|v - v^*|} = \frac{(|v|^2 |v^*|^2 - (v \cdot v^*)^2)^{\frac{1}{2}}}{|v -v^*|}.
\]
A direct computation shows that
\[
\nabla_v |V_2(v, v^*)| = -\frac{v - v^*}{|v -v^*|^2} |V_2(v, v^*)| + \frac{|v^*|^2 v - (v \cdot v^*) v^*}{|v - v^*| |v \times v^*|}.
\]
For the second term on the right hand side, we have
\[
\left| |v^*|^2 v - (v \cdot v^*) v^* \right|^2 = |v^*|^4 |v|^2 - (v \cdot v^*)^2 |v^*|^2 = |v^*|^2 |v \times v^*|^2.  
\]
Thus, we have
\[
\left| \frac{|v^*|^2 v - (v \cdot v^*) v^*}{|v - v^*| |v \times v^*|} \right| = \frac{|v^*|}{|v - v^*|}.
\]
Using the estimate
\[
|V_2(v, v^*)| = \frac{|v^* \times v|}{|v - v^*|} = \frac{|v^* \times (v - v^*)|}{|v - v^*|} \leq |v^*|,
\]
we obtain
\[
\left| \nabla_v |V_2(v, v^*)| \right| \leq \frac{2 |v^*|}{|v - v^*|}.
\]
Since
\begin{equation} \label{est:assumption_B}
\frac{b ( \cos \theta ) |w| + b ( \sin \theta ) |v - v^*|}{|w| |v - v^*|} \leq C (|v - v^*|^2 + |w|^2)^{-\frac{1}{2}}
\end{equation}
by \eqref{assumption_B1}, we adopt the original variables of integration and replace $w$ with $-w$ in $\nabla_v k_{2, 3}$ to obtain
\begin{align*}
|\nabla_v k_{2, 3}(v, v^*)| \leq& \frac{C}{|v -v^*|} e^{-\frac{1}{4}|v - v^*|^2 -|V_1(v, v^*)|^2} |\nabla_v |V_2(v, v^*)||\\
&\times \int_{W_{v -v^*}} |w| e^{-|w - V_2(v, v^*)|^2} (|v -v^*|^2 + |w|^2)^{\frac{\gamma - 1}{2}}\,dw\\
\leq& \frac{C|v^*|}{|v -v^*|^2} e^{-\frac{1}{4}|v - v^*|^2 -|V_1(v, v^*)|^2} \\
&\times \int_{W_{v -v^*}} e^{-|w - V_2(v, v^*)|^2} (|v -v^*|^2 + |w|^2)^{\frac{\gamma + 1}{2}}\,dw\\
\leq& \frac{C(|v - v^*| + |v|)}{|v -v^*|^2} e^{-\frac{1}{4}|v - v^*|^2 -|V_1(v, v^*)|} w_{\gamma + 1}(|v - v^*|).
\end{align*}
By the direct calculation, we see that
\[
w_{\gamma + 1}(|v - v^*|) e^{-\delta|v - v^*|^2} \leq C_\delta w_\gamma(|v -v^*|).
\]
Thus, by taking $\delta$ suitably, we have
\[
|\nabla_v k_{2, 3}(v, v^*)| \leq \frac{C(1 + |v|) w_\gamma(|v - v^*|)}{|v -v^*|(1 + |v| + |v^*|)^{1 - \gamma}} E_\delta(v, v^*).
\]

In the same way, by \eqref{est:assumption_B}, we obtain
\begin{align*}
|\nabla_v k_{2, 4}(v, v^*)| \leq& C e^{-\frac{1}{2} (|v|^2 + |v^*|^2)}\\
&\times \int_0^\infty \int_{-\pi}^{\pi} e^{- r^2 - 2 r |V_2(v, v^*)| \cos \varphi} \left( |v - v^*|^2 + r^2 \right)^{\frac{\gamma - 3}{2}}\,r d\varphi dr\\
\leq& \frac{C}{|v -v^*|} e^{-\frac{1}{4} |v -v^*|^2} e^{-|V_1(v, v^*)|^2}\\
&\times \int_{W_{v -v^*}} e^{-|w - V_2(v, v^*)|^2} (|v -v^*|^2 + |w|)^{\frac{\gamma - 1}{2}}\,dw\\
\leq& \frac{C w_\gamma(|v -v^*|)}{(1 + |v| + |v^*|)^{1 - \gamma}} E_{\delta/2}(v, v^*)\\
\leq& \frac{C (1+|v|) w_\gamma(|v -v^*|)}{|v -v^*|(1 + |v| + |v^*|)^{1 - \gamma}} E_{\delta}(v, v^*).
\end{align*}

For $\nabla_v k_{2, 5}$, we assume that the function $b$ is smooth. We notice that
\begin{align*}
\nabla_v \left( \frac{|v - v^*|}{(|v - v^*|^2 + r^2)^{\frac{1}{2}}} \right) =& - \frac{r(v - v^*)}{|v - v^*|^2} \partial_r \left( \frac{|v - v^*|}{(|v - v^*|^2 + r^2)^{\frac{1}{2}}} \right),\\
\nabla_v \left( \frac{r}{(|v - v^*|^2 + r^2)^{\frac{1}{2}}} \right) =& - \frac{r(v - v^*)}{|v - v^*|^2} \partial_r \left( \frac{r}{(|v - v^*|^2 + r^2)^{\frac{1}{2}}} \right).
\end{align*}
Thus, the integration by parts yields
\begin{align*}
\nabla_v k_{2, 5}(v, v^*) =& - \frac{1}{\pi^{\frac{3}{2}}} \frac{v - v^*}{|v - v^*|^4} e^{-\frac{1}{2} (|v|^2 + |v^*|^2)} \int_0^\infty \int_{-\pi}^{\pi} e^{- r^2 - 2 r |V_2(v, v^*)| \cos \varphi}\\
&\times \left( |v - v^*|^2 + r^2 \right)^{\frac{\gamma}{2}} \partial_r \left( b ( \cos \theta ) r + b ( \sin \theta ) |v - v^*| \right) r\, d\varphi dr\\
=& \sum_{j = 1}^3 \nabla_v k_{2, 5, j}(v, v^*),
\end{align*}
where
\begin{align*}
\nabla_v k_{2, 5, 1}(v, v^*) :=& - \frac{2}{\pi^{\frac{3}{2}}} \frac{v - v^*}{|v - v^*|^4} e^{-\frac{1}{2} (|v|^2 + |v^*|^2)}\\
&\times \int_0^\infty \int_{-\pi}^{\pi} (r + |V_2(v, v^*)| \cos \varphi) e^{- r^2 - 2 r |V_2(v, v^*)| \cos \varphi}\\
&\times \left( |v - v^*|^2 + r^2 \right)^{\frac{\gamma}{2}} \left( b ( \cos \theta ) r + b ( \sin \theta ) |v - v^*| \right) r\, d\varphi dr,\\
\nabla_v k_{2, 5, 2}(v, v^*) :=& \frac{\gamma}{\pi^{\frac{3}{2}}} \frac{v - v^*}{|v - v^*|^4} e^{-\frac{1}{2} (|v|^2 + |v^*|^2)} \int_0^\infty \int_{-\pi}^{\pi} e^{- r^2 - 2 r |V_2(v, v^*)| \cos \varphi}\\
&\times \left( |v - v^*|^2 + r^2 \right)^{\frac{\gamma - 2}{2}} \left( b ( \cos \theta ) r + b ( \sin \theta ) |v - v^*| \right) r^2\, d\varphi dr,\\
\nabla_v k_{2, 5, 3}(v, v^*) :=& \frac{1}{\pi^{\frac{3}{2}}} \frac{v - v^*}{|v - v^*|^4} e^{-\frac{1}{2} (|v|^2 + |v^*|^2)} \int_0^\infty \int_{-\pi}^{\pi} e^{- r^2 - 2 r |V_2(v, v^*)| \cos \varphi}\\
&\times \left( |v - v^*|^2 + r^2 \right)^{\frac{\gamma}{2}} \left( b ( \cos \theta ) r + b ( \sin \theta ) |v - v^*| \right)\, d\varphi dr.
\end{align*}
For the $\nabla_v k_{2, 5, 1}$ term, since
\[
|V_2(v, v^*)| = \frac{|(v^* - v) \times v|}{|v - v^*|} \leq |v|,
\]
we have
\begin{align*}
&|\nabla_v k_{2, 5, 1}(v, v^*)|\\ 
\leq& \frac{C}{|v - v^*|^3} e^{-\frac{1}{4}|v - v^*|^2 -|V_1(v, v^*)|^2} \int_{W_{v - v^*}} (|w| + |v|) e^{-|w + V_2(v, v^*)|^2}\\ 
&\times \left( |v - v^*|^2 + |w|^2 \right)^{\frac{\gamma}{2}} \left( b ( \cos \theta ) |w| + b ( \sin \theta ) |v - v^*| \right) \, dw\\
\leq& \frac{C}{|v - v^*|^2} e^{-\frac{1}{4}|v - v^*|^2 -|V_1(v, v^*)|^2} \int_{W_{v - v^*}} e^{-|w - V_2(v, v^*)|^2} \left( |v - v^*|^2 + |w|^2 \right)^{\frac{\gamma + 1}{2}}\,dw\\
&+ \frac{C |v|}{|v - v^*|^2} e^{-\frac{1}{4}|v - v^*|^2 -|V_1(v, v^*)|^2} \int_{W_{v - v^*}} e^{-|w - V_2(v, v^*)|^2} \left( |v - v^*|^2 + |w|^2 \right)^{\frac{\gamma}{2}}\,dw\\
\leq& \frac{C (1 + |v|)}{|v - v^*|} e^{-\frac{1}{4}|v - v^*|^2 -|V_1(v, v^*)|^2} w_{\gamma + 1}(|v - v^*|)\\
\leq& \frac{C (1 + |v|) w_\gamma(|v - v^*|)}{|v - v^*|(1 + |v| + |v^*|)^{1 - \gamma}} E_\delta(v, v^*).
\end{align*}
For the $\nabla_v k_{2, 5, 2}$ term, we have
\begin{align*}
&|\nabla_v k_{2, 5, 2}(v, v^*)|\\ 
\leq& \frac{C}{|v - v^*|^3} e^{-\frac{1}{4}|v - v^*|^2 -|V_1(v, v^*)|^2} \int_{W_{v - v^*}} e^{-|w + V_2(v, v^*)|^2}\\
&\times \left( |v - v^*|^2 + |w|^2 \right)^{\frac{\gamma - 2}{2}} \left( b ( \cos \theta ) |w| + b ( \sin \theta ) |v - v^*| \right) |w|\, dw\\
\leq& \frac{C}{|v - v^*|^2} e^{-\frac{1}{4}|v - v^*|^2 -|V_1(v, v^*)|^2} \int_{W_{v - v^*}} e^{-|w - V_2(v, v^*)|^2} \left( |v - v^*|^2 + |w|^2 \right)^{\frac{\gamma - 1}{2}} \, dw\\
\leq& \frac{C (1 + |v|) w_\gamma(|v - v^*|)}{|v - v^*|(1 + |v| + |v^*|)^{1 - \gamma}} E_\delta(v, v^*).
\end{align*}
For the $\nabla_v k_{2, 5, 3}$ term, by \eqref{est:assumption_B}, we have
\begin{align*}
&|\nabla_v k_{2, 5, 3}(v, v^*)|\\ 
\leq& \frac{C}{|v - v^*|^3} e^{-\frac{1}{4}|v - v^*|^2 -|V_1(v, v^*)|^2} \int_{W_{v - v^*}} e^{-|w + V_2(v, v^*)|^2}\\
&\times \frac{\left( |v - v^*|^2 + r^2 \right)^{\frac{\gamma}{2}}}{|w|} \left( b ( \cos \theta ) |w| + b ( \sin \theta ) |v - v^*| \right)\, dw\\
\leq& \frac{C}{|v - v^*|^2} e^{-\frac{1}{4}|v - v^*|^2 -|V_1(v, v^*)|^2} \int_{W_{v - v^*}} e^{-|w - V_2(v, v^*)|^2} \left( |v - v^*|^2 + |w|^2 \right)^{\frac{\gamma - 1}{2}}\,dw\\
\leq& \frac{C (1 + |v|) w_\gamma(|v - v^*|)}{|v - v^*|(1 + |v| + |v^*|)^{1 - \gamma}} E_\delta(v, v^*).
\end{align*}
Summarizing the above estimates, we conclude that
\[
|\nabla_v k_{2, 5}(v, v^*)| \leq \frac{C (1 + |v|) w_\gamma(|v - v^*|)}{|v - v^*|(1 + |v| + |v^*|)^{1 - \gamma}} E_\delta(v, v^*).
\]
Since the constant in the estimate does not depend on $b'$, we may apply the approximation argument to obtain the same estimate for a nonsmooth function $b$.

Therefore, the estimate \eqref{est:dk} is proved.


\end{document}